\newtheorem{lemma}{Lemma}[section]
\newtheorem{proposition}[lemma]{Proposition}
\newtheorem{corollary}[lemma]{Corollary}
\newtheorem{remark}[lemma]{Remark}
\newtheorem{theorem}[lemma]{Theorem}
\newtheorem{examples}[lemma]{Examples}
\newtheorem{example}[lemma]{Example}
\newcommand{\Hmm}[1]{\leavevmode{\marginpar{\tiny%
$\hbox to 0mm{\hspace*{-0.5mm}$\leftarrow$\hss}%
\vcenter{\vrule depth 0.1mm height 0.1mm width \the\marginparwidth}%
\hbox to 0mm{\hss$\rightarrow$\hspace*{-0.5mm}}$\\\relax\raggedright #1}}}
\begin{document}

\title[Pseudogroups]{Pseudogroups and their \'etale groupoids}

\author[M.~V.~Lawson]{Mark V.  Lawson$^1$}
\address{$^1$Department of Mathematics
and the
Maxwell Institute for Mathematical Sciences,
Heriot-Watt University,
Riccarton,
Edinburgh~EH14~4AS,
Scotland}
\email{markl@ma.hw.ac.uk }
\thanks{The first author was supported by an EPSRC grant (EP/F004184, EP/F014945, EP/F005881).
Both authors would like to thank Pedro Resende for his comments and suggestions on Section~2 of this paper.}

\author[ D.~H.~Lenz]{Daniel H. Lenz$^2$}
\address{$^2$ Mathematisches Institut, Friedrich-Schiller Universit\"at Jena, Ernst-Abb\'{e} Platz~2, 07743 Jena, Germany}
\email{ daniel.lenz@uni-jena.de }

\keywords{Inverse semigroups, \'etale topological groupoids, locales, topos theory}

\subjclass{20M18, 18B40, 06E15}

\begin{abstract}
A pseudogroup is a complete infinitely distributive inverse monoid.
Such inverse monoids bear the same relationship to classical pseudogroups of transformations as frames do to topological spaces.
The goal of this paper is to develop the theory of pseudogroups motivated by applications to group theory, $C^{\ast}$-algebras and aperiodic tilings.
Our starting point is an adjunction between a category of pseudogroups and a category of \'etale groupoids
from which we are able to set up a duality between spatial pseudogroups and sober \'etale groupoids.
As a corollary to this duality, we deduce a non-commutative version of Stone duality involving what we call
boolean inverse semigroups and boolean \'etale groupoids,
as well as a generalization of this duality to distributive inverse semigroups.
Non-commutative Stone duality has important applications in the theory of $C^{\ast}$-algebras:
it is the basis for the construction of Cuntz and Cuntz-Krieger algbras
and in the case of the Cuntz algebras it can also be used to construct the Thompson groups. 
We then define coverages on inverse semigroups and the resulting presentations of pseudogroups.
As applications, we show that Paterson's universal groupoid is an example of a booleanization,
and reconcile Exel's recent work on the theory of tight maps with the work of the second author.
\end{abstract}

\maketitle

\section{Preliminaries}

A {\em frame} is a complete infinitely distributive lattice.
The theory of frames can be viewed as an approach to spaces in which open sets, and not points, are taken as basic.
This theory is interesting in its own right \cite{J} and a key ingredient in topos theory \cite{MM}.
Johnstone discusses the origins of frame theory in his notes to Chapter~II of his classic book \cite{J}.
One sentence there is significant, and somewhat surprising.
He writes on page~76:
\begin{quote}
It was Ehresmann \ldots and his student B\'enabou \ldots who first took the decisive step in regarding complete
Heyting algebras as `generalized topological spaces'.
\end{quote}
What Johnstone does not say is why Ehresmann was led to his frame-theoretic viewpoint of topological spaces.
In fact, Ehresmann's motivation was to develop an abstract theory of local structures in geometry.
Local structures, such as differential manifolds, are defined by means of atlases whose changes of co-ordinates belong to a pseudogroup of transformations.
The nature of the pseudogroup determines the nature of the local structure and it was for this reason that Ehresmann needed 
an abstract formulation of pseudogroups of transformations.
He used ordered groupoids but it subsequently became clear that a simpler, but equivalent, formulation was possible using inverse semigroups.
Thus a {\em complete abstract pseudogroup}, to use the terminology of Resende \cite{R1,R2}, is a complete, infinitely distributive inverse monoid;
we shall explain what this definition means later in this section.
In this paper, we shall simply call them {\em pseudogroups}.
Transformation pseudogroups are just pseudogroups of partial homeomorphisms of a topological space
and the idempotents of a transformation pseudogroup are just the partial identities on the open subsets of the space.
We can now see how frames arise: they are the partially ordered sets of idempotents of pseudogroups.
It was perhaps natural to disentangle frames from their roots and study them on their own terms.
But the premise of this paper is that we now need to go back and generalize the foundations of the classical theory of frames to pseudogroups.
This is not an empty exercise because it has become clear that the resulting theory provides the setting for the significant applications 
of inverse semigroup theory to $C^{\ast}$-algebras.
In fact, the theory in this paper arose out of a detailed analysis of the relationships that exists between inverse semigroups, 
topological groupoids and $C^{\ast}$-algebras \cite{Exel1,Exel2,Kel1,Kel2,Law4,LMS,L,P,Ren}.
We now recall some key definitions and establish some basic results.

A  {\em groupoid} is a set $G$
together with a partially defined associative
multiplication, denoted by concatenation, and an
involution $x\mapsto x^{-1}$, satisfying the following conditions:
\begin{itemize}
\item[(G1)] $(x^{-1})^{-1}=x$.
\item[(G2)]  If $xy$ and $yz$ exist, then $xyz$ exists as well.
\item[(G3)]  $x^{-1}x$ exists and if $xy$ exists as well then $x^{-1}xy=y$.
\item[(G4)] $xx^{-1}$ exists and if $zx$ exists as well then $zxx^{-1}=z$.
\end{itemize}
Elements of the form $x x^{-1}$ are called \textit{identities} of $G$ and the set of all identities of $G$ is denoted by $G_{o}$. 
Each groupoid comes with the maps
$\mathbf{d} \colon G\rightarrow G_{o}$ and $\mathbf{r} \colon G\rightarrow G_{o}$ defined by $\mathbf{d}(x) = x^{-1}x$ and $\mathbf{r}(x) = xx^{-1}$.
If $G$ carries a topology making the multiplication and inversion continuous, it is called a \textit{topological groupoid}. 
The most important class of topological groupoids are the {\em \'etale groupoids}.
Classically, an etale groupoid is a topological groupoid in which the domain map is a local homeomorphism.
However, in Theorem~5.18 of \cite{R2}, Resende characterizes them as those topological groupoids whose
frames of open sets form a semigroup under subset multiplication with an identity formed by the open set of all identities.
This is a fundamental observation in understanding how a topological structure such as an \'etale groupoid
can be related to algebraic structures such as quantales and pseudogroups.
For more background on topological groupoids, we refer the reader to \cite{R1,R2}.

Let $(P,\leq)$ be a poset.
A minimum element in $P$ is called  {\em zero} denoted by 0.
We shall usually assume that our posets have zeros.
For $x \in P$ define
$$x^{\downarrow} = \{y \in E \colon y \leq x \},$$
the {\em principal order ideal generated by $x$},
and
$$x^{\uparrow} = \{y \in E \colon y \geq x \},$$
the {\em principal filter generated by $x$}.
We extend this notation to subsets $A \subseteq P$
and define $A^{\downarrow}$ and $A^{\uparrow}$.
A subset $A$ such that $A = A^{\downarrow}$ is called an {\em order ideal}.
Observe that the intersection of order ideals is always an order ideal.
If $A$ is a finite set then $A^{\downarrow}$ is said to be a {\em finitely generated} order ideal.
A subset $A$ of $P$ is said to be {\em directed} if for each $a,b \in A$ there exists $c \in A$ such that $c <  a,b$.
A {\em filter} in $P$ is a directed subset $A$ such that $A = A^{\uparrow}$.
If $A$ is any directed subset then $A^{\uparrow}$ is a filter.
If $a$ and $b$ are elements of $P$ we write $a^{\downarrow} \cap b^{\downarrow} \neq 0$
to mean that there is some non-zero element below both $a$ and $b$.
If $a^{\downarrow} \cap b^{\downarrow} = 0$ then we say $a$ and $b$ are {\em orthogonal}.
If $X \subseteq P$ then $X^{\perp}$ denotes all the elements of $P$ orthogonal to every element of $X$.
Clearly the zero always belongs to this set.

For background on inverse semigroups we shall refer to \cite{Law2}.
One piece of notation we shall sometimes use is that we write $\mathbf{d}(s) = s^{-1}s$ and $\mathbf{r}(s) = ss^{-1}$.
It is worth stressing that the order used with inverse semigroups is always the natural partial order.
Observe that a necessary condition for a subset of an inverse semigroup to have a join is that
the elements in the set be pairwise compatible where elements $s$ and $t$ are {\em compatible} if $s^{-1}t$ and $st^{-1}$ are idempotents.
If these idempotents are in fact both zeros then $s$ and $t$ are said to be {\em orthogonal}.
Thus for inverse semigroups {\em complete} always means that every {\em compatible} subset has a join.
Compatibility plays a crucial role throughout this paper.  
An inverse semigroup $S$ is said to be {\em distributive} if it has joins of all finite
compatible subsets and multiplication distributes over the finite joins that exist. 
If this does not only holds for finite sets but for arbitrary sets the semigroup is called {\em infinitely distributive}.
Distributivity is heavily used in the sequel.
{\em Pseudogoup morphisms} will be semigroup homomorphisms that preserve compatible joins.
An inverse semigroup is said to be an {\em inverse $\wedge$-semigroup} if all binary meets exist.
It is an easy exercise to show, or see \cite{R1,R2}, that pseudogroups are inverse $\wedge$-semigroups.
A homomorphism between such semigroups that preserves the meet operation is called a {\em $\wedge$-homomorphism}.

The inverse semigroup $S$ is said to satisfy the {\em weak meet condition} if the intersection of any two principal order ideals is finitely generated as an order ideal.
This condition was introduced by Steinberg \cite{Stei} who called an inverse semigroup satisfying the condition a {\em weak semilattice}.
If $S$ is an inverse $\wedge$-semigroup then in fact $a^{\downarrow} \cap b^{\downarrow} = (a \wedge b)^{\downarrow}$.
Thus inverse semigroups satisfying the weak meet condition generalize inverse $\wedge$-semigroups.
In a weak semilattice, the intersection of any finite number of principal order ideals is finitely generated as an order ideal.

A {\em filter} in an inverse subsemigroup $S$ is a subset $F$ that is closed upwards under the natural partial order and directed.
The set of all filters $\mathsf{L}(S)$ on an inverse semigroup forms a groupoid as we shall now describe.
If $A$ is a filter define $\mathbf{d}(A) = (A^{-1}A)^{\uparrow}$ and $\mathbf{r}(A) = (AA^{-1})^{\uparrow}$.
Define a partial product on $\mathsf{L}(S)$ by
$A \cdot B = (AB)^{\uparrow}$ iff $\mathbf{d}(A) = \mathbf{r}(B)$.
By  \cite{L,LMS} we then have that $(\mathsf{L}(S),\cdot)$ is a groupoid  and is in fact the groupoid
underlying Paterson's universal groupoid of an inverse semigroup $S$ \cite{P}.
Filters also play an important role in Exel's work \cite{Exel1,Exel2} on relating inverse semigroups and $C^{\ast}$-algebras.
A filter is said to be {\em proper} if it does not contain zero.

\begin{remark} 
{\em The filters in this paper will always be assumed to be proper.} 
\end{remark}

We shall have occasion in this paper to study a variety of different kinds of filters but particularly important are the maximal or {\em ultrafilters}.
The set of ultrafilters forms a subgroupoid of the groupoid of all filters of an inverse semigroup. 
The groupoid of ultrafilters has attracted attention in various guises.  
In the case of inverse semigroups arsing from locally finite tilings, it is just the tiling groupoid \cite{Kel1, L}. 
In the case of inverse semigroups arising from locally finite graphs, 
it is  shown in \cite{L} to agree with the graph groupoid introduced by  Kumjian, Pask, Raeburn and  Renault in  \cite{KPRR}.

Ultrafilters in inverse semigroups satisfying the weak meet condition can be handled rather easily as we now show.
Observe first that by Zorn's Lemma every non-zero element of an inverse semigroup belongs to an ultrafilter.
We shall apply this observation frequently in what follows.

A finite non-empty subset $A \subseteq S$ is said to be {\em consistent} if there is a non-zero element $b \in S$
such that $b \leq a$ for all $a \in A$.
We write $b \leq A$ and call it a {\em lower bound} of $A$.
Observe that we do not require that $b \in A$.
An arbitrary subset of $S$ is said to be consistent if every finite non-empty subset is consistent.
Filters are examples of consistent subsets since they are directed.
If $A$ is a consistent subset and $a,b \in A$ then clearly $a^{\downarrow} \cap b^{\downarrow} \neq 0$.
We shall use this observation without further comment when working with consistent subsets.
The following is a consequence of Zorn's Lemma that we shall use repeatedly.

\begin{lemma} Let $S$ be a weak semilattice.
Every consistent subset is contained in a maximal consistent subset.
\end{lemma}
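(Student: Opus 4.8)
The plan is to apply Zorn's Lemma to the collection of consistent subsets ordered by inclusion, exploiting the fact that consistency is a property of \emph{finite character}. Let $A$ be the given consistent subset, and let $\mathcal{P}$ be the set of all consistent subsets of $S$ that contain $A$, partially ordered by inclusion. Since $A$ itself lies in $\mathcal{P}$, this collection is non-empty, so it suffices to verify the hypothesis of Zorn's Lemma: every chain in $\mathcal{P}$ has an upper bound in $\mathcal{P}$.

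Given a chain $\mathcal{C} \subseteq \mathcal{P}$, the natural candidate for an upper bound is the union $U = \bigcup \mathcal{C}$. Clearly $A \subseteq U$ and $U$ contains every member of $\mathcal{C}$, so the only thing to check is that $U$ is again consistent. By definition this amounts to showing that every finite non-empty subset $F \subseteq U$ has a non-zero lower bound. Here is where finiteness does all the work: writing $F = \{x_1, \ldots, x_n\}$, each $x_i$ lies in some member $C_i$ of the chain, and since $\mathcal{C}$ is totally ordered by inclusion the finite family $C_1, \ldots, C_n$ has a largest element $C$. Then $F \subseteq C$, and as $C$ is consistent there is a non-zero $b$ with $b \leq F$. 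Hence $U$ is consistent, so $U \in \mathcal{P}$ is the required upper bound.

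With the chain condition established, Zorn's Lemma yields a maximal element of $\mathcal{P}$, which is precisely a maximal consistent subset containing $A$. The argument has no real obstacle: the only substantive point is the passage from a finite subset of the union to a single member of the chain, and this is routine once one observes that consistency is defined purely in terms of finite subsets. It is worth remarking that the weak semilattice hypothesis does not appear to be needed for the existence claim itself, since consistency being of finite character already supplies the chain condition; the hypothesis is presumably retained for uniformity with the surrounding development, where maximal consistent subsets are subsequently analysed under that assumption.
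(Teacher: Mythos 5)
Your proof is correct and is essentially the paper's own argument: apply Zorn's Lemma to the poset of consistent subsets containing $A$, using the fact that consistency is of finite character so that the union of a chain is consistent. Your observation that the weak semilattice hypothesis is not actually needed here is also consistent with the paper, whose proof begins with an arbitrary inverse semigroup.
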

\begin{proof} Let $A$ be a consistent subset of an inverse semigroup $S$.
Consider the poset of all consistent subsets that contain $A$.
This is non-empty since $A$ itself belongs to this set.
The union of a chain of consistent subsets is consistent since consistency is defined in terms of finite subsets.
Thus the post of all consistent sets containing $A$ has a maximal member.
Thus every consistent subset is contained in a maximally consistent one.
\end{proof}

\begin{lemma} Let $S$ be a weak semilattice.
Let $A$ be a maximal consistent subset.
Let $a,b \in A$ and suppose that $a^{\downarrow} \cap b^{\downarrow} = \{c_{1}, \ldots, c_{m} \}^{\downarrow}$.
Then $c_{j} \in A$ for some $j$.
\end{lemma}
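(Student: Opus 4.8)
The plan is to argue by contradiction, exploiting the fact that consistency is a finitary condition together with the maximality of $A$. So suppose, for contradiction, that $c_{j} \notin A$ for every $j$. Since $A$ is a maximal consistent set, for each $j$ the strictly larger set $A \cup \{c_{j}\}$ fails to be consistent. Because consistency is defined purely in terms of finite subsets, this failure is witnessed by some finite subset of $A \cup \{c_{j}\}$; and since $A$ itself is consistent, every finite subset of $A$ has a non-zero lower bound, so the offending finite set must contain $c_{j}$. Hence for each $j$ there is a finite subset $F_{j} \subseteq A$ such that $F_{j} \cup \{c_{j}\}$ has no non-zero lower bound.

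Next I would amalgamate these finitely many witnesses. Set $F = \{a,b\} \cup \bigcup_{j=1}^{m} F_{j}$, which is again a finite subset of $A$. Since $A$ is consistent, $F$ possesses a non-zero lower bound $d$; thus $d \neq 0$ and $d \leq f$ for all $f \in F$. In particular $d \leq a$ and $d \leq b$, so $d \in a^{\downarrow} \cap b^{\downarrow}$.

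Now I invoke the hypothesis $a^{\downarrow} \cap b^{\downarrow} = \{c_{1}, \ldots, c_{m}\}^{\downarrow}$, which is where the weak meet condition enters: it guarantees that the intersection of the two principal order ideals is finitely generated, so every element lying below both $a$ and $b$ sits beneath one of the generators. Hence $d \leq c_{j}$ for some $j$. But then, since also $d \leq f$ for every $f \in F_{j} \subseteq F$, the non-zero element $d$ is a lower bound for $F_{j} \cup \{c_{j}\}$, contradicting the choice of $F_{j}$. This contradiction shows that $c_{j} \in A$ for some $j$, completing the argument.

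The step I expect to require the most care is the first one: correctly extracting, from the mere failure of consistency of $A \cup \{c_{j}\}$, a finite subset of $A$ whose union with $c_{j}$ is inconsistent. This rests on the two observations that consistency is a finitary property and that $A$ is already consistent, so that any witnessing finite set must genuinely involve the new element $c_{j}$. Once this is pinned down, the amalgamation of the finitely many $F_{j}$ into a single finite set $F$ and the appeal to the finite generation of $a^{\downarrow} \cap b^{\downarrow}$ are routine.
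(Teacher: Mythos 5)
Your proof is correct and follows the same overall skeleton as the paper's: argue by contradiction, extract for each $c_{j}$ a finite witness $F_{j}\subseteq A$ of the inconsistency of $A\cup\{c_{j}\}$, amalgamate these with $\{a,b\}$ into a single finite consistent set, and take a non-zero lower bound. Where you diverge is in the final step. The paper takes its lower bound $z$, passes to an ultrafilter $F$ containing $z$ (via Zorn's Lemma), uses directedness of $F$ to produce $c\in F$ with $c\leq a,b$, concludes $c_{j}\in F$ for some $j$ by upward closure, and then argues that $\{c_{j}\}\cup B_{j}\subseteq F$ forces consistency. You instead observe directly that the lower bound $d$ already satisfies $d\leq a,b$, hence $d\leq c_{j}$ for some $j$ by the finite generation of $a^{\downarrow}\cap b^{\downarrow}$, and $d$ is then itself a non-zero lower bound of $F_{j}\cup\{c_{j}\}$, contradicting the choice of $F_{j}$. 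Your version is slightly more economical: it avoids the appeal to ultrafilters (and hence a second invocation of Zorn's Lemma) and shortens the endgame to a one-line contradiction, while proving exactly the same statement. Your care in justifying that the witnessing finite set must contain $c_{j}$ (because $A$ itself is consistent) is also a point the paper glosses over.
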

\begin{proof} Suppose that $\{c_{1}, \ldots, c_{m} \} \cap A = \emptyset$.
Since $A$ is a maximal consistent subset, the sets $\{c_{i} \} \cup A$ are inconsistent for each $i$.
It follows that for each $i$ there is finite subset $B_{i} \subseteq A$ such that $\{c_{i}\} \cup B_{i}$ is inconsistent.
Put $B = \bigcup_{i=1}^{m} B_{i} \cup \{a,b \}$.
Then $B \subseteq A$ and so $B$ is a finite consistent set.
Let $z$ be a non-zero lower bound of $B$
and
let $F$ be any ultrafilter containing $z$.
In particular, $a,b \in F$.
It follows that there is $c \in F$ such that $c \leq a,b$.
Thus for some $j$ we have that $c_{j} \in F$.
However, $\{c_{j} \} \cup B_{j} \subseteq F$ which implies that $\{c_{j} \} \cup B_{j}$ is a consistent set.
This is a contradiction.
It follows that $c_{j} \in A$ for some $j$, as claimed.
\end{proof}

The following proposition provides an alternative way of viewing ultrafilters.

\begin{proposition}  Let $S$ be a weak semilattice.
Every maximal consistent subset is an ultrafilter, and every ultrafilter is a maximal consistent subset.
\end{proposition}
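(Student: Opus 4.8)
The plan is to prove the two implications separately, with the forward direction doing essentially all the work and the reverse direction following formally from it together with Lemma~1.1. For the forward direction, let $A$ be a maximal consistent subset; I would show it is an ultrafilter by verifying in turn that it is proper, upward closed, directed, and maximal among proper filters. Properness is immediate: if $0 \in A$ then the singleton $\{0\}$ would require a non-zero lower bound $b \leq 0$, which is impossible. For upward closure, suppose $a \in A$ and $a \leq x$; I would argue that $A \cup \{x\}$ is again consistent, since any finite subset of it containing $x$ can be handled by replacing $x$ with $a$ and taking a non-zero lower bound of the resulting finite subset of $A$ (such a bound lies below $a \leq x$ as well). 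Maximality of $A$ then forces $x \in A$.

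The directed condition is where the hypotheses genuinely bite, and I expect this to be the main obstacle. Given $a,b \in A$, consistency gives $a^{\downarrow} \cap b^{\downarrow} \neq 0$, and the weak semilattice condition expresses this intersection as a finitely generated order ideal $\{c_{1}, \ldots, c_{m}\}^{\downarrow}$. Lemma~1.2 then guarantees that $c_{j} \in A$ for some $j$, and since $c_{j} \leq a,b$ this supplies the required common lower bound \emph{inside} $A$. Hence $A$ is a proper filter. Finally, if $A \subseteq F$ with $F$ a proper filter, then $F$ is itself consistent (filters are directed, hence consistent), so maximality of $A$ as a consistent set gives $A = F$. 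Thus $A$ admits no strictly larger proper filter, i.e.\ it is an ultrafilter.

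For the reverse direction, let $F$ be an ultrafilter. Being a filter, $F$ is consistent, so it remains only to check maximality among consistent sets. Suppose $F \subseteq C$ with $C$ consistent. By Lemma~1.1, $C$ is contained in some maximal consistent subset $A$, and by the forward direction $A$ is an ultrafilter. Then $F \subseteq A$ are both proper filters with $F$ maximal, so $F = A$; consequently $C \subseteq A = F$, which forces $C = F$. Therefore $F$ is a maximal consistent subset, completing the proof.

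The only non-formal ingredient is the directedness argument, and it is precisely there that the weak semilattice assumption is indispensable: without finite generation of $a^{\downarrow} \cap b^{\downarrow}$ one could not invoke Lemma~1.2 to pull a common lower bound back into $A$. Every other step is either a routine consistency manipulation or a straightforward comparison of filters.
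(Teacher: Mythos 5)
Your proof is correct and follows essentially the same route as the paper's: directedness of a maximal consistent set via the weak meet condition and the lemma producing some $c_j\in A$, upward closure forced by maximality, and the converse obtained from Zorn's lemma together with the forward direction (your Lemmas~1.1 and 1.2 are the paper's Lemmas~1.2 and 1.3). The only difference is cosmetic: the paper deduces upward closure by noting $A\subseteq A^{\uparrow}$ with $A^{\uparrow}$ a filter and hence consistent, where you argue directly that $A\cup\{x\}$ remains consistent.
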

\begin{proof}
Let $A$ be a maximal consistent subset.
We prove first that $A$ is a directed subset.
Let $a,b \in A$ where $a^{\downarrow} \cap b^{\downarrow} = \{c_{1}, \ldots, c_{m} \}^{\downarrow}$.
Then by Lemma~1.3, we have that $c_{j} \in A$ for some $j$ and clearly $c_{j} \leq a,b$.
It follows that $A^{\uparrow}$ is a filter.
But filters are consistent subsets and $A \subseteq A^{\uparrow}$ and so $A = A^{\uparrow}$.
We have therefore shown that $A$ is a filter and since $A$ is maximal consistent and all filters are 
consistent it follows that $A$ is an ultrafilter.
Conversely, let $A$ be an ultrafilter.
Then $A$ is certainly a consistent subset.
If it is not maximal consistent then by Lemma~1.2 it must be contained in a subset that is.
But by the above, such a subset would be an ultrafilter which is a contradiction.
It follows that $A$ is a maximal consistent subset.
\end{proof}

The next result is really a corollary to the above, but it is such an important result that
we promote it to a theorem.

\begin{theorem} Let $F$ be a proper filter in an inverse semigroup satisfying the weak meet condition.
Then $F$ is an ultrafilter if and only if it satisfies the following condition:
if $b$ is such that $b^{\downarrow} \cap a^{\downarrow} \neq 0$ for all $a \in F$ then $b \in F$.
\end{theorem}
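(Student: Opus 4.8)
The plan is to reduce everything to the identification of ultrafilters with maximal consistent subsets furnished by Proposition~1.4, and to exploit the fact that a proper filter contains no zero, so that any common lower bound of finitely many of its elements is automatically non-zero.

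For the forward implication I would assume $F$ is an ultrafilter, hence a maximal consistent subset. Given $b$ with $b^{\downarrow} \cap a^{\downarrow} \neq 0$ for every $a \in F$, the strategy is to show that $F \cup \{b\}$ is still consistent, for then maximality of $F$ forces $F \cup \{b\} = F$ and hence $b \in F$. To verify consistency I would take an arbitrary finite non-empty subset: if it omits $b$ it lies inside the consistent set $F$ and there is nothing to do, while if it has the form $\{b, a_{1}, \ldots, a_{n}\}$ with the $a_{i} \in F$, I would first use directedness of $F$ repeatedly to produce a single $a \in F$ with $a \leq a_{i}$ for all $i$, and then invoke the hypothesis to obtain a non-zero $c$ with $c \leq b$ and $c \leq a \leq a_{i}$. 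This $c$ is the required lower bound.

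For the converse I would assume that $F$ is a proper filter with the stated property and show it is maximal. Taking any proper filter $G$ with $F \subseteq G$ and any $b \in G$, I would note that for each $a \in F$ the directedness of $G$ supplies some $c \in G$ below both $a$ and $b$, and $c \neq 0$ since $G$ is proper; hence $b^{\downarrow} \cap a^{\downarrow} \neq 0$ for all $a \in F$. The hypothesis then yields $b \in F$, so $G \subseteq F$ and $G = F$, as needed.

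The hard part will be the consistency check in the forward direction, where the point is to collapse an arbitrary finite subset of $F$ to one element via directedness so that the pairwise condition $b^{\downarrow} \cap a^{\downarrow} \neq 0$ can be applied a single time; everything else is routine. I also expect that the weak meet condition enters only through Proposition~1.4 in the forward direction, the converse being elementary and valid in any inverse semigroup.
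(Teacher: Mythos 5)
Your proof is correct and follows essentially the same route as the paper: both directions reduce to the identification of ultrafilters with maximal consistent subsets from Proposition~1.4, with the forward direction showing $F \cup \{b\}$ is consistent and the converse being an elementary directedness argument. The only difference is that you spell out the consistency check (collapsing a finite subset of $F$ to a single element by directedness) which the paper asserts without detail, and your remark that the weak meet condition is used only via Proposition~1.4 in the forward direction is accurate.
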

\begin{proof} Let $F$ be an ultrafilter.
We prove that it satisfies the given condition.
Suppose that $b \notin F$.
By assumption $\{b\} \cup F$ is a consistent subset.
By Lemma~1.2, it is contained in a maximal consistent subset.
By Proposition~1.4, such a subset is an ultrafilter.
But this contradicts the fact that $F$ is an ultrafilter.
It follows that $b \in F$.
Conversely, let $F$ be a filter that satisfies the condition.
We prove that $F$ is an ultrafilter.
Suppose not.
Then $F \subseteq G$ where $G$ is any filter properly containing $F$.
Let $b \in G \setminus F$.
Then $\{b \} \cup F$ is a consistent set and so $b$ satisfies the premiss of the condition.
It follows that $b \in F$ which is a contradiction.
It follows that $F$ is an ultrafilter. 
\end{proof}

The distinction between semigroups and monoids leads to a distinction we shall have to make between
{\em distributive lattices} and {\em unital distributive lattices},
and {\em boolean algebras} and {\em unital boolean} algebras.
Thus for us a distributive lattice is one that does not necessarily have a top element,
and what we call boolean algebras are often referred to as {\em generalized boolean algebras}.
A {\em boolean space} is a hausdorff topological space with a basis of compact-open subsets.
A continuous map between spaces is {\em proper} if the inverse image of every compact set is compact.
Classical Stone duality states that the category of boolean algebras is dual to the category of boolean spaces.

In a distributive lattice $D$ a filter $F$ is said to be {\em prime} if $a \vee b \in F$ implies that either $a \in F$ or $b \in F$.
Generalizations of prime filters will play an important role in this paper.
The following result is well-known in the unital case, but we shall need it in the non-unital case as well.

\begin{proposition} \mbox{} 
\begin{enumerate}

\item In a distributive lattice every ultrafilter is prime.

\item A distributive lattice is boolean if and only if every prime filter is an ultrafilter.

\end{enumerate}
\end{proposition}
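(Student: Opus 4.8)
The plan is to treat the two parts separately, using only distributivity together with the ultrafilter criterion of Theorem~1.5. Throughout I regard the distributive lattice $D$ as a meet-semilattice, so that it is an inverse $\wedge$-semigroup satisfying the weak meet condition and Theorem~1.5 applies: a proper filter $F$ is an ultrafilter exactly when $b \in F$ whenever $b \wedge a \neq 0$ for all $a \in F$; equivalently, $F$ fails to be maximal only if some $b \notin F$ is non-orthogonal to every element of $F$. The genuinely new content is the non-unital converse in part~(2), and that is where I expect the work to lie.

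For part~(1), let $F$ be an ultrafilter and suppose $a \vee b \in F$ while $a,b \notin F$. Since $a \notin F$ and $b \notin F$, Theorem~1.5 supplies $f_1, f_2 \in F$ with $f_1 \wedge a = 0 = f_2 \wedge b$. Putting $f = f_1 \wedge f_2 \in F$ and invoking distributivity gives $f \wedge (a \vee b) = (f \wedge a) \vee (f \wedge b) = 0$, which contradicts properness of $F$ because $f \wedge (a \vee b) \in F$. Hence $F$ is prime; distributivity is the only real ingredient here.

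For the forward implication of part~(2), assume $D$ is boolean and let $P$ be a prime filter; I verify the criterion of Theorem~1.5. Given $b \notin P$, choose any $a \in P$ and work in the boolean interval $[0,a]$: let $c$ be the relative complement of $a \wedge b$ there, so $(a \wedge b) \vee c = a$ and $(a \wedge b) \wedge c = 0$. Primeness applied to $a = (a \wedge b) \vee c \in P$ forces $c \in P$, since the alternative $a \wedge b \in P$ would put $b \in P$. As $c \leq a$, we get $c \wedge b = c \wedge (a \wedge b) = 0$, so $b$ is orthogonal to an element of $P$. By Theorem~1.5, $P$ is an ultrafilter.

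The converse of part~(2) is the substantive non-unital step. The idea is to reduce to the well-known bounded case by localising at each principal ideal: it suffices to prove every interval $[0,b]$ is a boolean algebra, and by the unital result this follows once every prime filter of $[0,b]$ is shown to be maximal in $[0,b]$. Given a prime filter $Q$ of $[0,b]$, I would check that its upward closure $P = Q^{\uparrow}$ in $D$ is again a prime filter (distributing $q = q \wedge (u \vee v)$ for a witness $q \in Q$) and that $P \cap [0,b] = Q$; by hypothesis $P$ is then an ultrafilter of $D$. If some proper filter of $[0,b]$ properly contained $Q$, witnessed by $b' \leq b$ with $b' \notin Q$, then $b' \notin P$, so Theorem~1.5 gives $f \in P$ with $f \wedge b' = 0$; picking $q_0 \in Q$ with $q_0 \leq f$ yields $q_0 \wedge b' = 0$ inside the extending filter, contradicting its properness. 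Hence $Q$ is maximal. The delicate points are that filters of the bounded lattice $[0,b]$ automatically contain the top $b$, that upward closure sends prime filters to prime filters and is inverse to intersection with $[0,b]$, and the bookkeeping of which meets stay below $b$; with these in hand the known unital equivalence completes the argument and shows $D$ is a generalized boolean algebra.
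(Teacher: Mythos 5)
Your proof is correct and follows essentially the same route as the paper: part (1) via the orthogonal witnesses supplied by Theorem~1.5 together with distributivity, and part (2) by localising to principal ideals, transporting filters between $[0,b]$ and $D$ by upward closure, and invoking the unital equivalence. The only organisational difference is in the forward direction of (2), where you use the relative complement of $a \wedge b$ inside $[0,a]$ to handle the unital and non-unital cases uniformly, whereas the paper first proves the unital case with the global complement $a'$ and then reduces the general case to it via $P' = \{e \wedge p \colon p \in P\}$; your version is slightly more economical but rests on the same idea.
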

\begin{proof} (1) Let $F$ be an ultrafilter in a distributive lattice $D$.
Suppose that $c = a \vee b \in F$ but $a,b \notin F$.
By Theorem~1.5, there exists $f_{a} \in F$ such that $a \wedge f_{a} = 0$ and $f_{b} \in F$ such that $f_{b} \wedge b = 0$.
But $F$ is a filter and so $f = f_{a} \wedge f_{b} \in F$ which means that $f \wedge a = 0 = f \wedge b$.
But $f \wedge c = (f \wedge a) \vee (f \wedge b)$.
The lefthand-side is non-zero since it is a meet of elements in a filter,
but the righthand-side is zero.
This is a contradiction and so either $a \in F$ or $b \in F$.

(2) We prove first the unital case of our result.
Let $D$ be a unital boolean algebra and let $P$ be a prime filter.
Let $a \in D$ be such that $a \wedge p \neq 0$ for all $p \in P$.
Since $D$ is a unital boolean algebra we have that $1 = a \vee a'$.
But $1 \in P$ and $P$ is a prime filter.
Thus either $a \in P$ or $a' \in P$.
We cannot have $a' \in P$ because $a \wedge a' = 0$ which contradicts our assumption.
Thus $a \in P$ and so by Theorem~1.5, we have proved that $P$ is an ultrafilter.
Conversely, let $D$ be a unital distributive lattice such that every prime filter is an ultrafilter.
We prove that $D$ is boolean. 
We use Corollary~4.9 of \cite{J}.
Thus we shall prove that every prime ideal is maximal.
Let $I$ be a prime ideal in $D$.
Suppose that it is not a maximal proper ideal.
Then $I \subseteq J$ a maximal proper ideal by Lemma~2.3 of \cite{J}.
By Corollary~2.4 of \cite{J}, the ideal $J$ is also prime.
We have that $D \setminus J \subseteq D \setminus I$ where by Proposition~2.2 of \cite{J},
both $D \setminus J$ and $D \setminus I$ are prime filters.
By assumption, both must be ultrafilters and so $D \setminus J = D \setminus I$ giving $I = J$.
It follows that $I$ is a maximal proper ideal, as required.

We now turn to the non-unital case.
Let $D$ be a distributive lattice in which every prime filter is maximal.
We prove that $D$ is a boolean algebra.
To do this we have to prove that $e^{\downarrow}$ is a boolean algebra for each $e \in D$.
This can be achieved by showing that every prime filter in $e^{\downarrow}$ is an ultrafilter.
Let $F \subseteq e^{\downarrow}$ be a filter in $e^{\downarrow}$.
Then it is easy to check that $F^{\uparrow}$ is a filter in $D$.
In addition, if $F \subseteq G \subseteq e^{\downarrow}$ are filters then $F^{\uparrow} \subseteq G^{\uparrow}$,
and if $F^{\uparrow} = G^{\uparrow}$ then $F = G$.
Let $F \subseteq e^{\downarrow}$ be a prime filter in $e^{\uparrow}$.
Then $F^{\uparrow}$ is a prime filter in $D$.
If $F$ is not an ultrafilter in $e^{\downarrow}$ then there is a filter $G$ such that $F \subseteq G \subseteq e^{\downarrow}$.
But $F^{\uparrow}$ implies by assumption that $F^{\uparrow}$ is an ultrafilter and so $F^{\uparrow} = G^{\uparrow}$ giving $F = G$.
We have therefore proved that in the unital distributive lattice $e^{\downarrow}$ every prime filter is an ultrafilter and so $e^{\downarrow}$
is a unital boolean algebra, as required. 

Conversely, let $D$ be a boolean algebra.
We prove that every prime filter is an ultrafilter.
Let $P$ be a prime filter and let $a \in D$ be an element such that $a \wedge p \neq 0$ for all $p \in P$.
We shall prove that $a \in P$ from which it follows that $P$ is an ultrafilter by Theorem~1.5.
Choose $e \in P$ and put $P' = \{e \wedge p \colon p \in P \}$.
Then $P'$ is a prime filter in the unital boolean algebra $e^{\downarrow}$.
Observe that the element $a \wedge e$ has a non-empty meet with every element of $P'$.
But in a unital boolean algebra we have seen that every prime filter is an ultrafilter and so $P'$ is an ultrafilter
and thus by Theorem~1.5 we have that $a \wedge e \in P'$.
It follows that $a \in P$, as required. 
\end{proof}

\noindent
{\bf Notation }In this paper, we shall use a number of different groupoids constructed from an inverse semigroup $S$ by means of filters.
It may be helpful to list them here:
\begin{itemize}

\item $\mathsf{L}(S)$ the groupoid of all proper filters on the inverse semigroup $S$. When the non-proper filter $S$ is adjoined we in fact
obtain an inverse semigroup \cite{L,LMS}.

\item $\mathsf{G}(S)$ is the groupoid of all completely prime filters when $S$ is a pseudogroup.

\item $\mathsf{G}_{P}(S)$ is the groupoid of all prime filters when $S$ is a distributive inverse semigroup.

\item $\mathsf{G}_{M}(S)$ is the groupoid of all ultrafilters.

\item $\mathsf{G}_{\mathcal{C}}(S)$ is the groupoid of all $\mathcal{C}$-filters where $\mathcal{C}$ is a coverage on $S$.

\item  $\mathsf{G}_{u}(S)$ is Paterson's universal groupoid that consists of all filters (for us all proper filters) with the patch topology.

\item  $\mathsf{G}_{t}(S)$ is Exel's tight groupoid that consists of all tight filters with the patch topology.

\item $\mathsf{G}_{m}(S)$ is the groupoid of ultrafilters equipped with the patch topology.\\

\end{itemize}

We would like to acknowledge one other author who has carried out pioneering work in this area: Pedro Resende \cite{R1,R2,MR}.
In many ways our approaches are complementary since our interest is primarily in deriving dualities involving classes of inverse semigroups.
Our work also differs from his in a number of other ways:
Resende works with localic groupoids whereas we work with topological groupoids;
he uses quantales whereas our work is connected more to the theory of $C^{\ast}$-algebras;
and finally, his theory works at the level of objects only whereas we have introduced suitable morphisms.
However, the duality Theorem~2.23, at the level of objects, was explicitly proved in \cite{MR}.
We shall make direct links to Resende's work where appropriate.

\section{The adjunction}

We shall be interested in two categories which at this stage we only define at the level of objects:
the category $\mathbf{Inv}$ of pseudogroups and the category $\mathbf{Etale}$ of \'etale groupoids.
Our goal is to define a functor
$\mathsf{G} \colon \mathbf{Inv}^{op} \rightarrow \mathbf{Etale}$
that takes a pseudogroup and delivers an \'etale groupoid,
and a functor
$\mathsf{B} \colon \mathbf{Etale} \rightarrow \mathbf{Inv}^{op}$
that takes an \'etale groupoid and delivers a pseudogroup.
We begin by constructing both functors at the level of objects only in Section~2.1
and then explore how the constructions can be lifted to maps in Section~2.2.

\subsection{Objects only}

The easiest functor to define is the second.
Recall that a {\em local bisection} $A$ in a groupoid $G$ is a subset satisfying the two
conditions $A^{-1}A,AA^{-1} \subseteq G_{o}$.
We shall usually refer to `local bisections' simply as `bisections'.
Here $A^{-1}$ means the inverses of all elements of $A$, and the product is
just the usual multiplicaton of subsets of a category.
It is well-known, and in any event easy to prove, that the set of all bisections of a groupoid
forms a pseudogroup.
The idempotents of this semigroup are just the subsets of $G_{o}$ and the natural partial order is given by subset-inclusion.
If $G$ is now an \'etale groupoid, then we define $\mathsf{B}(G)$ to be the set of all open bisections.
We now have the object part of our functor.

\begin{proposition} Let $G$ be an \'etale groupoid.
Then $\mathsf{B}(G)$, the set of all open bisections of $G$ under subset multiplication, is a pseudogroup.
\end{proposition}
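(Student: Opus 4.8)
The plan is to realize $\mathsf{B}(G)$ as a sub-pseudogroup of the pseudogroup of \emph{all} bisections of $G$, whose structure has already been recorded above. Thus I would take for granted that the set of all bisections is a complete infinitely distributive inverse monoid, with identity $G_{o}$, with idempotents the subsets of $G_{o}$, and with natural partial order equal to subset-inclusion; the remaining work is to check that the open bisections are closed under all the relevant operations. The single essential input is Resende's characterization of \'etale groupoids (Theorem~5.18 of \cite{R2}): the open sets of $G$ form a semigroup under subset multiplication whose identity is the open set $G_{o}$. In particular the product of two open sets is open and $G_{o}$ is itself open.

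First I would verify that $\mathsf{B}(G)$ is a sub-inverse-monoid. If $A$ and $B$ are open bisections then $AB$ is a bisection, since all bisections form a semigroup, and $AB$ is open by the \'etale characterization; hence $AB \in \mathsf{B}(G)$. The identity $G_{o}$ is an open bisection, again by the \'etale characterization, so it lies in $\mathsf{B}(G)$ and acts as identity there. For inversion, note that in a topological groupoid the map $x \mapsto x^{-1}$ is continuous, and being an involution it is its own inverse, so it is a homeomorphism; consequently $A^{-1}$ is open whenever $A$ is, and $A^{-1}$ is a bisection whenever $A$ is, so $A^{-1} \in \mathsf{B}(G)$.

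Next I would establish completeness and infinite distributivity. The key observation is that in the pseudogroup of all bisections the join of a compatible family $\{A_{i}\}$ is simply its union $\bigcup_{i} A_{i}$: for $i \neq j$ compatibility gives $A_{i}^{-1}A_{j}, A_{i}A_{j}^{-1} \subseteq G_{o}$, so $(\bigcup_{i} A_{i})^{-1}(\bigcup_{i} A_{i}) = \bigcup_{i,j}A_{i}^{-1}A_{j} \subseteq G_{o}$ and likewise $(\bigcup_{i} A_{i})(\bigcup_{i} A_{i})^{-1}\subseteq G_{o}$, whence $\bigcup_{i} A_{i}$ is a bisection and, by the coincidence of the natural order with inclusion, it is the join. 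Since a union of open sets is open, this join lies in $\mathsf{B}(G)$, so $\mathsf{B}(G)$ is complete and its joins are computed exactly as in the ambient pseudogroup. Infinite distributivity then reduces to the set-theoretic identities $A(\bigcup_{i} B_{i}) = \bigcup_{i} AB_{i}$ and $(\bigcup_{i} B_{i})A = \bigcup_{i} B_{i}A$, which hold for subset multiplication in any groupoid.

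The only place \'etaleness is genuinely used is in the two facts that products of open sets are open and that $G_{o}$ is open; this is the crux of the argument, since for a general topological groupoid the open bisections need not be closed under multiplication nor contain an open identity. Everything else is inherited from the pseudogroup of all bisections together with the elementary observations that unions and homeomorphic images of open sets are open.
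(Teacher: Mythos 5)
Your argument is correct and follows exactly the route the paper intends: the paper states this proposition without a formal proof, relying on the preceding remarks that all bisections of a groupoid form a pseudogroup (with order given by inclusion and idempotents the subsets of $G_{o}$) together with Resende's characterization of \'etale groupoids via the multiplicativity and unitality of the frame of open sets. Your write-up simply supplies the routine verifications (closure under products, inversion, and unions of compatible families) that the paper leaves implicit, and correctly identifies \'etaleness as the only nontrivial input.
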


The description of the object part of our second functor is more complex and depends on an important class of filters.
A filter $F$ in a pseudogroup $S$ is said to be {\em completely prime} if $\bigvee a_{i} \in F$ implies that $a_{i} \in F$ for some $i$.
Such filters were defined in \cite{R1} where they were called {\em compatibly prime} and are a generalization of a concept important in frame theory \cite{J}.
Given a pseuodgroup $S$, we denote the set of all completely prime filters on $S$ by $\mathsf{G}(S)$.

\begin{lemma} Let $A$ be a filter in a pseudogroup $S$.
\begin{enumerate}

\item $A$ is completely prime if and only if $A^{-1}$ is completely prime.

\item $A$ is completely prime if and only if $\mathbf{d}(A)$ is completely prime.

\end{enumerate}
\end{lemma}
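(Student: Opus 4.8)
The plan is to reduce both equivalences to a handful of order-theoretic facts about inversion, the map $\mathbf{d}$, and joins in a pseudogroup, and then to transport the completely prime condition across these maps.

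First I would record the tools. Inversion $s \mapsto s^{-1}$ is an order automorphism of $S$, since $s \leq t$ iff $s^{-1} \leq t^{-1}$; being an order automorphism it preserves every join that exists and it preserves compatibility, so $\left( \bigvee_i a_i \right)^{-1} = \bigvee_i a_i^{-1}$. Part~(1) then follows immediately: $A^{-1}$ is a filter because inversion is an order automorphism, and if $\bigvee_i b_i \in A^{-1}$ then $\bigvee_i b_i^{-1} = \left( \bigvee_i b_i \right)^{-1} \in A$, so complete primeness of $A$ yields $b_i^{-1} \in A$, i.e. $b_i \in A^{-1}$, for some $i$. Since $(A^{-1})^{-1} = A$, the reverse implication is the same statement applied to the filter $A^{-1}$.

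For part~(2) I would first unwind $\mathbf{d}(A) = (A^{-1}A)^{\uparrow}$. From $c \leq a,b \Rightarrow c^{-1}c \leq a^{-1}b$ together with directedness of $A$ one obtains $\mathbf{d}(A) = \{ a^{-1}a \colon a \in A \}^{\uparrow}$, a filter of idempotents. I would also establish the identity $\mathbf{d}\left( \bigvee_i a_i \right) = \bigvee_i \mathbf{d}(a_i)$ for a compatible family: writing $s = \bigvee_i a_i$, each $a_i \leq s$ gives $a_i = s\,\mathbf{d}(a_i)$ and $\mathbf{d}(a_i) \leq \mathbf{d}(s)$, so every cross term reduces to $a_j^{-1}a_i = \mathbf{d}(a_j)\mathbf{d}(a_i) \leq \mathbf{d}(a_i)$, whence $\mathbf{d}(s) = \bigvee_{i,j} a_j^{-1} a_i = \bigvee_i \mathbf{d}(a_i)$. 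The forward direction of~(2) is then quick: if $A$ is completely prime and $\bigvee_i e_i \in \mathbf{d}(A)$, choose $a \in A$ with $a^{-1}a \leq \bigvee_i e_i$; distributivity gives $a = a(a^{-1}a) \leq \bigvee_i a e_i$, so $\bigvee_i a e_i \in A$ and some $a e_i \in A$. The $e_i$ lie below the idempotent $\bigvee_i e_i$ and hence are themselves idempotent, so $\mathbf{d}(a e_i) = e_i (a^{-1}a) e_i \leq e_i$ lies in $\mathbf{d}(A)$, forcing $e_i \in \mathbf{d}(A)$ by upward closure.

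The converse of~(2) is the step I expect to be the main obstacle, since it requires reconstructing a member of $A$ from information about its domain idempotent alone. Assume $\mathbf{d}(A)$ is completely prime and $\bigvee_i a_i \in A$. The identity above gives $\bigvee_i \mathbf{d}(a_i) = \mathbf{d}\left( \bigvee_i a_i \right) \in \mathbf{d}(A)$, so complete primeness yields $\mathbf{d}(a_i) \in \mathbf{d}(A)$ for some $i$; pick $b \in A$ with $b^{-1}b \leq \mathbf{d}(a_i)$. Using directedness, choose $d \in A$ with $d \leq b$ and $d \leq s := \bigvee_j a_j$. Then $d \leq b$ gives $d = d\,b^{-1}b$, so $s\,b^{-1}b \geq d\,b^{-1}b = d \in A$ and hence $s\,b^{-1}b \in A$; on the other hand $b^{-1}b \leq \mathbf{d}(a_i)$ yields $s\,b^{-1}b = s\,\mathbf{d}(a_i)\,b^{-1}b = a_i\,b^{-1}b \leq a_i$. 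Upward closure of $A$ then gives $a_i \in A$, completing the argument.
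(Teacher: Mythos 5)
Your part~(1) and the converse half of part~(2) are correct; the converse is in substance the paper's own argument (the paper writes only ``it follows that $a_i = a\mathbf{d}(a_i) \in A$''), with the details that make that last step legitimate --- the choice of $d \in A$ below both $b$ and $\bigvee_j a_j$ and the computation $sb^{-1}b = a_ib^{-1}b \leq a_i$ --- written out properly. The gap is in the forward direction of~(2). The filter $\mathbf{d}(A) = (A^{-1}A)^{\uparrow}$ is an \emph{idempotent filter} in the sense that it is generated as an up-set by the idempotents $a^{-1}a$ with $a \in A$, but it is upward closed in all of $S$ and therefore contains non-idempotent elements (any $s \geq a^{-1}a$ lies in it). The completely prime condition must accordingly be verified for an arbitrary compatible family $\{x_i\}$ with $\bigvee_i x_i \in \mathbf{d}(A)$, whereas you treat only families of idempotents: your assertion that ``the $e_i$ lie below the idempotent $\bigvee_i e_i$'' is unjustified, because membership of $\bigvee_i e_i$ in $\mathbf{d}(A)$ gives no reason for that join to be an idempotent. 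For a non-idempotent term $x_i$ the subsequent step breaks: one has $\mathbf{d}(ax_i) = x_i^{-1}(a^{-1}a)x_i \leq \mathbf{d}(x_i)$ rather than $\leq x_i$, so upward closure no longer places $x_i$ in $\mathbf{d}(A)$.

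The repair is one line and is exactly what the paper does: having found $i$ with $ax_i \in A$, note that $a^{-1}ax_i = a^{-1}(ax_i) \in A^{-1}A$ and $a^{-1}ax_i \leq x_i$ because $a^{-1}a$ is idempotent, whence $x_i \in (A^{-1}A)^{\uparrow} = \mathbf{d}(A)$. Alternatively you could first prove the reduction you are implicitly relying on --- that an idempotent filter is completely prime as soon as the defining condition holds for compatible families of idempotents --- but that reduction itself requires an argument (apply $\mathbf{d}$ to the family, use the idempotent case, then multiply back by the join), so the direct route is the cleaner fix.
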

\begin{proof} (1) This is straightforward.

(2) Suppose that $A$ is completely prime.
We prove that $\mathbf{d}(A) = (A^{-1}A)^{\uparrow}$ is completely prime.
Let $x = \bigvee_{i} x_{i} \in A^{-1} \cdot A$.
Then $a^{-1}a \leq x$ for some $a \in A$;
this is always possible since if $a,b \in A$ then $(a \wedge b)^{-1}(a \wedge b) \leq a^{-1}b$.
Clearly $a^{-1}a = xa^{-1}a$ and so by infinite distributivity we have that $a^{-1}a = \bigvee_{i} x_{i}a^{-1}a$.
Thus again by infinite distributivity,
$a = \bigvee_{i} ax_{i}a^{-1}a$.
By assumption, $A$ is completely prime and so $ax_{i}a^{-1}a \in A$ for some $i$.
Thus $ax_{i} \in A$ since $A$ is upwardly closed.
However $a^{-1}ax_{i} \in A^{-1} \cdot A$.
Thus $x_{i} \in A^{-1} \cdot A$, again by upward closure, as required.

Suppose now that $\mathbf{d}(A)$ is completely prime.
We prove that $A$ is completely prime.
Let $a = \bigvee a_{i} \in A$.
Then $\mathbf{d}(a) = \bigvee \mathbf{d}(a_{i}) \in \mathbf{d}(A)$.
Where we use standard properties of compatible joins \cite{Law2}
By assumption $\mathbf{d}(a_{i}) \in \mathbf{d}(A)$.
It follows that $a_{i} = a\mathbf{d}(a_{i}) \in A$, as required.
\end{proof}

Clearly, $\mathbf{d}(A)$ and $\mathbf{r}(A)$ each contain idempotents.
The following is stated as Lemma~2.9 of \cite{Law3}.

\begin{lemma} Let $F$ be a filter.
Then $F$ contains an idempotent if and only if it is also an inverse subsemigroup.
\end{lemma}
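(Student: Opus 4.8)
The plan is to prove the two implications separately, disposing of the easy (reverse) direction first. Suppose $F$ is an inverse subsemigroup. Being a filter, $F$ is non-empty, so pick any $s \in F$; then $ss^{-1} \in F$ because $F$ is closed under products and inverses, and $ss^{-1}$ is an idempotent. Hence $F$ contains an idempotent.

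For the forward direction, suppose $e \in F$ is an idempotent; the task is to show that $F$ is closed under both inverses and products. I shall lean on two standard facts about the natural partial order on an inverse semigroup (see \cite{Law2}): first, that an element lying beneath an idempotent is itself an idempotent --- indeed, if $c \leq e = e^{2}$ then $c = e(c^{-1}c)$ is a product of two commuting idempotents and so is idempotent; and second, that the order is compatible with multiplication, so that $a \leq b$ and $c \leq d$ together imply $ac \leq bd$ (and in particular the inversion map is order-preserving). The key manoeuvre is to use directedness to drag the idempotent $e$ down below whichever elements one wishes to manipulate. Concretely, given $s \in F$, directedness applied to $s$ and $e$ yields $c \in F$ with $c \leq s$ and $c \leq e$; since $c \leq e$ the element $c$ is idempotent, so $c = c^{-1}$, whence $c = c^{-1} \leq s^{-1}$ and upward closure of $F$ forces $s^{-1} \in F$. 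For closure under products, given $s,t \in F$ I apply directedness twice to obtain $c \in F$ with $c \leq s$, $c \leq t$ and $c \leq e$; then $c$ is idempotent, so $c = c^{2} \leq st$ by compatibility, and again upward closure gives $st \in F$. Thus $F$ is an inverse subsemigroup.

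I do not anticipate a genuine obstacle here; the one point requiring care is precisely the observation that membership of a \emph{single} idempotent, combined with directedness, lets one produce an idempotent lower bound sitting beneath any chosen finite family of elements of $F$. This is exactly the mechanism that upgrades the presence of one idempotent into full closure under the inverse and multiplication operations, and everything else is routine bookkeeping with the natural partial order.
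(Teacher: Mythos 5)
Your proof is correct. The paper itself gives no argument for this lemma (it is simply quoted as Lemma~2.9 of \cite{Law3}), and your argument --- using directedness to pull the idempotent $e$ down to a common lower bound $c$, observing that $c \leq e$ forces $c$ to be idempotent, and then pushing $c = c^{-1} \leq s^{-1}$ and $c = c^{2} \leq st$ back up through upward closure --- is exactly the standard proof one would supply.
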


We shall call filters of the above type {\em idempotent filters}.

\begin{lemma} Let $A$ and $B$ be completely prime filters such that $\mathbf{d}(A) = \mathbf{r}(B)$.
Then $(AB)^{\uparrow}$ is a completely prime filter such that $\mathbf{d}((AB)^{\uparrow}) = \mathbf{d}(B)$ and
$\mathbf{r}((AB)^{\uparrow}) = \mathbf{r}(A)$.
\end{lemma}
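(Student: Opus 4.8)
The statement packages three assertions about $C := (AB)^{\uparrow}$: that it is a (proper) filter, that $\mathbf{d}(C) = \mathbf{d}(B)$ and $\mathbf{r}(C) = \mathbf{r}(A)$, and that it is completely prime. The plan is to dispose of the first two for free and to spend the real effort only on complete primeness. For the filter property and the two domain/range identities I would simply invoke the groupoid structure of $\mathsf{L}(S)$ recorded earlier (following \cite{L,LMS}): since $\mathbf{d}(A) = \mathbf{r}(B)$, the product $A \cdot B = (AB)^{\uparrow}$ is defined in the groupoid $\mathsf{L}(S)$ of proper filters, so it is automatically a proper filter, and the identities $\mathbf{d}(A \cdot B) = \mathbf{d}(B)$ and $\mathbf{r}(A \cdot B) = \mathbf{r}(A)$ are just instances of the identities $\mathbf{d}(xy) = \mathbf{d}(y)$ and $\mathbf{r}(xy) = \mathbf{r}(x)$ valid in any groupoid.

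The one genuinely new point is that $C$ is completely prime, and here I would avoid a direct verification by routing through $\mathbf{d}$ and Lemma~2.2(2). The idea is that complete primeness of a filter is equivalent to complete primeness of its idempotent domain, so it is enough to control $\mathbf{d}(C)$. Concretely: by Lemma~2.2(2) applied to $B$, the hypothesis that $B$ is completely prime gives that $\mathbf{d}(B)$ is completely prime; but $\mathbf{d}(C) = \mathbf{d}(B)$ by the previous paragraph, so $\mathbf{d}(C)$ is completely prime; applying Lemma~2.2(2) now to the filter $C$ itself, complete primeness of $\mathbf{d}(C)$ hands back complete primeness of $C$. This settles the result in two applications of a single lemma, and it uses only that $B$ is completely prime — the hypothesis on $A$ enters solely to guarantee that the product is defined.

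I expect the only place one could get bogged down is if one insisted on proving complete primeness of $C$ straight from the definition. There the obstacle is that a typical element $c \in (AB)^{\uparrow}$ merely dominates some product $ab$ with $a \in A$ and $b \in B$ rather than equalling it, so when $c = \bigvee_i c_i$ one must push the join down through the upward closure and across the multiplication, juggling infinite distributivity much as in the proof of Lemma~2.2 itself. The reduction to the idempotent domain filter $\mathbf{d}(B)$, where complete primeness is already in hand, sidesteps exactly this bookkeeping, which is why I would organize the argument around Lemma~2.2 rather than around the definition.
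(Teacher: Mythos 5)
Your proof is correct, but it takes a genuinely different route from the paper's. The paper handles the filter property by citing Lemma~2.7 of \cite{Law3}, proves complete primeness by a direct computation (take $x = \bigvee_i x_i \in A\cdot B$, pick $ab \leq x$, and push the join through with infinite distributivity until landing in $B$ — exactly the bookkeeping you predicted and chose to avoid), and only afterwards computes $\mathbf{d}((AB)^{\uparrow})$ and $\mathbf{r}((AB)^{\uparrow})$ by expanding $(B^{-1}A^{-1}AB)^{\uparrow}$. You instead get the filter property and the identities $\mathbf{d}(A\cdot B) = \mathbf{d}(B)$, $\mathbf{r}(A\cdot B) = \mathbf{r}(A)$ for free from the groupoid structure of $\mathsf{L}(S)$ recorded in Section~1 (these are general groupoid identities, and the paper itself makes exactly this appeal later, in the proof of Lemma~4.4), and then transfer complete primeness along the chain $B \Rightarrow \mathbf{d}(B) = \mathbf{d}(C) \Rightarrow C$ by two applications of the equivalence in Lemma~2.2(2). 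This is legitimate — Lemma~2.2(2) is an honest ``if and only if'' and you use each direction once, and there is no circularity since the identity $\mathbf{d}(C) = \mathbf{d}(B)$ is established at the level of $\mathsf{L}(S)$ before any primeness is invoked. What your route buys is economy: the distributivity juggling is done once, in Lemma~2.2, and never repeated. What the paper's route buys is self-containment: its computation does not lean on the external reference for the groupoid structure of $\mathsf{L}(S)$, and it exhibits the mechanism explicitly. Your closing observation that only the complete primeness of $B$ is ever used is also borne out by the paper's own proof, which likewise only invokes primeness of $B$; by symmetry one could equally well route through $\mathbf{r}$ and use only $A$.
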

\begin{proof} By Lemma~2.7 of \cite{Law3}, we have that $(AB)^{\uparrow}$ is a filter.
We show that it is completely prime.
Let $x = \bigvee_{i} x_{i} \in A \cdot B$.
Then $ab \leq x$ for some $a \in A$ and $b \in B$.
Thus $ab = \bigvee_{i} x_{i} (ab)^{-1}ab$.
By infinite distributivity we have that
$a^{-1}ab = \bigvee_{i} a^{-1}x_{i} (ab)^{-1}ab$.
Since $A^{-1} \cdot A = B \cdot B^{-1}$ we have that $a^{-1}ab \in B$.
But $B$ is completely prime and so $a^{-1}x_{i}(ab)^{-1}ab \in B$.
Thus $aa^{-1}x_{i}(ab)^{-1}ab \in aB$ and so $x_{i} \in A \cdot B$, as required.

For the final claims, observe that
$(A \cdot B)^{-1} \cdot (A \cdot B) = ( (B^{-1}A^{-1})^{\uparrow}(AB)^{\uparrow})^{\uparrow}$
and that
$((B^{-1}A^{-1})^{\uparrow}(AB)^{\uparrow})^{\uparrow} = (B^{-1}A^{-1}AB)^{\uparrow}$.
Under our assumption that $A^{-1} \cdot A = B \cdot B^{-1}$,
we deduce that
$(B^{-1}A^{-1}AB)^{\uparrow} = B^{-1} \cdot B$.
The dual result is proved similarly.
\end{proof}

On $\mathsf{G}(S)$ define a partial binary operation by
$$A \cdot B = (AB)^{\uparrow} \text{ iff } \mathbf{d}(A) = \mathbf{r}(B).$$
The proof of the following is now straightforward:
essentially only associativity is left to prove and that is easy.

\begin{lemma}
For each pseudogroup $S$, the structure $(\mathsf{G}(S),\cdot)$ is a groupoid.
\end{lemma}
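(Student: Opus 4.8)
The plan is to verify the groupoid axioms (G1)--(G4) directly, observing that almost all of the work has already been done by Lemmas~2.2 and~2.4. First I would record that the two operations on $\mathsf{G}(S)$, namely the partial product and the involution $A \mapsto A^{-1}$, are genuinely internal to $\mathsf{G}(S)$: closure under products when $\mathbf{d}(A) = \mathbf{r}(B)$ is exactly Lemma~2.4, and closure under inversion is Lemma~2.2(1). Axiom (G1) is then immediate, since $(a^{-1})^{-1} = a$ in $S$ forces $(A^{-1})^{-1} = A$. I would also note at this stage that $\mathbf{d}(A^{-1}) = (AA^{-1})^{\uparrow} = \mathbf{r}(A)$, so that the product $A^{-1}\cdot A$ is always defined and equals $(A^{-1}A)^{\uparrow} = \mathbf{d}(A)$; dually $A\cdot A^{-1} = \mathbf{r}(A)$. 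Thus the candidate local identities are precisely the idempotent filters $\mathbf{d}(A)$ and $\mathbf{r}(A)$.

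For the associativity axiom (G2), suppose $A\cdot B$ and $B\cdot C$ are both defined, i.e. $\mathbf{d}(A) = \mathbf{r}(B)$ and $\mathbf{d}(B) = \mathbf{r}(C)$. Using the identities $\mathbf{d}(A\cdot B) = \mathbf{d}(B)$ and $\mathbf{r}(B\cdot C) = \mathbf{r}(B)$ supplied by Lemma~2.4, one checks that $(A\cdot B)\cdot C$ and $A\cdot(B\cdot C)$ are both defined. The equality of the two then reduces to the single identity $((AB)^{\uparrow}C)^{\uparrow} = (ABC)^{\uparrow} = (A(BC)^{\uparrow})^{\uparrow}$. This is the step the paper calls easy: since multiplication in $S$ is monotone for the natural partial order, $x \geq ab$ implies $xc \geq abc$, so upward closure may be pushed through the product freely. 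This disposes of (G2).

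Finally, for (G3) and (G4) I would show that $\mathbf{d}(A)$ and $\mathbf{r}(A)$ act as two-sided local units. Concretely, if $A\cdot B$ is defined then $\mathbf{d}(A) = \mathbf{r}(B) = (BB^{-1})^{\uparrow}$, so $\mathbf{d}(A)\cdot B = (BB^{-1}B)^{\uparrow}$, and I claim this equals $B$. One inclusion uses $b = bb^{-1}b \in BB^{-1}B$; the reverse uses directedness of $B$: given $b_1,b_2,b_3 \in B$ choose $d \in B$ with $d \leq b_1,b_2,b_3$, whence $b_1b_2^{-1}b_3 \geq dd^{-1}d = d \in B$, so $b_1b_2^{-1}b_3 \in B$ by upward closure. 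Combining with (G2) gives $A^{-1}\cdot A\cdot B = \mathbf{d}(A)\cdot B = B$, which is (G3); the symmetric computation $C\cdot\mathbf{r}(A) = C$ gives (G4).

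The only genuine obstacle is bookkeeping rather than mathematics: one must be sure the relevant products are defined at each stage, which is why the $\mathbf{d}$/$\mathbf{r}$ formulas of Lemma~2.4 are doing the real work. Indeed, an even shorter route is available, since the excerpt already records that $(\mathsf{L}(S),\cdot)$ is a groupoid; Lemmas~2.2 and~2.4 then say exactly that $\mathsf{G}(S)$ is closed under the groupoid operations and contains the units $\mathbf{d}(A),\mathbf{r}(A)$ of each of its elements, so $\mathsf{G}(S)$ is a subgroupoid and hence a groupoid in its own right.
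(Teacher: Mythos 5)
Your proposal is correct and follows the route the paper intends: Lemmas~2.2 and~2.4 supply closure of $\mathsf{G}(S)$ under inversion and the partial product together with the $\mathbf{d}$/$\mathbf{r}$ identities, and the only substantive check is associativity, which you carry out correctly by pushing upward closure through the monotone multiplication. Your closing remark that $\mathsf{G}(S)$ is simply a subgroupoid of the already-established groupoid $\mathsf{L}(S)$ is exactly the "now straightforward" observation the paper is gesturing at, so nothing further is needed.
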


For each $s \in S$ define $X_{s}$ to be the set of all completely prime filters that contains $s$.
Clearly $X_{0} = \emptyset$ although other sets $X_{s}$ could also be empty.

\begin{lemma} Let $S$ be a pseudogroup.
\begin{enumerate}

\item $X_{s}$ is a bisection.

\item $X_{s}^{-1} = X_{s^{-1}}$.

\item $X_{s}X_{t} = X_{st}$.

\item $X_{s} \cap X_{t} = X_{s \wedge t}$.

\item If $s = \bigvee_{i} s_{i}$ then $\bigcup_{i} X_{s_{i}} = X_{s}$.

\end{enumerate}
\end{lemma}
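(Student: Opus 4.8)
The plan is to dispose of parts (5), (4) and (2) directly from the definitions, to isolate part (3) as the substantial work, and then to read off part (1) as a formal consequence of (2) and (3). Part (5) is little more than a restatement of complete primeness: since each $s_{i} \leq s$, upward closure gives $\bigcup_{i} X_{s_{i}} \subseteq X_{s}$, while if $F \in X_{s}$ and $s = \bigvee_{i} s_{i} \in F$, then by complete primeness $s_{i} \in F$ for some $i$, so $F \in \bigcup_{i} X_{s_{i}}$. For (4) I would use that a pseudogroup is an inverse $\wedge$-semigroup, so $s \wedge t$ exists: if $s \wedge t \in F$ then $s,t \in F$ by upward closure, and conversely if $s,t \in F$ then directedness produces $c \in F$ with $c \leq s,t$, whence $c \leq s \wedge t$ and $s \wedge t \in F$; properness covers the degenerate case, since a proper filter cannot contain both $s$ and $t$ when $s \wedge t = 0$, matching $X_{s \wedge t} = X_{0} = \emptyset$. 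Part (2) is immediate once one notes that inversion is an order-automorphism of $S$ carrying filters to filters, that $s \in A$ iff $s^{-1} \in A^{-1}$, and that complete primeness is transported by Lemma~2.2(1).

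The heart of the lemma is (3). The inclusion $X_{s}X_{t} \subseteq X_{st}$ is easy: if $A \in X_{s}$, $B \in X_{t}$ and $\mathbf{d}(A) = \mathbf{r}(B)$, then $st \in AB \subseteq (AB)^{\uparrow} = A \cdot B$, and $A \cdot B$ is again completely prime by Lemma~2.4. The reverse inclusion, which I expect to be the main obstacle, requires factoring an arbitrary completely prime $C$ with $st \in C$. My candidate factors are $A := (Ct^{-1})^{\uparrow}$ and $B := (s^{-1}C)^{\uparrow}$; these are filters, $s \in A$ because $stt^{-1} \leq s$, and $t \in B$ because $s^{-1}st \leq t$. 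Three things must then be checked: that $A$ and $B$ are completely prime, that $\mathbf{d}(A) = \mathbf{r}(B)$ so that $A \cdot B$ is defined, and that $A \cdot B = C$.

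For complete primeness I would argue, say for $B$, that if $\bigvee_{i} x_{i} \in B$ then $\bigvee_{i} sx_{i} \geq ss^{-1}c$ for some $c \in C$ by infinite distributivity; since $st \in C$ forces $ss^{-1} \in \mathbf{r}(C)$ we get $ss^{-1}c \in C$, hence $\bigvee_{i} sx_{i} \in C$, and complete primeness of $C$ returns a single $sx_{i} \in C$, whence $x_{i} \in B$. The fiddliest point is $\mathbf{d}(A) = \mathbf{r}(B)$: working with the cofinal set of $c \in C$ satisfying $c \leq st$, one computes that the generators are $\mathbf{d}(ct^{-1}) = t\,\mathbf{d}(c)\,t^{-1}$ and $s^{-1}\mathbf{r}(c)s = (s^{-1}s)\,t\,\mathbf{d}(c)\,t^{-1}\,(s^{-1}s)$, and these coincide because $S$ is a monoid (so $s^{-1}s \leq 1$ and $t\,\mathbf{d}(c)\,t^{-1} \leq s^{-1}s$) and idempotents commute. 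Finally $A \cdot B = C$ rests on the identity $c(st)^{-1}c = c$ valid for $c \leq st$: any $c \in C$ lies above some $c_{0} \leq st$ in $C$ with $c_{0} = (c_{0}t^{-1})(s^{-1}c_{0}) \in AB$, giving $C \subseteq A \cdot B$, while any product of generators of $A$ and $B$ is bounded below by an element of $C$, giving $A \cdot B \subseteq C$.

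With (2) and (3) established, part (1) is purely formal. By Lemma~2.3 the identities of $\mathsf{G}(S)$ are exactly the completely prime filters containing an idempotent, so $X_{e} \subseteq \mathsf{G}(S)_{o}$ whenever $e$ is idempotent. Then $X_{s}^{-1}X_{s} = X_{s^{-1}}X_{s} = X_{s^{-1}s}$ and $X_{s}X_{s}^{-1} = X_{ss^{-1}}$ are both contained in $\mathsf{G}(S)_{o}$, which is precisely the condition $X_{s}^{-1}X_{s},\, X_{s}X_{s}^{-1} \subseteq \mathsf{G}(S)_{o}$ for $X_{s}$ to be a bisection.
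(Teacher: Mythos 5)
Your proposal is correct. Parts (2), (4) and (5) are handled exactly as in the paper (the paper simply records them as immediate), and your treatment of part (3) is in substance the paper's argument: the paper factors $F \in X_{st}$ as $A \cdot B$ with $A = (s(tHt^{-1})^{\uparrow})^{\uparrow}$ and $B = (tH)^{\uparrow}$ where $H = \mathbf{d}(F)$, which generate the same filters as your $(Ct^{-1})^{\uparrow}$ and $(s^{-1}C)^{\uparrow}$; you merely carry out explicitly the verifications (complete primeness, $\mathbf{d}(A) = \mathbf{r}(B)$, and $A \cdot B = C$ via the identity $c(st)^{-1}c = c$ for $c \leq st$) that the paper delegates to its Lemma~2.2(2) and to routine checking. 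The genuine divergence is in part (1): the paper proves directly that two completely prime filters in $X_s$ with the same domain (or codomain) coincide, by citing Lemma~2.11 of the reference [Law3], whereas you deduce (1) formally from (2), (3) and Lemma~2.3 via $X_s^{-1}X_s = X_{s^{-1}s} \subseteq \mathsf{G}(S)_o$. Your route is valid and has the merit of being self-contained (it avoids the external citation), at the cost of making (1) depend on (3), which forces the heavier part of the lemma to be proved first; note also that your appeal to the monoid identity in verifying $\mathbf{d}(A) = \mathbf{r}(B)$ is unnecessary, since $t\,\mathbf{d}(c)\,t^{-1} \leq s^{-1}s$ already follows from $\mathbf{d}(c) \leq \mathbf{d}(st)$ and commutativity of idempotents.
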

\begin{proof} (1) Let $F,G \in X_{s}$ such that $\mathbf{d}(F) = \mathbf{d}(G)$.
By Lemma~2.11 of \cite{Law3}, this implies that $F = G$.
The dual result can be proved similarly.

(2) Immediate from the properties of the natural partial order.

(3) It is clear that $X_{s}X_{t} \subseteq X_{st}$.
Let $F \in X_{st}$.
Put $H = F^{-1} \cdot F$.
Then $F = (stH)^{\uparrow}$.
Put $A = (s(tHt^{-1})^{\uparrow} )^{\uparrow}$ and $B = (tH)^{\uparrow}$.
Then by Lemma~2.2(2), we have that $A \in X_{s}$ and $B \in X_{t}$, and $A \cdot B = F$.

(4) This is straightforward since filters are closed under binary meets.

(5) This is immediate from the definition of completely prime filters.
\end{proof}

Put $\tau = \{X_{s} \colon s \in S \}$.
By the lemma above, $\tau$ is a basis for a topology on $\mathsf{G}(S)$
and in what follows we shall always regard $\mathsf{G}(S)$ equipped with this topology.

\begin{lemma}
$\mathsf{G}(S)$ is a topological groupoid.
\end{lemma}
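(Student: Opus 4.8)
The plan is to verify the two standard continuity requirements for a topological groupoid: that the inversion map $A \mapsto A^{-1}$ is continuous and that the partial multiplication $(A,B) \mapsto A \cdot B$, regarded as a map on the space of composable pairs with the subspace topology inherited from $\mathsf{G}(S) \times \mathsf{G}(S)$, is continuous. Since $\tau = \{X_s \colon s \in S\}$ is a basis for the topology, in each case it suffices to check that the preimage of a basic open set $X_s$ is open. That $\mathsf{G}(S)$ is already a groupoid under $\cdot$ is given by Lemma~2.5, so only the topological claims remain.

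For inversion, I would argue directly. Writing $\iota$ for the inversion map, a filter $A$ satisfies $\iota(A) = A^{-1} \in X_s$ precisely when $s \in A^{-1}$, that is, when $s^{-1} \in A$; hence $\iota^{-1}(X_s) = X_{s^{-1}}$, which is basic open (equivalently, this is Lemma~2.6(2)). Since $\iota$ is an involution, it is then automatically a homeomorphism, so there is nothing further to check here.

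The substantive step is continuity of the multiplication $m$. Fix a basic open $X_s$ and a composable pair $(A,B)$ with $m(A,B) = (AB)^{\uparrow} \in X_s$; I must exhibit a basic neighbourhood of $(A,B)$ all of whose composable pairs are sent into $X_s$. By definition $s \in (AB)^{\uparrow}$ means there exist $a \in A$ and $b \in B$ with $ab \leq s$. I claim that $X_a \times X_b$ intersected with the composable pairs is the required neighbourhood: it contains $(A,B)$ since $a \in A$ and $b \in B$, and for any composable $(A',B')$ lying in it we have $a \in A'$ and $b \in B'$, so $ab \in A'B' \subseteq (A'B')^{\uparrow} = m(A',B')$, whence $s \in m(A',B')$ by upward closure and therefore $m(A',B') \in X_s$. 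Concisely, $m$ carries this neighbourhood into $X_{ab} \subseteq X_s$, the final inclusion holding because $ab \leq s$ and the sets $X_s$ are determined by upward closure.

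The argument is essentially routine once the description of the partial product as the upward closure $(AB)^{\uparrow}$ is in hand. The only point demanding care is that continuity of $m$ is tested against the subspace topology on the composable pairs, so the witnessing neighbourhood must be taken as $X_a \times X_b$ \emph{restricted to that subspace} rather than in the full product $\mathsf{G}(S) \times \mathsf{G}(S)$. Notably, complete primeness plays no direct role in this lemma: it enters only through Lemma~2.5 and Lemma~2.6, which guarantee that $\mathsf{G}(S)$ is a groupoid and that the $X_s$ form a basis. Thus the main obstacle is bookkeeping rather than any genuine difficulty.
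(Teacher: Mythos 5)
Your proposal is correct and follows essentially the same route as the paper: the paper likewise disposes of inversion via $\iota^{-1}(X_s)=X_{s^{-1}}$ and proves continuity of $m$ by observing that $m^{-1}(X_s) = \bigl(\bigcup_{0 \neq ab \leq s} X_a \times X_b\bigr) \cap (\mathsf{G}(S) \ast \mathsf{G}(S))$, which is exactly the neighbourhood argument you spell out (the paper delegates the verification to step~3 of Proposition~2.22 of \cite{Law3}). The only cosmetic difference is that the paper records the condition $ab \neq 0$, which is automatic in your setting since $A \cdot B$ is a proper filter.
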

\begin{proof}
By the above lemma the inversion map is continuous.
Denote the set of composable elements in the groupoid $G$ by $G \ast G$ and denote the
multiplication map by $m \colon G \ast G \rightarrow G$.
We observe that
$$m^{-1}(X_{s}) = \left( \bigcup_{0 \neq ab \leq s} X_{a} \times X_{b} \right) \cap (\mathsf{G}(S) \ast \mathsf{G}(S))$$
for all $s \in S$.
The proof is straightforward and the same as step~3 of the proof of Proposition~2.22 of \cite{Law3}
and shows that $m$ is a continuous function.
\end{proof}

We can now state our second main result.

\begin{proposition} Let $S$ be a pseudogroup.
Then $\mathsf{G}(S)$, the set of all proper completely prime filters, is an \'etale groupoid.
\end{proposition}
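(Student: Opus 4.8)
The proposition asserts that $\mathsf{G}(S)$ is an étale groupoid. We already have the groupoid structure (Lemma~2.5) and the topological groupoid structure (Lemma~2.7). So what remains is the étale condition. The paper has flagged two equivalent ways to say "étale": classically, that $\mathbf{d}$ is a local homeomorphism, or via Resende's characterization (Theorem~5.18 of \cite{R2}) that the frame of open sets forms a semigroup under subset multiplication with a unit given by the open set of identities. The cleanest route is to verify the domain map $\mathbf{d}$ is a local homeomorphism, since the basic open sets $X_s$ are bisections and we have explicit formulas for them.

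**Thinking about the structure of the proof.** The plan is to show that $\mathbf{d}$ restricts to a homeomorphism from each basic open set $X_s$ onto an open subset of the identity space $\mathsf{G}(S)_o$. The identity space consists of the completely prime filters that are inverse subsemigroups (the idempotent filters of Lemma~2.4), and its topology has basic open sets $X_e$ for $e$ idempotent. By part~(1) of Lemma~2.6, each $X_s$ is a bisection, which is precisely the statement that $\mathbf{d}$ and $\mathbf{r}$ are injective on $X_s$; combined with Lemma~2.2(2) — that $A$ is completely prime iff $\mathbf{d}(A)$ is — this tells us $\mathbf{d}$ maps $X_s$ bijectively onto $X_{s^{-1}s}$. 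Continuity of $\mathbf{d}$ follows from the topological groupoid structure, and I would check that $\mathbf{d}$ is open by verifying $\mathbf{d}(X_s) = X_{s^{-1}s}$ is itself basic open, so that $\mathbf{d}|_{X_s}$ is a continuous open bijection, hence a homeomorphism.

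**The key steps, in order.** First I would identify the identity space and its subspace topology, noting that the $X_e$ with $e$ idempotent form a basis. Second, fix $s$ and analyze $\mathbf{d}|_{X_s}$: injectivity is the bisection property from Lemma~2.6(1); surjectivity onto $X_{s^{-1}s}$ uses that every completely prime filter $E$ containing $s^{-1}s$ lifts to $A = (sE)^{\uparrow} \in X_s$ with $\mathbf{d}(A) = E$, which is exactly the kind of construction carried out in the proof of Lemma~2.6(3). Third, I would confirm $\mathbf{d}$ is continuous and that the image $\mathbf{d}(X_s) = X_{s^{-1}s}$ is open, giving the local homeomorphism. Since the $X_s$ cover $\mathsf{G}(S)$ (every nonzero element, hence every filter, sits in some $X_s$), this establishes that $\mathbf{d}$ is a local homeomorphism everywhere.

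**Where the difficulty lies.** The routine parts are continuity and the bisection property, which are already packaged in earlier lemmas. The main obstacle is the surjectivity-and-openness step: showing precisely that $\mathbf{d}$ carries the basic open set $X_s$ \emph{onto} the basic open set $X_{s^{-1}s}$, rather than merely into it. This requires the lifting construction — given an idempotent completely prime filter $E \supseteq s^{-1}s$, produce a completely prime filter $A$ over it with $A \in X_s$ — and a verification that this lift is the unique preimage, so that the correspondence is a genuine bijection. This is where infinite distributivity and Lemma~2.2 do the real work; once it is in place, the homeomorphism claim and hence the étale property follow formally.
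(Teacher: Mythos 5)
Your argument is correct, but it follows a genuinely different route from the one in the paper. You verify the classical definition of \'etale directly: $\mathbf{d}$ restricted to each basic open bisection $X_{s}$ is a continuous open bijection onto the basic open set $X_{s^{-1}s}$, with the surjectivity supplied by the lift $E \mapsto (sE)^{\uparrow}$ (note that every filter in $X_{s^{-1}s}$ is automatically an identity by Lemma~2.3, so the target really is a subset of $\mathsf{G}(S)_{o}$). You rightly flag that this lifting step, together with the verification that $\mathbf{d}((sE)^{\uparrow}) = E$ and that the lift is the unique preimage, is where the work lies; it is essentially the computation hidden inside the proof of Lemma~2.6(3). The paper instead invokes Resende's characterization of \'etale groupoids from the Preliminaries and checks two things: that $\mathsf{G}(S)_{o}$ is open (every identity filter contains an idempotent $e$, and $X_{e} \subseteq \mathsf{G}(S)_{o}$), and that the product of two open sets is open (immediate from $X_{s}X_{t} = X_{st}$). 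The paper's route is shorter because Lemma~2.6(3) is used as a black box rather than reopened; yours has the advantage of exhibiting the local homeomorphism explicitly, which is closer to the classical definition and to step~4 of Proposition~2.22 of \cite{Law3}, the alternative the authors mention but decline to follow. Either way the conclusion is sound.
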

\begin{proof} It remains to show that $\mathsf{G}(S)$ is \'etale.
There are a number of ways to prove this.
We could follow step~4 of the proof of Proposition~2.22 of \cite{Law3}.
We give a different proof here.

We show first that  $\mathsf{G}(S)_{o}$ is an open subspace of $\mathsf{G}(S)$.
Let $F$ be an identity in $\mathsf{G}(S)$.
Then by Lemma~2.3, $F$ is an inverse subsemigroup and so contains idempotents.
Let $e \in F$.
Then $F \in X_{e}$.
But every completely prime filter in $X_{e}$ contains an idempotent and so is an identity in the groupoid.
Thus $F \in X_{e} \subseteq \mathsf{G}(S)_{o}$ and is an open set.
Thus $\mathsf{G}(S)_{o}$ is an open set.

Next we show that the product of two open sets is an open set.
Let $X$ and $Y$ be any open sets.
By the definition of the topology, we may write
$X = \bigcup_{i} X_{s_{i}}$ and $Y = \bigcup_{j} X_{t_{j}}$.
Then we have
$$XY = \bigcup_{i,j} X_{s_{i}t_{j}}$$
by Lemma~2.6(3).
Thus the product of open sets is always open.
\end{proof}


We shall now describe the relationships between
$$S \text{ and } \mathsf{B}(\mathsf{G}(S)),
\text{ and }
G \text{ and } \mathsf{G}(\mathsf{B}(G)).$$

Let $S$ be a pseudogroup.
The set $X_{s}$, a set of completely prime filters, is a bisection by Lemma~2.6(1) and it is by definition open.
Thus $X_{s} \in \mathsf{B}(\mathsf{G}(S))$.
Define $\varepsilon \colon S \rightarrow \mathsf{B}(\mathsf{G}(S))$ by $s \mapsto X_{s}$.

\begin{proposition} The function $\varepsilon \colon S \rightarrow \mathsf{B}(\mathsf{G}(S))$ has the following properties:
\begin{enumerate}

\item It is a pseudogroup $\wedge$-morphism.

\item Every element of $\mathsf{B}(\mathsf{G}(S))$ is a compatible join of elements in the image of $\varepsilon$.

\item The map $\varepsilon$ is an isomorphism of monoids if and only if the pseudogroup $S$ has the additional property that for all
$s,t \in S$ we have that $X_{s} = X_{t}$ implies that $s = t$.

\end{enumerate}
\end{proposition}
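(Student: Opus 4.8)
The plan is to read all three statements off the catalogue of properties of the sets $X_{s}$ recorded in Lemma~2.6, using throughout the fact that in the pseudogroup $\mathsf{B}(\mathsf{G}(S))$ the natural partial order is inclusion, that binary meets are intersections (the intersection of two open bisections is again an open bisection and is the largest one contained in both), and that compatible joins are unions. For part~(1) I would first note that $\varepsilon$ preserves products, $\varepsilon(s)\varepsilon(t)=X_{s}X_{t}=X_{st}=\varepsilon(st)$, by Lemma~2.6(3), and that it is a $\wedge$-map, $\varepsilon(s)\wedge\varepsilon(t)=X_{s}\cap X_{t}=X_{s\wedge t}=\varepsilon(s\wedge t)$, by Lemma~2.6(4). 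To handle compatible joins, suppose $\{s_{i}\}$ is compatible with join $s$. The images are pairwise compatible: by Lemma~2.6(2),(3) we have $X_{s_{i}}^{-1}X_{s_{j}}=X_{s_{i}^{-1}s_{j}}$, and since $s_{i}^{-1}s_{j}$ is an idempotent every completely prime filter containing it is an idempotent filter by Lemma~2.3, so $X_{s_{i}^{-1}s_{j}}\subseteq\mathsf{G}(S)_{o}$ is an idempotent of $\mathsf{B}(\mathsf{G}(S))$ (and symmetrically for $X_{s_{i}s_{j}^{-1}}$). Hence $\{X_{s_{i}}\}$ has as join its union, which Lemma~2.6(5) identifies with $X_{s}=\varepsilon(s)$; thus $\varepsilon$ preserves compatible joins.

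For part~(2) I would invoke the fact that $\tau=\{X_{s}\}$ is a basis for the topology on $\mathsf{G}(S)$, so an arbitrary $U\in\mathsf{B}(\mathsf{G}(S))$, being open, may be written $U=\bigcup_{i}X_{s_{i}}$. Since each $X_{s_{i}}\subseteq U$ and $U$ is a bisection, the elements $X_{s_{i}}$ all lie beneath the common element $U$ of $\mathsf{B}(\mathsf{G}(S))$ and are therefore pairwise compatible (elements below a common element of an inverse semigroup are always compatible). Consequently their union is their compatible join, so $U=\bigvee_{i}X_{s_{i}}=\bigvee_{i}\varepsilon(s_{i})$, which is exactly the asserted representation.

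For part~(3), one direction is immediate: an isomorphism of monoids is injective, and injectivity of $\varepsilon$ is precisely the condition $X_{s}=X_{t}\Rightarrow s=t$. For the converse I would assume that condition, which says $\varepsilon$ is injective, note that $\varepsilon$ is a monoid homomorphism because $\varepsilon(1)=X_{1}=\mathsf{G}(S)_{o}$ is the identity of $\mathsf{B}(\mathsf{G}(S))$ (a completely prime filter contains $1$ exactly when it contains an idempotent, i.e.\ when it is an identity), and then prove surjectivity. Given $U\in\mathsf{B}(\mathsf{G}(S))$, write $U=\bigcup_{i}X_{s_{i}}$ as in part~(2); the family $\{X_{s_{i}}\}$ is compatible, and I would use injectivity to reflect this back into $S$. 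Indeed each $X_{s_{i}^{-1}s_{j}}$ is an idempotent of $\mathsf{B}(\mathsf{G}(S))$, so by Lemma~2.6(3) one has $X_{(s_{i}^{-1}s_{j})^{2}}=X_{s_{i}^{-1}s_{j}}$, whence injectivity forces $(s_{i}^{-1}s_{j})^{2}=s_{i}^{-1}s_{j}$; with the symmetric statement this shows $\{s_{i}\}$ is pairwise compatible in $S$. Since $S$ is complete, $s=\bigvee_{i}s_{i}$ exists, and Lemma~2.6(5) gives $\varepsilon(s)=X_{s}=\bigcup_{i}X_{s_{i}}=U$, so $\varepsilon$ is a bijective monoid homomorphism. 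The hard part will be exactly this reflection step: compatibility of the $X_{s_{i}}$ in $\mathsf{B}(\mathsf{G}(S))$ does not by itself force compatibility of the $s_{i}$ in $S$, and it is precisely the separation hypothesis (injectivity of $\varepsilon$) that bridges this gap and thereby yields surjectivity.
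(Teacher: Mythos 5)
Your proposal is correct and follows essentially the same route as the paper: parts (1) and (2) are read off Lemma~2.6 exactly as in the paper's proof, and part (3) reduces surjectivity to reflecting compatibility of the $X_{s_i}$ back into $S$ via injectivity of $\varepsilon$, just as the paper does. The only (harmless) variation is in the reflection step, where you deduce idempotency of $s_i^{-1}s_j$ from $X_{(s_i^{-1}s_j)^2}=X_{s_i^{-1}s_j}$, whereas the paper uses $X_{s}=X_{s\wedge s^{-1}s}$; both are one-line applications of injectivity.
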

\begin{proof} (1) The map is a semigroup homomorphism by Lemma~2.6(3),
it preserves binary meets by Lemma~2.6(4) and it preserves compatible joins by Lemma~2.6(5).
It is a monoid map essentially by Lemma~2.3.

(2) Each element of $\mathsf{B}(\mathsf{G}(S))$ is an open bisection and every open set, by definition,
is a union of open sets of the form $X_{s}$.

(3) Only one direction needs proving.
We assume that $X_{s} = X_{t}$ implies that $s = t$ for all $s,t \in S$.
It remains to prove that $\varepsilon$ is surjective.
This will follow if we can prove that $X_{s}$ and $X_{t}$ compatible implies that $s$ and $t$ are compatible.
In turn, this will follow is we can prove that if $X_{s}$ contains only idempotent filters then $s$ must be an idempotent.
Observe that the assumption implies that $X_{s} = X_{s \wedge s^{-1}s}$.
But then since $\varepsilon$ is injective we deduce that $s = s \wedge s^{-1}s$ and so $s$ is an idempotent.
\end{proof}

A pseudogroup $S$ is said to be {\em spatial} if $X_{s} = X_{t}$ implies that $s = t$ for all $s,t \in S$. 
By  part (3) of  the preceeding proposition, a pseudogroup is spatial if and only if $\varepsilon$ is an isomorphism of monoids.

The proof of the following is straightforward.

\begin{lemma} Let $G$ be an \'etale groupoid.
For each $g \in G$ define $F_{g}$ to be the set of all open bisections that contain $g$.
Then $F_{g}$ is a completely prime filter in the pseuodgroup $\mathsf{B}(S)$.
\end{lemma}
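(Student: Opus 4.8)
The plan is to verify the three defining properties of a completely prime filter in turn: that $F_{g}$ is a filter (upward closed and directed), that it is proper, and that it is completely prime. Throughout I would rely on the two structural facts about $\mathsf{B}(G)$ recorded earlier, namely that the natural partial order is subset inclusion and that the zero is the empty bisection $\emptyset$.

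First I would establish that $F_{g}$ is a filter. Upward closure is immediate: if $A \in F_{g}$ and $A \leq B$ in $\mathsf{B}(G)$, then $A \subseteq B$, so $g \in A \subseteq B$ and hence $B \in F_{g}$. For directedness, given $A, B \in F_{g}$ I would take the intersection $A \cap B$. Since any subset of a bisection is again a bisection and an intersection of open sets is open, $A \cap B$ is an open bisection; it contains $g$ and satisfies $A \cap B \subseteq A, B$, so it is a lower bound of $\{A, B\}$ lying in $F_{g}$. (In fact $A \cap B$ is the meet $A \wedge B$ in the inverse $\wedge$-semigroup $\mathsf{B}(G)$.) Properness is equally direct: the zero of $\mathsf{B}(G)$ is $\emptyset$, and since $g \notin \emptyset$ we have $\emptyset \notin F_{g}$.

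The substantive step is complete primeness, and the key observation on which it rests is that a compatible join in $\mathsf{B}(G)$ is computed as a set-theoretic union. I would prove this first: if $\{A_{i}\}$ is a compatible family of open bisections, then pairwise compatibility says that each $A_{i}^{-1}A_{j}$ and $A_{i}A_{j}^{-1}$ lies in $G_{o}$, which is exactly the condition guaranteeing that $\bigcup_{i} A_{i}$ is again a bisection; it is open as a union of open sets, and it is plainly the least upper bound in the inclusion order, hence equals $\bigvee_{i} A_{i}$. This identification of joins with unions is the step I expect to be the main obstacle, since everything else is formal once it is in place.

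Granting this, complete primeness is immediate. Suppose $\bigvee_{i} A_{i} \in F_{g}$. By the preceding paragraph $\bigvee_{i} A_{i} = \bigcup_{i} A_{i}$, so $g \in \bigcup_{i} A_{i}$ and therefore $g \in A_{i}$ for some index $i$, giving $A_{i} \in F_{g}$. This verifies that $F_{g}$ is a completely prime filter in $\mathsf{B}(G)$, as required.
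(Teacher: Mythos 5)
Your proof is correct. The paper states this lemma without proof (dismissing it as straightforward), and your argument is precisely the intended one: the only point of substance is the identification of compatible joins in $\mathsf{B}(G)$ with set-theoretic unions, which you isolate and verify correctly before reading off complete primeness.
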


Let $G$ be a \'etale groupoid.
Define $\eta \colon G \rightarrow \mathsf{G}(\mathsf{B}(G))$ by $g \mapsto F_{g}$.
By the above lemma this is a well-defined map.
A functor $\alpha \colon G \rightarrow H$ is called a {\em cover} if it satisfies two conditions:
$\mathbf{d}(g) = \mathbf{d}(g')$ and $\alpha (g) = \alpha (g')$ implies that $g = g'$,
and if $\alpha (e) = \mathbf{d}(h)$ where $e$ is an identity then there exists $g \in G$
such that $\mathbf{d}(g) = e$ and $\alpha (g) = h$.

\begin{proposition}
The function  $\eta \colon G \rightarrow \mathsf{G}(\mathsf{B}(G))$ is a continuous covering functor.
\end{proposition}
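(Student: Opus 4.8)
The statement asserts three things about $\eta \colon g \mapsto F_{g}$: that it is a functor of groupoids, that it is continuous, and that it satisfies the two covering conditions. The plan is to treat these in turn, and the single tool that drives almost everything is the \emph{bisection property} established in Lemma~2.6(1): two completely prime filters lying in a common $X_{A}$ and having the same $\mathbf{d}$ must coincide. I will use repeatedly that for an open bisection $A$ the restrictions $\mathbf{d}|_{A}$ and $\mathbf{r}|_{A}$ are injective and that $A^{-1}A = \mathbf{d}(A)$, the open set of domains of elements of $A$.

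For the functor part, inversion is immediate: an open bisection $B$ contains $g^{-1}$ iff $B^{-1}$ contains $g$, which gives $F_{g^{-1}} = (F_{g})^{-1}$ directly. The key preliminary computation is that $\mathbf{d}(F_{g}) = F_{\mathbf{d}(g)}$ (and dually for $\mathbf{r}$). The inclusion $(F_{g}^{-1}F_{g})^{\uparrow} \subseteq F_{\mathbf{d}(g)}$ is clear since $A,B \ni g$ forces $\mathbf{d}(g) \in A^{-1}B$; for the reverse, given an open bisection $C \ni \mathbf{d}(g)$ and any $A \in F_{g}$, I form $E = A^{-1}A \cap C$ and check that $AE \in F_{g}$ with $\mathbf{d}(AE) = E \subseteq C$, so $C \in (F_{g}^{-1}F_{g})^{\uparrow}$. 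With this in hand, $gh$ defined gives $\mathbf{d}(F_{g}) = F_{\mathbf{d}(g)} = F_{\mathbf{r}(h)} = \mathbf{r}(F_{h})$, so $F_{g} \cdot F_{h}$ is defined; the inclusion $F_{g}F_{h} \subseteq F_{gh}$ is trivial, and since both $(F_{g}F_{h})^{\uparrow}$ and $F_{gh}$ contain a common element and share the same domain $\mathbf{d}(F_{h})$ (using Lemma~2.4 for the former), the bisection property forces equality. Continuity is the easiest point: for an open bisection $A$ one has $\eta^{-1}(X_{A}) = \{g : A \in F_{g}\} = A$, which is open.

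For the covering conditions, (C1) is short: if $F_{g} = F_{g'}$ and $\mathbf{d}(g) = \mathbf{d}(g')$, pick any open bisection $A \ni g$; then $g' \in A$ as well, and injectivity of $\mathbf{d}|_{A}$ gives $g = g'$. The substance of the proof is condition (C2), the lifting of an abstract completely prime filter to an actual point. Suppose $e \in G_{o}$ and $\mathcal{H} \in \mathsf{G}(\mathsf{B}(G))$ satisfy $F_{e} = \mathbf{d}(\mathcal{H})$. For each $A \in \mathcal{H}$ we have $A^{-1}A \in \mathcal{H}^{-1}\mathcal{H} \subseteq \mathbf{d}(\mathcal{H}) = F_{e}$, so $e \in A^{-1}A = \mathbf{d}(A)$; since $\mathbf{d}|_{A}$ is injective there is a \emph{unique} element $a_{A} \in A$ with $\mathbf{d}(a_{A}) = e$. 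Directedness of $\mathcal{H}$ shows these elements cohere: for $A,B \in \mathcal{H}$ choose $C \in \mathcal{H}$ with $C \subseteq A \cap B$, whence $a_{C} = a_{A} = a_{B}$ by uniqueness. This produces a single point $g := a_{A}$ with $\mathbf{d}(g) = e$ and $g \in A$ for every $A \in \mathcal{H}$, that is, $\mathcal{H} \subseteq F_{g}$. Finally $\mathbf{d}(F_{g}) = F_{\mathbf{d}(g)} = F_{e} = \mathbf{d}(\mathcal{H})$, and $\mathcal{H} \subseteq F_{g}$ means they share a common element, so the bisection property yields $\mathcal{H} = F_{g} = \eta(g)$.

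I expect (C2) to be the main obstacle, precisely because it requires manufacturing a concrete groupoid element out of the purely order-theoretic datum $\mathcal{H}$; the constraint $F_{e} = \mathbf{d}(\mathcal{H})$ is exactly what guarantees, via $e \in \mathbf{d}(A)$, that each $A \in \mathcal{H}$ selects a point over $e$, and it is the bisection structure of the \'etale groupoid (uniqueness of that point) together with directedness of $\mathcal{H}$ that lets these local choices be glued into one global $g$. Everything else reduces to routine manipulation once the bisection property and the identity $\mathbf{d}(F_{g}) = F_{\mathbf{d}(g)}$ are available.
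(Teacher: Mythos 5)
Your proof is correct, and its skeleton (functoriality, then the two covering conditions, then continuity via $\eta^{-1}(X_{A})=A$) matches the paper's; the (C1) argument and the continuity computation are identical, and your (C2) argument is a mild elaboration of the paper's --- the paper picks a single $b\in F$ and produces $g\in b$ with $g^{-1}g=e$, then invokes the same uniqueness lemma for completely prime filters sharing an element and a domain, whereas you first check that the points $a_{A}$ selected by the various $A\in\mathcal{H}$ cohere via directedness before concluding the same way. The genuine divergence is in how functoriality is established. The paper deduces $F_{hk}\subseteq(F_{h}F_{k})^{\uparrow}$ topologically: continuity of the multiplication map together with the basis of open bisections yields, for any open bisection $O_{g}\ni g=hk$, sub-bisections $O_{h}\ni h$ and $O_{k}\ni k$ with $O_{h}O_{k}\subseteq O_{g}$. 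You instead avoid the continuity of multiplication entirely: you prove the identity $\mathbf{d}(F_{g})=F_{\mathbf{d}(g)}$ by the explicit construction $AE$ with $E=A^{-1}A\cap C$, and then upgrade the trivial inclusion $(F_{g}F_{h})^{\uparrow}\subseteq F_{gh}$ to equality by noting that both filters lie in a common $X_{D}$ and have the same domain, so Lemma~2.6(1) forces them to coincide. This buys a purely order-theoretic argument that fills in the paper's ``it now readily follows'' in detail, at the cost of needing the auxiliary identity $\mathbf{d}(F_{g})=F_{\mathbf{d}(g)}$ up front (which the paper only uses implicitly, in the form $\mathbf{d}(F)=F_{e}$, during the covering step); the paper's route is shorter but leans on the topological input that you deliberately sidestep.
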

\begin{proof}
Recall that in an \'etale groupoid the open bisections form a basis for the topology \cite{R1}.
We may deduce from this, and the fact that the multiplication function is continuous,
that if $O_{g}$ is an open bisection containing $g \in G$ and $g = hk$
then there are open bisections $h \in O_{h}$ and $k \in O_{k}$ such that $O_{h}O_{k} \subseteq O_{g}$.
It now readily follows that $\eta$ is a functor.

We now prove that $\eta$ is a covering functor.
Suppose that $\mathbf{d}(g) = \mathbf{d}(h)$ and $\eta (g) = \eta (h)$.
Then there is an open bisection $O$ that contains both $g$ and $h$.
But it then follows immediately from the definition of bisection that $g = h$.
Now suppose that $\eta (e) = \mathbf{d}(F)$ where $e$ is an identity and $F$
is a completely prime filter in $\mathsf{B}(G)$.
By definition $\mathbf{d}(F) = F_{e}$.
Let $b \in F$ be any open bisection.
By assumption $e \in b^{-1}b$.
Thus we may find $g \in b$ such that $e = g^{-1}g$.
Consider $F_{g}$.
Then $\mathbf{d}(F_{g}) = \mathbf{d}(F)$ and $b \in F_{g} \cap F$.
By Lemma~2.11 of \cite{Law3}, we have that $F_{g} = F$, as required.

It remains to show that $\eta$ is continuous.
Let $b \in \mathsf{B}(G)$ be an open bisection.
Then
$$g \in \eta^{-1} (X_{b}) \Leftrightarrow F_{g} \in X_{b} \Leftrightarrow b \in F_{g} \Leftrightarrow g \in b.$$
Thus $\eta^{-1}(X_{b}) = b$.
\end{proof}

An \'etale groupoid is said to be {\em sober} if the map $\eta$ is a homeomorphism.

\begin{proposition} \mbox{}
\begin{enumerate}

\item For every \'etale groupoid $G$ the pseudogroup $\mathsf{B}(G)$ is spatial.

\item For every pseudogroup $S$ the \'etale groupoid $\mathsf{G}(S)$ is sober.

\end{enumerate}
\end{proposition}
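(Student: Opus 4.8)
The plan is to settle part~(1) at once and then devote the real work to part~(2). For part~(1), recall that in the proof of Proposition~2.12 it was shown that $\eta^{-1}(X_{b}) = b$ for every open bisection $b$ of $G$. Consequently, if $a,b \in \mathsf{B}(G)$ satisfy $X_{a} = X_{b}$, then $a = \eta^{-1}(X_{a}) = \eta^{-1}(X_{b}) = b$. Since spatiality of $\mathsf{B}(G)$ is precisely the statement that $X_{a} = X_{b}$ forces $a = b$, this finishes part~(1).

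For part~(2), I would prove that $\eta \colon \mathsf{G}(S) \to \mathsf{G}(\mathsf{B}(\mathsf{G}(S)))$ is a homeomorphism, soberness being exactly this assertion. Continuity, and the fact that $\eta$ is a functor, are already supplied by Proposition~2.12, so it remains to show that $\eta$ is a bijection and an open map. Injectivity is easy and I would do it first: each basic open set $X_{s}$ is itself an open bisection of $\mathsf{G}(S)$, by Lemma~2.6(1) and the definition of the topology, hence lies in $\mathsf{B}(\mathsf{G}(S))$. For a point $P$ of $\mathsf{G}(S)$ (a completely prime filter on $S$) one has $X_{s} \in \eta(P) = F_{P} \Leftrightarrow P \in X_{s} \Leftrightarrow s \in P$. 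Thus $\eta(P)$ already records exactly which elements of $S$ belong to $P$, so $\eta(P) = \eta(Q)$ forces $P = Q$.

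The crux is surjectivity. Given a completely prime filter $H$ on $\mathsf{B}(\mathsf{G}(S))$, I would form its pullback $P = \varepsilon^{-1}(H) = \{ s \in S \colon X_{s} \in H \}$ along $\varepsilon$. Using Lemma~2.6, namely that $s \mapsto X_{s}$ is order preserving, sends meets to intersections (part~4) and compatible joins to unions (part~5), together with the facts that $H$ is proper, upward closed, directed and completely prime, I would verify that $P$ is a proper completely prime filter on $S$ and hence a genuine point of $\mathsf{G}(S)$; properness uses that $X_{0} = \emptyset$ is the zero of $\mathsf{B}(\mathsf{G}(S))$ and so cannot lie in the proper filter $H$. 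Then, invoking Proposition~2.9(2) to write an arbitrary open bisection as a compatible join $b = \bigvee_{i} X_{s_{i}}$, so that $b = \bigcup_{i} X_{s_{i}}$ as a set, I would chase the equivalences $b \in H \Leftrightarrow X_{s_{i}} \in H$ for some $i \Leftrightarrow s_{i} \in P$ for some $i \Leftrightarrow P \in b$. The first equivalence uses upward closure and complete primeness of $H$, the second is the definition of $P$, and the third is membership in the union. This chain says exactly that $F_{P} = H$, that is $\eta(P) = H$.

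Finally, openness follows cheaply once bijectivity is in hand: from $\eta^{-1}(X_{b}) = b$ and injectivity we get $\eta(b) = X_{b}$, which is open, and since the open bisections $b$ form a basis for $\mathsf{G}(S)$, the map $\eta$ carries basic opens to opens and is therefore open. A continuous open bijection is a homeomorphism, so $\mathsf{G}(S)$ is sober. I expect the main obstacle to be the surjectivity step: checking in detail that the pullback $P$ is genuinely a completely prime filter and that it reproduces $H$, since this is where the distributivity built into Lemma~2.6 and Proposition~2.9(2) must be used carefully.
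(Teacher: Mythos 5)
Your proof is correct and follows essentially the same route as the paper: part (1) separates distinct bisections by the point filters $F_{g}$ (equivalently, via the identity $\eta^{-1}(X_{b}) = b$ from the proof of Proposition~2.11), and part (2) establishes sobriety exactly as the paper does, by pulling a completely prime filter $H$ on $\mathsf{B}(\mathsf{G}(S))$ back along $\varepsilon$ to the completely prime filter $P = \{ s \in S \colon X_{s} \in H \}$, checking $F_{P} = H$ and injectivity, and then observing that $\eta$ carries basic opens to the opens $X_{X_{s}}$. The only quibble is that in the openness step the identity $\eta(b) = \eta(\eta^{-1}(X_{b})) = X_{b}$ really uses surjectivity rather than injectivity, but since you have bijectivity in hand at that point this is harmless.
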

\begin{proof} (1) Let $U$ and $V$ be two distinct open bisections in $\mathsf{B}(G)$.
Without loss of generality, there exists $g \in U$ and $g \notin V$.
But then $F_{g}$ is a completely prime filter in $\mathsf{B}(G)$ that contains $U$ and omits $V$.

(2) Let $S$ be a pseudogroup.
We show that every completely prime filter in  $\mathsf{B}(\mathsf{G}(S))$
is of the form $F_{f}$ where $f \in \mathsf{G}(S)$ is a uniquely determined element.
We show first that such an $f$ exists.
Define $f = \{s \in S \colon X_{s} \in F \}$.
From the fact that $F$ is completely prime and that the sets $X_{s}$ form a basis of open bisections for $\mathsf{G}(S))$
it follows that $f$ is non-empty.
Using Lemma~2.6, it is routine to verify that $f$ is a completely prime filter and by construction $F_{f} \subseteq F$.
Let $O \in F$.
Then $O$ can be written as a union of open bisections of the form $X_{s}$ for some $s$.
It follows that $O \in F_{f}$.

Now suppose that $F_{f} = F_{g}$ for completely prime filters $f$ and $g$ in $S$.
Let $s \in f$.
Then $f \in X_{s}$ and so by assumption $X_{s} \in F_{g}$ which gives $s \in g$.
It follows that $f \leq g$.
The reverse inclusion follows by symmetry.

It remains to show that $\eta$ is an open map.
Let $X_{s}$ be a basic open bisection in $\mathsf{G}(S)$.
Then $\eta (X_{s})$ consists of all $F_{f}$ where $f \in X_{s}$.
But this is precisely the set $\{F_{f} \colon X_{s} \in F_{f} \}$
which is the basic open set $X_{X_{s}}$.\end{proof}

\subsection{Adding the morphisms}

What we would like to do is prove that, for suitable definitions of morphisms,
the functor
$$\mathsf{G} \colon \mathbf{Inv}^{op} \rightarrow \mathbf{Etale}$$
is right adjoint to the functor
$$\mathsf{B} \colon \mathbf{Etale} \rightarrow \mathbf{Inv}^{ op}.$$
Since the idempotent pseudogroups are the frames and the \'etale groupoids in which every element is an identity are just the
topological spaces, this would generalize the classical adjunction between categories of these structures;
see Theorem~1, page~476 of \cite{MM} and Theorem~1.4, page~42 of \cite{J}.
The problem is in defining appropriate morphisms.
Frames have top elements preserved by frame morphisms but this is not true of general pseudogroups.
This means that the inverse images of completely prime filters might be empty.
We do, however, have the following.

\begin{lemma} Let $\theta \colon S \rightarrow T$ be a pseudogroup $\wedge$-morphism.
If $F$ is a completely prime filter in $T$ and $\theta^{-1}(F)$ is non-empty then it is a completely prime filter.
\end{lemma}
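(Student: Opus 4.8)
The plan is to check directly that $\theta^{-1}(F) = \{s \in S : \theta(s) \in F\}$ satisfies the three defining properties of a completely prime filter: upward closure, (downward) directedness, and the completely prime condition; properness will come along for free. Throughout I would exploit three features of $\theta$: as an inverse-semigroup homomorphism it automatically preserves inverses and hence the natural partial order; as a $\wedge$-morphism it preserves binary meets; and as a pseudogroup morphism it preserves compatible joins.

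Two of the four points are immediate. For upward closure, if $s \in \theta^{-1}(F)$ and $s \leq s'$, then $\theta(s) \leq \theta(s')$ by order-preservation, and since $\theta(s) \in F$ with $F$ upward closed we get $\theta(s') \in F$, i.e. $s' \in \theta^{-1}(F)$. For properness, note that $\theta$ preserves compatible joins and in particular the empty join, so $\theta(0) = 0$; as $F$ is proper, $0 \notin F$, whence $0 \notin \theta^{-1}(F)$ and the set is a proper filter once directedness is established.

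The one step where the $\wedge$-hypothesis does genuine work is directedness, and this is the point I would single out as the crux of the argument (it is exactly why a bare pseudogroup morphism will not do). Given $s, s' \in \theta^{-1}(F)$, I would form $s \wedge s'$, which exists because $S$, being a pseudogroup, is an inverse $\wedge$-semigroup. Since $\theta$ is a $\wedge$-morphism, $\theta(s \wedge s') = \theta(s) \wedge \theta(s')$. Now $\theta(s), \theta(s') \in F$, and because $F$ is directed there is $c \in F$ with $c \leq \theta(s), \theta(s')$; then $c \leq \theta(s) \wedge \theta(s')$, so upward closure of $F$ forces $\theta(s) \wedge \theta(s') \in F$. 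Hence $s \wedge s' \in \theta^{-1}(F)$ is a common lower bound of $s$ and $s'$ inside $\theta^{-1}(F)$, giving directedness; combined with upward closure this shows $\theta^{-1}(F)$ is a filter.

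Finally, the completely prime condition falls straight out of join-preservation. Suppose $\bigvee_i s_i \in \theta^{-1}(F)$. The family $\{s_i\}$ is compatible, since its join exists in the pseudogroup $S$, so $\theta$ carries it to $\theta\!\left(\bigvee_i s_i\right) = \bigvee_i \theta(s_i) \in F$; as $F$ is completely prime, some $\theta(s_i) \in F$, that is, some $s_i \in \theta^{-1}(F)$. This completes the verification. The role of the non-emptiness hypothesis is purely to avoid asserting that the empty set is a filter: it is forced by the fact, stressed in the preceding discussion, that pseudogroups need not have a top element, so inverse images of completely prime filters can legitimately be empty.
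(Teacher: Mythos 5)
Your proof is correct and follows essentially the same route as the paper's: upward closure from order-preservation, directedness via $\theta(s\wedge s') = \theta(s)\wedge\theta(s')$ together with the fact that a filter in a pseudogroup is closed under binary meets, and the completely prime condition from preservation of compatible joins. The extra remarks on properness and on the role of the non-emptiness hypothesis are accurate but not needed beyond what the paper records.
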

\begin{proof} Let $a,b \in \theta^{-1}(F)$.
Then $\theta (a),\theta (b) \in F$.
But $F$ is a filter in a pseudogroup and so $\theta (a) \wedge \theta (b) \in F$.
We have assume that $\theta$ is a $\wedge$-morphism and so $\theta (a \wedge b) = \theta (a) \wedge \theta (b)$.
Thus $a \wedge b \in \theta^{-1}(F)$.
It is clear that $\theta^{-1}(F)$ is closed upwards, and it is completely prime because $\theta$ is a pseudogroup morphism.
\end{proof}

A function $\theta \colon S \rightarrow T$ between pseudogroups will be called {\em callitic}
if it satisfies two conditions:
\begin{enumerate}

\item it is a $\wedge$-morphism of pseudogroups, and

\item for each completely prime filter $F$ in $T$,  we have that $F \cap \mbox{im}(\theta) \neq \emptyset$.

\end{enumerate}

\begin{lemma} Let $\theta \colon S \rightarrow T$ be a callitic morphism of pseudogroups.
Then
$$\theta^{-1} \colon \mathsf{G}(T) \rightarrow \mathsf{G}(S)$$
is a continuous covering functor.
\end{lemma}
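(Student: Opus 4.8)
The plan is to check, in order, that $\theta^{-1}$ is well defined as a map $\mathsf{G}(T) \to \mathsf{G}(S)$, that it is continuous, that it is a functor, and finally that it is a cover. Well-definedness and continuity are the quick part. Given $F \in \mathsf{G}(T)$, condition~(2) of calliticity supplies $s$ with $\theta(s) \in F$, so $\theta^{-1}(F) \neq \emptyset$ and hence is a completely prime filter by Lemma~2.13; thus $\theta^{-1}$ does land in $\mathsf{G}(S)$. Continuity I would get by computing, for $s \in S$,
$$(\theta^{-1})^{-1}(X_{s}) = \{F \in \mathsf{G}(T) \colon \theta(s) \in F\} = X_{\theta(s)},$$
which is a basic open set.

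The functor property is where the work lies, and I would reduce it to the single identity $\mathbf{d}(\theta^{-1}(F)) = \theta^{-1}(\mathbf{d}(F))$ (and its dual for $\mathbf{r}$). Granting this, composability is preserved, since $\mathbf{d}(F) = \mathbf{r}(F')$ forces $\mathbf{d}(\theta^{-1}(F)) = \mathbf{r}(\theta^{-1}(F'))$; and $\theta^{-1}(F \cdot F') = \theta^{-1}(F) \cdot \theta^{-1}(F')$ then follows because the right-hand side is contained in the left, both are completely prime filters sharing an element, and by Lemma~2.4 they have the same image under $\mathbf{d}$, so they agree by the bisection principle (Lemma~2.11 of \cite{Law3}). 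As $\theta$ is a homomorphism, the inclusion $\mathbf{d}(\theta^{-1}(F)) \subseteq \theta^{-1}(\mathbf{d}(F))$ is routine. For the reverse inclusion, given $\theta(s) \in \mathbf{d}(F)$, I would pick $q$ with $\theta(q) \in F$ (calliticity), set $m = s \wedge \mathbf{d}(q)$, which is an idempotent below $s$, and note that $\theta(m) = \theta(s) \wedge \mathbf{d}(\theta(q))$ lies in the filter $\mathbf{d}(F)$. The key observation is the absorption property that $xy \in F$ whenever $x \in F$ and $y \in \mathbf{d}(F)$; applying it to $\theta(q)$ and $\theta(m)$ gives $\theta(qm) \in F$, and since $\mathbf{d}(qm) = m \le s$ this puts $s$ in $\mathbf{d}(\theta^{-1}(F))$.

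It remains to verify the two conditions in the definition of a cover. The first (injectivity) condition is short: if $\mathbf{d}(F) = \mathbf{d}(F')$ and $\theta^{-1}(F) = \theta^{-1}(F')$, choose by calliticity $s$ with $\theta(s) \in F$; then $s \in \theta^{-1}(F')$ gives $\theta(s) \in F \cap F'$, so $F$ and $F'$ share an element and have equal domains, whence $F = F'$ by the bisection principle. For the second (lifting) condition, given an identity $E$ of $\mathsf{G}(T)$ and $A \in \mathsf{G}(S)$ with $\theta^{-1}(E) = \mathbf{d}(A)$, I would take $F = (\theta(A)E)^{\uparrow}$. Using $\mathbf{d}(A) = \theta^{-1}(E)$ one has $\mathbf{d}(\theta(a)) = \theta(\mathbf{d}(a)) \in E$ for every $a \in A$, and from this I would check that $F$ is a proper completely prime filter with $\mathbf{d}(F) = E$ and $A \subseteq \theta^{-1}(F)$; the equality $\theta^{-1}(F) = A$ then follows once more from the bisection principle, since $\mathbf{d}(\theta^{-1}(F)) = \theta^{-1}(E) = \mathbf{d}(A)$.

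The main obstacle is the completeness-sensitive part of the argument, which occurs in two places. The first is the nontrivial inclusion $\theta^{-1}(\mathbf{d}(F)) \subseteq \mathbf{d}(\theta^{-1}(F))$, where calliticity must be combined with the meet-closure of the image and the absorption property above. The second is showing that the candidate $F = (\theta(A)E)^{\uparrow}$ really is completely prime: this needs infinite distributivity together with the complete primeness of $E$, since from $\theta(a)e \le \bigvee_{i} t_{i}$ one must distribute over the join and use that $E$ is completely prime in order to return a single $t_{i}$ to $F$.
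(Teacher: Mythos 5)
Your proof is correct, but it is organized differently from the paper's. For functoriality, the paper proves the product formula $(\theta^{-1}(F)\theta^{-1}(G))^{\uparrow} = \theta^{-1}((FG)^{\uparrow})$ head-on: given $\theta(s) \in F \cdot G$, it uses calliticity to pick $v$ with $\theta(v) \in G$ and then exploits $\theta(s)\theta(v)^{-1} \in F \cdot G \cdot G^{-1} = F$ to exhibit the factorization $s \geq (sv^{-1})v$; the identity $\mathbf{d}(\theta^{-1}(F)) = \theta^{-1}(\mathbf{d}(F))$ is then extracted as a corollary. You invert this order: you establish domain-preservation first (your argument via $m = s \wedge \mathbf{d}(q)$ and the absorption property is sound, though note it leans on the $\wedge$-morphism hypothesis at a point where the paper's product computation does not), and then deduce product preservation from the rigidity of completely prime filters --- same domain plus a common element forces equality (Lemma~2.11 of \cite{Law3}). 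This uniqueness principle also does the work in your verification of the covering conditions, where you give an explicit lift $F = (\theta(A)E)^{\uparrow}$ and correctly identify that complete primeness of $F$ requires infinite distributivity together with complete primeness of $E$; the paper, by contrast, simply defers the covering-functor verification to Proposition~2.15 of \cite{Law3}, so your proposal actually supplies an argument the paper leaves to a citation. Both routes are valid; the paper's is more self-contained on products and more economical with hypotheses, while yours uses the uniqueness lemma as a single organizing tool that shortens both the product and the lifting steps.
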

\begin{proof} The assumption that $\theta$ is callitic simply ensures that for each completely prime filter $F$ the set
$\theta^{-1}(F)$ is non-empty.
It follows by Lemma~2.13, that $\theta^{-1} \colon \mathsf{G}(T) \rightarrow \mathsf{G}(S)$ is a well-defined function.
The bulk of the proof is taken up with showing that $\theta^{-1}$ is a functor.
Let $F$ be an identity completely prime filter in $T$.
Then $F$ contains idempotents by Lemma~2.4.
In particular, it must contain the top idempotent in the frame $E(T)$ by upward closure.
Since $\theta$ is a frame morphism when restricted to the semilattice of idempotents
it follows that $\theta^{-1}(F)$ contains the top element of $E(S)$.
Thus $\theta^{-1}(F)$ is a completely prime filter containing idempotents and so it is an identity in the groupoid.

We prove that if $F$ and $G$ are completely prime filters such that $F^{-1} \cdot F = G \cdot G^{-1}$ then
$$( \theta^{-1} (F) \theta^{-1} (G) )^{\uparrow} = \theta^{-1} ( (FG)^{\uparrow}  ).$$
We prove first that
$$\theta^{-1} (F) \theta^{-1} (G) \subseteq \theta^{-1} (FG).$$
Let $s \in \theta^{-1} (F) \theta^{-1} (G)$.
Then $s = ab$ where $a \in \theta^{-1} (F)$ and $b \in \theta^{-1} (G)$.
Thus $\theta (s) = \theta (a) \theta (b) \in FG$.
It follows that $s \in \theta^{-1} (FG)$.
Observe that $\theta^{-1} (X)^{\uparrow} \subseteq \theta^{-1} (X^{\uparrow})$.
It follows that
$$( \theta^{-1} (F) \theta^{-1} (G) )^{\uparrow} \subseteq \theta^{-1} ( (FG)^{\uparrow}  ).$$
We now prove the reverse inclusion.
Let $s \in \theta^{-1} ( (FG)^{\uparrow} ).$
Then $\theta (s) \in F \cdot G$ and so
$fg \leq \theta(s)$ for some $f \in F$ and $g \in G$.
The map $\theta$ is assumed callitic and so there exists $v \in S$ such that $\theta (v) \in G$.
Consider the product $\theta (s) \theta (v)^{-1}$.
Since $\theta (s) \in F \cdot G$ and $\theta (v)^{-1} \in G^{-1}$ we have that
$\theta (s)\theta (v)^{-1} \in F \cdot G \cdot G^{-1} = F \cdot F^{-1} \cdot F = F$.
Thus $\theta (sv^{-1}) \in F$,  and we were given $\theta (v) \in G$, and clearly $(sv^{-1})v \leq s$.
Put $a = sv^{-1}$ and $b = v$.
Then $ab \leq S$ where $\theta (a) \in F$ and $\theta (b) \in G$.
It follows that $s \in (\theta^{-1} (F) \theta^{-1} (G) )^{\uparrow}$.

We may now show that $\theta^{-1}$ is a functor.
Let $F$ be any completely prime filter.
Observe that $\theta^{-1}(F)^{-1} = \theta^{-1} (F^{-1})$.
We have that
$$(\theta^{-1} (F^{-1}) \theta^{-1} (F))^{\uparrow}
=
(\theta^{-1} (F)^{-1} \theta^{-1} (F))^{\uparrow}
=
\mathbf{d} ( \theta^{-1} (F))$$
and
$$\theta^{-1} ( (F^{-1}F)^{\uparrow} ) = \theta^{-1} (\mathbf{d} (F)).$$
Hence
$$\theta^{-1} (\mathbf{d} (F))
=
\mathbf{d} ( \theta^{-1} (F)).$$
A dual result also holds and so $\theta^{-1}$ preserves the domain and codomain operations.
Suppose that $\mathbf{d}(F) = \mathbf{r} (G)$ so that $F \cdot G$ is defined.
By our calculation above $\mathbf{d} (\theta^{-1}(F)) = \mathbf{r} (\theta^{-1} (G))$
and so the product $\theta^{-1} (F) \cdot \theta^{-1} (G)$ is defined.
By our main result above we have that
$$\theta^{-1} (F \cdot G) = \theta^{-1} (F) \cdot \theta^{-1} (G),$$
as required.

The proof that $\theta^{-1}$ is a covering functor follows the same lines as the proof of Proposition~2.15 of \cite{Law3}.
It remains to show that it is continuous.
A basic open set of $\mathsf{G}(S)$ has the form $X_{s}$ for some $s \in S$.
It is simple to check that this is pulled back to the set $X_{\theta (s)}$.
\end{proof}

We also have the following whose proof is straightforward.

\begin{lemma}
The composition of callitic maps is callitic.
\end{lemma}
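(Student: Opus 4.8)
The plan is to verify the two defining conditions of a callitic map for the composite $\psi \circ \theta$, where $\theta \colon S \rightarrow T$ and $\psi \colon T \rightarrow U$ are callitic. Write $\phi = \psi \circ \theta$.

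First I would check condition~(1), that $\phi$ is a $\wedge$-morphism of pseudogroups. The composition of semigroup homomorphisms is a semigroup homomorphism, and the composition of meet-preserving maps preserves meets, so the only point that rewards attention is preservation of compatible joins. The subtlety is that before applying $\psi$ to $\theta(\bigvee_{i} s_{i})$ one must know that $\{\theta(s_{i})\}$ is a compatible family, so that the join is legitimate. This holds because any pseudogroup morphism sends compatible families to compatible families: if $s^{-1}t$ and $st^{-1}$ are idempotents, then their images $\theta(s)^{-1}\theta(t) = \theta(s^{-1}t)$ and $\theta(s)\theta(t)^{-1} = \theta(st^{-1})$ are idempotents, so $\theta(s)$ and $\theta(t)$ are compatible. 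Hence $\theta(\bigvee_{i} s_{i}) = \bigvee_{i} \theta(s_{i})$ in $T$, and applying $\psi$ gives $\phi(\bigvee_{i} s_{i}) = \bigvee_{i} \phi(s_{i})$.

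The substance of the argument is condition~(2): for each completely prime filter $F$ in $U$ we must show that $F \cap \mbox{im}(\phi) \neq \emptyset$. This is where I would invoke Lemma~2.13. Since $\psi$ is callitic, $F \cap \mbox{im}(\psi) \neq \emptyset$, so $\psi^{-1}(F)$ is non-empty; by Lemma~2.13 it is therefore a completely prime filter in $T$. Now apply the callitic hypothesis on $\theta$ to this filter: we have $\psi^{-1}(F) \cap \mbox{im}(\theta) \neq \emptyset$, so there is some $s \in S$ with $\theta(s) \in \psi^{-1}(F)$, that is $\psi(\theta(s)) = \phi(s) \in F$. Thus $\phi(s) \in F \cap \mbox{im}(\phi)$, as required.

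I do not expect any genuine obstacle here; the proof is essentially a two-step chase that uses Lemma~2.13 to transport the non-emptiness condition from $U$ back through $T$ to $S$. If anything, the only place calling for care is the verification in condition~(1) that compatibility is preserved, since otherwise one could not even speak of the join $\bigvee_{i} \theta(s_{i})$ to which $\psi$ is subsequently applied.
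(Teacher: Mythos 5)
Your proof is correct and follows exactly the route the paper has in mind (the paper omits the proof as ``straightforward''): composition of $\wedge$-morphisms is handled routinely, and the non-emptiness condition is chased back through Lemma~2.13, which upgrades $\psi^{-1}(F)$ to a completely prime filter so that the callitic hypothesis on $\theta$ can be applied to it.
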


Identity maps are callitic and so pseudogroups and their callitic maps form a category.

A morphism $\theta \colon S \rightarrow T$ is called {\em hypercallitic} if for each $t \in T$
we have that
$$t = \bigvee_{s \in S} (t \wedge \theta (s)).$$
Observe that since $t \wedge \theta (s) \leq t$ the righthand-side is well-defined.
Frame morphisms are always hypercallitic because tops are mapped to tops.

\begin{lemma}
Hypercallitic maps are callitic.
\end{lemma}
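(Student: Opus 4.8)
The plan is to observe that the two clauses in the definition of callitic can be checked separately, and that a hypercallitic map already satisfies the first one for free. A hypercallitic map is, by definition, a pseudogroup $\wedge$-morphism, so clause (1) of calliticity holds immediately; all of the content lies in verifying clause (2), namely that every completely prime filter $F$ of $T$ meets the image of $\theta$.

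For clause (2) I would argue directly from the defining identity of hypercalliticity. Fix a completely prime filter $F$ in $T$. Since our filters are proper and hence non-empty, choose any $t \in F$. Hypercalliticity gives
\[
t = \bigvee_{s \in S} (t \wedge \theta(s)),
\]
and this is a genuine compatible join because every term $t \wedge \theta(s)$ lies beneath the common element $t$, and elements bounded above by a common element are pairwise compatible. Now $t \in F$ says that this join belongs to $F$; since $F$ is completely prime, some term $t \wedge \theta(s)$ must already lie in $F$. As $t \wedge \theta(s) \leq \theta(s)$ and $F$ is closed upwards, we conclude $\theta(s) \in F$. Hence $\theta(s) \in F \cap \mbox{im}(\theta)$, so $F \cap \mbox{im}(\theta) \neq \emptyset$, which is precisely clause (2).

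There is essentially no hard step here: the argument is a one-line application of complete primeness to the hypercalliticity equation. The only points requiring care are the two implicit hypotheses I invoked — that $F$ is non-empty, so that a witness $t$ exists, and that the displayed join is compatible, so that complete primeness is applicable to it. The first is guaranteed by our standing convention that filters are proper, and the second follows from the remark accompanying the definition of hypercalliticity that the right-hand side is well-defined. With these in hand the conclusion is immediate, and the lemma follows.
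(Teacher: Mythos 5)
Your proof is correct and follows essentially the same route as the paper's: apply complete primeness of $F$ to the hypercalliticity join $t = \bigvee_{s \in S}(t \wedge \theta(s))$ and then use upward closure. In fact your version is slightly more careful than the paper's, which takes an arbitrary $t \in T$ rather than $t \in F$ (the step ``$F$ is completely prime, so $t \wedge \theta(s) \in F$'' needs $t \in F$, which your choice of a witness from the proper, hence non-empty, filter supplies).
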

\begin{proof} Let $\theta \colon S \rightarrow T$ be a hypercallitic map.
Let $F$ be a completely prime filter in $T$ and let $t \in T$.
By assumption, we may write
$t = \bigvee_{s \in S} (t \wedge \theta (s))$.
But $F$ is completely prime and so $t \wedge \theta (s) \in F$ for some $s \in S$.
It then follows that $\theta (s) \in F$, as required.
\end{proof}

The proof of the following is immediate from the definition of hypercallitic map.

\begin{lemma} Let $\theta \colon S \rightarrow T$ be a pseudogroup morphism such that each element of $T$ is the join
of a compatible subset of the image of $\theta$.
Then $\theta$ is hypercallitic.
\end{lemma}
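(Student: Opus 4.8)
The plan is to unwind the definition of hypercallitic directly, using the hypothesis to supply the two inequalities that together yield the required equality. Fix $t \in T$. By assumption we may write $t = \bigvee_{i} \theta(s_{i})$ for some family $\{s_{i}\} \subseteq S$ whose image $\{\theta(s_{i})\}$ is a compatible subset of $T$.

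First I would check that the join $\bigvee_{s \in S}(t \wedge \theta(s))$ appearing on the right-hand side of the hypercallitic identity actually exists. Each term $t \wedge \theta(s)$ lies below $t$, and any two elements of a pseudogroup lying below a common element are compatible: if $x,y \leq t$ then, writing $x = tx^{-1}x$ and $y = ty^{-1}y$, one computes $x^{-1}y = (x^{-1}x)(t^{-1}t)(y^{-1}y)$, a product of idempotents and hence idempotent, and symmetrically for $xy^{-1}$. Thus $\{t \wedge \theta(s) \colon s \in S\}$ is a compatible family, so its join exists since $T$ is complete; and because every term satisfies $t \wedge \theta(s) \leq t$, we obtain $\bigvee_{s \in S}(t \wedge \theta(s)) \leq t$.

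For the reverse inequality I would invoke the hypothesis. Each $\theta(s_{i})$ lies below the join $t$, so $t \wedge \theta(s_{i}) = \theta(s_{i})$; hence every $\theta(s_{i})$ occurs among the terms $t \wedge \theta(s)$ by taking $s = s_{i}$. Therefore
$$t = \bigvee_{i} \theta(s_{i}) = \bigvee_{i}(t \wedge \theta(s_{i})) \leq \bigvee_{s \in S}(t \wedge \theta(s)).$$
Combining the two inequalities gives $t = \bigvee_{s \in S}(t \wedge \theta(s))$, which is precisely the hypercallitic condition, and since $t \in T$ was arbitrary this shows $\theta$ is hypercallitic.

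There is no genuine obstacle here, in keeping with the paper's remark that the result is immediate: the only point that requires a moment's care is the existence of the right-hand join, which is handled by the observation that elements below a common element are automatically compatible. Everything else is a straightforward rewriting using the defining property of the natural partial order together with the completeness of $T$.
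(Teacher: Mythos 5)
Your proof is correct and is exactly the argument the paper has in mind when it declares the result ``immediate from the definition'': the paper itself already notes that the right-hand join is well-defined because each term $t \wedge \theta(s)$ lies below $t$, and the rest is the two-inequality unwinding you give. The only detail you add beyond what the paper takes for granted is the verification that elements below a common element are compatible, which is standard and correctly carried out.
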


The following is an important consequence.

\begin{corollary}
The map $\varepsilon \colon S \rightarrow \mathsf{B} (\mathsf{G} (S))$ is hypercallitic.
\end{corollary}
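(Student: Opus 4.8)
The plan is to obtain this corollary as an immediate application of Lemma~2.17, which reduces hypercalliticity to two conditions that have already been verified for $\varepsilon$. Recall that Lemma~2.17 asserts that a pseudogroup morphism $\theta \colon S \rightarrow T$ is hypercallitic whenever every element of $T$ is the join of a compatible subset of $\mathrm{im}(\theta)$. I would instantiate this lemma with $\theta = \varepsilon$ and $T = \mathsf{B}(\mathsf{G}(S))$, so that the whole argument amounts to checking that $\varepsilon$ meets the hypotheses of Lemma~2.17.

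Both hypotheses are supplied directly by Proposition~2.9. First, part~(1) of that proposition states that $\varepsilon$ is a pseudogroup $\wedge$-morphism, and in particular a pseudogroup morphism, which discharges the morphism hypothesis of Lemma~2.17. Second, part~(2) states that every element of $\mathsf{B}(\mathsf{G}(S))$ is a compatible join of elements in the image of $\varepsilon$, which is precisely the join condition required. Feeding these two facts into Lemma~2.17 yields that $\varepsilon$ is hypercallitic, which is the desired conclusion.

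I do not expect any genuine obstacle here: the result is flagged as a corollary precisely because its substantive content has already been isolated in Proposition~2.9 and Lemma~2.17. The only point deserving a moment's attention is the purely terminological one of confirming that a $\wedge$-morphism of pseudogroups is in particular a pseudogroup morphism in the sense used by Lemma~2.17---that is, a semigroup homomorphism preserving compatible joins---so that part~(1) of Proposition~2.9 really does cover the morphism hypothesis. Once this is observed, the proof collapses to a single invocation of the two earlier results.
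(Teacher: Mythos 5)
Your proposal is correct and is exactly the argument the paper intends: the corollary is stated immediately after Lemma~2.17 precisely so that it follows by combining that lemma with Proposition~2.9, parts (1) and (2). The terminological point you raise (that a pseudogroup $\wedge$-morphism is in particular a pseudogroup morphism) is sound, so nothing further is needed.
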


The following result explains the real reason for our interest in hypercallitic maps.

\begin{lemma} Let $\theta \colon G \rightarrow H$ be a continuous covering functor between two \'etale groupoids.
Then $\theta^{-1} \colon \mathsf{B}(H) \rightarrow \mathsf{B}(G)$ is hypercallitic.
\end{lemma}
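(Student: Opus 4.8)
The plan is to verify the defining identity of a hypercallitic map directly. I would take a typical element $c$ of the target pseudogroup $\mathsf{B}(G)$, that is, an open bisection $c$ of $G$, and aim to establish
$$c = \bigvee_{b \in \mathsf{B}(H)} \left( c \wedge \theta^{-1}(b) \right),$$
recalling throughout that in $\mathsf{B}(G)$ the natural partial order is inclusion, binary meet is intersection, and a compatible join is a union. Before doing this I would record that $\theta^{-1}$ really is a well-defined map $\mathsf{B}(H) \to \mathsf{B}(G)$: each $\theta^{-1}(b)$ is open by continuity of $\theta$, and it is a bisection because $\theta$ is a covering functor. Indeed, if $g_{1},g_{2} \in \theta^{-1}(b)$ satisfy $\mathbf{d}(g_{1}) = \mathbf{d}(g_{2})$, then $\theta(g_{1}),\theta(g_{2}) \in b$ have a common domain, so $\theta(g_{1}) = \theta(g_{2})$ since $b$ is a bisection, whence $g_{1} = g_{2}$ by the defining property of a cover; the injectivity of $\mathbf{r}$ on $\theta^{-1}(b)$ follows by applying the same argument to $g_{1}^{-1},g_{2}^{-1} \in \theta^{-1}(b^{-1})$.

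For the identity itself I would observe that every set $c \wedge \theta^{-1}(b) = c \cap \theta^{-1}(b)$ is a subset of the single bisection $c$, and that any family of subsets of one bisection is pairwise compatible in $\mathsf{B}(G)$; hence the join on the right exists and is just the union. It then remains to compute
$$\bigvee_{b} \left( c \cap \theta^{-1}(b) \right) = c \cap \bigcup_{b \in \mathsf{B}(H)} \theta^{-1}(b) = c \cap \theta^{-1}\!\left( \bigcup_{b \in \mathsf{B}(H)} b \right).$$
Since $H$ is \'etale its open bisections form a basis for its topology and in particular cover $H$, so $\bigcup_{b} b = H$ and therefore $\theta^{-1}(H) = G$. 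The right-hand side then collapses to $c \cap G = c$, which is exactly what is required.

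The proof is short precisely because the union of all open bisections of $H$ is the whole of $H$, and the one genuinely load-bearing ingredient is the \'etale hypothesis, which guarantees this covering. I would flag that Lemma~2.17 cannot be invoked directly here: the images $\theta^{-1}(b)$ may strictly contain $c$ (for instance when $\theta$ is a multi-sheeted cover), so $c$ need not be a join of elements of the image of $\theta^{-1}$. It is exactly the meet with $c$ in the definition of hypercallitic that rescues the statement, and keeping track of that truncation is the only real subtlety; the compatibility of the family and the well-definedness of $\theta^{-1}$ on bisections are routine once the covering property is in hand.
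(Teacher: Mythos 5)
Your proof is correct and takes essentially the same route as the paper: both arguments rest on the fact that, $H$ being \'etale, its open bisections cover $H$, so that pulling back (each $\theta^{-1}(b)$ being an open bisection by continuity and the covering property) and intersecting with $c$ recovers $c$. The only cosmetic difference is that the paper indexes the union by points $g \in B$, choosing one open bisection $C_{g} \ni \theta(g)$ for each, whereas you take the join over all of $\mathsf{B}(H)$ at once and use distributivity of intersection over union.
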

\begin{proof}
The proof that we have a $\wedge$-morphism of pseudogroups basically follows the proof of Proposition~2.17 of \cite{Law3}.
It remains to show that $\theta^{-1}$ is hypercallitic.
From Exercise~I.1.8, Question~6 of \cite{R1}, we learn that if $G$ is a topological groupoid in which $G_{o}$ is an open subspace
then $G$ has a basis consisting of open bisections.
Let $B$ be a non-empty open bisection in $G$ and let $g \in B$.
Then $\theta (g) \in H$.
Clearly $H$ is an open set containing $\theta (g)$ but not a bisection.
However, since $H$ is \'etale, it follows that $H$ is a union of open bisections and so $\theta (g) \in C_{g}$ an open bisection $C_{g}$ in $H$.
Since $\theta$ is continuous $g \in \theta^{-1}(C_{g})$ is open and because $\theta$ is a covering functor $\theta^{-1} (C_{g})$ is a bisection.
Thus $g \in B \cap \theta^{-1}(C_{g})$ an open bisection in $G$.
It follows that we may write
$$B = \bigcup_{g \in B} (B \cap \theta^{-1} (C_{g})).$$
\end{proof}

There is another way of seeing the importance of hypercallitic maps using a construction of Resende \cite{R1,R2}.
Let $S$ be a pseudogroup.
Define $\mathcal{L}^{\vee} (S)$ to be the set of all order ideals of $S$ that are closed under compatible joins.
This is called the {\em enveloping quantale} of $S$.
It is, in particular, a frame with top element $S$.
Let $\theta \colon S \rightarrow T$ be a morphism of pseudogroups.
Then we may define a function $\bar{\theta} \colon \mathcal{L}^{\vee} (S) \rightarrow \mathcal{L}^{\vee} (T)$
by $\bar{\theta} (A) = [A^{\downarrow}]^{\vee}$
which means the downward closure of $A$ followed by the closure under compatible joins.
The following lemma grew out of conversations with Resende.

\begin{lemma}
The map $\bar{\theta}$ defined above is a frame map if and only if $\theta$ is hypercallitic.
\end{lemma}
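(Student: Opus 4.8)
The plan is to analyse what it means for $\bar{\theta}$ to be a frame map, namely that it preserve arbitrary joins and all finite meets, the latter including the empty meet, i.e. the top element $S \mapsto T$. First I would record the structural facts about the enveloping quantale I intend to use: meets in $\mathcal{L}^{\vee}(S)$ are given by intersection, the top element is $S$, every element $A$ satisfies $A = \bigvee_{a \in A} a^{\downarrow}$, and $\bar{\theta}(s^{\downarrow}) = \theta(s)^{\downarrow}$. The strategy is then to show that join-preservation holds automatically, that preservation of the top element is exactly the hypercallitic condition, and that once hypercallitic is assumed the binary meets are preserved for free because hypercallitic maps turn out to be $\wedge$-morphisms.

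First I would dispatch the top element, which carries the real content. Unwinding the definitions, $\bar{\theta}(S) = [\theta(S)^{\downarrow}]^{\vee}$ consists exactly of those $t \in T$ expressible as a compatible join $\bigvee_i u_i$ with each $u_i$ below some $\theta(s_i)$, using the standard fact that in a pseudogroup meet distributes over compatible joins, so that this closure is automatically an order ideal. If $\theta$ is hypercallitic then the canonical family $\{t \wedge \theta(s) : s \in S\}$, which is compatible since all its members lie below $t$, exhibits $t$ as such a join, so $\bar{\theta}(S) = T$. Conversely, if $\bar{\theta}(S) = T$ and $t = \bigvee_i u_i$ as above, then $u_i \leq t \wedge \theta(s_i)$, whence $t = \bigvee_i u_i \leq \bigvee_{s \in S}(t \wedge \theta(s)) \leq t$, giving the hypercallitic identity. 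This already shows that a frame map must be hypercallitic, since frame maps preserve tops, and it supplies half of the converse.

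For the remaining frame axioms under the hypercallitic hypothesis I would argue as follows. Join-preservation needs no hypothesis beyond $\theta$ being a pseudogroup morphism: the inclusion $\bigvee_i \bar{\theta}(A_i) \subseteq \bar{\theta}(\bigvee_i A_i)$ is monotonicity, and for the reverse one expands an element of $\bar{\theta}(\bigvee_i A_i)$ as a compatible join of images $\theta(y)$ with $y \in \bigcup_i A_i$, using that $\theta$ preserves compatible joins. For binary meets I would first invoke Lemma~2.19 together with the definition of callitic map to conclude that a hypercallitic $\theta$ is a $\wedge$-morphism; then, since $S$ is an inverse $\wedge$-semigroup, $\bar{\theta}$ preserves meets of principal ideals, as $\bar{\theta}(s^{\downarrow} \cap s'^{\downarrow}) = \bar{\theta}((s \wedge s')^{\downarrow}) = (\theta(s) \wedge \theta(s'))^{\downarrow} = \bar{\theta}(s^{\downarrow}) \cap \bar{\theta}(s'^{\downarrow})$. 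Writing arbitrary elements as joins of principal ideals and applying frame distributivity in both $\mathcal{L}^{\vee}(S)$ and $\mathcal{L}^{\vee}(T)$, together with the already-proved join-preservation, then upgrades this to preservation of all binary meets.

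The part requiring the most care is this last reduction of binary-meet preservation from principal ideals to arbitrary ideals, since direct-image constructions of the form $A \mapsto [\theta(A)^{\downarrow}]^{\vee}$ do not preserve meets in general; the point is that meet preservation on the principal ideals, guaranteed by the $\wedge$-morphism property, propagates through the frame distributive law precisely because joins are already known to be preserved. Conceptually, however, the heart of the lemma is the identification of the hypercallitic condition with preservation of the top element, the other frame axioms holding for structural reasons.
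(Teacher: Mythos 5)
Your central computation is exactly the paper's: the paper's entire proof consists of the assertion that $\bar{\theta}$ is a frame map if and only if $\bar{\theta}(S)=T$, followed by the observation that $[\theta(S)^{\downarrow}]^{\vee}=T$ holds precisely when every $t\in T$ can be written as a compatible join of elements $t_{i}\leq\theta(s_{i})$, whence $t=\bigvee_{i}(t_{i}\wedge\theta(s_{i}))$ and then $t=\bigvee_{s\in S}(t\wedge\theta(s))$. Your ``top element'' paragraph reproduces this argument, in both directions, essentially verbatim. Where you go beyond the paper is in trying to justify the opening reduction itself, i.e.\ to verify the remaining frame axioms. Your join-preservation argument is fine: $A\mapsto[\theta(A)^{\downarrow}]^{\vee}$ is a direct-image-type construction and preserves arbitrary joins for any pseudogroup morphism $\theta$.

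The gap is in your treatment of binary meets. You claim that a hypercallitic $\theta$ is automatically a $\wedge$-morphism, citing Lemma~2.19; but that lemma concerns the inverse-image map $\theta^{-1}\colon\mathsf{B}(H)\to\mathsf{B}(G)$ of a continuous covering functor, not an abstract hypercallitic morphism, and neither the definition of hypercallitic (which imposes only the join identity $t=\bigvee_{s\in S}(t\wedge\theta(s))$) nor the proof of Lemma~2.16 (which verifies only the filter-meeting condition of calliticity) yields $\theta(a\wedge b)=\theta(a)\wedge\theta(b)$. The implication is in fact false: a surjective pseudogroup morphism is hypercallitic by Lemma~2.17, yet it may identify two orthogonal non-zero elements $a,b$ (the quotients by nuclei in Section~4.3 do exactly this), in which case $\bar{\theta}(a^{\downarrow}\cap b^{\downarrow})=\{0\}$ while $\bar{\theta}(a^{\downarrow})\cap\bar{\theta}(b^{\downarrow})$ contains $\theta(a)\neq 0$, so your principal-ideal step fails. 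This is precisely the point the paper's own proof elides by taking the equivalence ``frame map $\Leftrightarrow$ top-preserving'' as given; if ``frame map'' is read in the standard finite-meet-preserving sense, then either an extra hypothesis on $\theta$ (that it be a $\wedge$-morphism, as the callitic maps of the paper are) is needed, or ``frame map'' must be understood here as a unital join-preserving map. As written, your meet argument does not close, and it cannot be repaired without one of these adjustments.
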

\begin{proof} The map $\bar{\theta}$ is a frame map if and only if $\bar{\theta}(S) = T$.
That is if and only if $[\theta(S)^{\downarrow}]^{\vee} = T$.
This means that for each $t \in T$ we may find $t_{i} \in \theta (S)^{\downarrow}$ such that $t = \bigvee t_{i}$.
But $t_{i} \in \theta (S)^{\downarrow}$ means that $t_{i} \leq \theta (s_{i})$ for some $s_{i} \in S$.
In particular we have that $t_{i} = t_{i} \wedge \theta (s_{i})$.
Thus $t = \bigvee t_{i} \wedge \theta (s_{i})$.
It is now easy to show that $t = \bigvee_{s \in S} t \wedge \theta (s)$, as required.
\end{proof}

The following lemma answers a question that the reader might ask themselves
when reading the statement and proof of the Adjunction Theorem below.

\begin{lemma} Let $\theta \colon S \rightarrow T$ be a callitic morphism of pseudogroups where $T$ is spatial.
Then $\theta$ is in fact hypercallitic.
\end{lemma}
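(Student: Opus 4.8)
The plan is to use spatiality of $T$ to reduce the desired identity
$$t = \bigvee_{s \in S} (t \wedge \theta (s))$$
to a statement about the completely prime filters of $T$, and then to invoke calliticity to supply enough such filters. Recall that $T$ being spatial means that two elements coincide as soon as they lie in exactly the same completely prime filters, i.e. $X_{t} = X_{t'}$ implies $t = t'$.

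First I would set $t' = \bigvee_{s \in S} (t \wedge \theta (s))$ and check that this join genuinely exists. Each term satisfies $t \wedge \theta (s) \leq t$, and any family of elements bounded above by a common element is pairwise compatible, so $\{ t \wedge \theta (s) \colon s \in S \}$ is a compatible subset and its join is defined in the pseudogroup $T$. Since every term lies below $t$, we immediately obtain $t' \leq t$, and hence $X_{t'} \subseteq X_{t}$ by upward closure of completely prime filters.

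The substance of the argument is the reverse inclusion $X_{t} \subseteq X_{t'}$. Let $F$ be any completely prime filter containing $t$. Because $\theta$ is callitic, $F \cap \mbox{im}(\theta) \neq \emptyset$, so there is some $s \in S$ with $\theta (s) \in F$. Now $t, \theta (s) \in F$ and $F$ is a filter in a pseudogroup, hence closed under binary meets (as already used in the proof of Lemma~2.13), so $t \wedge \theta (s) \in F$. But $t \wedge \theta (s) \leq t'$, and therefore $t' \in F$ by upward closure; that is, $F \in X_{t'}$. This gives $X_{t} \subseteq X_{t'}$, and combining the two inclusions yields $X_{t} = X_{t'}$.

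Finally, since $T$ is spatial, $X_{t} = X_{t'}$ forces $t = t'$, which is precisely the hypercallitic identity. The only points requiring any care are the existence of the join defining $t'$ (compatibility of the terms) and the clean passage, via spatiality, from equality of filter-sets to equality of elements; everything else reduces to upward closure and closure of filters under binary meets. I expect no real obstacle here, the whole strength of the hypothesis being that spatiality converts the join identity into the filter-theoretic condition that calliticity is designed to guarantee.
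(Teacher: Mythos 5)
Your proof is correct and follows essentially the same route as the paper: establish $X_{t} = X_{\bigvee_{s \in S}(t \wedge \theta(s))}$ using calliticity to produce some $\theta(s) \in F$ and closure of filters under binary meets, then conclude by spatiality. The only cosmetic difference is that you argue the two inclusions via $t' \leq t$ and upward closure while the paper notes the easy direction directly; both justifications of the existence of the join are the same observation the paper makes when defining hypercallitic maps.
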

\begin{proof} Let $t \in T$ where $t \neq 0$.
Then the set of all completely prime filters $X_{t}$ containing $t$ cannot be empty
because then $X_{t} = X_{0}$ would imply that $t = 0$.
We prove that
$$X_{t} = X_{\vee_{s \in S} (t \wedge \theta (s))}$$
from which the result follows by the spatiality of $T$.
Let $F \in X_{t}$.
Since $\theta$ is callitic there exists $\theta (s) \in F$ for some $s \in S$.
Thus $t \wedge \theta (s) \in F$.
It follows that $F \in  X_{\vee_{s \in S} (t \wedge \theta (s))}$.
Conversely, if $F \in  X_{\vee_{s \in S} (t \wedge \theta (s))}$
then clearly $F \in X_{t}$.
\end{proof}

We may now define our categories.
We define the category $\mathbf{Inv}$ to have pseudogroups as objects and callitic pseudogroup maps as morphisms.
In what follows we work with the dual category $\mathbf{Inv}^{op}$ for convenience.
We define $\mathbf{Etale}$ to have \'etale groupoids as objects and continuous covering functors as morphisms.
There is a functor $\mathsf{G} \colon \mathbf{Inv}^{op} \rightarrow \mathbf{Etale}$
that takes a pseudogroup $S$ to the \'etale groupoid $\mathsf{G}(S)$ by Proposition~2.8
and takes the callitic map $\theta \colon S \rightarrow T$ to the continuous covering functor
$\theta^{-1} \colon \mathsf{G}(T) \rightarrow \mathsf{G}(S)$ by Lemma~2.14.
There is a functor $\mathsf{B} \colon \mathbf{Etale} \rightarrow  \mathbf{Inv}^{op}$
that takes an \'etale groupoid $G$ to a pseudogroup $\mathsf{B}(G)$ by Proposition~2.1
and takes the continuous covering functor $\theta \colon G \rightarrow H$ to the
callitic morphism of pseudogroups $\theta^{-1} \colon \mathsf{B}(H) \rightarrow \mathsf{B}(G)$ by Lemma~2.19.
Our main theorem is the following.

\begin{theorem}[Adjunction]
The functor
$$\mathsf{G} \colon \mathbf{Inv}^{op} \rightarrow \mathbf{Etale}$$
is right adjoint to the functor
$$\mathsf{B} \colon \mathbf{Etale} \rightarrow \mathbf{Inv}^{ op}.$$
\end{theorem}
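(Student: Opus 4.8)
The plan is to establish $\mathsf{B} \dashv \mathsf{G}$ (equivalently, that $\mathsf{G}$ is right adjoint to $\mathsf{B}$) through its unit and counit together with the two triangle identities, since essentially all of the analytic work has already been done and only the categorical assembly remains. The natural candidate for the \emph{unit} is the family $\eta_{G} \colon G \to \mathsf{G}(\mathsf{B}(G))$, $g \mapsto F_{g}$, which is a continuous covering functor by Proposition~2.11 and hence a morphism of $\mathbf{Etale}$. The natural candidate for the \emph{counit} is the family $\varepsilon_{S} \colon S \to \mathsf{B}(\mathsf{G}(S))$, $s \mapsto X_{s}$, which is a pseudogroup $\wedge$-morphism by Proposition~2.9, is hypercallitic by Corollary~2.18, and is therefore callitic by Lemma~2.16, so that it is a morphism of $\mathbf{Inv}$. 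Because we work with $\mathbf{Inv}^{op}$, the family $\varepsilon$ plays the role of the counit $\mathsf{B}\mathsf{G} \Rightarrow 1$ over $\mathbf{Inv}^{op}$, while $\eta$ is the unit $1 \Rightarrow \mathsf{G}\mathsf{B}$ over $\mathbf{Etale}$; keeping the direction of arrows straight as one passes between $\mathbf{Inv}$ and $\mathbf{Inv}^{op}$ is the one genuinely delicate point.

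First I would check that $\eta$ and $\varepsilon$ are natural. For a continuous covering functor $\phi \colon G \to H$ one has $\mathsf{B}(\phi) = \phi^{-1}$ and then $\mathsf{G}(\mathsf{B}(\phi)) = (\phi^{-1})^{-1}$, so naturality of $\eta$ reduces to the identity $(\phi^{-1})^{-1}(F_{g}) = F_{\phi(g)}$; this is immediate from the chain $b \in (\phi^{-1})^{-1}(F_{g}) \Leftrightarrow \phi^{-1}(b) \in F_{g} \Leftrightarrow g \in \phi^{-1}(b) \Leftrightarrow \phi(g) \in b$. Dually, for a callitic morphism $\theta \colon S \to T$ one has $\mathsf{B}(\mathsf{G}(\theta)) = (\theta^{-1})^{-1}$, and naturality of $\varepsilon$ reduces to $(\theta^{-1})^{-1}(X_{s}) = X_{\theta(s)}$, which follows from $F \in (\theta^{-1})^{-1}(X_{s}) \Leftrightarrow s \in \theta^{-1}(F) \Leftrightarrow \theta(s) \in F$. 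Both identities are forced by the very definitions of $F_{g}$ and $X_{s}$, so naturality costs nothing beyond the bookkeeping.

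The heart of the matter is then the two triangle identities, and here the earlier computations do almost all the work. The first identity requires that the composite $\eta_{G}^{-1} \circ \varepsilon_{\mathsf{B}(G)} \colon \mathsf{B}(G) \to \mathsf{B}(G)$ be the identity; for an open bisection $b$ one computes $\eta_{G}^{-1}(\varepsilon_{\mathsf{B}(G)}(b)) = \eta_{G}^{-1}(X_{b}) = b$, which is exactly the formula $\eta^{-1}(X_{b}) = b$ established in the proof of Proposition~2.11. The second identity requires that $\varepsilon_{S}^{-1} \circ \eta_{\mathsf{G}(S)} \colon \mathsf{G}(S) \to \mathsf{G}(S)$ be the identity; for a completely prime filter $f$ on $S$ one computes $\varepsilon_{S}^{-1}(F_{f}) = \{ s \in S \colon X_{s} \in F_{f} \} = \{ s \in S \colon f \in X_{s} \} = \{ s \in S \colon s \in f \} = f$, a one-line calculation in the spirit of Proposition~2.12. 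With both triangle identities verified, the standard characterisation of adjunctions gives the result.

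I would expect no serious obstacle in the mathematics, since every ingredient is already available from the preceding results: Propositions~2.8 and~2.11 supply the objects and the covering functor $\eta$, the combination of Proposition~2.9, Corollary~2.18 and Lemma~2.16 makes $\varepsilon$ callitic, and Lemmas~2.14 and~2.19 make $\mathsf{G}$ and $\mathsf{B}$ act on morphisms. The only place demanding care is the opposite-category convention: one must stay consistent about whether a callitic map $\theta$ or its pullback $\theta^{-1}$ names a given morphism, and about the order of composition when a triangle identity stated in $\mathbf{Inv}^{op}$ is re-expressed in $\mathbf{Inv}$. An equivalent route that sidesteps some of this bookkeeping is to exhibit the hom-set bijection $\mathbf{Inv}(S, \mathsf{B}(G)) \cong \mathbf{Etale}(G, \mathsf{G}(S))$ directly, via $\theta \mapsto \theta^{-1} \circ \eta_{G}$ and $\phi \mapsto \phi^{-1} \circ \varepsilon_{S}$, and to verify that these assignments are mutually inverse and natural; the underlying computations are precisely those above.
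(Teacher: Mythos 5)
Your proposal is correct, and it reaches the adjunction by the other standard door: you build the unit $\eta$ and counit $\varepsilon$ and verify naturality plus the two triangle identities, whereas the paper exhibits the hom-set bijection $\mathbf{Etale}(G,\mathsf{G}(S)) \cong \mathbf{Inv}(S,\mathsf{B}(G))$ directly, sending $\alpha \mapsto \alpha^{-1}\varepsilon$ and $\beta \mapsto \beta^{-1}\eta$ and checking by an element chase that these assignments are mutually inverse, with naturality dismissed as straightforward. The two routes are built from the same atomic computations --- $\eta_G^{-1}(X_b) = b$ from Proposition~2.11, $\varepsilon_S^{-1}(F_f) = f$ from Proposition~2.12, and the pullback identities $(\theta^{-1})^{-1}(X_s) = X_{\theta(s)}$ and $(\phi^{-1})^{-1}(F_g) = F_{\phi(g)}$ --- and indeed the paper's mutual-inverse verification is exactly the concatenation of the chains you write down for the triangle identities. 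What your version buys is that the naturality of $\eta$ and $\varepsilon$, which the paper leaves implicit, is spelled out explicitly; what the paper's version buys is that one never has to track the composition order of a triangle identity stated in $\mathbf{Inv}^{op}$, which you rightly flag as the one delicate point. Your closing remark that the hom-set bijection can be exhibited directly via $\theta \mapsto \theta^{-1}\circ\eta_G$ and $\phi \mapsto \phi^{-1}\circ\varepsilon_S$ is precisely the paper's proof.
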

\begin{proof}
Given a continuous covering functor $\alpha \colon G \rightarrow \mathsf{G}(S)$,
we may construct the callitic map $\alpha^{-1} \varepsilon \colon S \rightarrow \mathsf{B}(G)$.
This is just the map
$$s \mapsto \alpha^{-1}(X_{s}).$$
Given a callitic map $\beta \colon S \rightarrow \mathsf{B}(G)$ we may construct the
continuous covering functor $\beta^{-1} \eta \colon G \rightarrow \mathsf{G}(S)$.
This is just the map
$$g \mapsto \beta^{-1} (F_{g}).$$
We shall that these two constructions are mutually inverse.

Let $\beta \colon S \rightarrow \mathsf{B}(G)$ be a callitic morphism of pseudogroups.
Define $\alpha (g) = \beta^{-1}(G_{g})$.
Then the map we get from $S$ to $\mathsf{B}(G)$
after applying the above procedures twice is the map $s \mapsto \alpha^{-1} (X_{s})$.
We have that
$g \in \alpha^{-1} (X_{s})
\Leftrightarrow \alpha (g) \in X_{s}
\Leftrightarrow \beta^{-1} (F_{g}) \in X_{s}
\Leftrightarrow s \in \beta^{-1} (F_{g})
\Leftrightarrow \beta (s) \in F_{g}
\Leftrightarrow g \in \beta (s)$.
It follows that $\beta (s) = \alpha^{-1}(X_{s})$, as required.

Let $\alpha \colon G \rightarrow \mathsf{G}(S)$ be a continuous covering functor.
Define $\beta (s) = \alpha^{-1} (X_{s})$.
Then then map we get from $G$ to $\mathsf{G}(S)$
after applying the above procedures twice is the map $g \mapsto \beta^{-1} (F_{g})$.
We have that
$s \in \beta^{-1} (F_{g})
\Leftrightarrow \beta (s) \in F_{g}
\Leftrightarrow \alpha^{-1}(X_{s}) \in F_{g}
 \Leftrightarrow g \in \alpha^{-1} (X_{s})
\Leftrightarrow \alpha (g) \in X_{s}
\Leftrightarrow s \in \alpha (g)$, as required.

Naturality is straightfoward to prove here and so we have an adjunction.
\end{proof}

The map $\eta \colon G \rightarrow \mathsf{G}(\mathsf{B} (G))$ of Proposition~2.11 is the
{\em unit} of the adjunction.
The map $\varepsilon \colon S \rightarrow \mathsf{B} (\mathsf{G} (S))$ of Proposition~2.9
is the {\em counit} of the adjunction.

Let $\mathbf{Inv}_{sp}$ be the category of spatial pseudogroups and callitic pseudogroup morphisms.
Let $\mathbf{Etale}_{so}$ be the category of sober \'etale groupoids and continuous covering functors.
From the above theorem, Proposition~2.12 and general category theory we have proved the following.

\begin{theorem}[Duality]
The category  $\mathbf{Inv}_{sp}^{op}$ is equivalent to the category  $\mathbf{Etale}_{so}$.
\end{theorem}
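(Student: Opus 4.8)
The plan is to apply the general principle that any adjunction cuts down to an equivalence between the full subcategory of objects on which the unit is invertible and the full subcategory of objects on which the counit is invertible, and then to match these two subcategories against the definitions of \emph{sober} and \emph{spatial}. Concretely, Theorem~2.22 exhibits $\mathsf{B} \colon \mathbf{Etale} \to \mathbf{Inv}^{op}$ as left adjoint to $\mathsf{G}$, with unit $\eta$ (components $\eta_{G} \colon G \to \mathsf{G}(\mathsf{B}(G))$ of Proposition~2.11) and counit $\varepsilon$ (components $\varepsilon_{S} \colon S \to \mathsf{B}(\mathsf{G}(S))$ of Proposition~2.9, read as a morphism $\mathsf{B}(\mathsf{G}(S)) \to S$ in $\mathbf{Inv}^{op}$). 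So everything reduces to pinning down where $\eta$ and $\varepsilon$ are isomorphisms.

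First I would identify the two subcategories. By the definition of sobriety, $\eta_{G}$ is a homeomorphism exactly for $G$ in $\mathbf{Etale}_{so}$; since a homeomorphism is bijective, its set-theoretic inverse is automatically a functor, and the inverse of a bijective covering functor is again a covering functor, so $\eta_{G}$ being a homeomorphism is the same as $\eta_{G}$ being an isomorphism in $\mathbf{Etale}$. Dually, Proposition~2.9(3) says $\varepsilon_{S}$ is an isomorphism of monoids precisely when $S$ is spatial; when $\varepsilon_{S}$ is a bijective pseudogroup $\wedge$-morphism it is callitic (its image is all of $\mathsf{B}(\mathsf{G}(S))$, so condition~(2) in the definition of callitic is vacuous) and so is its inverse, whence $\varepsilon_{S}$ is then an isomorphism in $\mathbf{Inv}^{op}$. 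Thus $\mathbf{Etale}_{so}$ is exactly the full subcategory on which $\eta$ is invertible, and $\mathbf{Inv}_{sp}^{op}$ is exactly the full subcategory on which $\varepsilon$ is invertible. These two ``upgrading'' verifications --- that a homeomorphic covering functor, respectively a bijective $\wedge$-morphism, is an honest isomorphism of the relevant category --- are the only delicate points, since they are where one must check that the categorical inverse again lies in the category; I expect them to be the main (though modest) obstacle.

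Finally I would confirm the functors restrict correctly and assemble the equivalence. By Proposition~2.12(2) the groupoid $\mathsf{G}(S)$ is always sober, so $\mathsf{G}$ carries $\mathbf{Inv}_{sp}^{op}$ into $\mathbf{Etale}_{so}$, and by Proposition~2.12(1) the pseudogroup $\mathsf{B}(G)$ is always spatial, so $\mathsf{B}$ carries $\mathbf{Etale}_{so}$ into $\mathbf{Inv}_{sp}^{op}$; equivalently this follows abstractly from the triangle identities $\varepsilon_{\mathsf{B}(G)} \circ \mathsf{B}(\eta_{G}) = \mathrm{id}$ and $\mathsf{G}(\varepsilon_{S}) \circ \eta_{\mathsf{G}(S)} = \mathrm{id}$. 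Restricting $\eta$ and $\varepsilon$ to these subcategories yields natural isomorphisms $\mathrm{Id}_{\mathbf{Etale}_{so}} \cong \mathsf{G}\mathsf{B}$ and $\mathsf{B}\mathsf{G} \cong \mathrm{Id}_{\mathbf{Inv}_{sp}^{op}}$, with naturality inherited from the adjunction data of Theorem~2.22 and so requiring no separate computation. Hence $\mathsf{G}$ and $\mathsf{B}$ are mutually quasi-inverse, giving the stated equivalence $\mathbf{Inv}_{sp}^{op} \simeq \mathbf{Etale}_{so}$.
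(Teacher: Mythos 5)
Your proposal is correct and follows essentially the same route as the paper, which simply invokes Theorem~2.22 (the adjunction), Proposition~2.12 (that $\mathsf{B}(G)$ is always spatial and $\mathsf{G}(S)$ is always sober) and ``general category theory'', i.e.\ the fact that an adjunction restricts to an equivalence between the full subcategories on which the unit and counit are invertible. Your additional checks --- that a homeomorphic covering functor and a bijective callitic $\wedge$-morphism are genuine isomorphisms in $\mathbf{Etale}$ and $\mathbf{Inv}^{op}$ respectively --- are exactly the details the paper leaves implicit, and they are handled correctly.
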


We have already remarked in the Introduction that the above theorem, at the level of objects, was first proved in \cite{MR}.

\section{Non-commutative Stone dualities}

The main goal of this section is to derive the duality theorems for boolean inverse semigroups proved in \cite{Law3,Law4} from Theorem~2.23,
however we shall begin by proving a duality theorem for a wider class of inverse semigroups.
An inverse semigroup $S$ is said to be {\em distributive} if it has joins of all {\em finite}
compatible subsets and multiplication distributes over the finite joins that exist.
A {\em morphism} of distributive inverse semigroups is a semigroup morphism
that preserves the joins of finite compatible subsets.
It is worth noting that distributive inverse monoids are of independent interest \cite{KM} 
and that they have a distributive lattice of idempotents.

A filter $F$ in a distributive inverse semigroup is said to be {\em prime} if
$a \vee b \in F$ implies that $a \in F$ or $b \in F$.
The class of prime filters in distributive inverse semigroups will play a key role in this section.

\subsection{$\mbox{Idl}$-completions}

The results of Section~2 were proved only for pseudogroups.
In order to obtain dualities for a wider class of inverse semigroups we shall need a way of completing inverse semigroups to pseudogroups.
The basis of this is a construction due to Boris Schein \cite{S,Law2}.
Let $S$ be an inverse semigroup.
Define $C(S)$ to be the set of all compatible order ideals of $S$ with subset multiplication as the operation.
Then $C(S)$ is a pseudogroup and the map $\iota \colon S \rightarrow C(S)$, given by $s \mapsto s^{\downarrow}$, is a semigroup homomorphism.
In addition, $C$ is left adjoint to the forgetful functor from the category
of pseudogroups and pseudogroup morphisms to the category of inverse semigroups and semigroup homomorphisms.
If $X$ and $Y$ are compatible subsets of $S$ then $X^{\downarrow}$ and $Y^{\downarrow}$ are compatible order ideals and $X^{\downarrow}Y^{\downarrow} = (XY)^{\downarrow}$.
Given a semigroup homomorphism $\theta \colon S \rightarrow T$, we may therefore define a function $\Theta \colon C(S) \rightarrow C(T)$
by $\Theta (A) = \theta (A)^{\downarrow}$ which is a semigroup homomorphism.
Observe that if $A = \bigcup_{i} A_{i}$ then $\Theta (A) = \bigcup_{i} \Theta (A_{i})$.
Thus $\Theta$ is a morphism of pseudogroups and gives an explicit description of the induced 
functor from the category of inverse semigroups and semigroup homomorphisms
to the category of pseudogroups and pseudogroup morphisms. 

When $S$ is a distributive inverse semigroup, 
we shall work with a cut down version of $C(S)$ which uses a finitary version of a construction used by Rinow \cite{R}.
An element $A$ of $C(S)$ is said to be {\em $\vee$-closed} if it is closed under joins of its finite subsets;
such subsets are necessarily compatible since $A$ is a compatible order ideal.
We denote by
 $\mbox{Idl} (S)$
the set of all $\vee$-closed elements of $C(S)$.
Observe that if $A$ is an order ideal
then it becomes a $\vee$-closed order ideal when we
include all the joins of finite subsets of $A$.
We denote this set by $A^{\vee}$ and call it the {\em $\vee$-closure of $A$}.
If $A$ is a compatible order ideal then $A^{\vee}$ is a $\vee$-closed compatible order ideal as can easily be verified.
The operation $A \mapsto A^{\vee}$ satisfies the following conditions:
\begin{description}

\item[{\rm (Cl1)}] $A \subseteq A^{\vee}$.

\item[{\rm (Cl2)}] If $A \subseteq B$ then $A^{\vee} \subseteq B^{\vee}$.

\item[{\rm (Cl3)}] $A^{\vee} = (A^{\vee})^{\vee}$.

\item[{\rm (Cl4)}] $A^{\vee}B^{\vee} = (AB)^{\vee}$.

\end{description}
The proofs are all straightforward except for (Cl4) which needs some comment.
The proof of the inclusion $A^{\vee}B^{\vee} \subseteq (AB)^{\vee}$ follows from the fact that multiplication distributes over compatible joins.
The proof of the reverse inclusion uses the fact that $A^{\vee}B^{\vee}$ is an order ideal. 

\begin{proposition} Let $S$ be a distributive inverse semigroup.
Then $\mbox{\rm Idl} (S)$ is a pseudogroup and the homomorphism $\iota \colon S \rightarrow \mbox{\rm Idl} (S)$
given by $s \mapsto s^{\downarrow}$
preserves binary joins of compatible pairs of elements.

In addition, $\mbox{\rm Idl}$ is left adjoint to the forgetful functor from the category
of pseudogroups and pseudogroup morphisms to the category of distributive inverse semigroups and their morphisms.

If $\theta \colon S \rightarrow T$ is a morphism of distributive inverse semigroups then
$\Theta \colon \mbox{\rm Idl}(S) \rightarrow \mbox{\rm Idl}(T)$ defined by $\Theta (A) = [\theta (A)^{\downarrow}]^{\vee}$
is the induced morphism of pseudogroups.

\end{proposition}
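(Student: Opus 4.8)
The plan is to build everything on top of the two facts already established: that $C(S)$ is a pseudogroup, and that $A \mapsto A^{\vee}$ satisfies (Cl1)--(Cl4). Conditions (Cl1)--(Cl3) say precisely that $A \mapsto A^{\vee}$ is a closure operator on $C(S)$, so $\mbox{Idl}(S)$ is exactly its set of fixed points. I would first record the operations this forces on $\mbox{Idl}(S)$. For multiplication, if $A,B \in \mbox{Idl}(S)$ then (Cl4) gives $AB = A^{\vee}B^{\vee} = (AB)^{\vee}$, so the subset product of two $\vee$-closed ideals is again $\vee$-closed; thus $\mbox{Idl}(S)$ is closed under the multiplication of $C(S)$. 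The inverse $A \mapsto A^{-1}$ preserves $\vee$-closure because inversion is an order-isomorphism respecting compatible joins, and the intersection of $\vee$-closed ideals is visibly $\vee$-closed, so binary meets are computed as in $C(S)$. The identity of $C(S)$ is the ideal $E(S)$ of idempotents, which is $\vee$-closed since in a distributive inverse semigroup a finite compatible join of idempotents is again an idempotent; hence $E(S) \in \mbox{Idl}(S)$ and $\mbox{Idl}(S)$ is a monoid. Joins of compatible families are the only operation that changes: the $C(S)$-join $\bigcup_i A_i$ need not be $\vee$-closed, so in $\mbox{Idl}(S)$ one takes $\bigvee_i A_i = (\bigcup_i A_i)^{\vee}$.

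With the operations in hand, infinite distributivity in $\mbox{Idl}(S)$ follows mechanically from (Cl4) together with infinite distributivity in $C(S)$: for instance $A \cdot \bigvee_i B_i = A^{\vee}(\bigcup_i B_i)^{\vee} = (A \cdot \bigcup_i B_i)^{\vee} = (\bigcup_i AB_i)^{\vee} = \bigvee_i (A \cdot B_i)$. This makes $\mbox{Idl}(S)$ a pseudogroup. To see that $\iota(s) = s^{\downarrow}$ preserves a compatible binary join $s \vee t$, I would show $(s \vee t)^{\downarrow} = (s^{\downarrow} \cup t^{\downarrow})^{\vee}$. The inclusion $\supseteq$ is immediate since $(s \vee t)^{\downarrow}$ is $\vee$-closed and contains $s^{\downarrow} \cup t^{\downarrow}$. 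For $\subseteq$, given $x \leq s \vee t$ I would write $x = (s \vee t)x^{-1}x = sx^{-1}x \vee tx^{-1}x$ using distributivity; since $sx^{-1}x \leq s$ and $sx^{-1}x \leq x$, and likewise for $t$, the two summands lie in $s^{\downarrow}$ and $t^{\downarrow}$ respectively and are compatible, so $x$ lies in $(s^{\downarrow} \cup t^{\downarrow})^{\vee}$.

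For the adjunction, let $U$ be the forgetful functor and let $f \colon S \to U(P)$ be a morphism of distributive inverse semigroups into a pseudogroup $P$. I would define $\hat{f} \colon \mbox{Idl}(S) \to P$ by $\hat{f}(A) = \bigvee_{a \in A} f(a)$. This join exists because $A$ is a compatible subset and $f$, being a homomorphism, sends it to a compatible family in the complete semigroup $P$. One checks $\hat{f}$ is a pseudogroup morphism: preservation of products reduces to $\bigvee_{a,b} f(a)f(b)$ on both sides using infinite distributivity of $P$; preservation of compatible joins is immediate from the description $\bigvee_i A_i = (\bigcup_i A_i)^{\vee}$ and the fact that the $f$-images of $\vee$-generated elements contribute nothing beyond the total join. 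Since $s$ is the top of $s^{\downarrow}$, we get $\hat{f}(\iota(s)) = f(s)$, i.e.\ $\hat{f}\iota = f$. Uniqueness follows because every $A \in \mbox{Idl}(S)$ satisfies $A = \bigvee_{a \in A} \iota(a)$ in $\mbox{Idl}(S)$, so any pseudogroup morphism extending $f$ is forced to equal $\hat{f}$. This establishes that $\mbox{Idl}$ is left adjoint to $U$ with unit $\iota$.

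Finally, the induced morphism $\Theta$ of a map $\theta \colon S \to T$ is, by functoriality of the left adjoint, the unique pseudogroup morphism extending $\iota_{T} \theta \colon S \to \mbox{Idl}(T)$; substituting $g = \iota_{T}\theta$ into the formula for $\hat{g}$ gives $\Theta(A) = \bigvee_{a \in A} \theta(a)^{\downarrow} = (\bigcup_{a \in A} \theta(a)^{\downarrow})^{\vee} = [\theta(A)^{\downarrow}]^{\vee}$, as claimed. I expect the main obstacle to be the adjunction step, and specifically checking that $\hat{f}$ is well-defined and multiplicative: one must confirm that the relevant joins actually exist in $P$ (compatibility of the family $\{f(a)\}$) and that multiplicativity survives passage to these possibly infinite joins, which is exactly where completeness and infinite distributivity of the target pseudogroup are indispensable. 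The remaining verifications, being closure-operator bookkeeping built on (Cl1)--(Cl4), are routine by comparison.
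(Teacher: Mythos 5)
Your proposal is correct and follows essentially the same route as the paper's proof: closure under products via (Cl4), joins obtained by reflecting the $C(S)$-join through the $\vee$-closure, distributivity from the closure axioms, and the adjunction witnessed by $\hat{f}(A)=\bigvee_{a\in A}f(a)$ with uniqueness forced by $A=\bigvee_{a\in A}\iota(a)$. You simply spell out in more detail several verifications (the distributivity computation, the identity $(s\vee t)^{\downarrow}=(s^{\downarrow}\cup t^{\downarrow})^{\vee}$, and the well-definedness and multiplicativity of $\hat{f}$) that the paper leaves as routine.
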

\begin{proof} It is clear that $\mbox{\rm Idl} (S)$ is closed under inverses, and it is closed under multiplication by (C4) above.
It follows that $\mbox{\rm Idl} (S)$ is an inverse semigroup.
Observe that $\mbox{\rm Idl} (S)$ is actually an inverse subsemigroup of $C(S)$ and so the natural partial orders agree.
A compatible set of elements in  $\mbox{\rm Idl} (S)$ has a join in $C(S)$ and this can be reflected into  $\mbox{\rm Idl} (S)$
using the operation $A \mapsto A^{\vee}$.
Thus every compatible subset of $\mbox{\rm Idl} (S)$ has a join.
It is now easy to prove using the properties of the $\vee$-closure operation that $\mbox{\rm Idl} (S)$ is infinitely distributive.
Observe that the idempotents of $\mbox{\rm Idl} (S)$ are the $\vee$-closed order ideals in the meet semilattice $E(S)$
and that there is a maximum idempotent $E(S)$ and so the semilattice of idempotents of $\mbox{\rm Idl} (S)$ forms a frame.
In the monoid case, this can be deduced from Corollary in Section~2.11 of \cite{J}.
It follows that $\mbox{\rm Idl} (S)$ is a pseudogroup.

The map $\iota \colon S \rightarrow \mbox{\rm Idl} (S)$ is a homomorphism.
Suppose that $c = a \vee b$ in $S$.
Clearly $\iota (a), \iota (b) \subseteq \iota (c)$.
But any {\em $\vee$-closed} element of $C(S)$ that contains $a$ and $b$ must contain $c$.
It follows that $\iota (c) = \iota (a) \vee \iota (b)$.

Let $\alpha \colon S \rightarrow T$ be a homomorphism to a pseudogroup that preserves finite compatible joins.
Then there is a unique morphism of pseudogroups $\bar{\alpha} \colon \mbox{\rm Idl} (S) \rightarrow T$
such that $\bar{\alpha} \iota = \alpha$ defined by $\bar{\alpha} (A) = \bigvee A$.

The proof of the last claim is routine.
\end{proof}

We call the pseudogroup $\mbox{\rm Idl}(S)$ the {\em $\mbox{\rm Idl}$-completion} of $S$.

Prime filters in distributive inverse semigroups and completely prime filters in their $\mbox{Idl}$-completions are related as follows.

\begin{lemma} \label{prime-vs-completelyprime} Let $S$ be a distributive inverse semigroup.

If $P$ is a prime filter in $S$ define
$$P^{u} = \{ A \in  \mbox{\rm Idl} (S) \colon  A \cap P \neq \emptyset \}.$$
Then $P^{u}$ is a completely prime filter in $\mbox{\rm Idl} (S)$.

If $F$ is a completely prime filter in $\mbox{\rm Idl} (S)$
define
$$F^{d} = \{s \in S \colon s^{\downarrow} \in F \}.$$
Then $F^{d}$ is a prime filter in $S$.

The above two operations are mutually inverse and set up an order
isomorphism between the poset of prime filters on $S$ and the poset of
completely prime filters on  $\mbox{\rm Idl} (S)$.
\end{lemma}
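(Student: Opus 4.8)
The plan is to verify the three assertions in turn, all of which rest on a few elementary facts about $\mbox{Idl}(S)$ that I would record first. Each principal ideal $s^{\downarrow}$ is itself $\vee$-closed (a join of elements below $s$ is again below $s$), so $\iota(s) = s^{\downarrow}$ genuinely lies in $\mbox{Idl}(S)$; meets in $\mbox{Idl}(S)$ are given by intersection, since the intersection of two $\vee$-closed compatible order ideals is again one; and the join is computed as $A \vee B = (A \cup B)^{\vee}$, which yields the two translation identities $(a \vee b)^{\downarrow} = a^{\downarrow} \vee b^{\downarrow}$ and, for any $A \in \mbox{Idl}(S)$, $A = \bigvee_{s \in A} s^{\downarrow}$ (the principal subideals of $A$ are pairwise compatible and their union is $A$, which is already $\vee$-closed). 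These identities are exactly what convert ``prime'' on $S$ into ``completely prime'' on $\mbox{Idl}(S)$ and back.

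For the first assertion I would check that $P^{u}$ is a proper filter and then that it is completely prime. Upward closure is immediate, and properness holds because $\{0\}$, the zero of $\mbox{Idl}(S)$, meets the proper filter $P$ in the empty set. Directedness uses that $P$ is directed together with the fact that meets are intersections: given $A, B \in P^{u}$, pick $a \in A \cap P$ and $b \in B \cap P$ and a common lower bound $c \in P$ of $a, b$, so that $c \in (A \cap B) \cap P$. For complete primeness, suppose $\bigvee_{i} A_{i} \in P^{u}$; unwinding the join as $(\bigcup_{i} A_{i})^{\vee}$ produces an element of $P$ lying below a finite join $a_{1} \vee \cdots \vee a_{n}$ with $a_{j} \in A_{i_{j}}$, so this finite join lies in $P$, and iterating primeness of $P$ places some $a_{j}$ in $P$, whence $A_{i_{j}} \in P^{u}$.

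The second assertion is where the real work lies. Checking that $F^{d}$ is upward closed and proper is routine, and primeness is immediate from the translation $(a \vee b)^{\downarrow} = a^{\downarrow} \vee b^{\downarrow}$ together with two-term complete primeness of $F$. The main obstacle is directedness of $F^{d}$: because $S$ need not be a $\wedge$-semigroup, for $s, t \in F^{d}$ the intersection $s^{\downarrow} \cap t^{\downarrow}$ --- which is the meet of $s^{\downarrow}, t^{\downarrow}$ in $\mbox{Idl}(S)$ and hence lies in $F$ --- need not be principal, so I cannot simply read off a common lower bound in $S$. The way around this is precisely complete primeness: I would write $s^{\downarrow} \cap t^{\downarrow}$ as the compatible join $\bigvee_{c \leq s, t} c^{\downarrow}$ of its principal subideals and apply complete primeness of $F$ to obtain some $c \leq s, t$ with $c^{\downarrow} \in F$, that is, $c \in F^{d}$.

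Finally I would establish that the two constructions are mutually inverse and order-preserving. The identity $(P^{u})^{d} = P$ reduces to the statement that $s^{\downarrow}$ meets $P$ exactly when $s \in P$, which is just upward closure of $P$. For $(F^{d})^{u} = F$, one inclusion uses $A = \bigvee_{s \in A} s^{\downarrow}$ and complete primeness of $F$ to produce $s \in A$ with $s^{\downarrow} \in F$, while the reverse uses that $s^{\downarrow} \leq A$ whenever $s \in A$ together with upward closure of $F$. Order-preservation of both maps is immediate from their definitions, so the two mutually inverse order-preserving bijections assemble into the desired order isomorphism.
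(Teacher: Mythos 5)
Your proposal is correct and follows essentially the same route as the paper's proof: the same translation identities $(a\vee b)^{\downarrow}=a^{\downarrow}\vee b^{\downarrow}$ and $A=\bigvee_{s\in A}s^{\downarrow}$, the same use of complete primeness of $F$ applied to $s^{\downarrow}\cap t^{\downarrow}$ to get directedness of $F^{d}$, and the same unwinding of the $\vee$-closure to get complete primeness of $P^{u}$. The only difference is that you spell out the preliminary facts and the mutually-inverse verification that the paper dismisses as routine.
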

\begin{proof}
Clearly the set $P^{u}$ is closed upwards.
Let $A,B \in P^{u}$.
Then $s \in A \cap P$ and $t \in B \cap P$.
But $P$ is a filter and so there exists $p \in P$ such that $p \leq s,t$.
But then $p \in A \cap B$ and so $P^{u}$ is closed under binary intersections.
Suppose that $\bigvee A_{i} \in P^{u}$.
Thus there exists $p \in P$ such that $p \in \bigvee_{i} A_{i}$.
By definition $p = \vee_{j=1}^{m} a_{j}$ for some finite set of elements $a_{j}$ in the $A_{i}$.
But $P$ is a prime filter and so $a_{k} \in P$ for some $k$.
Thus one of the $A_{i}$, the one containing $a_{k}$, belongs to $P^{u}$ as required.
Thus $P^{u}$ is a completely prime filter.

We now show that $F^{d}$ is a prime filter.
Let $s,t \in F^{d}$.
Then $s^{\downarrow},t^{\downarrow} \in F$.
Thus $A = s^{\downarrow} \cap t^{\downarrow} \in F$.
Now $A = \bigvee_{a \in A} a^{\downarrow} \in F$ and so $a^{\downarrow} \in F$ for some $a \in A$.
Thus $a \in F^{d}$ and $a \leq s,t$.
It is clear that $F^{d}$ is closed upwards.
It remains to show that $F^{d}$ is a prime filter.
Let $s \vee t \in F^{d}$.
Then $(s \vee t)^{\downarrow} \in F$.
But $(s \vee t)^{\downarrow} = s^{\downarrow} \vee t^{\downarrow} \in F$.
It follows that $s^{\downarrow} \in F$ or $t^{\downarrow} \in F$.
Thus $s \in F$ or $t \in F$.

It is now routine to check that these two operations are mutually inverse and order-preserving.
\end{proof}

\subsection{Coherent pseudogroups}

We shall now characterize the pseudogroups that arise as  $\mbox{\rm Idl}$-completions.
To do this we need the following definition.
We say that the compatible subset $X$ of a pseudogroup $S$ is a {\em covering} of the element $a$ if $a \leq \bigvee X$.
An element $a \in S$ in a pseudogroup $S$ is said to be {\em finite} if for any compatible subset $X \subseteq S$ such that
$a \leq \bigvee X$ there exists a finite subset $Y$ of $X$ such that $a \leq \bigvee Y$.
In other words, every covering has a finite subcovering.
In the case of the frames of open sets of a topological space the finite elements are just the compact ones.
It is worth noting that the inequalities can be replaced by equalities; see page~63 of \cite{J}.
We denote the set of finite elements of a pseudogroup $S$ by $K(S)$.

\begin{lemma} Let $S$ be a pseudogroup.
\begin{enumerate}

\item If $a$ is finite then $a^{-1}$ is finite.

\item If $a$ is any element and $e$ is a finite idempotent such that $e \leq a^{-1}a$ then $ae$ is finite.

\item If $a$ is finite then $a^{-1}a$ is finite, and dually.

\item If $a$ and $b$ are finite and $a^{-1}a = bb^{-1}$ then $ab$ is finite.

\end{enumerate}
\end{lemma}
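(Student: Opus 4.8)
The statement has four parts, and the natural strategy is to prove them roughly in the order (1), (3), (2), (4), because each later part tends to reuse the inequality manipulations established in the earlier ones. The common tool throughout is the interplay between coverings and the order-theoretic identities $\mathbf{d}(a) = a^{-1}a$, $\mathbf{r}(a) = aa^{-1}$, together with infinite distributivity of multiplication over compatible joins. The guiding heuristic is that multiplying a covering on the left or right by a fixed element (or its inverse) transports coverings to coverings, because if $a \leq \bigvee X$ then $sa \leq \bigvee sX$ and $as \leq \bigvee Xs$, and conversely one can pull a covering back by multiplying by the appropriate inverse.

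\textbf{Parts (1) and (3).} For (1), suppose $a^{-1} \leq \bigvee X$ with $X$ compatible. Applying the order-reversing–free operation of inversion, which preserves the order and sends joins to joins, I would obtain $a \leq \bigvee X^{-1}$, and $X^{-1}$ is again compatible; finiteness of $a$ then yields a finite subcovering $Y^{-1} \subseteq X^{-1}$, whose inverse $Y$ is the desired finite subcovering of $a^{-1}$. Part (3) is the key lemma for everything that follows. To show $a^{-1}a$ is finite, suppose $a^{-1}a \leq \bigvee_i x_i$ with the $x_i$ compatible; I would then left-multiply by $a$ to get $a = a(a^{-1}a) \leq \bigvee_i a x_i$ (using $a = a a^{-1} a$ and distributivity), and the $ax_i$ are still compatible since compatibility is preserved by multiplication. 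Finiteness of $a$ gives a finite subfamily with $a \leq \bigvee_{i \in Y} a x_i$; multiplying back by $a^{-1}$ on the left and using $a^{-1}a \leq \bigvee_i x_i$ to reabsorb, I would recover $a^{-1}a \leq \bigvee_{i \in Y} a^{-1} a x_i \leq \bigvee_{i\in Y} x_i$, a finite subcovering. The dual statement for $aa^{-1} = (a^{-1})^{-1}(a^{-1})$ follows by combining this with (1).

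\textbf{Parts (2) and (4).} For (2), given $e \leq a^{-1}a$ a finite idempotent, suppose $ae \leq \bigvee_i x_i$. Left-multiplying by $a^{-1}$ gives $e = (a^{-1}a)e \leq a^{-1}(ae) \leq \bigvee_i a^{-1} x_i$; since $e$ is finite, a finite subfamily covers $e$, and right/left-multiplying back by $a$ transports this to a finite subcovering of $ae$. Part (4) then follows most cleanly by reduction to the previous parts: when $a$ and $b$ are finite with $a^{-1}a = bb^{-1}$, I would write $ab$ and analyze a covering $ab \leq \bigvee_i x_i$ by peeling off $b$ on the right (multiply by $b^{-1}$, use $(ab)(ab)^{-1} = a(bb^{-1})a^{-1}$ and finiteness of $a$) and $a$ on the left, invoking (2) and (3) to handle the idempotent pieces $a^{-1}a = bb^{-1}$ that appear.

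\textbf{The main obstacle.} I expect the genuine difficulty to lie in part (3), and specifically in the step where, after extracting a finite subcovering of $a$ from the transported covering $\{ax_i\}$, I must multiply back by $a^{-1}$ and \emph{correctly reabsorb} the idempotent factors so that the resulting finite family still covers $a^{-1}a$ rather than something strictly smaller. The subtlety is that $a^{-1}(a x_i) = (a^{-1}a) x_i$ need not equal $x_i$, so I must use the original hypothesis $a^{-1}a \leq \bigvee_i x_i$ together with distributivity to argue $a^{-1}a = (a^{-1}a)\bigvee_i x_i = \bigvee_i (a^{-1}a)x_i$ and then restrict the join to the finite index set $Y$. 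Getting this bookkeeping exactly right — keeping track of which joins are over the full family versus the finite subfamily, and ensuring all intermediate sets remain compatible — is where the care is needed; once (3) is secured in both its form and its dual, parts (2) and (4) are comparatively routine multiplicative manipulations.
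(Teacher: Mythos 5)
Your proposal is correct and follows essentially the same route as the paper: in each part one transports a covering by multiplying by $a$, $a^{-1}$, or $b^{-1}$ as appropriate, extracts a finite subcover using the finiteness hypothesis, and multiplies back, with the reabsorption step being just the standard fact that $ex \leq x$ when $e$ is an idempotent (so the "main obstacle" you flag in part (3) is in fact immediate and needs no appeal to the original covering hypothesis). The only cosmetic difference is your ordering (1), (3), (2), (4) versus the paper's (1), (2), (3), (4), which is immaterial since (2) and (3) are independent of each other.
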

\begin{proof} (1) Straightforward.

(2) Let $a$ be any element and $e$ a finite idempotent $e \leq a^{-1}a$.
We prove that $ae$ is finite.
Suppose that $ae \leq \bigvee x_{i}$.
Then $e = a^{-1}ae \leq \bigvee a^{-1}x_{i}$.
But $e$ is finite and so $e \leq \bigvee_{i=1}^{m} a^{-1}x_{i}$.
Thus $ae \leq \bigvee_{i=1}^{m} aa^{-1}x_{i} \leq \bigvee_{i=1}^{m} x_{i}$.
It follows that $ae$ is finite.

(3) Let $a$ be any finite element.
Suppose that $a^{-1}a \leq \bigvee x_{i}$.
Then $a \leq \bigvee ax_{i}$.
Thus $a \leq \bigvee_{i=1}^{m} ax_{i}$ since $a$ is finite.
Hence $a^{-1}a \bigvee_{i=1}^{m} a^{-1}ax_{i} \leq  \bigvee_{i=1}^{m} x_{i}$.
Thus $a^{-1}a$ is finite.

(4) Let $a$ and $b$ be any finite elements where $a^{-1}a = bb^{-1}$.
We prove that $ab$ is finite.
Suppose that $ab \leq \bigvee x_{i}$.
Then $a^{-1}abb^{-1} \leq \bigvee a^{-1}x_{i}b^{-1}$.
By assumption $a^{-1}abb^{-1}$ is a finite idempotent.
Thus we may write $a^{-1}abb^{-1} \leq \bigvee_{i=1}^{m} a^{-1}x_{i}b^{-1}$.
Hence $ab  \leq \bigvee_{i=1}^{m} aa^{-1}x_{i}b^{-1}b \leq \bigvee_{i=1}^{m} x_{i}$.
It follows that $ab$ is finite.
\end{proof}

The above lemma tells us that the finite elements in a pseudogroup always form an ordered groupoid \cite{Law2}.

\begin{lemma} Let $S$ be a pseudogroup.
\begin{enumerate}

\item The finite elements of $S$ form an inverse subsemigroup if and only if the finite idempotents form a subsemigroup.

\item If the finite elements form an inverse subsemigroup they form a distributive inverse semigroup.

\item Every element of $S$ is a join of finite elements if and only if every idempotent is a join of finite idempotents.

\end{enumerate}
\end{lemma}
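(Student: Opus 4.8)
The plan is to prove each of the three equivalences separately, noting that in each case one implication is a triviality and the substantive content lies in bootstrapping a property of the idempotents up to a property of the whole semigroup. The unifying mechanism throughout will be Lemma~2.25, which relates finiteness of a general element $a$ to finiteness of the idempotents $a^{-1}a$ and $aa^{-1}$, together with the fact that every element $a$ satisfies $a = a \cdot a^{-1}a$ and can be reconstructed from its domain idempotent and any element with the same domain.

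For part~(1), the forward direction is immediate since the finite idempotents are a subset of the finite elements. For the converse, suppose the finite idempotents $K(S) \cap E(S)$ form a subsemigroup; I must show $K(S)$ is closed under products and inverses. Closure under inverses is exactly Lemma~2.25(1). For products, given finite $a,b$, the issue is that $ab$ need not satisfy the hypothesis $a^{-1}a = bb^{-1}$ of Lemma~2.25(4), so the plan is to force matching idempotents. First, $a^{-1}a$ and $bb^{-1}$ are finite idempotents by Lemma~2.25(3); their product (a meet, since idempotents commute) $e = a^{-1}a \wedge bb^{-1} = a^{-1}abb^{-1}$ is finite by the subsemigroup hypothesis. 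Then $ae$ and $eb$ are finite by Lemma~2.25(2) and its dual, and $(ae)^{-1}(ae) = e\, a^{-1}a\, e = e = eb\,b^{-1}e = (eb)(eb)^{-1}$, so Lemma~2.25(4) gives $aeb = (ae)(eb)$ finite; but $aeb = a(a^{-1}abb^{-1})b = ab$, so $ab \in K(S)$.

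For part~(2), assuming $K(S)$ is an inverse subsemigroup, I must exhibit finite compatible joins and distributivity. The key observation is that if $a,b \in K(S)$ are compatible, their join $a \vee b$ (taken in $S$) is again finite: given a covering $a \vee b \leq \bigvee x_i$, we have $a \leq \bigvee x_i$ and $b \leq \bigvee x_i$, and finiteness of each of $a,b$ yields finite subcoverings whose union is a finite subcovering of $a \vee b$. Thus $K(S)$ is closed under finite compatible joins, and distributivity of multiplication over these joins is simply inherited from the infinite distributivity already present in $S$. This makes $K(S)$ a distributive inverse semigroup.

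For part~(3), the forward direction is again trivial. For the converse, assume every idempotent is a join of finite idempotents and take an arbitrary $a \in S$. The strategy is to write $a^{-1}a = \bigvee_i e_i$ with each $e_i$ finite and $e_i \leq a^{-1}a$, then use infinite distributivity to obtain $a = a(a^{-1}a) = a\bigvee_i e_i = \bigvee_i a e_i$; each $ae_i$ is finite by Lemma~2.25(2), exhibiting $a$ as a join of finite elements. I expect the main obstacle to be the product case of part~(1): the naive appeal to Lemma~2.25(4) fails because the domain and range idempotents of $a$ and $b$ need not match, and the argument only works because one can first truncate both factors to the common finite idempotent $e = a^{-1}abb^{-1}$ while verifying that this truncation leaves the product $ab$ unchanged. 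The remaining verifications are routine consequences of infinite distributivity and the restriction results in Lemma~2.25.
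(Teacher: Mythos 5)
Your proposal is correct and follows essentially the same route as the paper's proof: part (1) via the truncation $ab=(ae)(eb)$ with $e=a^{-1}abb^{-1}$, part (2) by observing that compatible finite elements have a finite join, and part (3) by writing $a^{-1}a=\bigvee e_i$ and distributing to get $a=\bigvee ae_i$. The only difference is that you spell out the verifications (matching domain/range idempotents, $aeb=ab$, the finite-subcover argument for $a\vee b$) that the paper leaves implicit; your ``Lemma~2.25'' is the paper's Lemma~3.3.
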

\begin{proof} (1) Let $a$ and $b$ be arbitrary finite elements.
Then $a^{-1}a$ and $bb^{-1}$ are both finite and so $e = a^{-1}abb^{-1}$ is finite
and consequently $ab = (ae)(eb)$ is finite.

(2) Observe that if $a$ and $b$ are compatible finite elements then $a \vee b$ is finite.

(3) Only one direction needs proving.
Suppose that every idempotent is a join of finite idempotents.
Let $a$ be an arbitrary element.
By assumption we may write $a^{-1}a = \bigvee e_{i}$ where $e_{i} \leq a^{-1}a$ and are finite.
Thus $a = \bigvee ae_{i}$ and by Lemma~3.18(1) the elements $ae_{i}$ are all finite.
\end{proof}

A pseudogroup $S$ is said to be {\em coherent} if the set of its finite elements forms a distributive inverse subsemigroup
and if every element of $S$ is a join of finite elements.
  
\begin{proposition} A pseudogroup $S$ is coherent if and only if there exists a distributive inverse semigroup $T$ such
that $S$ is isomorphic to $\mbox{\rm Idl} (T)$.  
In fact, any coherent pseudogroup $S$ is canonically isomorphic to $\mbox{\rm Idl} (K(S))$.
\end{proposition}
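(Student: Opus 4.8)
The plan is to prove the stronger ``in fact'' statement, from which the biconditional follows at once: if $S$ is coherent then $K(S)$ is a distributive inverse semigroup by the very definition of coherence and so provides the required $T$, while conversely every pseudogroup of the form $\mbox{Idl}(T)$ will be shown to be coherent. I would begin with the forward direction. Suppose $S$ is coherent, so that $K(S)$ is a distributive inverse subsemigroup of $S$ and the inclusion $\alpha \colon K(S) \rightarrow S$ is a morphism of distributive inverse semigroups into the pseudogroup $S$, the finite compatible joins in $K(S)$ being computed as in $S$ by Lemma~3.19(2). By the adjunction of Proposition~3.11, $\alpha$ extends to a unique pseudogroup morphism $\bar{\alpha} \colon \mbox{Idl}(K(S)) \rightarrow S$ given by $\bar{\alpha}(A) = \bigvee A$, the join being taken in $S$ and existing because $A$ is a compatible subset of the complete inverse monoid $S$. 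This $\bar{\alpha}$ is my candidate isomorphism.

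Next I would show $\bar{\alpha}$ is a bijection. For surjectivity, let $s \in S$; by coherence $s = \bigvee_{i} e_{i}$ for finite elements $e_{i}$, which are pairwise compatible since the join exists, so the $\vee$-closure $A$ of the order ideal of $K(S)$ generated by $\{e_{i}\}$ is a well-defined element of $\mbox{Idl}(K(S))$, and every adjoined element lies below $s$, whence $\bar{\alpha}(A) = \bigvee A = s$. For injectivity, suppose $\bigvee A = \bigvee B$ for $A, B \in \mbox{Idl}(K(S))$ and let $a \in A$. Then $a$ is finite and $a \leq \bigvee B$, so since $B$ is a compatible family there is a finite subset $b_{1}, \ldots, b_{m} \in B$ with $a \leq \bigvee_{j=1}^{m} b_{j}$; because $B$ is $\vee$-closed the element $b = \bigvee_{j=1}^{m} b_{j}$ lies in $B$, and because $B$ is an order ideal the inequality $a \leq b$ forces $a \in B$. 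Thus $A \subseteq B$, and $B \subseteq A$ by symmetry, giving $A = B$. A bijective pseudogroup morphism is an isomorphism, so $S \cong \mbox{Idl}(K(S))$, establishing both the forward implication and the canonical form.

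For the converse I would identify the finite elements of an arbitrary $\mbox{Idl}(T)$, claiming that $K(\mbox{Idl}(T))$ consists precisely of the principal ideals $s^{\downarrow}$ with $s \in T$. Each $s^{\downarrow}$ is finite: if $s^{\downarrow} \leq \bigvee_{i} A_{i}$ then $s$ lies in the $\vee$-closed ideal $\bigvee_{i} A_{i}$, hence $s = \bigvee_{j=1}^{m} a_{j}$ with each $a_{j}$ in some $A_{i}$, so $s^{\downarrow}$ is dominated by the join of finitely many of the $A_{i}$. Conversely, any $A \in \mbox{Idl}(T)$ satisfies $A = \bigvee_{a \in A} a^{\downarrow}$, so if $A$ is finite then $A \leq \bigvee_{k=1}^{n} a_{k}^{\downarrow}$ for finitely many $a_{k} \in A$, and $\vee$-closedness gives $A = (\bigvee_{k=1}^{n} a_{k})^{\downarrow}$, a principal ideal. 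Hence $K(\mbox{Idl}(T)) = \iota(T) \cong T$ is a distributive inverse subsemigroup, and since every element of $\mbox{Idl}(T)$ is a join of the principal ideals it contains, $\mbox{Idl}(T)$ is coherent; transporting coherence across $S \cong \mbox{Idl}(T)$ completes this direction.

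The main obstacle is the injectivity of $\bar{\alpha}$, where the defining property of finiteness, that every covering admits a finite subcovering, must be combined with $\vee$-closedness and the order-ideal condition to recover the membership $a \in B$ from the mere equality $\bigvee A = \bigvee B$. This is the one point at which coherence is genuinely used; the remainder is formal bookkeeping with the $\vee$-closure operation and the universal property of Proposition~3.11.
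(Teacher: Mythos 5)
Your proposal is correct and follows essentially the same route as the paper: the paper also identifies the finite elements of $\mbox{Idl}(T)$ as the principal ideals $t^{\downarrow}$ by exactly your covering argument, and for a coherent $S$ it defines the same map $A \mapsto \bigvee A$ on $\mbox{Idl}(K(S))$ and proves injectivity by the identical combination of finiteness, $\vee$-closedness and the order-ideal property. The only cosmetic difference is that you obtain this map from the universal property of the $\mbox{Idl}$-adjunction, whereas the paper writes it down directly and checks it is a homomorphism.
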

\begin{proof} 
Let $T$ be a distributive inverse semigroup.
We prove first  that the finite elements of $\mbox{\rm Idl} (T)$ are precisely the elements of the form $t^{\downarrow}$ where $t \in T$.
Observe that $t^{\downarrow} = \overline{t^{\downarrow}}$.
Let $t^{\downarrow} = \bigvee A_{i}$.
Then $t$ is in the $\vee$-closure of  $\bigcup A_{i}$.
Thus there is a finite set of elements $a_{1}, \ldots, a_{m} \in \bigcup A_{i}$ such that $t = \vee a_{j}$.
But this implies that $t^{\downarrow}$ is the join of only finitely many of the $A_{i}$.
Thus $t^{\downarrow}$ is finite.
Suppose now that $A$ is a finite element.
We have that $A = \bigvee_{a \in A} a^{\downarrow}$.
By assumption there are finitely many elements $a_{1}, \ldots, a_{m} \in A$ such that
$A = \bigvee a_{i}^{\downarrow}$.
But if $a = \vee a_{i}$ then $A = a^{\downarrow}$, as required.
Clearly, every element of  $\mbox{\rm Idl} (T)$ is a compatible join of finite elements.
It follows that $\mbox{\rm Idl} (T)$ is coherent and that its finite elements form a distributive
inverse semigroup isomorphic to $T$. 

Now suppose that $S$ is a coherent pseudogroup.
Put $T = K(S)$, a distributive inverse semigroup by assumption.
Define $\theta \colon \mbox{\rm Idl} (T) \rightarrow S$ by $\theta (A) = \bigvee A$.
This is surjective since every element of $S$ is the join of finite elements.
Suppose that $\theta (A) = \theta (B)$.
Let $a \in A$.
Then $a \leq \bigvee A$.
Thus $a \leq \bigvee B$.
But $a$ is a finite element and so there is a finite subset $b_{1}, \ldots, b_{m} \in B$ such that
$a \leq \vee_{i} b_{i}$.
But $B$ is  $\vee$-closed and so $\bigvee_{i} b_{i} \in B$ that implies $a \in B$.
We have proved that $A \subseteq B$.
The reverse inclusion is proved similarly.
It follows that $\theta$ is a bijection.
It is clearly a homomorphism.
We have proved that $K(\mbox{\rm Idl} (T))$ is isomorphic to $S$.
\end{proof}

A pseudogroup morphism between coherent pseudogroups is said to be {\em coherent} if it preserves finite elements.
The proof of the following is now straightforward. In fact, the previous lemma just gives the object-part of the statement.

\begin{lemma}
The category of distributive inverse semigroups and their morphisms is equivalent to the category of coherent pseudogroups
and coherent pseudogroup morphisms.
\end{lemma}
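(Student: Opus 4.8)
The plan is to upgrade the object-level bijection of Proposition~3.20 to an equivalence of categories by constructing mutually inverse functors together with the required natural isomorphisms. In one direction I would use the functor $\mbox{Idl}$, and in the other the functor $K$ that sends a coherent pseudogroup to its distributive inverse semigroup of finite elements.

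First I would check that $\mbox{Idl}$ lands in the category of coherent pseudogroups and coherent morphisms. On objects this is Proposition~3.20; on a morphism $\theta \colon S \rightarrow T$ of distributive inverse semigroups the induced map is $\Theta(A) = [\theta(A)^{\downarrow}]^{\vee}$ as in Proposition~3.16. The one point genuinely worth verifying is that $\Theta$ is \emph{coherent}, i.e.\ that it preserves finite elements. By Proposition~3.20 the finite elements of $\mbox{Idl}(S)$ are exactly the principal ideals $s^{\downarrow}$, and since a principal ideal is already $\vee$-closed one computes $\Theta(s^{\downarrow}) = [\theta(s)^{\downarrow}]^{\vee} = \theta(s)^{\downarrow}$, a finite element of $\mbox{Idl}(T)$. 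Thus $\Theta$ preserves finite elements, and functoriality (preservation of identities and composites) is then routine.

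In the reverse direction I would define $K$ on a coherent morphism $\phi \colon S \rightarrow S'$ by restriction to finite elements. Coherence of $\phi$ guarantees that $\phi$ maps $K(S)$ into $K(S')$, and since $\phi$ is a pseudogroup morphism it is a semigroup homomorphism that preserves compatible joins; restricting to the distributive inverse subsemigroups $K(S)$ and $K(S')$ (which are distributive inverse semigroups by coherence) therefore yields a morphism of distributive inverse semigroups. Functoriality of $K$ is again immediate.

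It then remains to check that the two isomorphisms of Proposition~3.20 are natural. For the isomorphism $T \rightarrow K(\mbox{Idl}(T))$, $t \mapsto t^{\downarrow}$, naturality is precisely the identity $\Theta(t^{\downarrow}) = \theta(t)^{\downarrow}$ established above. For the isomorphism $\mbox{Idl}(K(S)) \rightarrow S$, $A \mapsto \bigvee A$, naturality reduces to the fact that a pseudogroup morphism preserves compatible joins, giving $\phi(\bigvee A) = \bigvee \phi(A) = \bigvee [\phi(A)^{\downarrow}]^{\vee}$. Since every step follows directly once Proposition~3.20 is in hand, there is no genuine obstacle here; the only place demanding attention is the verification that $\mbox{Idl}$ carries arbitrary morphisms to coherent ones, which is exactly where the description of the finite elements as principal ideals is used.
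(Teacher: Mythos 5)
Your proposal is correct and follows exactly the route the paper intends: the paper offers no written proof beyond the remark that the preceding proposition supplies the object part and the rest is straightforward, and your argument fills in precisely those remaining steps (that $\mbox{Idl}$ sends morphisms to coherent morphisms via $\Theta(s^{\downarrow})=\theta(s)^{\downarrow}$, that restriction to finite elements gives the inverse functor, and that the two isomorphisms of the preceding proposition are natural).
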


Let $\theta \colon S \rightarrow T$ be a morphism of distributive inverse semigroups and let
$\Theta \colon \mbox{Idl}(S) \rightarrow \mbox{Idl}(T)$ be the induced morphism of pseudogroups.
Then $\Theta$ is a pseudogroup $\wedge$-morphism if and only if $\theta$ satisfies the following condition:
\begin{description}

\item[{\rm (DC1)}] If $t \leq \theta (s_{1}), \theta (s_{2})$ then there exists $s \leq s_{1},s_{2}$ such that $t \leq \theta (s)$.

\end{description}
Consider now the following condition on $\theta$:
\begin{description} 

\item[{\rm (DC2)}] For each prime filter $P$ in $T$ the inverse image $\theta^{-1}(P)$ is non-empty

\end{description}
Observe that assuming (DC1), condition (DC2) implies that the inverse images of prime filters under $\theta$ are prime filters.
We claim that $\Theta$ is callitic if and only if $\theta$ satisfies (DC1) and (DC2).
This essentially follows by Proposition~3.5 and the fact that $\mbox{Idl}(S)$ and $\mbox{Idl}(T)$ are coherent.
We say that a morphism of distributive inverse semigroups is {\em callitic} if its satisfies (DC1) and (DC2).
We may now refine Lemma~3.6 as follows.

\begin{proposition}
The category of distributive inverse semigroups and their callitic morphisms is equivalent to the category of coherent pseudogroups
and callitic coherent pseudogroup morphisms.
\end{proposition}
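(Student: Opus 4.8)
The plan is to leverage the equivalence already established in Lemma~3.6 between distributive inverse semigroups with all their morphisms and coherent pseudogroups with coherent pseudogroup morphisms, and to show that it restricts to the subcategories cut out by calliticity. Since both categories appearing in Proposition~3.7 have exactly the objects of Lemma~3.6, everything reduces to verifying that the mutually inverse functors $\mbox{Idl}$ and $K$ (restriction to finite elements) set up a bijection between callitic morphisms of distributive inverse semigroups and callitic coherent pseudogroup morphisms. Concretely, for a morphism $\theta \colon S \to T$ of distributive inverse semigroups with induced pseudogroup morphism $\Theta \colon \mbox{Idl}(S) \to \mbox{Idl}(T)$, I will prove that $\theta$ is callitic (that is, satisfies (DC1) and (DC2)) if and only if $\Theta$ is callitic; this is precisely the claim recorded just before the statement.

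First I would dispose of the $\wedge$-morphism requirement: condition~(1) in the definition of a callitic pseudogroup map asks that $\Theta$ be a pseudogroup $\wedge$-morphism, and this is exactly equivalent to (DC1), as already observed. It remains to match condition~(2)---that every completely prime filter of $\mbox{Idl}(T)$ meet $\mbox{im}(\Theta)$---with (DC2). The key tool here is the order isomorphism of Lemma~\ref{prime-vs-completelyprime} between prime filters $P$ of $T$ and completely prime filters $F = P^{u}$ of $\mbox{Idl}(T)$. I would also use the computation $\Theta(s^{\downarrow}) = \theta(s)^{\downarrow}$ (the downward closure of $\theta(s^{\downarrow})$ is $\theta(s)^{\downarrow}$, which is already $\vee$-closed); this simultaneously shows that $\Theta$ is coherent, since by Proposition~3.5 the finite elements of $\mbox{Idl}(T)$ are exactly the principal ideals $t^{\downarrow}$.

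For the implication (DC2) $\Rightarrow$ condition~(2), given a completely prime filter $F = P^{u}$, condition~(DC2) furnishes $s \in S$ with $\theta(s) \in P$; then $\Theta(s^{\downarrow}) = \theta(s)^{\downarrow}$ meets $P$ and hence lies in $P^{u} = F$, witnessing $F \cap \mbox{im}(\Theta) \neq \emptyset$. The reverse implication is the more delicate step and the one I expect to be the main obstacle: starting from $\Theta(A) \in P^{u}$ for some $A \in \mbox{Idl}(S)$, one obtains an element $p \in P$ lying in the $\vee$-closure $[\theta(A)^{\downarrow}]^{\vee}$, so that $p = \bigvee_{j=1}^{m} c_{j}$ with each $c_{j} \leq \theta(a_{j})$ for some $a_{j} \in A$; the primeness of $P$ (applied by a finite induction on the join) then extracts a single index $k$ with $c_{k} \in P$, and upward closure gives $\theta(a_{k}) \in P$, so $\theta^{-1}(P) \neq \emptyset$ and (DC2) holds. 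The care needed is in correctly unwinding the definition of the $\vee$-closure and in iterating the defining property of a prime filter over the finite join.

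With the correspondence ``$\theta$ callitic $\Leftrightarrow$ $\Theta$ callitic'' in hand, the conclusion follows formally: $\mbox{Idl}$ carries callitic morphisms to callitic coherent pseudogroup morphisms (calliticity from (DC1) and (DC2), coherence from $\Theta(s^{\downarrow}) = \theta(s)^{\downarrow}$), while $K$ carries a callitic coherent morphism $\Phi$ back to its restriction $\theta = K(\Phi)$, which is callitic because $\Phi$ is naturally isomorphic to $\mbox{Idl}(\theta)$ under the isomorphisms of Lemma~3.6 and calliticity is preserved along that isomorphism. Hence the equivalence of Lemma~3.6 restricts to the asserted equivalence of the callitic subcategories.
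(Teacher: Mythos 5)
Your proposal is correct and follows essentially the route the paper intends: it restricts the equivalence of Lemma~3.6 to the callitic subcategories by verifying, via the prime/completely-prime filter correspondence of Lemma~3.2 and the identification $\Theta(s^{\downarrow}) = \theta(s)^{\downarrow}$ of finite elements from Proposition~3.5, that $\theta$ satisfies (DC1) and (DC2) exactly when $\Theta$ is a callitic (and coherent) pseudogroup morphism. The paper leaves these details as an assertion ("this essentially follows by Proposition~3.5 and the fact that $\mbox{Idl}(S)$ and $\mbox{Idl}(T)$ are coherent"), and your write-up supplies precisely the missing verification, including the finite-join primeness argument needed for the direction from condition~(2) back to (DC2).
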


\subsection{Non-commutative Stone duality for distributive inverse semigroups}

In distributive inverse semigroups, we assume that finite compatible joins exist but we make no assumption about the existence of meets.
However, for those finite subsets where meets do exist the following lemma shows that they behave as expected with respect to joins.
It is just the finitary case of \cite{R3}.

\begin{lemma} Let $S$ be a distributive inverse semigroup.
Suppose that $a \vee b$ exists and that $c \wedge (a \vee b)$ exists.
Then $c \wedge a$ and $c \wedge b$ both exist, the join $(c \wedge a) \vee (c \wedge b)$ exists and
$$c \wedge (a \vee b)
=
(c \wedge a) \vee (c \wedge b).$$
\end{lemma}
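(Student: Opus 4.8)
The statement asserts a distributivity law for meets over binary joins, but only under hypotheses guaranteeing that the relevant joins and meets exist. The plan is to work throughout in the natural partial order of the inverse semigroup and to exploit the characterization of $\wedge$ and $\vee$ as, respectively, the greatest lower bound and the least upper bound of a compatible pair. First I would prove existence of $c \wedge a$ and $c \wedge b$, then prove that $(c \wedge a) \vee (c \wedge b)$ exists, and only then establish the equality.

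\emph{Existence of the componentwise meets.} The key observation is that $a \leq a \vee b$ and $b \leq a \vee b$, so each of $c \wedge a$ and $c \wedge b$ is a meet against something below $a \vee b$. I would argue that $c \wedge (a \vee b)$, which exists by hypothesis, serves as a common lower bound that lets us locate the smaller meets. Concretely, set $d = c \wedge (a \vee b)$. Since $d \leq a \vee b$ and the natural partial order on an inverse semigroup satisfies $d = (a \vee b) d^{-1} d = a d^{-1} d \vee b d^{-1} d$ by distributivity, the elements $a d^{-1} d$ and $b d^{-1} d$ are compatible and below $d \leq c$; I expect each to be a candidate for $c \wedge a$ and $c \wedge b$ respectively. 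The cleanest route is to verify directly that $a d^{-1} d$ is the greatest lower bound of $\{a, c\}$: it is below $a$ by construction and below $d \leq c$; and any lower bound $x \leq a, c$ satisfies $x \leq a \vee b$ and $x \leq c$, hence $x \leq d$, whence $x = a x^{-1} x \leq a d^{-1} d$ using $x \leq a$ and $x \leq d$. This shows $c \wedge a = a d^{-1} d$ exists, and symmetrically $c \wedge b = b d^{-1} d$.

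\emph{The join and the equality.} Once $c \wedge a = a d^{-1} d$ and $c \wedge b = b d^{-1} d$ are identified, both lie below $d$, hence below the compatible element $c$; since any two elements beneath a common element of an inverse semigroup are compatible, their join exists in the distributive inverse semigroup $S$. For the equality I would simply compute
$$(c \wedge a) \vee (c \wedge b) = a d^{-1} d \vee b d^{-1} d = (a \vee b) d^{-1} d = d = c \wedge (a \vee b),$$
where the middle step is precisely the distributivity of multiplication over the finite join $a \vee b$ that defines a distributive inverse semigroup, and the last equality is the definition of $d$.

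\emph{Anticipated obstacle.} The main subtlety is not the final one-line computation but the careful verification that $a d^{-1} d$ really is the \emph{meet} $c \wedge a$ rather than merely some lower bound; this requires the greatest-lower-bound argument above, and one must be attentive that the manipulations $x = a x^{-1} x$ and $x \leq a d^{-1} d$ genuinely follow from the natural partial order rather than being assumed. I would also take care that all joins invoked are of compatible pairs, so that they exist in $S$ by hypothesis, and that no infinite joins sneak in.
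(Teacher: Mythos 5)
Your proof is correct. Note that the paper itself supplies no argument for this lemma: it simply remarks that the statement ``is just the finitary case of [R3]'' (Resende's note on infinitely distributive inverse semigroups) and moves on. So your contribution is a self-contained elementary proof of what the paper outsources to a citation, and it is essentially the standard argument one would extract from Resende's note, specialised to binary joins. The key steps all check out: writing $d = c \wedge (a \vee b)$, the identity $d = (a\vee b)d^{-1}d = ad^{-1}d \vee bd^{-1}d$ is exactly the distributivity axiom applied to the idempotent $d^{-1}d$; the verification that $ad^{-1}d$ is the \emph{greatest} lower bound of $\{a,c\}$ (via $x \leq a, c \Rightarrow x \leq d \Rightarrow x = ax^{-1}x = ad^{-1}dx^{-1}x \leq ad^{-1}d$) is the genuinely necessary step and you carry it out correctly; the compatibility of $c\wedge a$ and $c \wedge b$ follows, as you say, from their having the common upper bound $d$; and the final computation collapses by distributivity and $d \leq a \vee b$. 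One tiny presentational point: when you assert $ad^{-1}d \leq c$ you should make explicit that this is because $ad^{-1}d \leq d$ (read off from $d = ad^{-1}d \vee bd^{-1}d$) and $d \leq c$, which you do gesture at; everything else is watertight, and only finite joins of compatible pairs are ever invoked, so the hypotheses of distributivity are respected throughout.
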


It can easily be verified that the union of a totally ordered set of $\vee$-closed order ideals
of an inverse semigroup is again a $\vee$-closed order ideal.
The proof of the following result now follows from Zorn's Lemma.

\begin{lemma} Let $S$ be an inverse semigroup.
Let $I$ be a $\vee$-closed order ideal of $S$ and let $F$ be a filter disjoint from $I$.
Then there is a $\vee$-closed order ideal $J$ maximal with respect to the two conditions:
(1) $I \subseteq J$ and (2) $J \cap F = \emptyset$.
\end{lemma}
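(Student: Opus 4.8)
The plan is a routine application of Zorn's Lemma, and the immediately preceding remark in the text supplies the one nontrivial ingredient. Consider the poset $\mathcal{P}$ of all $\vee$-closed order ideals $J$ of $S$ satisfying both conditions $I \subseteq J$ and $J \cap F = \emptyset$, ordered by set-inclusion. First I would observe that $\mathcal{P}$ is non-empty: the ideal $I$ itself is $\vee$-closed by hypothesis, contains itself, and is disjoint from $F$ by assumption, so $I \in \mathcal{P}$.

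Next I would verify that every chain in $\mathcal{P}$ has an upper bound in $\mathcal{P}$. Let $\mathcal{C} = \{J_{\lambda}\}$ be a totally ordered subset of $\mathcal{P}$ and put $J = \bigcup_{\lambda} J_{\lambda}$. The remark just before the statement tells us that the union of a totally ordered set of $\vee$-closed order ideals is again a $\vee$-closed order ideal, so $J$ is $\vee$-closed. Clearly $I \subseteq J_{\lambda} \subseteq J$ for any $\lambda$, so condition (1) holds. For condition (2), if some $x \in J \cap F$ existed, then $x \in J_{\lambda}$ for some $\lambda$, whence $x \in J_{\lambda} \cap F$, contradicting $J_{\lambda} \in \mathcal{P}$; hence $J \cap F = \emptyset$. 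Thus $J \in \mathcal{P}$ is an upper bound for $\mathcal{C}$.

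With these two facts established, Zorn's Lemma yields a maximal element of $\mathcal{P}$, which is precisely a $\vee$-closed order ideal maximal with respect to conditions (1) and (2), as required. I do not expect any genuine obstacle here: the only potentially delicate point is that the union of a chain stays $\vee$-closed, and this is exactly the assertion the authors have already granted in the sentence preceding the lemma. Everything else is bookkeeping about inclusions and disjointness that transfers immediately from the members of the chain to their union.
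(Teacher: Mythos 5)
Your proof is correct and is exactly the argument the paper intends: the authors state that unions of chains of $\vee$-closed order ideals are again $\vee$-closed order ideals and then simply invoke Zorn's Lemma, which is precisely what you have carried out in detail. No issues.
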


An order ideal $P$ of an inverse semigroup $S$ is said to be {\em prime}
if $a^{\downarrow} \cap b^{\downarrow} \subseteq P$ implies that either $a \in P$ or $b \in P$.

\begin{lemma} Let $S$ be an inverse semigroup.
Then a subset $F$ is a prime filter if and only if $S \setminus F$ is a $\vee$-closed prime order ideal.
\end{lemma}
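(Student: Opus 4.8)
The plan is to pass to the complement $P = S \setminus F$ and to check that this bijection between up-sets and down-sets interchanges the three defining properties of a prime filter with the three defining properties of a $\vee$-closed prime order ideal. Unwinding the definitions, $F$ is a prime filter precisely when it is (i) upward closed, (ii) directed, and (iii) prime in the sense that $a \vee b \in F$ forces $a \in F$ or $b \in F$ whenever that join exists; while $P$ is a $\vee$-closed prime order ideal precisely when it is (i$'$) downward closed, (ii$'$) prime as an order ideal, i.e. $a^{\downarrow} \cap b^{\downarrow} \subseteq P$ forces $a \in P$ or $b \in P$, and (iii$'$) $\vee$-closed. I would prove the three equivalences (i)$\leftrightarrow$(i$'$), (ii)$\leftrightarrow$(ii$'$), (iii)$\leftrightarrow$(iii$'$) separately and then assemble them; each is handled most cleanly by contraposition.

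The first equivalence is the standard fact that the complement of an upward-closed set is downward closed and conversely, so (i)$\leftrightarrow$(i$'$) is immediate. The pleasant step is (ii)$\leftrightarrow$(ii$'$): here I would use the reformulation that, since $c \leq a,b$ is the same as $c \in a^{\downarrow} \cap b^{\downarrow}$, directedness of $F$ says exactly that for all $a,b \in F$ the set $a^{\downarrow} \cap b^{\downarrow}$ meets $F$, that is, $a^{\downarrow} \cap b^{\downarrow} \not\subseteq P$. The contrapositive of this implication is precisely $a^{\downarrow} \cap b^{\downarrow} \subseteq P \Rightarrow a \in P \text{ or } b \in P$, which is the definition of a prime order ideal. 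Thus, granting (i)$\leftrightarrow$(i$'$) so that $F$ and $P$ really are an up-set and its complementary down-set, conditions (ii) and (ii$'$) are literally contrapositives of one another.

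For (iii)$\leftrightarrow$(iii$'$) I would again contrapose: $F$ prime says that $a,b \notin F$ implies $a \vee b \notin F$ whenever $a \vee b$ exists, which is exactly the assertion that $P$ is closed under binary joins that happen to exist. The one point that genuinely requires care — and the only real obstacle, everything else being pure complementation — is reconciling this binary condition with the nominally finitary requirement defining $\vee$-closure, namely closure under joins of \emph{arbitrary finite} subsets. I would dispatch this by verifying that in the present setting $\vee$-closure is equivalent to closure under existing binary joins: the empty and singleton cases are trivial, and a finite join is built up from binary ones by induction, so the primality hypothesis on $F$ (which supplies only the binary case) suffices to force every existing finite join of elements of $P$ back into $P$. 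Once the three equivalences are established, $F$ is a prime filter iff it satisfies (i)--(iii) iff $P$ satisfies (i$'$)--(iii$'$) iff $S \setminus F$ is a $\vee$-closed prime order ideal, which completes the argument.
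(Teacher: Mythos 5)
Your proof is correct and follows essentially the same route as the paper's: pass to the complement and observe that upward closure, directedness, and primality of $F$ are precisely the contrapositives of downward closure, primality as an order ideal, and $\vee$-closure of $P = S \setminus F$. The only difference is cosmetic: the paper verifies $\vee$-closure just for existing binary joins, whereas you explicitly flag the induction needed to pass from binary to arbitrary finite joins.
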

\begin{proof}
Suppose that $F$ is a prime filter.
We prove that $P = S \setminus F$ is a $\vee$-closed order ideal.
Let $a \in P$ and $b \leq a$.
Suppose that $b \notin P$.
Then $b \in F$ and so $a \in F$, which is a contradiction.
Thus $P$ is an order ideal.
Suppose that $a^{\downarrow} \cap b^{\downarrow} \subseteq P$ and that $a,b \in F$.
Then since $F$ is a filter there exists $c \in F$ such that $c \leq a,b$.
But $c \in P$ which is a contradiction.
Finally, suppose that $a,b \in P$ and that $a$ and $b$ are compatible.
If $a \vee b \in F$ then either $a$ or $b$ is in $F$.
It follows that $a \vee b \in P$ and so $P$ is a $\vee$-closed prime ideal.

Conversely, suppose that $P$ is a $\vee$-closed prime ideal.
We prove that $F = S \setminus P$ is a prime filter.
Let $a \in F$ and $a \leq b$.
If $b \in P$ then $a \in P$ and so $b \in F$.
Let $a,b \in F$.
Then if $a^{\downarrow} \cap b^{\downarrow} \subseteq P$ then either $a$ or $b$ is in $P$.
It follows that there must exist $c \leq a,b$ such that $c \in F$.
Finally, suppose that $a \vee b \in F$.
If $a,b \in P$ then $a \vee b \in P$ so at least one of $a$ or $b$ belongs to $F$.
Thus $F$ is a prime filter.
\end{proof}

\begin{lemma} Let $S$ be a distributive inverse semigroup.
\begin{enumerate}

\item Let $F$ be a filter in $S$ and let $P$ be a $\vee$-closed order ideal of $S$ maximal amongst all $\vee$-closed order ideals disjoint from $F$.
Then $P$ is a prime $\vee$-closed order ideal.

\item Let $a,b \in S$ be such that $b \nleq a$.
Then there exists a prime filter that contains $b$ and omits $a$. 

\end{enumerate}
\end{lemma}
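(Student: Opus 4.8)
The plan is to prove (1) by the standard ``a maximal order ideal missing a filter is prime'' separation argument, and then to read off (2) from (1) together with the two immediately preceding lemmas (the Zorn's-lemma result producing maximal $\vee$-closed order ideals, and the characterisation of prime filters as the complements of $\vee$-closed prime order ideals). Throughout, the governing difficulty is that meets need not exist in a distributive inverse semigroup, so I must arrange matters so that every meet I write down is of the shape $c \wedge w$ with $c \leq w$ and $w$ an already-existing compatible join; its existence and the accompanying distributive expansion are then supplied by the preceding finitary distributivity lemma.

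For (1), suppose for contradiction that $P$ is not prime, so there are $a, b \notin P$ with $a^{\downarrow} \cap b^{\downarrow} \subseteq P$. I would first identify the smallest $\vee$-closed order ideal containing $P \cup \{a\}$ as
$$P_{a} = \{ x \in S \colon x \leq p \vee a' \text{ for some } p \in P,\, a' \leq a \text{ with } p, a' \text{ compatible} \},$$
and analogously $P_{b}$. That $P_{a}$ is an order ideal containing $P$ and $a$ is immediate (using the zero of $S$ to realise $p$ and $a'$ separately, which also shows $0 \in P$ and hence $a \in P_{a} \setminus P$); the one point needing care is closure under compatible joins, and this is exactly where I invoke the elementary fact that two elements bounded above by a common element of $S$ are compatible. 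Concretely, for $x, y \in P_{a}$ with $x \vee y$ existing, I decompose $x$ and $y$ via distributivity and recombine the $P$-parts and the $a^{\downarrow}$-parts separately, each recombination being a legitimate join because all the pieces lie below $x \vee y$.

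With $P_{a}$ in hand, maximality of $P$ forces $P_{a} \cap F \neq \emptyset$ and $P_{b} \cap F \neq \emptyset$; I pick $f \in P_{a} \cap F$ and $g \in P_{b} \cap F$ and, using directedness of $F$, an $h \in F$ with $h \leq f, g$. Then $h \leq p_{1} \vee a'$ and $h \leq p_{2} \vee b'$ for suitable $p_{1}, p_{2} \in P$, $a' \leq a$, $b' \leq b$. Distributivity now gives $h = u \vee v$ with $u = h \wedge p_{1} \in P$ and $v = h \wedge a' \leq a$, and likewise $h = u' \vee v'$ with $u' = h \wedge p_{2} \in P$ and $v' = h \wedge b' \leq b$. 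Expanding $v = v \wedge h = (v \wedge u') \vee (v \wedge v')$, the first term lies below $u' \in P$ and the second lies in $a^{\downarrow} \cap b^{\downarrow} \subseteq P$, so $v \in P$ by $\vee$-closure; since $u \in P$ as well, $h = u \vee v \in P$, contradicting $h \in F$ and $P \cap F = \emptyset$. Hence $P$ is prime.

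For (2), given $b \nleq a$ I note first that $b \neq 0$ (otherwise $b \leq a$), so $b^{\uparrow}$ is a proper filter, and that $a^{\downarrow}$ is a $\vee$-closed order ideal disjoint from $b^{\uparrow}$, since $x \in a^{\downarrow} \cap b^{\uparrow}$ would give $b \leq x \leq a$. Applying the preceding Zorn's-lemma result with $I = a^{\downarrow}$ and the filter $b^{\uparrow}$ produces a $\vee$-closed order ideal $J$ maximal among those containing $a^{\downarrow}$ and disjoint from $b^{\uparrow}$; this $J$ is in fact maximal among \emph{all} $\vee$-closed order ideals disjoint from $b^{\uparrow}$, because any larger such ideal automatically contains $a^{\downarrow}$. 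By part (1), $J$ is a prime $\vee$-closed order ideal, whence $S \setminus J$ is a prime filter by the complementation lemma. Since $b \in b^{\uparrow}$ we have $b \notin J$, i.e. $b \in S \setminus J$, while $a \in a^{\downarrow} \subseteq J$ gives $a \notin S \setminus J$; thus $S \setminus J$ is the desired prime filter. The main obstacle, as flagged above, is the disciplined handling of meets in (1) and the compatibility bookkeeping underlying the description of $P_{a}$.
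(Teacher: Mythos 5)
Your proposal is correct and follows essentially the same route as the paper: for (1) you form the $\vee$-closed order ideals generated by $P\cup\{a\}^{\downarrow}$ and $P\cup\{b\}^{\downarrow}$, use maximality to find elements of $F$ in each, take a common lower bound $h\in F$, and apply the finitary distributivity lemma (Lemma~3.8) to force $h\in P$, while (2) is read off from Lemmas~3.9 and 3.10 exactly as in the text. The only differences are cosmetic — your two-step expansion of $h$ versus the paper's four-term expansion of $f=f\wedge f$, and your (correct) extra remark that the ideal $J$ produced in (2) is maximal among \emph{all} $\vee$-closed order ideals disjoint from $b^{\uparrow}$.
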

\begin{proof} (1) Assume that $a^{\downarrow} \cap b^{\downarrow} \subseteq P$.
Define
$$P_{1} = [P \cup \{ a \}^{\downarrow}]^{\vee}
\text{ and }
P_{2} = [P \cup \{ b \}^{\downarrow}]^{\vee}.$$
Both are well-defined $\vee$-closed order ideals that contain $P$.
Assume, for the sake of argument, that both intersect the filter $F$ in the elements $f_{1}$ and $f_{2}$ respectively.
We may write
$$f_{1} = p_{1} \vee x_{1}
\text{ and }
f_{2} = p_{2} \vee y_{1}$$
where $p_{1},p_{2} \in P$ and $x_{1} \leq a$ and $y_{1} \leq b$.
Since $F$ is a filter there is an element $f \in F$ such that $f \leq f_{1},f_{2}$.
Thus we may write
$$f = (p_{1} \vee x_{1}) \mathbf{d}(f)
\text{ and }
f = (p_{2} \vee y_{1}) \mathbf{d}(f).$$
By distributivity
$$f = p_{1} \mathbf{d}(f) \vee x_{1}\mathbf{d}(f)
\text{ and }
f = p_{2} \mathbf{d}(f)  \vee y_{1}\mathbf{d}(f).$$
Now $f = f \wedge f$.
Thus by Lemma~3.8, we have that
$$f =
(p_{1} \mathbf{d}(f) \wedge p_{2} \mathbf{d}(f))
\vee
(p_{1}\mathbf{d}(f) \wedge y_{1} \mathbf{d}(f))
\vee
(x_{1}\mathbf{d}(f) \wedge p_{2} \mathbf{d}(f))
\vee
(x_{1} \mathbf{d}(f) \wedge y_{1} \mathbf{d}(f)).
$$
Each term belongs to $P$, the final term by assumption.
Hence $f \in P$ which is a contradiction.
Thus either $P_{1}$ or $P_{2}$ is disjoint from $F$.
Without loss of generality we may assume that $P_{1}$ is disjoint from $F$.
But then we must have that $P_{1} = P$ and so $a \in P$.
It follows that $P$ is a prime $\vee$-closed order ideal.

(2) Consider the filter $b^{\uparrow}$ and the order ideal $a^{\downarrow}$ which is clearly a $\vee$-closed order ideal.
By assumption, $b^{\uparrow} \cap a^{\downarrow} = \emptyset$.
By Lemma~3.9, we may find a $\vee$-closed order ideal $J$ such that $a^{\downarrow} \subseteq J$ and $J \cap b^{\uparrow} = \emptyset$
and maximal with respect to these properties by Lemma~3.9.
By (1) above, $J$ is a prime $\vee$-closed order ideal.
Thus by Lemma~3.10, $S \setminus J$ is a prime filter in $S$.
By construction this prime filter contains $b$ and omits $a$, as required.
\end{proof}

The above lemma enables us to prove the following important result.

\begin{proposition}\label{coherent-implies-spatial}
Every coherent pseudogroup is spatial.
\end{proposition}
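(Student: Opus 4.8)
The plan is to reduce spatiality of a coherent pseudogroup to a separation statement about prime filters in a distributive inverse semigroup, where the Zorn-type machinery of Lemmas~3.9--3.11 is available. By Proposition~3.5 I may assume $S = \mathrm{Idl}(T)$ with $T = K(S)$ a distributive inverse semigroup, so that the order on $S$ is inclusion of $\vee$-closed order ideals of $T$, and the finite elements of $S$ are exactly the $a^{\downarrow}$ with $a \in T$. To prove that $S$ is spatial, i.e. that $X_{s} = X_{t}$ implies $s = t$, it suffices, by antisymmetry of the natural partial order, to show: whenever $s \not\leq t$ there is a completely prime filter of $S$ containing $s$ but not $t$. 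Applying this in both directions then separates any two distinct elements.

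So fix $s \not\leq t$, that is $s \not\subseteq t$ as order ideals of $T$, and choose a finite element $a \in s \setminus t$. Since $t$ is an order ideal of $T$ not containing $a$, the principal filter $a^{\uparrow}$ of $T$ is disjoint from $t$. Now I would run the argument of Lemma~3.11(2), but with the $\vee$-closed order ideal $t$ in place of the principal ideal $a^{\downarrow}$ used there: by Lemma~3.9 there is a $\vee$-closed order ideal $J \supseteq t$ maximal subject to $J \cap a^{\uparrow} = \emptyset$; by Lemma~3.11(1) such a $J$ is prime; and by Lemma~3.10 its complement $P := T \setminus J$ is a prime filter of $T$. By construction $a \in P$ (as $a \in a^{\uparrow}$, which misses $J$) while $t \cap P = \emptyset$ (as $t \subseteq J$); note $P$ is proper, since $J$ is an order ideal and hence contains the zero of $T$.

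Finally I transport $P$ back across the order isomorphism of Lemma~\ref{prime-vs-completelyprime}. The set $F := P^{u} = \{A \in \mathrm{Idl}(T) \colon A \cap P \neq \emptyset\}$ is a completely prime filter of $S$, with $A \in F$ precisely when $A \cap P \neq \emptyset$. Since $a \in s \cap P$ we get $s \in F$, whereas $t \cap P = \emptyset$ forces $t \notin F$. Hence $F \in X_{s} \setminus X_{t}$, so $X_{s} \neq X_{t}$, which is exactly what was needed.

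The main obstacle is the middle step. The separation result I can cite as a black box, Lemma~3.11(2), only excludes a single element $a$, i.e. the principal ideal $a^{\downarrow}$, whereas spatiality requires excluding the \emph{entire} order ideal $t$ while still capturing $s$ through the single witness $a$. This is what forces me to feed the full $\vee$-closed order ideal $t$ into the maximal-ideal construction of Lemma~3.9 rather than quoting Lemma~3.11 directly; the primeness of the resulting maximal ideal (Lemma~3.11(1)) is the real engine. Everything else -- the reduction to $\mathrm{Idl}(T)$ via Proposition~3.5, the choice of the finite witness $a$, and the transport via Lemma~\ref{prime-vs-completelyprime} -- is essentially bookkeeping.
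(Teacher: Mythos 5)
Your proposal is correct and follows essentially the same route as the paper's own proof: reduce to $S = \mathrm{Idl}(T)$, pick a witness $a \in s \setminus t$, extend the $\vee$-closed order ideal $t$ to one maximal disjoint from $a^{\uparrow}$ via Lemma~3.9, invoke Lemma~3.11(1) for primeness and Lemma~3.10 for the complementary prime filter, and transport via Lemma~\ref{prime-vs-completelyprime}. Your observation that the full ideal $t$ (not just a principal one) must be fed into the maximal-ideal construction is exactly what the paper does, and your write-up in fact irons out some $A$/$B$ slips in the paper's version.
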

\begin{proof}
Let $S$ be a coherent pseudogroup.
By Lemma~3.5, we may assume that $S = \mbox{Idl}(T)$ where $T$ is a distributive inverse semigroup.
Let $A,B \in \mbox{Idl}(T)$ be distinct elements.
We shall construct a completely prime filters that contains one of these elements but not the other.
Without loss of generality, we may assume that there is $b \in B$ such that $b \notin A$.
It follows that
$$b^{\uparrow} \cap B = \emptyset.$$
Clearly $B$ is a $\vee$-closed order ideal.
Thus by Lemma~3.9, there exists a $\vee$-closed order ideal $P$ that contains $B$,
is disjoint from $b^{\uparrow}$,
and is a maximal $\vee$-closed order ideal with respect to these two conditions.
By Lemma~3.11, $P$ is a prime $\vee$-closed order ideal.
Thus by Lemma~3.10, the set $F = T \setminus P$ is a prime filter in $S$ that contains $b$ and is disjoint from $B$.
By Lemma~3.2, we have that $F^{u}$ is a completely prime filter in $S = \mbox{Idl}(T)$.
By definition $B \in F^{u}$ and $A \notin F^{u}$, as required.
\end{proof}

An \'etale groupoid $G$ is said to be {\em coherent} if it satisfies three conditions:
\begin{description}

\item[{\rm (C1)}] The set $\mathsf{KB}(G)$ of compact-open bisections forms a basis for the topology on $G$.

\item[{\rm (C2)}]  The set $\mathsf{KB}(G)$ of compact-open bisections is closed under subset multiplication.

\item[{\rm (C3)}] The \'etale groupoid $G$ is sober.

\end{description}

Whether or not an \'etale groupoid $G$ is coherent largely depends on the properties of $G_{o}$, as we now show.

\begin{lemma} Let $G$ be an \'etale groupoid.
\begin{enumerate}

\item Then $G$ has a basis of compact-open bisections if and only if $G_{o}$ has a basis of compact-open sets.

\item $G$ satisfies (C1) and (C2) if and only if $G_{o}$ satisfies (C1) and (C2).

\item $G$ is sober as an \'etale groupoid if and only if the space $G_{0}$ is sober.

\item $G$ is coherent if and only if $G_{o}$ is coherent.

\end{enumerate}
\end{lemma}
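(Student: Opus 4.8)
The plan is to prove parts (1), (2) and (3) directly and then read off (4) by combining them, since for an \'etale groupoid $G$ being coherent means exactly that (C1), (C2) and (C3) (=soberness) hold at once. The single tool underlying all three parts is that in an \'etale groupoid the restriction of $\mathbf{d}$ to any open bisection $B$ is a homeomorphism onto the open set $\mathbf{d}(B) \subseteq G_{o}$, and dually for $\mathbf{r}$; this lets me transport compactness back and forth between an open bisection and a piece of $G_{o}$. I will also use repeatedly that every open subset of $G_{o}$ is itself an open bisection, so that on $G_{o}$ the notions ``open bisection'' and ``open set'' coincide and the product of two subsets of $G_{o}$ reduces to their intersection.

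For (1), the forward direction is easy: a compact-open bisection contained in $G_{o}$ is just a compact-open subset of $G_{o}$, so a basis of compact-open bisections restricts on the open subspace $G_{o}$ to a basis of compact-open sets. For the converse, given $g$ in an open set $W$, I first shrink to an open bisection $B$ with $g \in B \subseteq W$, choose a compact-open $K \subseteq \mathbf{d}(B)$ containing $\mathbf{d}(g)$, and pull it back along the homeomorphism $\mathbf{d}|_{B}$ to obtain the compact-open bisection $(\mathbf{d}|_{B})^{-1}(K)$ sitting between $g$ and $W$. Part (2) adds the multiplicative condition, and its (C1) half is exactly (1). For (C2): if compact-open bisections are closed under product, then for compact-open $U,V \subseteq G_{o}$ the product $UV$ is a compact-open bisection equal to $U \cap V$, giving closure under intersection. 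Conversely, given compact-open bisections $B,C$, I compute $\mathbf{d}(BC)$ as the image under $\mathbf{d}|_{C}$ of $(\mathbf{r}|_{C})^{-1}(\mathbf{d}(B) \cap \mathbf{r}(C))$; since $\mathbf{d}(B)$ and $\mathbf{r}(C)$ are compact-open and their intersection is compact-open by hypothesis, transporting through the two homeomorphisms shows $\mathbf{d}(BC)$, and hence $BC$, is compact-open.

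Part (3) is the heart of the matter. I will identify the identity space $\mathsf{G}(\mathsf{B}(G))_{o}$ with the point space of the frame of idempotents of $\mathsf{B}(G)$, which is precisely $\mathcal{O}(G_{o})$, and check that under this identification $\eta_{o} := \eta|_{G_{o}}$ is the usual sobrification unit $G_{o} \to \mathsf{pt}(\mathcal{O}(G_{o}))$, so that $\eta_{o}$ is a homeomorphism exactly when $G_{o}$ is sober. The bijection sends an identity completely prime filter $A$ to $A \cap \mathcal{O}(G_{o})$; the delicate point is matching the topologies, for which I will show that for every open bisection $b$ one has $X_{b} \cap \mathsf{G}(\mathsf{B}(G))_{o} = X_{b \cap G_{o}} \cap \mathsf{G}(\mathsf{B}(G))_{o}$, using that an identity filter is $\wedge$-closed and that $b \wedge \mathbf{d}(b) = b \cap G_{o}$ is an idempotent. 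Granting this, the equivalence splits into two implications. If $\eta$ is a homeomorphism it is an isomorphism of topological groupoids, hence restricts to the homeomorphism $\eta_{o}$, forcing $G_{o}$ sober. Conversely, if $G_{o}$ is sober then $\eta_{o}$ is a bijection, and I upgrade this to $\eta$ being a bijection using that $\eta$ is a covering functor: injectivity because $\eta(g) = \eta(g')$ forces $\mathbf{d}(g) = \mathbf{d}(g')$ through $\eta_{o}$ and then $g = g'$ by the covering condition, and surjectivity by applying the lifting condition to $\mathbf{d}(F) = \eta_{o}(e)$. Finally, a continuous covering functor that is a bijection is automatically open, and so a homeomorphism, because $\eta^{-1}(X_{b}) = b$ from Proposition~2.11 gives $\eta(b) = X_{b}$ for every open bisection $b$, and these form a basis of $G$.

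Part (4) then follows at once: $G$ is coherent iff (C1), (C2) and (C3) hold, which by (2) and (3) is equivalent to (C1) and (C2) holding for $G_{o}$ together with $G_{o}$ sober, i.e.\ to $G_{o}$ being coherent. I expect the main obstacle to be Part (3), and within it the two technical checks that the identity space of $\mathsf{G}(\mathsf{B}(G))$ carries exactly the point-space topology of $\mathcal{O}(G_{o})$ (the identity $X_{b} \cap \mathsf{G}(\mathsf{B}(G))_{o} = X_{b \cap G_{o}} \cap \mathsf{G}(\mathsf{B}(G))_{o}$ being the crux), and that bijectivity of a continuous covering functor already forces it to be a homeomorphism. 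Everything else is a routine transport of compactness through the local homeomorphisms $\mathbf{d}$ and $\mathbf{r}$.
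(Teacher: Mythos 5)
Your proposal is correct and follows essentially the same route as the paper: part (1) by transporting compact-open sets through the local homeomorphism $\mathbf{d}$ restricted to an open bisection, part (2) by reducing compactness of a product $BC$ to compactness of the intersection $\mathbf{d}(B)\cap\mathbf{r}(C)$ of compact-open subsets of $G_{o}$, part (3) by the observation that a covering functor is bijective precisely when its restriction to identities is, together with the identification of identity filters with their idempotent parts and the openness of $\eta$ on basic open bisections, and part (4) by combining the first three. The only divergences are cosmetic: the paper extracts a finite subcover to prove $BC$ compact where you transport compactness along $\mathbf{d}|_{BC}$, and you spell out the topology-matching step $X_{b}\cap\mathsf{G}(\mathsf{B}(G))_{o}=X_{b\cap G_{o}}\cap\mathsf{G}(\mathsf{B}(G))_{o}$ that the paper leaves implicit.
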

\begin{proof} (1) We suppose first that $G_{o}$ has a basis of compact-open sets.
We show that $G$ has a basis of compact-open bisections.
Let $U$ be any non-empty open bisection in $G$ and let $g \in U$.
Since $G$ is \'etale there is an open bisection $V$ containing $g$
such that $\mathbf{d}$ restricted to $V$ is a homeomorphism onto its image.
It follows that $\mathbf{d}$ restricted to $U \cap V$ is a homeomorphism onto its image and $g \in U \cap V$.
By assumption we may find a compact-open set (and therefore bisection) $B$ in $G_{o}$ containing $g^{-1}g$ and contained
in the image of $U \cap V$.
It follows that there is a compact-open bisection $A$ containing $g$ such that $\mathbf{d}$ maps $A$ to $B$
and which is contained in $U \cap V$.
It follows that every open bisection in $G$ is a union of compact-open bisections.
Thus the compact-open bisections form a basis for the topology.

Suppose now that $G$ has a basis of compact-open bisections.
We prove that $G_{o}$ has a basis of compact-open sets.
Let $U$ be an open set in $G_{o}$ and let $e \in U$.
There is therefore an open set $V$ in $G$ such that $U = G_{o} \cap V$.
Thus $U$ is also an open set in $G$.
Therefore there exists a compact-open bisection $W$ such that $e \in W \subseteq U$.
It follows that $W$ is a subset of $G_{o}$.

(2) We suppose first that $G_{o}$ satisfies (C1) and (C2).
This means that we assume that $G_{o}$ has a basis of compact-open sets and the intersection
of any two compact-open sets is again compact-open.

We prove first that if $A$ is a compact-open bisection then so too is $A^{-1}A$.
We need only prove that it is compact.
Let $A^{-1}A \subseteq \bigcup_{i} O_{i}$ be a covering by open bisections.
Then $A \subseteq \bigcup_{i} AO_{i}$ is also a covering by open bisections.
By assumption, $A$ is compact and so we may find a finite number $AO_{1}, \ldots, OA_{m}$ that cover $A$.
Thus $A \subseteq \bigcup_{i=1}^{m} AO_{i}$.
Hence $A^{-1}A \subseteq \bigcup_{i=1}^{m} A^{-1}AO_{i}$.
But $A^{-1}AO_{i} \subseteq O_{i}$ and so $A^{-1}A \subseteq \bigcup_{i=1}^{m} O_{i}$.
Thus $A^{-1}A$ is compact.

Let $A$ and $B$ be two compact-open bisections.
The product $AB$ is an open bisection so it only remains to show that it is compact.
Let $AB \subseteq \bigcup_{i} C_{i}$ where the $C_{i}$ are open bisections.
Then $A^{-1}ABB^{-1} \subseteq \bigcup_{i} A^{-1}C_{i}B^{-1}$.
Now $A^{-1}ABB^{-1} = A^{-1}A \cap BB^{-1}$ and so is compact.
Thus we may write
$A^{-1}ABB^{-1} \subseteq \bigcup_{i=1}^{m} A^{-1}C_{i}B^{-1}$
and so
$AB \subseteq \bigcup_{i=1}^{m} AA^{-1}C_{i}B^{-1}B \subseteq \bigcup_{i=1}^{m} C_{i}$, as required.

The proof of the converse is straightforward.

(3) Observe first that a covering functor $\eta \colon G \rightarrow H$ is bijective if and only if
the function $\eta \mid G_{o} \colon G_{o} \rightarrow H_{o}$ is bijective.
It follows that $\eta \colon G \rightarrow \mathsf{G}(\mathsf{B}(G))$ is bijective
if and only if $\eta \mid G_{o} \colon G_{o} \rightarrow \mathsf{G}(\mathsf{B}(G)_{o})$ is a bijective.
However, filters that are identities are determined by their idempotent elements,
and the idempotents in $\mathsf{B}(G)$ are the open subsets of $G_{o}$.
It follows that $\eta \colon G \rightarrow \mathsf{G}(\mathsf{B}(G))$ is bijective
if and only if 
$\eta \colon G_{o} \rightarrow \mathsf{G}(\mathsf{B}(G_{o}))$ is bijective.
Now observe that if $b$ is an open bisection in $G$ then $\eta (b) = X_{b}$.
It follows that $\eta$ is always an open map.
We have therefore proved that $G$ is sober if and only if $G_{o}$ is sober.

(4) This is immediate by (1), (2) and (3) above.

\end{proof}

Our definition of a {\em coherent space} is more general than the one given in \cite{J},
since we do not require the space itself to be compact.
It is a sober space, in our sense, in which the compact-open sets form a basis that is closed under binary meets.

The result by Exel \cite{Exel3} can be viewed within this setting as a proof of sobriety.

\begin{lemma} Let $G$ be a coherent \'etale groupoid.
Then $\mathsf{KB}(G)$ is a distributive inverse semigroup and $\mathsf{B}(G)$ is a coherent pseudogroup
\end{lemma}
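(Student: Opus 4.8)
The plan is to identify the pseudogroup-theoretic finite elements of $\mathsf{B}(G)$ with the compact-open bisections and then read off both assertions from that identification. The one structural fact I would isolate at the outset is that any two subsets of a single bisection are compatible, and that a compatible join of open bisections is nothing but their union: if $U,V$ are both contained in a bisection $A$, then $U^{-1}V \subseteq A^{-1}A \subseteq G_{o}$ and $UV^{-1} \subseteq AA^{-1} \subseteq G_{o}$, so $U$ and $V$ are compatible, while for any compatible family of open bisections the join in the pseudogroup $\mathsf{B}(G)$ of Proposition~2.1 is computed as the set-union.

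First I would verify that $\mathsf{KB}(G)$ is a distributive inverse semigroup. It is closed under inversion because inversion is a homeomorphism carrying compact-open bisections to compact-open bisections, and it is closed under multiplication by (C2); hence it is an inverse subsemigroup of $\mathsf{B}(G)$. If $A,B \in \mathsf{KB}(G)$ are compatible then $A \vee B = A \cup B$ is an open bisection that is compact as a finite union of compact sets, so $\mathsf{KB}(G)$ is closed under finite compatible joins. Distributivity of multiplication over these joins is then inherited from the infinitely distributive pseudogroup $\mathsf{B}(G)$, since products and joins computed in $\mathsf{KB}(G)$ coincide with those computed in $\mathsf{B}(G)$.

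Next I would prove the key identification $K(\mathsf{B}(G)) = \mathsf{KB}(G)$. For $\mathsf{KB}(G) \subseteq K(\mathsf{B}(G))$, suppose $A$ is compact and $A \leq \bigvee_{i} B_{i}$ for a compatible family of open bisections; since the join equals $\bigcup_{i} B_{i}$, compactness yields a finite subcover and hence a finite subjoin, so $A$ is finite. For the reverse inclusion, suppose $A$ is finite and let $A \subseteq \bigcup_{j} O_{j}$ be an arbitrary open cover. Using (C1), choose for each $g \in A$ a compact-open bisection $C_{g}$ with $g \in C_{g} \subseteq O_{j(g)}$. The sets $A \cap C_{g}$ all lie inside the bisection $A$, so they form a compatible family with $\bigcup_{g\in A}(A\cap C_{g}) = A$, giving $A = \bigvee_{g \in A}(A \cap C_{g})$. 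Finiteness of $A$ then extracts $g_{1},\dots,g_{n}$ with $A = \bigcup_{k}(A \cap C_{g_{k}}) \subseteq \bigcup_{k} O_{j(g_{k})}$, a finite subcover; thus $A$ is compact.

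Finally I would assemble the coherence of $\mathsf{B}(G)$. By the identification, the finite elements of $\mathsf{B}(G)$ are exactly $\mathsf{KB}(G)$, a distributive inverse subsemigroup by the first step. Moreover each open bisection $A$ satisfies $A = \bigcup\{C \in \mathsf{KB}(G) : C \subseteq A\}$ by (C1), and this union is a compatible join because every such $C$ lies inside $A$; hence every element of $\mathsf{B}(G)$ is a join of finite elements, and both clauses of the definition of a coherent pseudogroup are met. I expect the only genuine obstacle to be the direction \emph{finite implies compact}: the task is to manufacture, out of an arbitrary open cover together with the compact-open basis supplied by (C1), an honestly \emph{compatible} family of open bisections whose join is $A$, and this is precisely what the opening observation that subsets of a bisection are pairwise compatible delivers. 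I would note in passing that sobriety (C3) is not needed for this lemma; it enters only in the later duality statements.
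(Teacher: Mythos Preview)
Your proof is correct and follows the same outline as the paper's, but supplies the one step the paper merely asserts. The paper writes ``The finite elements of $\mathsf{B}(G)$ are the compact ones and so precisely the compact-open bisections'' without argument; you actually prove both inclusions. The direction $K(\mathsf{B}(G)) \subseteq \mathsf{KB}(G)$ is the non-obvious one, because the definition of \emph{finite} quantifies only over compatible covers, and your device of intersecting the basis elements $C_{g}$ with $A$ to obtain a genuinely compatible family with join $A$ is exactly the trick needed to reduce an arbitrary open cover to one on which finiteness can bite. The remaining steps (closure of $\mathsf{KB}(G)$ under inversion, products, and finite compatible joins; inheritance of distributivity from $\mathsf{B}(G)$; and (C1) giving that every open bisection is a join of finite elements) match the paper's proof. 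Your remark that (C3) plays no role here is also correct.
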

\begin{proof} The inversion map on an \'etale groupoid $G$ is a homeomorphism of $G$ to itself.
Thus $\mathsf{KB}(G)$ is an inverse subsemigroup of $\mathsf{B}(G)$.
If $A$ and $B$ are compatible compact-open bisections then their join is an open bisection
and compact because the union of finitely many compact subsets is compact.
Because $\mathsf{B}(G)$ is infinitely complete and infinitely distributive it follows that
$\mathsf{KB}(B)$ is finitely complete and finitely distributive.
We have used here (C2) above in the definition.

The finite elements of $\mathsf{B}(G)$ are the compact ones and so precisely the compact-open bisections.
Let $A \in \mathsf{B}(G)$ be an arbitrary open bisection.
By (C1) above in the definition, $A$ can be written as a union of compact-open bisections.
Thus every element of $\mathsf{B}(G)$ is a join of finite elements.
We have shown that $\mathsf{B}(G)$ is coherent.
\end{proof}

\begin{lemma} Let $S$ be a coherent pseudogroup.
Then $\mathsf{G}(S)$ is a coherent \'etale groupoid. 
Moreover, the isomorphism  $\varepsilon$ establishes a bijection 
between  the finite elements of $S$ and the compact-open bisections.
\end{lemma}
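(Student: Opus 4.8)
The plan is to reduce everything to a single translation between finiteness in $S$ and compactness in $\mathsf{G}(S)$, exploiting that a coherent pseudogroup is spatial. First I would observe that, by Proposition~\ref{coherent-implies-spatial}, $S$ is spatial, so by part~(3) of Proposition~2.9 the counit $\varepsilon \colon S \to \mathsf{B}(\mathsf{G}(S))$, $s \mapsto X_s$, is an isomorphism onto the open bisections. Spatiality also yields the order dictionary $X_s \subseteq X_t$ if and only if $s \leq t$: indeed $X_s \cap X_t = X_{s \wedge t}$ by Lemma~2.6(4), so $X_s \subseteq X_t$ forces $X_{s \wedge t} = X_s$ and hence $s \wedge t = s$, the converse being clear. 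This dictionary is what lets me move joins and coverings back and forth through $\varepsilon$.

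The heart of the argument is the claim that $X_s$ is compact precisely when $s$ is finite. For the direction I expect to be the main obstacle, suppose $s$ is finite and let $\{X_{t_i}\}$ be a cover of $X_s$ by basic open sets. The decisive move is to intersect the cover with $X_s$: using Lemma~2.6(4) this gives $X_s = \bigcup_i X_{s \wedge t_i}$, and since the elements $s \wedge t_i$ all lie below $s$ they are pairwise compatible, so the join $u = \bigvee_i (s \wedge t_i)$ exists. By Lemma~2.6(5) we have $X_u = X_s$, whence $u = s$ by spatiality. Finiteness of $s$ now extracts a finite $I_0$ with $s \leq \bigvee_{i \in I_0}(s \wedge t_i)$, and the order dictionary returns $X_s \subseteq \bigcup_{i \in I_0} X_{t_i}$. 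Conversely, if $X_s$ is compact and $s \leq \bigvee_i s_i$ for a compatible family, then $X_s \subseteq X_{\bigvee_i s_i} = \bigcup_i X_{s_i}$ by Lemma~2.6(5), so a finite subcover together with the order dictionary yields $s \leq \bigvee_{i \in I_0} s_i$. (One could instead transport the notion of finiteness along the isomorphism $\varepsilon$ and invoke that, as observed in the proof of the preceding lemma, the finite elements of $\mathsf{B}(G)$ are exactly the compact-open bisections.)

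Granting this equivalence, the \emph{moreover} clause is immediate: $\varepsilon$ is a bijection onto the open bisections and carries $K(S)$ exactly onto $\mathsf{KB}(\mathsf{G}(S))$. Finally I would check the three conditions defining a coherent \'etale groupoid. For (C1), coherence of $S$ lets me write any $s$ as $s = \bigvee_i e_i$ with $e_i$ finite, so $X_s = \bigcup_i X_{e_i}$ exhibits every basic open set as a union of compact-open bisections; since the $X_s$ form a basis, so do the compact-open bisections. For (C2), if $X_e$ and $X_f$ are compact-open then $e, f \in K(S)$, hence $ef \in K(S)$ because the finite elements form an inverse subsemigroup, so $X_e X_f = X_{ef}$ (Lemma~2.6(3)) is again compact-open. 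Condition (C3), sobriety of $\mathsf{G}(S)$, is exactly Proposition~2.12. The only genuinely nontrivial point is the finite-implies-compact step, where the passage from an arbitrary cover to the compatible family $\{s \wedge t_i\}$ lying below $s$ is essential.
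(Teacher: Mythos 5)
Your proof is correct and follows essentially the same route as the paper: sobriety from Proposition~2.12(2), spatiality of a coherent pseudogroup to make $\varepsilon$ an isomorphism, the identification of finite elements with compact-open bisections, and then (C1) and (C2) from the coherence of $S$. The only difference is that you spell out the finite-iff-compact equivalence in detail (intersecting a basic open cover with $X_s$ to produce a compatible family below $s$), whereas the paper simply asserts that the isomorphism $\varepsilon$ carries finite elements to compact-open bisections.
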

\begin{proof} By Proposition~2.12(2), the groupoid  $\mathsf{G}(S)$ is sober.
Now $S$ is isomorphic to $\mathsf{B} (\mathsf{G}(S))$ via $\varepsilon$ since every coherent pseudogroup is spatial by Proposition \ref{coherent-implies-spatial}.
Under this isomorphism, finite elements of $S$ are mapped to the compact-open bisections of $\mathsf{G}(S)$.
By the coherence of $S$, every $s\in S$ is the joint of finite elements and hence every $X_s$ is a union of compact-open bisections 
which stem from the  range of $\varepsilon$.
Thus every open bisection is a union of compact-open bisections.  
It follows that $\mathsf{G}(S)$ satisfies (C1). Moreover, every compact-open bisection comes comes from a finite element of $S$ 
since the finite elements are closed under finite joins by coherence.
The condition (C2) holds because the finite elements of $S$ are closed under multiplication. 
\end{proof}

\begin{lemma} Let $\theta \colon G \rightarrow H$ be a continuous covering functor between coherent \'etale groupoids
with the property that the inverse image of every compact-open bisection is a compact-open bisection.
Then the inverse image of every compact-open set is a compact-open set.
\end{lemma}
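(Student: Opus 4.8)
The plan is to reduce an arbitrary compact-open set in $H$ to a \emph{finite} union of compact-open bisections, pull each of these back using the hypothesis, and then invoke the elementary fact that a finite union of compact-open sets is again compact-open. The only ingredient of coherence that I expect to actually need is condition (C1): that the compact-open bisections of $H$ form a basis for its topology.

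Let $C$ be a compact-open set in $H$. Since $\theta$ is continuous, $\theta^{-1}(C)$ is certainly open, so the whole task is to prove that it is compact. First I would use (C1): every point $h \in C$ admits a compact-open bisection $B_{h}$ with $h \in B_{h} \subseteq C$, so that $C = \bigcup_{h \in C} B_{h}$. As $C$ is compact, finitely many of these cover it, giving $C = \bigcup_{i=1}^{n} B_{i}$ with each $B_{i}$ a compact-open bisection contained in $C$. Note that $C$ itself need not be a bisection; only the pieces $B_{i}$ are.

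Now I would simply pull back along $\theta$. Since inverse images commute with unions, $\theta^{-1}(C) = \bigcup_{i=1}^{n} \theta^{-1}(B_{i})$. By the hypothesis of the lemma, each $\theta^{-1}(B_{i})$ is a compact-open bisection, and in particular a compact-open set. A finite union of compact sets is compact and a finite union of open sets is open, so $\theta^{-1}(C)$ is compact-open, as required.

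I do not anticipate a serious obstacle here: the argument turns entirely on the single observation that, in a coherent \'etale groupoid, every compact-open set is a \emph{finite} union of compact-open bisections, and this is immediate from (C1) together with compactness. It is worth remarking that the proof uses neither condition (C2) nor sobriety (C3) of $H$, nor any coherence hypothesis on $G$; the assumption that $\theta^{-1}$ carries compact-open bisections to compact-open bisections supplies everything else.
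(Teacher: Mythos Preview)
Your proof is correct and follows essentially the same approach as the paper's own proof: use (C1) to write a compact-open set as a union of compact-open bisections, extract a finite subcover by compactness, pull back each piece using the hypothesis, and conclude that the inverse image is a finite union of compact-open sets. Your additional remark that only (C1) is actually used is accurate and worth noting.
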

\begin{proof} Let $X$ be a compact-open subset of $H$.
Since the groupoid $H$ is coherent, the compact-open bisections form a basis.
Thus we may write $X$ as a union of compact-open bisections
and so by compactness, we may write it as a finite union of compact-open bisections.
It follows that the inverse image of $X$ under $\theta$ can be written as finite union of compact-open bisections.
Thus since $\theta^{-1}(X)$ is a finite union of compact sets it is compact.
\end{proof}

We shall say that a continuous map between topological spaces is {\em coherent} if the inverse image
under this map of any compact-open set is a compact-open set.

\begin{theorem}[Duality for distributive inverse semigroups]\label{ddis}
The category of distributive inverse semigroups and their callitic morphisms is dually equivalent to the category
of coherent \'etale groupoids and coherent continuous covering functors.
\end{theorem}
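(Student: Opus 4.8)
The plan is to derive this duality by restricting the duality of Theorem~2.23 between spatial pseudogroups and sober \'etale groupoids, and then transporting the result along Proposition~3.7. The first point to secure is that the restriction makes sense at the level of objects. Every coherent pseudogroup is spatial by Proposition~\ref{coherent-implies-spatial}, so it lies in $\mathbf{Inv}_{sp}$; and every coherent \'etale groupoid is sober by condition (C3) of its definition, so it lies in $\mathbf{Etale}_{so}$. Lemma~3.15 shows that $\mathsf{G}$ sends a coherent pseudogroup to a coherent \'etale groupoid, while Lemma~3.14 shows that $\mathsf{B}$ sends a coherent \'etale groupoid to a coherent pseudogroup. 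Since the unit $\eta$ and counit $\varepsilon$ of Theorem~2.23 are already isomorphisms on these subcategories, the object-level bijection is inherited directly; Proposition~3.5, realising each coherent pseudogroup canonically as $\mbox{\rm Idl}(K(S))$, records the same fact on the inverse-semigroup side.

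The substantive step, and the one I expect to require the most care, is to match the morphisms. Under Theorem~2.23 a callitic morphism $\theta \colon S \rightarrow T$ of pseudogroups corresponds to the continuous covering functor $\theta^{-1} \colon \mathsf{G}(T) \rightarrow \mathsf{G}(S)$, and from the proof of Lemma~2.14 the basic open set $X_{s}$ of $\mathsf{G}(S)$ pulls back along this functor to $X_{\theta(s)}$ in $\mathsf{G}(T)$. Now specialise to coherent pseudogroups. By the bijection of Lemma~3.15, the compact-open bisections of $\mathsf{G}(S)$ are exactly the sets $X_{s}$ with $s \in K(S)$, and likewise for $\mathsf{G}(T)$; moreover $X_{\theta(s)}$ is always an open bisection (being the preimage of a bisection under a covering functor) and is compact precisely when $\theta(s)$ is a finite element of $T$. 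Hence the functor $\theta^{-1}$ carries compact-open bisections to compact-open bisections under inverse image if and only if $\theta$ preserves finite elements, that is, if and only if $\theta$ is a coherent morphism. Invoking Lemma~3.16, this last condition is equivalent to $\theta^{-1}$ being a coherent map, i.e.\ pulling back every compact-open set to a compact-open set. Thus callitic coherent pseudogroup morphisms correspond exactly to coherent continuous covering functors.

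Putting these together, the restriction of Theorem~2.23 is a dual equivalence between the category of coherent pseudogroups with callitic coherent morphisms and the category of coherent \'etale groupoids with coherent continuous covering functors, naturality being inherited from Theorem~2.23. Finally, Proposition~3.7 supplies a covariant equivalence between the category of distributive inverse semigroups with callitic morphisms and the category of coherent pseudogroups with callitic coherent morphisms. Composing the covariant equivalence with the dual equivalence yields the asserted dual equivalence between distributive inverse semigroups with callitic morphisms and coherent \'etale groupoids with coherent continuous covering functors.
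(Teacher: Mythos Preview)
Your proof is correct and follows essentially the same approach as the paper: restrict the duality of Theorem~2.23 to coherent objects using Lemmas~3.14 and~3.15, match coherent morphisms with coherent functors via the identification of compact-open bisections with the $X_{s}$ for finite $s$ together with Lemma~3.16, and then transport along Proposition~3.7. The paper checks the two directions of the morphism correspondence separately whereas you package them as a single biconditional, but this is only a cosmetic difference.
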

\begin{proof} We refine the duality of Theorem~2.23.

Let $\theta \colon \: G \rightarrow H$ be a coherent continous covering functor between coherent \'etale groupoids.
Then $\mathsf{B}(\theta) \colon \mathsf{B}(H) \rightarrow \mathsf{B}(G)$ is a callitic morphism of pseudogroups.
Since $G$ and $H$ are both coherent, both  $\mathsf{B}(G)$ and $\mathsf{B}(H)$ are coherent by Lemma~3.14.
Since $\theta$ is coherent  $\mathsf{B}(\theta)$ maps finite elements to finite elements by Lemma~3.16.
Thus $\mathsf{B}(\theta)$ is coherent.

Let $\theta \colon S \rightarrow T$ be a callitic coherent morphism between coherent pseudogroups.
Then $\mathsf{G}(\theta) \colon \mathsf{G}(T) \rightarrow \mathsf{G}(S)$ is a continuous covering functor.
By Lemma~3.15 both $\mathsf{G}(S)$ and $\mathsf{G}(T)$ are coherent \'etale groupoids.
Since $\theta$ is also coherent, the inverse image under $\mathsf{G}(\theta)$ of every compact-open bisection is a compact-open bisection.
It follows by Lemma~3.16 that $\mathsf{G}(\theta)$ is coherent.
Finally, the category of  distributive inverse semigroups and their callitic morphisms is equivalent to
the category of coherent pseudogroups and their callitic morphisms by Proposition~3.7.
\end{proof}

We are now going to explicitly compute  the functor giving the  dual equivalence in the previous theorem using  the prime filters. 
Let $S$ be a distributive inverse semigroup.
Define $\mathsf{G_{P}}(S)$ to be the set of prime filters of $S$.
For each $s \in S$ define $Y_{s}$ to be the set of all prime filters that contains $s$.
Put $\pi = \{Y_{s} \colon s \in S \}$.

\begin{lemma} Let $S$ be a distributive inverse semigroup.
Then  $\mathsf{G_{P}}(S)$ is a groupoid and
$\pi$ is a basis for a topology that makes $\mathsf{G_{P}}(S)$
a topological groupoid.
\end{lemma}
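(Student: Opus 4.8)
The plan is to avoid working directly with prime filters and instead transport everything from the completely prime filters of the $\mbox{Idl}$-completion, where the work has already been done. By Proposition~3.1 the set $\mbox{Idl}(S)$ is a pseudogroup, so by Proposition~2.8 the groupoid $\mathsf{G}(\mbox{Idl}(S))$ of its completely prime filters is an \'etale groupoid, in particular a topological groupoid. Lemma~\ref{prime-vs-completelyprime} supplies a bijection $\Phi \colon \mathsf{G_{P}}(S) \rightarrow \mathsf{G}(\mbox{Idl}(S))$, $P \mapsto P^{u}$, with inverse $F \mapsto F^{d}$. I would equip $\mathsf{G_{P}}(S)$ with the usual filter operations $\mathbf{d}(P) = (P^{-1}P)^{\uparrow}$, $\mathbf{r}(P) = (PP^{-1})^{\uparrow}$ and $P \cdot Q = (PQ)^{\uparrow}$ (defined when $\mathbf{d}(P) = \mathbf{r}(Q)$), and show that $\Phi$ carries these to the corresponding operations on $\mathsf{G}(\mbox{Idl}(S))$; since the latter is a groupoid, this both makes the operations on prime filters well defined and exhibits $\mathsf{G_{P}}(S)$ as a groupoid isomorphic to $\mathsf{G}(\mbox{Idl}(S))$.

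Concretely, the first step is to check the three compatibilities $(P^{u})^{-1} = (P^{-1})^{u}$, $\mathbf{d}(P^{u}) = (\mathbf{d}(P))^{u}$ (and dually for $\mathbf{r}$), and $P^{u} \cdot Q^{u} = (P \cdot Q)^{u}$ whenever the left-hand product is defined. The inverse identity is immediate from the definition of $P^{u}$, and once the product identity is in hand the domain and range identities follow by combining it with the inverse identity. Well-definedness is then automatic: $\mathbf{d}(P^{u})$ is completely prime by Lemma~2.2(2), so its image under $\Phi^{-1}$, namely $\mathbf{d}(P)$, is a prime filter, and products of prime filters are prime because products of completely prime filters are completely prime by Lemma~2.5. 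Since $\Phi$ is a bijection intertwining all the structure maps, it is an isomorphism of groupoids, and in particular $\mathsf{G_{P}}(S)$ is a groupoid.

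For the topology, I would first observe that $s^{\downarrow} \in P^{u}$ is equivalent to $s^{\downarrow} \cap P \neq \emptyset$, which, since $P$ is upward closed, is equivalent to $s \in P$; hence $\Phi^{-1}(X_{s^{\downarrow}}) = Y_{s}$. Next, every $A \in \mbox{Idl}(S)$ satisfies $A = \bigvee_{a \in A} a^{\downarrow}$, so by Lemma~2.6(5) we have $X_{A} = \bigcup_{a \in A} X_{a^{\downarrow}}$; thus the sets $X_{s^{\downarrow}}$ already form a basis for the topology on $\mathsf{G}(\mbox{Idl}(S))$. Transporting this basis along the bijection $\Phi$ shows that $\pi = \{Y_{s} \colon s \in S\}$ is a basis for a topology on $\mathsf{G_{P}}(S)$ and that $\Phi$ is a homeomorphism for this topology. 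Being simultaneously a groupoid isomorphism and a homeomorphism onto the topological groupoid $\mathsf{G}(\mbox{Idl}(S))$, the map $\Phi$ transfers continuity of multiplication and inversion back to $\mathsf{G_{P}}(S)$, which is therefore a topological groupoid.

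The main obstacle is the product identity $P^{u} \cdot Q^{u} = (P \cdot Q)^{u}$, since the product on the left is computed via subset multiplication and $\vee$-closure in $\mbox{Idl}(S)$ while the product on the right is an upward closure taken in $S$ itself. The forward inclusion is easy: if $p \in A \cap P$ and $q \in B \cap Q$ then $pq$ lies in both $AB$ and $PQ$, so $A \cdot B$ meets $(PQ)^{\uparrow}$. The reverse inclusion, however, requires unwinding how the subset product of $\vee$-closed order ideals interacts with the condition of meeting $P$ and $Q$, and it is here that the filter properties and the distributivity of $\mbox{Idl}(S)$ must be used with care.
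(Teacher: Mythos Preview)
Your approach is correct but genuinely different from the paper's. The paper argues directly: it shows $\pi$ is a basis by picking, for $P \in Y_{s} \cap Y_{t}$, any $p \in P$ with $p \leq s,t$ (using only that $P$ is a filter); it then says the closure of prime filters under inverse, domain/range and product follows by the same arguments as Lemmas~2.2 and~2.4 (with finite joins in place of arbitrary compatible joins), and that continuity follows as in Lemma~2.7. No $\mbox{Idl}$-completion is invoked.

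Your transport argument through $\Phi \colon P \mapsto P^{u}$ is perfectly valid and in fact proves Proposition~3.19 simultaneously (the paper derives that proposition separately, after the lemma). The trade-off is that the paper's direct route is self-contained and makes explicit that the prime-filter arguments are just finitary variants of the completely-prime ones, while your route avoids repetition at the cost of importing the $\mbox{Idl}$-machinery. One remark: the ``main obstacle'' you flag---the reverse inclusion $(P \cdot Q)^{u} \subseteq P^{u} \cdot Q^{u}$---is not really harder than the forward one. If $C \in (P\cdot Q)^{u}$, pick $c \in C$ with $pq \leq c$ for some $p \in P$, $q \in Q$; then $p^{\downarrow} \in P^{u}$, $q^{\downarrow} \in Q^{u}$, and $p^{\downarrow} q^{\downarrow} = (pq)^{\downarrow} \subseteq c^{\downarrow} \subseteq C$, so $C \in P^{u} \cdot Q^{u}$. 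The same trick handles $(\mathbf{d}(P^{u}))^{d} = \mathbf{d}(P)$, after which well-definedness of the prime-filter operations follows exactly as you outline.
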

\begin{proof}
Let $P \in Y_{s} \cap Y_{t}$.
Then $s,t \in P$.
But $P$ is a filter and so there exists $p \in P$ such that $p \leq s,t$.
Thus $P \in Y_{p} \subseteq Y_{s} \cap Y_{t}$.
Thus $\pi$ is a basis.
Next we have to check that the product of prime filters is a prime filter
so that $\mathsf{G_{P}}(S)$ is a groupoid.
This uses similar arguments to Lemmas~2.2 and 2.4.
The proof that $\mathsf{G_{P}}(S)$ is a topological groupoid is similar to the proof of Lemma~2.7.
\end{proof}

\begin{proposition} \label{gpt-equal-gs} Let $S$ be a coherent pseudogroup and let $T$ be  a distributive inverse semigroup with $S = \mbox{\rm Idl} (T)$ and $T  = K(S)$.
Then the groupoid $\mathsf{G}_{P} (T)$ is homeomorphic to the groupoid $\mathsf{G}(S)$.
\end{proposition}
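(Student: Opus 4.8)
The plan is to promote the order isomorphism of Lemma~\ref{prime-vs-completelyprime}, given by $P \mapsto P^{u}$ with inverse $F \mapsto F^{d}$, to an isomorphism of topological groupoids. The starting observation is that if $\iota \colon T \rightarrow S$ denotes the canonical homomorphism $s \mapsto s^{\downarrow}$ of Proposition~3.1, then $P^{u} = (\iota (P))^{\uparrow}$, the upward closure being taken in $S$; indeed $p^{\downarrow} \subseteq A$ exactly when $p \in A$, since $A$ is an order ideal, so $(\iota(P))^{\uparrow} = \{A : A \cap P \neq \emptyset\} = P^{u}$. In particular, for $s \in T$ and a prime filter $P$ we have $s^{\downarrow} \in P^{u}$ if and only if $s \in P$, because $P$ is upwardly closed.

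First I would treat the topology. Using the equivalence $s^{\downarrow} \in P^{u} \Leftrightarrow s \in P$, the bijection $P \mapsto P^{u}$ carries the basic open set $Y_{s}$ bijectively onto $X_{s^{\downarrow}}$. It then remains to observe that the sets $X_{s^{\downarrow}}$, for $s \in T$, form a basis for the topology of $\mathsf{G}(S)$. This is exactly where coherence enters: by Proposition~3.5 the finite elements of $S = \mbox{Idl}(T)$ are precisely the elements $s^{\downarrow}$, and every $A \in S$ is a compatible join $A = \bigvee_{a \in A} a^{\downarrow}$ of such, whence $X_{A} = \bigcup_{a \in A} X_{a^{\downarrow}}$ by Lemma~2.6(5). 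Thus the bijection maps a basis onto a basis and is therefore a homeomorphism.

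Next I would check functoriality, and here the one point requiring care is that products and upward closures on the two sides are formed in different semigroups. The key technical facts, all immediate from $\iota$ being an order-preserving homomorphism, are that $((\iota(X))^{\uparrow} (\iota(Y))^{\uparrow})^{\uparrow} = (\iota(XY))^{\uparrow}$ and that $(\iota(X^{\uparrow}))^{\uparrow} = (\iota(X))^{\uparrow}$, for subsets $X, Y \subseteq T$. Granting these, inversion is preserved because $(P^{u})^{-1} = (\iota(P)^{-1})^{\uparrow} = (\iota(P^{-1}))^{\uparrow} = (P^{-1})^{u}$; the domain map is preserved because $\mathbf{d}(P^{u}) = ((P^{u})^{-1}P^{u})^{\uparrow} = (\iota(P^{-1}P))^{\uparrow} = \mathbf{d}(P)^{u}$, and dually for $\mathbf{r}$; and the partial product is preserved because whenever $P \cdot Q$ is defined one computes $(P \cdot Q)^{u} = (\iota(PQ))^{\uparrow} = (P^{u}Q^{u})^{\uparrow} = P^{u} \cdot Q^{u}$, the composability condition on the right following from the already-established preservation of $\mathbf{d}$ and $\mathbf{r}$.

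The main obstacle is thus not the bijection, which is handed to us by Lemma~\ref{prime-vs-completelyprime}, but the bookkeeping in the previous paragraph: one must keep scrupulous track of whether a given product or upward closure is taken in $T$ or in $S$, and it is precisely the identity $P^{u} = (\iota(P))^{\uparrow}$ together with the homomorphism property of $\iota$ that allows one to transport the groupoid operations across. Once the two displayed semigroup identities are established, each verification collapses to a one-line computation, and combining this with the homeomorphism of the second paragraph yields the asserted isomorphism of topological groupoids.
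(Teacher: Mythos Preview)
Your proof is correct and follows essentially the same route as the paper: the same bijection $P \mapsto P^{u}$ from Lemma~\ref{prime-vs-completelyprime}, the same use of coherence together with Lemma~2.6(5) to see that the sets $X_{s^{\downarrow}}$ form a basis, and the same conclusion that $Y_{s}$ corresponds to $X_{s^{\downarrow}}$. The paper dismisses functoriality as ``routine to check''; your identity $P^{u} = (\iota(P))^{\uparrow}$ and the two auxiliary facts about $\iota$ are exactly what makes that routine explicit, so you have simply filled in detail the paper omits.
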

\begin{proof} Define a map $\mathsf{G}_{P} (T) \rightarrow \mathsf{G}(S)$ by $P \mapsto P^{u}$.
By Lemma~3.2 this is a bijection.
It is routine to check that this is a functor.
Finally, we need to check that the map is continuous and open.
A basic open set in $\mathsf{G}(S)$ has the form $X_{s}$.
By assumption, $s = \bigvee_{i \in I} s_{i}$ where the $s_{i}$ are finite elements from $T$.
Thus by Lemma~2.6(5), we have that
$X_{s} = \bigcup_{i \in I} X_{s_{i}}$ where the $s_{i}$ are finite.
The inverse image of $X_{t}$ where $t$ is finite is precisely $Y_{t}$.
Thus the map is continuous.
The image of $Y_{t}$ is just $X_{t}$ and so the map is open.
\end{proof}

The groupoid $\mathsf{G}_{P} (S)$, where $S$ is a distributive inverse semigroup, is called the {\em prime spectrum} of $S$.
In Theorem~3.17, the functor from distributive inverse semigroups to \'etale groupoids can be 
replaced by $\mathsf{G}_{P}$ as by the proposition above $\mathsf{G} (\mbox{Idl} (T))$ is homeomorphic to $\mathsf{G}_{P} (T)$.

\subsection{Non-commutative Stone duality for boolean inverse semigroups}

We shall now specialize Theorem~3.17 to obtain a new proof of the duality proved directly in \cite{Law4}
from which the monoid case, first proved in \cite{Law3}, follows as a corollary. 
It is important to be clear about the definitions we shall give so we highlight them:
\begin{itemize}

\item A distributive inverse semigroup whose semilattices of idempotents is a boolean algebra 
is called a {\em weakly boolean inverse semigroup}.\footnote{This is not an ideal term. Perhaps {\em pre-boolean inverse semigroup} would be better.}

\item A weakly boolean inverse semigroup that is also an inverse $\wedge$-semigroup is called a {\em boolean inverse semigroup}.
A {\em morphism of boolean inverse semigroups} is an inverse semigroup homomorphism that preserves binary meets and binary compatible joins.

\end{itemize}

Let $S$ be a boolean inverse semigroup.
Let $a,b \in S$ such that $b \leq a$.
Then we may construct a unique element, denoted by $a \setminus b$ such that $b$ and $a \setminus b$ are orthogonal
and $a = b \vee (a \setminus b)$.
See Lemma~3.27 for a proof in a slightly more general setting.
We call $a \setminus b$ the {\em (relative) complement of $b$ in $a$}.

\begin{lemma} \mbox{}
\begin{enumerate}

\item Let $S$ be a distributive inverse semigroup.
Then every ultrafilter is a prime filter. 

\item  Let $S$ be a distributive inverse semigroup.
It is weakly boolean if and only if every prime filter is an ultrafilter.

\item  Let $S$ be a distributive inverse $\wedge$-semigroup.
It is boolean if and only if every prime filter is an ultrafilter.

\end{enumerate}
\end{lemma}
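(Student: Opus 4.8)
The plan is to reduce all three parts to the distributive lattice of idempotents $E(S)$, where Proposition~1.6 is available. This reduction is essentially forced: $E(S)$, being a meet-semilattice, automatically satisfies the weak meet condition, whereas in parts~(1) and~(2) the semigroup $S$ is only assumed distributive and need not be a weak semilattice, so Theorem~1.5 and Proposition~1.6 are \emph{not} directly available for $S$ itself. The bridge is the map $F \mapsto \mathbf{d}(F) = (F^{-1}F)^{\uparrow}$, and the core of the proof is to show that it transports primeness and maximality faithfully between $S$ and $E(S)$.

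The first transfer fact I would establish is that a proper filter $F$ of $S$ is prime if and only if $\mathbf{d}(F)$ is a prime filter of $E(S)$. For the forward direction, from $e \vee f \in \mathbf{d}(F)$ (with $e,f$ idempotent) choose $a \in F$ with $a^{-1}a \le e \vee f$; then $a \le ae \vee af$ lies in $F$, primeness gives say $ae \in F$, and $(ae)^{-1}(ae) = a^{-1}a\,e \le e$ forces $e \in \mathbf{d}(F)$. For the converse, from $a \vee b \in F$ we get $\mathbf{d}(a \vee b) = a^{-1}a \vee b^{-1}b \in \mathbf{d}(F)$, so say $a^{-1}a \in \mathbf{d}(F)$; taking $f \in F$ with $f^{-1}f \le a^{-1}a$ and then $g \in F$ with $g \le a \vee b$ and $g \le f$, one has $\mathbf{d}(ag^{-1}g) = g^{-1}g = \mathbf{d}(g)$ while $ag^{-1}g \le g$, hence $ag^{-1}g = g \le a$ and $a \in F$. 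Neither direction needs the weak meet condition. The second transfer fact is that $F$ ultra in $S$ implies $\mathbf{d}(F)$ ultra in $E(S)$: if some idempotent $e \notin \mathbf{d}(F)$ met every element of $\mathbf{d}(F)$ nontrivially, then $(Fe)^{\uparrow}$ would be a proper filter (each $fe \ne 0$ since $\mathbf{d}(fe) = e \wedge f^{-1}f \ne 0$) strictly containing $F$ (equality would force $e \in \mathbf{d}(F)$), contradicting maximality. Part~(1) is now immediate: $F$ ultra $\Rightarrow \mathbf{d}(F)$ ultra $\Rightarrow$ (Proposition~1.6(1)) $\mathbf{d}(F)$ prime $\Rightarrow F$ prime.

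For part~(2), $S$ is weakly boolean exactly when $E(S)$ is boolean, which by Proposition~1.6(2) happens exactly when every prime filter of $E(S)$ is an ultrafilter. The easy implication is ``every prime filter of $S$ ultra $\Rightarrow E(S)$ boolean'': given a prime filter $Q$ of $E(S)$, its $S$-upward closure $Q^{\uparrow}$ is a filter with $\mathbf{d}(Q^{\uparrow}) = Q$, and it is prime because $a \vee b \ge e \in Q$ yields $e = ae \vee be$ with $ae, be$ idempotents below $a,b$, so primeness of $Q$ gives say $ae \in Q$ and hence $a \ge ae \in Q$; by hypothesis $Q^{\uparrow}$ is then ultra, so $Q = \mathbf{d}(Q^{\uparrow})$ is ultra. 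The hard implication, and the main obstacle, is the converse: assuming $E(S)$ boolean and $P$ prime, $\mathbf{d}(P)$ is prime, hence ultra, so any proper $G \supsetneq P$ satisfies $\mathbf{d}(G) = \mathbf{d}(P)$. Choosing $g \in G \setminus P$ and $p \in P$ with $p^{-1}p \le g^{-1}g$, a common lower bound $h \le p, gp^{-1}p$ in $G$ gives $pc = h = gc$ for $c = \mathbf{d}(h)$. Writing $c'$ for the complement of $c$ in $p^{-1}p$ (this is where booleanness of $E(S)$ enters), we have $p = pc \vee pc' \in P$, so by primeness $pc \in P$ or $pc' \in P$: in the first case $g \ge gc = pc \in P$ gives the contradiction $g \in P$, and in the second case $c' = \mathbf{d}(pc') \in \mathbf{d}(P)$ together with $c \in \mathbf{d}(G) = \mathbf{d}(P)$ forces $0 = c \wedge c' \in \mathbf{d}(P)$, contradicting properness. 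Hence $P$ is ultra.

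Finally, part~(3) is immediate from part~(2): for an inverse $\wedge$-semigroup the condition ``boolean'' coincides with ``weakly boolean'', so the statement is just the $\wedge$-semigroup specialization of~(2). The only genuine difficulty is the fibrewise complementation argument in the hard direction of~(2); everything else is bookkeeping with the natural partial order and distributivity, carried out in $E(S)$ where Proposition~1.6 does the real work.
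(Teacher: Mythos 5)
Your proof is correct and follows essentially the same route as the paper's: reduce everything to the distributive lattice $E(S)$ via $F \mapsto \mathbf{d}(F)$ and invoke Proposition~1.6, the paper merely asserting as unproved observations the transfer facts (primeness and maximality pass between $F$ and $\mathbf{d}(F)$) that you establish in detail. The only cosmetic difference is in the hard direction of (2), where you argue directly with relative complements in $E(S)$ while the paper would instead deduce that $P$ is an ultrafilter from $\mathbf{d}(P)$ being one (using that filters with equal domains and nonempty intersection coincide); both work.
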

\begin{proof}
(1) Observe that $F$ is a prime filter (respectively, ultrafilter) in $S$ if and only if $F^{-1} \cdot F$ is a prime idempotent filter
(respectively, idempotent ultrafilter).
Next observe that $G$ is an idempotent prime filter (respectively, ultrafilter) in $S$ if and only if $E(G)$ is a prime filter in $E(S)$ (respectively, ultrafilter).
We now apply Proposition~1.6(1).

(2) This follows by the argument in (1) above combined with Proposition~1.6(2).

The proof of (3) is immediate by (2).
\end{proof}

It follows by the above result that the {\em callitic} morphisms of boolean inverse semigroups
are just the morphisms under which the inverse images of ultrafilters are ultrafilters.

A {\em boolean groupoid} is a hausdorff \'etale topological groupoid with a basis of compact-open bisections whose space of identities is a boolean space.
A {\em morphism of boolean groupoids}  is a proper continuous covering functor.

It is useful to deconstruct the definition of a boolean groupoid.
The following is proved as Lemma~2.37 of \cite{Law4}.

\begin{lemma} Let $G$ be a hausdorff \'etale topological groupoid.
\begin{enumerate}

\item $G$ has a basis of compact-open bisections if and only if $G_{o}$ has a basis of compact-open bisections.

\item The product of two compact-open subsets is compact-open.

\end{enumerate}
\end{lemma}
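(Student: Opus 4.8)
The plan is to treat the two parts separately, reducing the first to a result already established in the excerpt and settling the second by a direct compactness argument that pinpoints exactly where the Hausdorff hypothesis is needed.

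For part (1), I would first record the elementary observation that \emph{every} subset of $G_{o}$ is automatically a bisection. Indeed, if $A \subseteq G_{o}$ then each element is self-inverse, so $A^{-1} = A$, and two identities compose only when they coincide, giving $A^{-1}A = AA^{-1} = A \subseteq G_{o}$. Hence in the space $G_{o}$ the notions of compact-open set and compact-open bisection coincide. Part (1) is then literally the earlier lemma asserting that $G$ has a basis of compact-open bisections if and only if $G_{o}$ has a basis of compact-open sets; note that this half needs only the \'etale structure and not the Hausdorff assumption.

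For part (2), let $A$ and $B$ be compact-open subsets of $G$, and argue that $AB$ is open and compact separately. Openness is immediate from Resende's characterization of \'etale groupoids invoked in Section~1: the open sets form a semigroup under subset multiplication, so the product of two open sets is open and in particular $AB$ is open. For compactness I would write $AB = m\bigl((A \times B) \cap (G \ast G)\bigr)$, where $m \colon G \ast G \to G$ is the continuous multiplication map. Since $G$ is Hausdorff, so is $G_{o}$, whence the diagonal $\Delta \subseteq G_{o} \times G_{o}$ is closed; because $G \ast G = (\mathbf{d} \times \mathbf{r})^{-1}(\Delta)$ with $\mathbf{d}$ and $\mathbf{r}$ continuous, $G \ast G$ is closed in $G \times G$. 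Therefore $(A \times B) \cap (G \ast G)$ is a closed subset of the compact set $A \times B$, hence compact, and its continuous image $AB$ under $m$ is compact. Combining the two observations shows $AB$ is compact-open.

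The only genuine subtlety, and hence the main obstacle, is the compactness half of part (2). Without the Hausdorff hypothesis the set $G \ast G$ of composable pairs need not be closed, and the image of the compact set $A \times B$ under $m$ would not be controlled, since one would be intersecting the compact set with a possibly non-closed set of composable pairs. The Hausdorff assumption is precisely what forces $G \ast G$ to be closed, which is what turns $(A \times B) \cap (G \ast G)$ into a compact set and lets the continuous-image argument close.
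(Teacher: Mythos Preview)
Your argument is correct. The paper does not actually supply its own proof of this lemma; it simply records it as ``proved as Lemma~2.37 of \cite{Law4}''. Your reduction of part~(1) to Lemma~3.13(1) via the observation that subsets of $G_{o}$ are automatically bisections is exactly right, and your proof of part~(2) --- openness from the fact that open sets in an \'etale groupoid are closed under multiplication, compactness from the closedness of $G\ast G$ in the Hausdorff case --- is a clean and standard way to establish the result.
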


The following is Lemma~2.42(1) of \cite{Law4}.

\begin{lemma}
Boolean groupoids are sober.
\end{lemma}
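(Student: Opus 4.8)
The plan is to reduce the claim to a purely spatial statement about the identity space and then invoke a classical separation-theoretic fact. By Lemma~3.12(3), an \'etale groupoid $G$ is sober if and only if its identity space $G_{o}$ is sober as a topological space. So it suffices to prove that the space of identities of a boolean groupoid is sober. By definition a boolean groupoid has a boolean space of identities, and a boolean space is in particular Hausdorff; hence the whole argument reduces to the classical fact that every Hausdorff space is sober, applied to $G_{o}$.

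To carry this out within our framework I would use the characterization of sobriety by irreducible closed sets, which is the standard reformulation of sobriety with respect to completely prime filters of the frame of open sets (and so matches the notion reduced to by Lemma~3.12(3)). First I would recall that a space is sober precisely when every nonempty irreducible closed set is the closure of a unique point. Then I would show that in a Hausdorff space the only nonempty irreducible closed sets are singletons: if a closed set $C$ contained two distinct points $x,y$, I would separate them by disjoint open sets $U \ni x$ and $V \ni y$, so that $C \cap U$ and $C \cap V$ are two disjoint nonempty relatively open subsets of $C$, contradicting irreducibility since in an irreducible space any two nonempty open subsets must meet. As a Hausdorff space is $T_{1}$, such a singleton $\{x\}$ is closed and equals its own closure, and uniqueness of the generic point follows from the $T_{0}$ axiom. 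This shows $G_{o}$ is sober, and hence $G$ is sober.

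I expect no genuine obstacle: essentially all the content is carried by Lemma~3.12(3) together with the elementary implication that Hausdorff spaces are sober. The only point demanding a little care is to confirm that the notion of \emph{sober space} appearing in Lemma~3.12(3)---sobriety with respect to the completely prime filters of the frame of open sets---coincides with the irreducible-closed-set formulation, so that the Hausdorff separation argument genuinely applies; this equivalence is standard, and once it is noted the proof is immediate.
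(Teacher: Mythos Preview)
Your argument is correct. The paper itself does not give a proof here; it simply cites Lemma~2.42(1) of \cite{Law4}. Your route---reduce via the paper's Lemma~3.13(3) (you wrote 3.12(3); check the numbering) to sobriety of the identity space, then use that a boolean space is Hausdorff and that Hausdorff spaces are sober via the irreducible-closed-set characterization---is exactly the natural way to supply the missing argument, and there is no circularity since Lemma~3.13 precedes this lemma and does not depend on it. Your caveat about matching the frame-theoretic definition of sobriety with the irreducible-closed-set formulation is well placed and is indeed standard (see, e.g., \cite{J}).
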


\begin{proposition} \mbox{}
\begin{enumerate}

\item The hausdorff coherent \'etale groupoids are precisely the boolean groupoids.

\item If $G$ is a hausdorff coherent \'etale groupoid then $\mathsf{KB}(G)$ is a boolean inverse semigroup.

\item If $S$ is a coherent pseudogroup whose finite elements form a boolean inverse semigroup then $\mathsf{G}(S)$ is hausdorff. 
In particular, $\mathsf{G}_P (T)$ is hausdorff for all boolean inverse semigroups $T$. 

\item Let $S$ be a distributive inverse semigroup.
Then $\mathsf{G}_P (S)$ is hausdorff if and only if $S$ is a boolean inverse semigroup.

\end{enumerate}
\end{proposition}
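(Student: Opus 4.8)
The four parts build on one another, so I would prove them in order; (1) and (2) repackage the coherence machinery of this section together with elementary facts about Hausdorff spaces, (3) carries the real topological content, and (4) is obtained by bootstrapping from (1)--(3). For (1) the plan is to compare the two definitions through the unit space. A coherent \'etale groupoid satisfies (C1), so it has a basis of compact-open bisections; by the lemma reducing coherence to the unit space, $G_{o}$ then has a basis of compact-open sets, and being a subspace of a Hausdorff space it is itself Hausdorff, hence a boolean space, so a Hausdorff coherent groupoid is boolean. Conversely, given a boolean groupoid I would check (C1)--(C3) directly: (C1) is immediate, (C2) holds because the product of two compact-open subsets is compact-open (Lemma~2.37 of \cite{Law4}) and a product of bisections is a bisection, and (C3) is precisely the statement that boolean groupoids are sober (Lemma~2.42(1) of \cite{Law4}).

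For (2) I would note that $\mathsf{KB}(G)$ is already a distributive inverse semigroup by the earlier lemma, so it remains to check that it is an inverse $\wedge$-semigroup whose idempotent semilattice is a boolean algebra. Since $G$ is Hausdorff every compact-open set is clopen; hence the intersection of two compact-open bisections is again a compact-open bisection (it is open and closed inside a compact set) and realises their meet in $\mathsf{B}(G)$, so $\mathsf{KB}(G)$ is an inverse $\wedge$-semigroup. Its idempotents are exactly the compact-open subsets of $G_{o}$, which is a boolean space by part~(1), and these form a generalized boolean algebra. Thus $\mathsf{KB}(G)$ is a boolean inverse semigroup.

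The heart of the proposition is (3), and here the plan is to pass to prime filters. Since $S$ is coherent it is the $\mbox{Idl}$-completion of $K(S)$, so by Proposition~\ref{gpt-equal-gs} it suffices to show that $\mathsf{G}_{P}(T)$ is Hausdorff when $T=K(S)$ is a boolean inverse semigroup; this simultaneously yields the ``in particular'' clause. The key claim is that every basic open set $Y_{s}$ is clopen. Indeed, $T$ is boolean, hence an inverse $\wedge$-semigroup satisfying the weak meet condition, and by the preceding lemma every prime filter of $T$ is an ultrafilter. If $P\notin Y_{s}$ then $s\notin P$, and since $P$ is an ultrafilter Theorem~1.5 supplies $a\in P$ orthogonal to $s$, that is $a\wedge s=0$; then $P\in Y_{a}$ and, because $T$ has binary meets, $Y_{a}\cap Y_{s}=Y_{a\wedge s}=Y_{0}=\emptyset$, so the complement of $Y_{s}$ is open. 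Given distinct prime filters $P\neq Q$, pick $s\in P\setminus Q$; then $Y_{s}$ and its complement are disjoint open sets separating $P$ and $Q$, which is the Hausdorff property.

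Finally, (4) splits into two implications. The direction ``$S$ boolean $\Rightarrow\mathsf{G}_{P}(S)$ Hausdorff'' is the ``in particular'' clause of (3). For the converse I would bootstrap through the completion: viewing $S$ as a distributive inverse semigroup, $\mbox{Idl}(S)$ is its coherent completion and $\mathsf{G}_{P}(S)\cong\mathsf{G}(\mbox{Idl}(S))$ by Proposition~\ref{gpt-equal-gs}; the hypothesis then makes $\mathsf{G}(\mbox{Idl}(S))$ a Hausdorff coherent \'etale groupoid, hence a boolean groupoid by~(1), so by~(2) its inverse semigroup $\mathsf{KB}(\mathsf{G}(\mbox{Idl}(S)))$ of compact-open bisections is boolean. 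This semigroup is isomorphic, via $\varepsilon$ restricted to the finite elements, to $K(\mbox{Idl}(S))\cong S$, so $S$ is boolean. The main obstacle is the clopen argument in (3): the remaining steps are bookkeeping over the coherence dictionary, whereas the clopen-ness of $Y_{s}$ is exactly where the boolean rather than merely distributive hypothesis enters, through the ultrafilter/orthogonality characterization of Theorem~1.5.
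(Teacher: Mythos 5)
Your proposal is correct and follows essentially the same route as the paper: parts (1), (2) and (4) are argued identically (unit-space reduction, compact-subsets-of-Hausdorff-are-closed, and the bootstrap through $\mathsf{KB}(\mathsf{G}_P(S))\cong S$), and in (3) your clopen-ness of $Y_s$ is just a repackaging of the paper's separation of two completely prime filters by $X_{a^{\downarrow}}$ and $X_{c^{\downarrow}}$ with $a\wedge c=0$, obtained from the same ultrafilter/orthogonality characterization. No gaps.
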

\begin{proof} (1) Immediate.

(2) Observe that the semilattice of idempotents of  $\mathsf{KB}(G)$ is given by the compact-open subsets of the boolean space $G_{o}$ and so forms a boolean algebra.
It remains to show that $\mathsf{KB}(G)$ is an inverse $\wedge$-semigroup.
Let $A$ and $B$ be two compact-open bisections.
Clearly $A \cap B$ is an open bisection so it only remains to  show that it is compact.
Compact subsets of hausdorff spaces are closed.
Thus $A \cap B$ is closed.
But $A \cap B$ is a closed subset of the compact set $A$ and so $A \cap B$ is compact.

(3) We show the last statement. The first statement the follows from Lemma \ref{gpt-equal-gs}.
We may write $S = \mbox{Idl}(T)$ where $T$ is a boolean inverse semigroup.
Let $F$ and $G$ be distinct elements of $\mathsf{G}(S)$.
Thus they are distinct completely prime filters in $S$.
It follows that $F^{d}$ and $G^{d}$ are distinct prime filters in $T$ and so by Lemma~3.20 they are distinct ultrafilters.
It follows that there exists $a \in A$ such that $b \notin B$.
But $T$ is an inverse $\wedge$-semigroup and $B$ is an ultrafilter
and so by Lemma~2.7(2) of \cite{Law4}, there exists $c \in B$ such that $b \wedge c = 0$.
By definition, $X_{a^{\downarrow}}$ is the set of all completely prime filters that contain $a^{\downarrow}$
and $X_{c^{\downarrow}}$ is the set of all completely prime filters that contain $c^{\downarrow}$.
Observe that $F \in X_{a^{\downarrow}}$  and $G \in X_{b^{\downarrow}}$;
both sets are open sets in $\mathsf{G}(S)$;
and their interesection is empty because $b \wedge c = 0$.

(4) We have that  $\mathsf{G}_P (S)$ is hausdorff if $S$ is a boolean inverse semigroup.
Suppose now that $\mathsf{G}_P (S)$ is hausdorff.
Then  we have that $\mathsf{KB}(\mathsf{G}_P (S))$ is a boolean inverse semigroup.
But $S$ is isomorphic to $\mathsf{KB}(\mathsf{G}_P (S))$ by Theorem~3.17 and so is a boolean inverse semigroup.
\end{proof}

\begin{lemma}
Let $\theta \colon G \rightarrow H$ be a coherent continuous covering functor between boolean groupoids.
Then $\theta$ is proper.
\end{lemma}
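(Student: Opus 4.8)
The plan is to unwind the definition of \emph{proper}: I must show that for every compact subset $K \subseteq H$, the inverse image $\theta^{-1}(K)$ is compact in $G$. Since $\theta$ is a coherent continuous covering functor between boolean groupoids, the coherence hypothesis tells me that the inverse image of every compact-open set is compact-open, but a general compact set need not be open, so I cannot apply Lemma~3.16 directly. The key structural fact I would exploit is that boolean groupoids are hausdorff with a basis of compact-open bisections, so their topology is especially well-behaved: compact subsets are closed, and compact-open sets form a basis.

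First I would reduce to the case where $K$ is itself compact-open. The standard move is this: since $H$ has a basis of compact-open bisections (by the definition of boolean groupoid, together with Lemma~3.23), any compact set $K$ can be covered by finitely many basic compact-open bisections $C_1,\ldots,C_n$. Set $U = C_1 \cup \cdots \cup C_n$; this is a compact-open set containing $K$, and because $H$ is hausdorff the compact set $K$ is closed. Now $\theta^{-1}(K)$ is a closed subset (by continuity of $\theta$) of $\theta^{-1}(U)$. By coherence, and because each $C_i$ is a compact-open bisection whose inverse image is compact-open, $\theta^{-1}(U) = \bigcup_i \theta^{-1}(C_i)$ is a finite union of compact-open sets, hence compact-open. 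Thus $\theta^{-1}(K)$ is a closed subset of a compact set.

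The final step is to conclude that a closed subset of a compact set is compact, which holds in any topological space and requires no hausdorffness. Therefore $\theta^{-1}(K)$ is compact, which is exactly what properness demands.

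The step I expect to be the main obstacle is the reduction from an arbitrary compact $K$ to a compact-open containing set, and in particular making sure that the inverse image of a compact-open \emph{bisection} is compact-open under our hypotheses. Here the coherence of $\theta$ as stated gives inverse images of compact-open \emph{sets}, and Lemma~3.16 bridges compact-open bisections and compact-open sets for coherent covering functors between coherent groupoids; since boolean groupoids are coherent by Proposition~3.25(1), I am entitled to invoke it. The only subtlety to watch is that the finite cover $C_1,\ldots,C_n$ exists precisely because $K$ is compact and the compact-open bisections form a basis, so the hausdorff and basis hypotheses of a boolean groupoid are both genuinely used.
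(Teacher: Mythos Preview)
Your argument is correct and follows essentially the same route as the paper's proof: cover the compact set $K$ by finitely many compact-open bisections, use coherence to get a compact inverse image of this finite union, and then use hausdorffness of $H$ together with continuity of $\theta$ to conclude that $\theta^{-1}(K)$ is closed in that compact set, hence compact. One small remark: you do not actually need to invoke Lemma~3.16 here, since the definition of a coherent continuous map already gives you directly that the inverse image of each compact-open $C_i$ is compact-open.
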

\begin{proof} Let $X$ be a compact subset of $H$.
By assumption $H$ has a basis of compact-open bisections and so $H$ are thus $X$ is covered by a family of compact-open bisections.
It follows that $X$ is covered by a finite set of compact-open bisections.
Thus we may write $X \subseteq \bigcup_{i=1}^{m} B_{i}$ where the $B_{i}$ are compact-open bisections.
It follows that $\theta^{-1} (X) \subseteq  \bigcup_{i=1}^{m} \theta^{-1}(B_{i})$.
The union is a finite union of compact-open bisections.
Now $X$ is a compact subset of a hausdorff space and so $X$ is closed.
It follows that $\theta^{-1}(X)$ is a closed subset of $G$.
But $\theta^{-1}(X)$ is a closed subset of a compact set and so is itself compact.
Thus $X$ compact implies that $\theta^{-1}(X)$ is compact.
\end{proof}

Using the preceding lemmas we can now specialize Theorem \ref{ddis} to boolean inverse semigroups and boolean groupoids.  
This gives immediately the following result.
It was first proved by direct means as Theorem~2.40 of \cite{Law4}.

\begin{theorem}[Duality for boolean inverse semigroups]
The category of boolean inverse semigroups and their callitic morphisms is dual to the category of boolean groupoids
and their proper continuous covering functors.
\end{theorem}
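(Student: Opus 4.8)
The plan is to derive the result purely formally from Theorem~\ref{ddis} by restricting that dual equivalence to the full subcategories cut out by the boolean conditions on each side. Nothing new needs to be constructed: the functors are those of Theorem~\ref{ddis} (namely $T \mapsto \mathsf{G}_{P}(T)$, which is homeomorphic to $\mathsf{G}(\mbox{Idl}(T))$ by Proposition~\ref{gpt-equal-gs}, together with $G \mapsto \mathsf{KB}(G)$), and all the work lies in checking that (a) these functors carry boolean objects to boolean objects in both directions, and (b) within the boolean subcategories the two prescribed morphism classes coincide with the callitic and coherent morphisms already treated by Theorem~\ref{ddis}.

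First I would dispose of the objects. By Proposition~3.23(1) the boolean groupoids are exactly the hausdorff coherent \'etale groupoids, so they form a full subcategory of the coherent \'etale groupoids appearing in Theorem~\ref{ddis}. A boolean inverse semigroup is in particular a distributive inverse semigroup, and Proposition~3.23(4) shows that a distributive inverse semigroup $S$ is boolean precisely when its prime spectrum $\mathsf{G}_{P}(S)$ is hausdorff, i.e.\ boolean; conversely Proposition~3.23(2) gives that $\mathsf{KB}(G)$ is boolean whenever $G$ is a boolean groupoid. Hence the two object assignments restrict to a bijective correspondence between boolean inverse semigroups and boolean groupoids.

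Next I would match the morphisms, which is the part requiring genuine checking. On the groupoid side the key point is that, between boolean groupoids, a continuous covering functor is coherent if and only if it is proper: Lemma~3.24 gives the implication coherent $\Rightarrow$ proper, while for the converse one observes that a proper map pulls compact sets back to compact sets and a continuous map pulls open sets back to open sets, so a proper continuous map pulls each compact-open set back to a compact-open set, i.e.\ is coherent. Thus the coherent continuous covering functors of Theorem~\ref{ddis} restrict, between boolean groupoids, to exactly the proper continuous covering functors demanded here. On the algebraic side I would verify that a callitic morphism of distributive inverse semigroups between two boolean objects is the same thing as a callitic morphism of boolean inverse semigroups. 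Condition (DC1) for a homomorphism $\theta$ between inverse $\wedge$-semigroups is equivalent to preservation of binary meets: applying (DC1) to $t = \theta(s_{1}) \wedge \theta(s_{2})$ produces $s \leq s_{1}, s_{2}$ with $\theta(s_{1}) \wedge \theta(s_{2}) \leq \theta(s) \leq \theta(s_{1} \wedge s_{2})$, and the reverse inequality $\theta(s_{1} \wedge s_{2}) \leq \theta(s_{1}) \wedge \theta(s_{2})$ is automatic from monotonicity. Condition (DC2), combined with Lemma~3.20, which identifies prime filters with ultrafilters in the boolean setting, is precisely the requirement that inverse images of ultrafilters be ultrafilters --- the characterization of callitic boolean morphisms recorded immediately after Lemma~3.20.

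With objects and morphisms matched on both sides, the theorem follows from the general fact that the restriction of a dual equivalence to full subcategories that correspond to one another under the equivalence is again a dual equivalence. The main obstacle is not conceptual but lies in this morphism bookkeeping: one must take care that the two a priori distinct definitions of morphism on each side really do carve out the same class after restriction. The substantive inputs are Lemma~3.24 together with its easy converse (coherent $\Leftrightarrow$ proper between boolean groupoids) and the reformulation of (DC1) as meet-preservation for $\wedge$-semigroups; once these are in hand the duality is immediate.
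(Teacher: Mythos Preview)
Your proposal is correct and takes essentially the same approach as the paper: specialize Theorem~\ref{ddis} to the boolean subcategories using Proposition~3.23 for the objects and Lemma~3.24 (plus its easy converse) for the morphisms. The paper's own proof is in fact just the single sentence ``Using the preceding lemmas we can now specialize Theorem~\ref{ddis} to boolean inverse semigroups and boolean groupoids,'' so your write-up supplies exactly the morphism bookkeeping (coherent $\Leftrightarrow$ proper on the groupoid side, and (DC1) $\Leftrightarrow$ meet-preservation together with (DC2) $\Leftrightarrow$ the ultrafilter condition on the algebraic side) that the paper leaves implicit.
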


If $S$ is an inverse semigroup, we denote by $\mathsf{G}_{M}(S)$ the set of ultrafilters on $S$.
In Theorem~3.25, the functor from boolean inverse semigroups to boolean groupoids can be replaced by $\mathsf{G}_{M}$ by Lemma~3.20.

\subsection{Weakly boolean inverse semigroups}

Under the duality of Theorem~3.17, weakly boolean inverse semigroups correspond to those coherent \'etale groupoid whose space of identities is hausdorff.
This is weaker than assuming that the whole groupoid is hausdorff which by Proposition~3.23 implies that the associated inverse semigroup is in fact boolean.
Accordingly, a {\em weakly boolean groupoid} is a coherent groupoid whose space of identities is hausdorff.
By Lemma~3.13, this is equivalent to an \'etale groupoid whose space of identities is a boolean space.

\begin{theorem}[Duality for weakly boolean inverse semigroups]
The category of weakly boolean inverse semigroups and their callitic morphisms is dually equivalent to the category
of weakly boolean groupoids and coherent continuous covering functors.
\end{theorem}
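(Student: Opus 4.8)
The plan is to obtain this theorem simply by restricting the duality for distributive inverse semigroups (Theorem~3.17) to appropriate full subcategories on each side. First I would record that, by definition, a weakly boolean inverse semigroup is exactly a distributive inverse semigroup whose idempotent semilattice $E(S)$ is a boolean algebra, and that its callitic morphisms are precisely the callitic morphisms of the ambient distributive inverse semigroup. Dually, by the remark preceding the theorem together with Lemma~3.13, a weakly boolean groupoid is exactly a coherent \'etale groupoid whose identity space $G_{o}$ is a boolean space, and its morphisms are the coherent continuous covering functors inherited from $\mathbf{Etale}$. Hence both categories embed as \emph{full} subcategories of the two sides of Theorem~3.17, with identical morphism classes, and so no extra verification is needed for the arrows: it suffices to check that the object correspondence of Theorem~3.17 carries one subcategory onto the other.

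The substance of the argument is therefore to show that, under the dual equivalence of Theorem~3.17, a distributive inverse semigroup $S$ is weakly boolean if and only if its dual coherent \'etale groupoid $G$ (with $\mathsf{KB}(G) \cong S$) is weakly boolean, and the bridge between the two conditions is the identity space. For the easy direction, suppose $G$ is weakly boolean, so that $G_{o}$ is a boolean space. The idempotents of $\mathsf{KB}(G)$ are precisely the compact-open subsets of $G_{o}$, and these form a generalized boolean algebra exactly because $G_{o}$ is a boolean space; since $\mathsf{KB}(G)$ is distributive by Lemma~3.14, it is therefore weakly boolean. Conversely, suppose $S$ is weakly boolean, so $E(S)$ is a boolean algebra. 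The identity space of the dual groupoid is (homeomorphic to) the prime spectrum of the distributive lattice $E(S)$, its compact-open subsets recovering $E(S)$; by the non-unital Stone duality recorded in Proposition~1.6(2) this spectrum is hausdorff with a basis of compact-open sets, i.e.\ a boolean space, so the dual groupoid is weakly boolean.

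Once the objects are matched, I would invoke the already-established coincidence of the morphism classes to conclude that Theorem~3.17 restricts to a dual equivalence between the two full subcategories, which is precisely the assertion. The main obstacle is the clean identification of $G_{o}$ with the Stone dual of $E(S)$ and the verification that ``$E(S)$ is a boolean algebra'' and ``$G_{o}$ is a boolean space'' are genuinely mirror images of each other; this rests on Lemma~3.13, which decouples soberness and coherence of the whole groupoid from the corresponding properties of its unit space, together with the prime-filter-versus-ultrafilter correspondence of Lemma~3.20 and Proposition~1.6. The pleasant feature — and the point that separates this result from the fully hausdorff case of Theorem~3.25 — is that booleanness here is entirely a condition on the identities, so nothing about the off-diagonal part of the groupoid, and in particular no properness of the functors, enters the proof.
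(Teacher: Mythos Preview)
Your proposal is correct and follows essentially the same route as the paper, which states the theorem immediately after observing that under Theorem~3.17 the weakly boolean inverse semigroups correspond precisely to the coherent \'etale groupoids with hausdorff (equivalently boolean, via Lemma~3.13) identity space; you have simply written out the details the paper leaves implicit. One small remark: Proposition~1.6(2) gives you that prime filters coincide with ultrafilters in $E(S)$, but the hausdorffness of the identity space then requires one more sentence (two distinct ultrafilters are separated by disjoint basic opens $Y_e$, $Y_f$ with $e\wedge f=0$, using Theorem~1.5), so it would be cleaner to cite Lemma~3.20(2) together with that separation argument rather than to invoke Stone duality as a black box.
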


Weakly boolean inverse semigroups turn out to be important in understanding Paterson's universal groupoid.
We shall describe how in Section~5.1. 
For this reason, it is convenient to prove here some simple results about such semigroups.

\begin{lemma} Let $S$ be a weakly boolean inverse semigroup.
Let $a,b \in S$ such that $b \leq a$.
Then we may construct a unique element, denoted by $a \setminus b$, such that $b$ and $a \setminus b$ are orthogonal and $a = b \vee (a \setminus b)$.
\end{lemma}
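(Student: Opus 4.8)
The plan is to reduce everything to the boolean algebra of idempotents $E(S)$ and then lift back along multiplication by $a$. Since $b \leq a$ in the natural partial order, we have $b = a\mathbf{d}(b)$ and $\mathbf{d}(b) = b^{-1}b \leq a^{-1}a = \mathbf{d}(a)$. As $S$ is weakly boolean, $E(S)$ is a (generalized) boolean algebra, so the principal order ideal $(a^{-1}a)^{\downarrow}$ is a \emph{unital} boolean algebra with top $a^{-1}a$. Inside it the idempotent $b^{-1}b$ has a unique relative complement: there is a unique idempotent $f \leq a^{-1}a$ with $(b^{-1}b)f = 0$ and $b^{-1}b \vee f = a^{-1}a$. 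I would then define $a \setminus b := af$ and claim this is the desired element.

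Next I would verify the two required properties. First, $af \leq a$ because $f \leq a^{-1}a$. For orthogonality, using $b = a\mathbf{d}(b)$ and $a^{-1}af = f$, compute $b^{-1}(af) = \mathbf{d}(b)a^{-1}af = (b^{-1}b)f = 0$, and similarly $b(af)^{-1} = a\mathbf{d}(b)fa^{-1} = a(b^{-1}bf)a^{-1} = 0$; hence $b$ and $af$ are orthogonal. For the join, note that $b = a(b^{-1}b)$ and $af$ both lie below $a$, so they are compatible; since multiplication distributes over the compatible join $b^{-1}b \vee f$ (which equals $a^{-1}a$ by construction), we obtain $b \vee af = a(b^{-1}b \vee f) = a(a^{-1}a) = a$. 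This step uses only the distributive law and the fact that $b^{-1}b$ and $f$, being orthogonal idempotents, are compatible.

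For uniqueness, suppose $c$ also satisfies the two conditions. From $c \leq b \vee c = a$ we again get $c = a\mathbf{d}(c)$ with $\mathbf{d}(c) \leq a^{-1}a$, and for elements below $a$ the orthogonality of $b$ and $c$ translates into $\mathbf{d}(b)\mathbf{d}(c) = 0$. Applying $\mathbf{d}$ to $b \vee c = a$ and using that $\mathbf{d}$ preserves finite compatible joins gives $\mathbf{d}(b) \vee \mathbf{d}(c) = a^{-1}a$. Thus $\mathbf{d}(c)$ is a relative complement of $\mathbf{d}(b)$ in $(a^{-1}a)^{\downarrow}$, which is unique in a boolean algebra, so $\mathbf{d}(c) = f$ and therefore $c = a\mathbf{d}(c) = af = a \setminus b$.

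The verifications are genuinely routine; the only point requiring care is making sure the boolean complement is taken relative to the correct interval, since $E(S)$ need not have a top element. This is exactly why I pass to the principal ideal $(a^{-1}a)^{\downarrow}$, which is unital and therefore does admit relative complements. The main obstacle, such as it is, will be correctly invoking that $\mathbf{d}$ preserves compatible joins and that multiplication distributes over them, both of which hold in any distributive inverse semigroup and so are available here.
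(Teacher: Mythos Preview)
Your proof is correct and follows essentially the same route as the paper's: both take the relative complement $f$ of $\mathbf{d}(b)$ inside the unital boolean algebra $(a^{-1}a)^{\downarrow}$, set $a\setminus b = af$, and verify existence and uniqueness by transporting the boolean-algebra facts along multiplication by $a$. Your write-up is in fact more explicit than the paper's (you verify orthogonality by computing $b^{-1}(af)$ and $b(af)^{-1}$ directly, whereas the paper simply asserts $b \wedge ae = 0$), but the underlying idea is identical.
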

\begin{proof}
We have that $\mathbf{d}(b) \leq \mathbf{d}(a)$.
But the semilattice of idempotents of $S$ is a boolean algebra.
Thus there exists $e \leq \mathbf{d}(a)$ such that $\mathbf{d}(a) = e \vee \mathbf{d}(b)$ and $e \wedge \mathbf{d}(b) = 0$.
Since we are working in a distributive inverse semigroup,
it follows that $a = b \vee ae$ and $b \wedge ae = 0$.
Suppose that $x \leq a$ is such that $a = b \vee x$ and $b \wedge x = 0$.
Then because we are working inside a principal order ideal we have that $\mathbf{d}(a) = \mathbf{d}(b) \vee \mathbf{d}(x)$ and
$\mathbf{d}(b) \wedge \mathbf{d}(x) = 0$.
But by uniqueness of relative complements in booleans algebras we have that $e = \mathbf{d}(x)$ and so $x = ae$.
\end{proof}

We denote the element $ae$ by $a \setminus b$, the {\em relative complement of $b$ in $a$}.

\begin{lemma} Let $S$ be a weakly boolean inverse semigroup.
Suppose that $t \leq s$ and $v \leq u$.
Then 
$(s \setminus t)(u \setminus v) = su \setminus (sv \vee tu \vee tv )$.
\end{lemma}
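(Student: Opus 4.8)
The plan is to make the two relative complements explicit and then reduce the claimed identity to the uniqueness clause of Lemma~3.27. Write $s' = s \setminus t$ and $u' = u \setminus v$, so that $s = t \vee s'$ with $t \perp s'$, and $u = v \vee u'$ with $v \perp u'$. First I would expand the product, using that multiplication in a distributive inverse semigroup distributes over finite compatible joins:
\[
su = (t \vee s')(v \vee u') = tv \vee tu' \vee s'v \vee s'u'.
\]
The analogous expansions $sv = tv \vee s'v$ and $tu = tv \vee tu'$ give $sv \vee tu \vee tv = tv \vee tu' \vee s'v$; in particular each summand lies below $su$, so the relative complement $su \setminus (sv \vee tu \vee tv)$ is defined, and comparing with the expansion above yields
\[
su = (sv \vee tu \vee tv) \vee s'u'.
\]
By the uniqueness part of Lemma~3.27 it then suffices to prove that $s'u'$ is orthogonal to $sv \vee tu \vee tv = tv \vee tu' \vee s'v$.

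Since $\mathbf{d}$ and $\mathbf{r}$ of a compatible join are the joins of the respective $\mathbf{d}$'s and $\mathbf{r}$'s, and the idempotents form a distributive lattice, orthogonality to the join reduces (by the distributive law for meets over joins) to orthogonality of $s'u'$ to each of $tv$, $tu'$, and $s'v$ separately. Here I would use the criterion that $a \perp b$ iff $\mathbf{r}(a)\mathbf{r}(b) = 0$ and $\mathbf{d}(a)\mathbf{d}(b) = 0$ (products of idempotents being meets), together with the bounds $\mathbf{r}(s'u') \leq \mathbf{r}(s')$ and $\mathbf{d}(s'u') \leq \mathbf{d}(u')$ and the identities $\mathbf{d}(xy) = y^{-1}\mathbf{d}(x)y$ and $\mathbf{r}(xy) = x\,\mathbf{r}(y)\,x^{-1}$. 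For $s'u' \perp tv$ both coordinates are immediate: the ranges lie below $\mathbf{r}(s')$ and $\mathbf{r}(t)$, whose product vanishes by $s' \perp t$, and the domains lie below $\mathbf{d}(u')$ and $\mathbf{d}(v)$, whose product vanishes by $u' \perp v$. For $s'u' \perp tu'$ the range coordinate is handled the same way by $s' \perp t$, while the domain coordinate $\mathbf{d}(s'u')\,\mathbf{d}(tu') = (u')^{-1}\mathbf{d}(s')\mathbf{d}(t)\mathbf{r}(u')\,u'$ vanishes because $\mathbf{d}(s')\mathbf{d}(t) = 0$.

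The genuinely crossed case $s'u' \perp s'v$ is the step I expect to require the most care. Its domain coordinate is again controlled by $u' \perp v$, but for the range coordinate I would compute directly, using that idempotents commute and $\mathbf{r}(u')\mathbf{r}(v) = 0$:
\[
\mathbf{r}(s'u')\,\mathbf{r}(s'v) = s'\,\mathbf{r}(u')\,\mathbf{d}(s')\,\mathbf{r}(v)\,(s')^{-1} = s'\,\bigl(\mathbf{r}(u')\mathbf{r}(v)\bigr)\,\mathbf{d}(s')\,(s')^{-1} = 0.
\]
Assembling the three orthogonalities shows $s'u' \perp (sv \vee tu \vee tv)$; combined with the displayed decomposition $su = (sv \vee tu \vee tv) \vee s'u'$ and the uniqueness of relative complements, this gives $(s \setminus t)(u \setminus v) = s'u' = su \setminus (sv \vee tu \vee tv)$, as required. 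The only points needing vigilance, apart from the crossed range computation above, are the existence of all the joins invoked: these are guaranteed because every summand lies below $su$ and hence the relevant finite sets are compatible.
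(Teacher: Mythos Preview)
Your proof is correct and follows essentially the same route as the paper's: expand $su$ via distributivity, rewrite $sv \vee tu \vee tv$ as $tv \vee t(u\setminus v) \vee (s\setminus t)v$, and then verify that $(s\setminus t)(u\setminus v)$ is orthogonal to this join so that uniqueness of relative complements (Lemma~3.27) applies. The only difference is one of detail: the paper is terse at the final step, simply asserting that the needed orthogonality ``follows from the fact that $tv$, $t(u\setminus v)$ and $(s\setminus t)v$ are pairwise orthogonal'', whereas you carry out the $\mathbf{d}/\mathbf{r}$ verifications explicitly.
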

\begin{proof}
We have that
$$su = (s \setminus t)(u \setminus v)
\vee
tv
\vee
t(u \setminus v)
\vee
(s \setminus t)v.$$
But
$$tu = tv \vee t(u \setminus v) \mbox{ and } sv = tv \vee (s \setminus t)v.$$
Thus
$$su = (s \setminus t)(u \setminus v)
\vee
tv
\vee
sv
\vee
tu.$$
It remains to show that
$(s \setminus t)(u \setminus v)$
and
$tv \vee sv \vee tu$
have only zero as a common lower bound.
But this follows from the fact that 
$tv$, 
$t(u \setminus v)$
and
$(s \setminus t)v$ are pairwise orthogonal.
\end{proof}

The following result will be used in the context of weakly boolean inverse semigroups although we prove it in a more general case.
Recall that $Y_{s}$ denotes the set of all prime filters containing $s$.

\begin{lemma} Let $S$ be a distributive inverse semigroup.
If every prime filter in $Y_{s}$ is idempotent then $s$ is idempotent.
\end{lemma}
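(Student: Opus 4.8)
The plan is to argue by contraposition: assuming $s$ is not idempotent, I would produce a prime filter containing $s$ that is not idempotent, i.e.\ that contains no idempotent. The first step is to recast idempotency of $s$ as an order condition. Recall the standard fact (see \cite{Law2}) that in any inverse semigroup an element lying below an idempotent is itself idempotent, since idempotents commute. Consequently $s$ is idempotent if and only if $s \leq \mathbf{d}(s) = s^{-1}s$: if $s \leq \mathbf{d}(s)$ then $s$ lies below an idempotent and is hence idempotent, while conversely an idempotent $s$ satisfies $s = \mathbf{d}(s)$.

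Now suppose $s$ is not idempotent, so that $s \nleq \mathbf{d}(s)$. Applying Lemma~3.11(2) with $b = s$ and $a = \mathbf{d}(s)$, I obtain a prime filter $F$ that contains $s$ and omits $\mathbf{d}(s)$. It then remains only to check that $F$ is not idempotent, since this contradicts the hypothesis that every prime filter in $Y_{s}$ is idempotent and therefore forces $s$ to be idempotent after all.

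For this last step, suppose for contradiction that $F$ does contain an idempotent $e$. Since $F$ is directed and $s, e \in F$, there is $c \in F$ with $c \leq s$ and $c \leq e$; as $c$ lies below the idempotent $e$ it is itself idempotent. But then $c = c^{-1}c \leq s^{-1}s = \mathbf{d}(s)$ by monotonicity of $\mathbf{d}$, and upward closure of $F$ yields $\mathbf{d}(s) \in F$, contradicting the choice of $F$. Hence $F$ is not idempotent, completing the argument. The only point that really needs care is the interplay between the three standard inverse-semigroup facts used above---that anything below an idempotent is idempotent, that $\mathbf{d}$ is order preserving, and that $s$ is idempotent precisely when $s \leq \mathbf{d}(s)$---after which the separation provided by Lemma~3.11(2) does all the work. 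I anticipate no genuine obstacle beyond assembling these ingredients correctly.
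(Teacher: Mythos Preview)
Your proof is correct, and it takes a genuinely different route from the paper's.

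Both arguments ultimately rest on Lemma~3.11(2) (prime filters separate elements in a distributive inverse semigroup), but they deploy it differently. The paper works globally with the sets $Y_{s}$: using that every idempotent filter is an inverse subsemigroup (Lemma~2.3), it first shows $Y_{s} = Y_{s^{-1}}$, hence $s = s^{-1}$; then, since $s^{3} = s$, the inclusion $Y_{s} \subseteq Y_{s^{2}}$ together with $Y_{s^{2}} = Y_{s}Y_{s} \subseteq Y_{s^{2}}Y_{s} = Y_{s^{3}} = Y_{s}$ yields $Y_{s} = Y_{s^{2}}$, hence $s = s^{2}$. Your argument is more direct: you recast ``$s$ idempotent'' as the single order condition $s \leq \mathbf{d}(s)$, invoke Lemma~3.11(2) once to obtain a prime filter $F \ni s$ omitting $\mathbf{d}(s)$, and then check that $F$ can contain no idempotent. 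This sidesteps the two-stage reduction and the bisection-level manipulation of the $Y_{s}$, yielding a shorter and more self-contained proof. The paper's approach, by contrast, highlights how equalities among the $Y_{s}$ mirror algebraic identities in $S$, which fits the surrounding groupoid-theoretic narrative.
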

\begin{proof} Let $P$ be any prime filter containing $s$.
By assumption, $P$ is an idempotent filter and so by Lemma~2.3 it is an inverse subsemigroup of $S$.
It follows that $Y_{s} = Y_{s^{-1}}$.
Thus $s = s^{-1}$ by Lemma~3.11(2).
Similarly $Y_{s} \subseteq Y_{s^{2}}$.
Both $Y_{s} \cdot Y_{s}$ and $Y_{s^{2}} \cdot Y_{s}$ are defined and so $Y_{s^{2}} \subseteq Y_{s}$.
It follows that $Y_{s} = Y_{s^{2}}$ and so by Lemma~3.11(2), we have that $s$ is an idempotent.
\end{proof}

\section{Dense and tight coverages}

In this section, we shall describe a way of constructing pseudogroups from inverse semigroups and of constructing \'etale groupoids from families of filters.

\subsection{Groupoids determined by families of filters}

We show how the different constructions of \'etale groupoids from filters in Section~3 may be unified. 
Our approach was motivated by the notion of a coverage in the theory of frames \cite{J} but modified to deal with the greater complexity of the inverse case.
We observe first that the different kinds of filters were defined with respect to the join operation.
We shall therefore abstract the properties of the join that are necessary to obtain an \'etale groupoid.

A {\em coverage} $\mathcal{C}$ on an inverse semigroup $S$ is defined by the following data.
For each $a \in S$, there is a set $\mathcal{C}(a)$ of subsets of $a^{\downarrow}$, whose elements are called {\em coverings},
satisfying the following axioms: 
\begin{description}

\item[{\rm (R)}] $\{a \} \in \mathcal{C}(a)$ for all $a \in S$.

\item[{\rm (I)}] If $X \in \mathcal{C}(a)$ then $X^{-1} \in \mathcal{C}(a^{-1})$.

\item[{\rm (MS)}] $X \in \mathcal{C}(a)$ and $Y \in \mathcal{C}(b)$ imply that $XY \in \mathcal{C}(ab)$.

\item[{\rm (T)}] If $X \in \mathcal{C}(a)$ and  $X_{i} \in \mathcal{C}(x_{i})$ for each $x_{i} \in X$ then $\bigcup_{i} X_{i} \in \mathcal{C}(a)$.

\end{description}
For motivation, work in a pseudogroup and interpret $X \in \mathcal{C}(a)$ to mean that $a = \bigvee_{x \in X} x$ and observe that each of the above axioms
is true. 

A filter $A$ on $S$ is called a {\em $\mathcal{C}$-filter} if $x \in A$ and $X \in \mathcal{C}(x)$ then $y \in A$ for some $y \in X$.
We shall use the word {\em family} to describe the set of all $\mathcal{C}$-filters for a given coverage $\mathcal{C}$.
We now show how these definitions unify what we have discussed so far and take the opportunity to introduce two further examples.

\begin{examples} {\em Let $S$ be an inverse semigroup.
\begin{enumerate}

\item The simplest weak coverage is defined by putting $\mathcal{C}(x) =\{ \{x \} \}$.
We call this the {\em trivial coverage}.
The $\mathcal{C}$-filters are just the {\em filters}.

\item Let $S$ be a distributive inverse semigroup. Define $\mathcal{C}(x)$ to be those finite subsets of $x^{\downarrow}$
whose joins are $x$. This defines a coverage.
The $\mathcal{C}$-filters are just the {\em prime filters}.

\item Let $S$ be a pseudogroup.  Define $\mathcal{C}(x)$ to be those subsets of $x^{\downarrow}$
whose joins are $x$. This defines a coverage.
The $\mathcal{C}$-filters are just the {\em completely prime filters}.

\item Let $S$ be an inverse semigroup.
To define our next coverage we need some notation. 
Let $a \in A$ and $B \subseteq S$.
Define $a \rightarrow B$ to mean that for each $0 \neq x \leq a$ there exists $b \in B$ such that $x^{\downarrow} \cap b^{\downarrow} \neq 0$.
We call this the {\em arrow relation} and it was first defined in \cite{L}.
For each $a \in S$ define $\mathcal{T}(a)$ to consist of those subsets
$B \subseteq a^{\downarrow}$ such that $a \rightarrow B$.
We claim that $\mathcal{D}$ defines a coverage on $S$. 
It is immediate that (R) and (I) hold.
Suppose that $X \in \mathcal{D}(a)$ and $Y \in \mathcal{D}(b)$.
Then since $X \subseteq a^{\downarrow}$ and $Y \subseteq b^{\downarrow}$ we have that $XY \subseteq (ab)^{\downarrow}$.
Let $0 \neq z \leq ab$.
Then $aa^{-1}z = z$ and so $a^{-1}z \neq 0$.
It follows that $0 \neq a^{-1}z \leq a^{-1}ab \leq b$.
Thus there exists $y \in Y$ and a $u$ such that $u \leq y, a^{-1}z$.
Observe that $a^{-1}au = u$ and so $au \neq 0$.
Thus $0 \neq au \leq ay, z$.
We now carry out a similar calculation starting from $au \leq ay$.
Then $auy^{-1} \leq a$ and so there exists $x \in X$ and a $v$ such that $v \leq x, auy^{-1}$.
Observe that $vyy^{-1} = v$ and so $vy \neq 0$.
Thus $0 \neq vy \leq xy, auy^{-1}y = au \leq z$.
It follows that (MS) holds.
Finally, we check that (T) holds.
Let $X \in \mathcal{D}(a)$ and suppose that for each $x_{i} \in X$ we have that $X_{i} \in \mathcal{D}(x_{i})$.
We prove that $\bigcup_{i} X_{i} \in \mathcal{D}(x)$.
Let $0 \neq z \leq a$.
Then there exists $0 \neq u \leq z,x_{i}$ for some $x_{i} \in X$.
But $0 \neq u \leq x_{i}$ implies that there exists $y \in X_{i}$ and a $v$ such that
$0 \neq v \leq y,u$.
Thus there exists $0 \neq u \leq z, y$ where $y \in X_{i}$, as required.
The $\mathcal{D}$-filters are called {\em dense filters}.

\item This is the same as the above example but we only consider the finite subsets of $\mathcal{D}$.
The resulting coverage is denoted by $\mathcal{T}$ and is called the {\em tight coverage}.
The $\mathcal{T}$-filters are called {\em tight filters}.

\end{enumerate}
}
\end{examples}

Throughout the remainder of this section, $\mathcal{C}$ will be a coverage.
If $X \in \mathcal{C}(a)$ define $\mathbf{d}(X) = \{x^{-1}x \colon x \in X \}$.

\begin{lemma} Let $\mathcal{C}$ be a coverage on $S$.
\begin{enumerate}

\item $X \in \mathcal{C}(a)$ implies that $\mathbf{d}(X) \in \mathcal{C}(a^{-1}a)$.

\item If $X \subseteq a^{\downarrow}$ then $X \in \mathcal{C}(a)$ if and only if $\mathbf{d}(X) \in \mathcal{C}(a^{-1}a)$.

\item Let $X,Y \in \mathcal{C}(a)$.
Then $X \wedge Y = \{x \wedge y \colon x \in X, y \in Y\} \in \mathcal{C}(a)$
and $X \wedge Y = X\mathbf{d}(Y) = Y\mathbf{d}(X)$.

\item If $X \in \mathcal{C}(b)$ and $X,Y \in \mathcal{C}(a)$ then $X \wedge Y \in \mathcal{C}(b)$.

\end{enumerate}
\end{lemma}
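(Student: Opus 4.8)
The plan is to reduce all four parts to a single algebraic observation about the natural partial order: for any element $x \le a$ in an inverse semigroup one has
$a^{-1}x = x^{-1}x = \mathbf{d}(x)$ and, dually, $a\,\mathbf{d}(x) = x$. Both follow at once from $x = a x^{-1}x$ together with $x^{-1}x \le a^{-1}a$. Since by definition every covering of $a$ is a subset of $a^{\downarrow}$, these identities apply termwise to the elements of any $X \in \mathcal{C}(a)$, and the three easy parts then become formal consequences of (R), (I) and (MS).

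For (1) I would note that the identity above gives $\mathbf{d}(X) = \{x^{-1}x : x\in X\} = \{a^{-1}x : x\in X\} = a^{-1}X$. Now $\{a^{-1}\} \in \mathcal{C}(a^{-1})$ by (R) and (I), so (MS) applied to $\{a^{-1}\}$ and $X$ yields $a^{-1}X \in \mathcal{C}(a^{-1}a)$, which is exactly the claim. The point of writing $\mathbf{d}(X)$ as the product $a^{-1}X$ is that it avoids the off-diagonal terms of $X^{-1}X$, which need not lie below the diagonal when the covering is not compatible. The forward implication of (2) is just (1); for the converse I would use the dual identity $a\,\mathbf{d}(x)=x$, so that $X = \{a\,x^{-1}x : x\in X\} = a\,\mathbf{d}(X)$, and then (R) gives $\{a\}\in\mathcal{C}(a)$ and (MS) gives $a\,\mathbf{d}(X) \in \mathcal{C}(a\cdot a^{-1}a) = \mathcal{C}(a)$.

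For (3) the first task is the meet identity: whenever $x,y \le a$ the meet $x\wedge y$ exists and equals $x\,\mathbf{d}(y) = y\,\mathbf{d}(x)$. This is a short computation, $x\,\mathbf{d}(y) = a\,\mathbf{d}(x)\mathbf{d}(y) = a\,\mathbf{d}(y)\mathbf{d}(x) = y\,\mathbf{d}(x)$, after which one checks that this common value is indeed the greatest lower bound; in particular this guarantees that all the meets named in the statement exist. Termwise this yields $X\wedge Y = X\,\mathbf{d}(Y) = Y\,\mathbf{d}(X)$, and membership $X \wedge Y \in \mathcal{C}(a)$ follows by combining $X \in \mathcal{C}(a)$ with $\mathbf{d}(Y)\in\mathcal{C}(a^{-1}a)$ (from part (1)) through (MS), using $a\cdot a^{-1}a = a$.

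The real work is in (4), and it is the one place where axiom (T) is needed. Here $X$ covers both $a$ and $b$ and $Y$ covers $a$, with no comparability between $a$ and $b$, so one cannot read off $X\wedge Y \in \mathcal{C}(b)$ from a single use of (MS): that would only place the product in $\mathcal{C}(b\,a^{-1}a)$. The idea is to localise at each $x\in X$. For fixed $x$ one has $x\le a$, whence $x\,a^{-1}a = x$, so (R), the membership $\mathbf{d}(Y)\in\mathcal{C}(a^{-1}a)$ from part (1), and (MS) together give $x\,\mathbf{d}(Y) \in \mathcal{C}(x)$. Setting $X_x := x\,\mathbf{d}(Y) = \{x\wedge y : y\in Y\}$, these are coverings of the individual members of the covering $X \in \mathcal{C}(b)$, so (T) assembles them into $\bigcup_{x\in X} X_x \in \mathcal{C}(b)$, and this union is precisely $\{x\wedge y : x\in X, y\in Y\} = X\wedge Y$. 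I expect this transitivity step to be the main obstacle: it is the only genuinely non-formal use of the axioms, and it hinges on recognising that the correct intermediate coverings are the $x\,\mathbf{d}(Y)$ rather than anything built from $X^{-1}X$ directly.
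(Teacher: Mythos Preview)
Your proof is correct and follows essentially the same route as the paper: the reduction of (1)--(3) to the identities $a^{-1}x = x^{-1}x$ and $a\,\mathbf{d}(x)=x$ for $x\le a$, combined with (R) and (MS), is exactly what the paper does, and your use of (T) in (4) via the local coverings $x\,\mathbf{d}(Y)\in\mathcal{C}(x)$ is identical to the paper's argument. The only cosmetic difference is that you invoke (I) alongside (R) to obtain $\{a^{-1}\}\in\mathcal{C}(a^{-1})$, whereas (R) alone already gives this directly.
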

\begin{proof} (1) We have that $x \in X$ implies that $x \leq a$ and so $x = ax^{-1}x$.
Thus $a^{-1}x = x^{-1}x$.
By (R), we have that $\{a^{-1} \} \in \mathcal{C}(a^{-1})$ and so by (MS) we have that $a^{-1}X \in \mathcal{C}(a^{-1}a)$
but $a^{-1}X = \{x^{-1}x \colon x \in X   \}$ and the claim follows.

(2) By (1), only one direction needs proving. Suppose that $X \subseteq a^{\downarrow}$ and  $\mathbf{d}(X) \in \mathcal{C}(a^{-1}a)$.
Then by (MS), we have that $a\mathbf{d}(X) \in \mathcal{C}(a)$.
But $X = a\mathbf{d}(X)$ and the result follows.

(3) Observe first that since $x,y \leq a$ the meet $x \wedge y$ is defined.
Since $x$ and $y$ are compatible, $x \wedge y = xy^{-1}y = yx^{-1}x$.
Thus $X \wedge Y = X\{y^{-1}y \colon y \in Y\}$ which belongs to $\mathcal{C}(a)$ by (1) and (MS).

(4) It remains to show that $X \wedge Y \in \mathcal{C}(b)$.
For each $x \in X$ we have that $x \leq a$ and so $x = xa^{-1}a$.
Thus for each $x \in X$ we have that $x\mathbf{d}(Y) \in \mathcal{C}(x)$.
But $X \in \mathcal{C}(b)$ and so by (T), we have that $X\mathbf{d}(Y) \in \mathcal{C}(b)$.

\end{proof}

Our goal now is to show that the set of all $\mathcal{C}$-filters forms an \'etale groupoid.

\begin{lemma} Let $A$ be a filter on the inverse semigroup $S$.
\begin{enumerate}

\item $A$ is a $\mathcal{C}$-filter if and only if $A^{-1}$ is a $\mathcal{C}$-filter.

\item $A$ is a $\mathcal{C}$-filter if and only if $A^{-1} \cdot A$ is a $\mathcal{C}$-filter.

\end{enumerate}
\end{lemma}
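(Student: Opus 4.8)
The plan is to follow the template of Lemma~2.2, replacing the appeals to infinite distributivity there by the coverage axioms (R), (I), (MS) and the preceding lemma on $\mathbf{d}(X)$. Throughout I would use that $A^{-1}$ and $\mathbf{d}(A)=(A^{-1}A)^{\uparrow}$ are themselves filters, together with the standard fact that whenever $u,v\in A$ there is $a\in A$ with $a^{-1}a\leq u^{-1}v$: take any lower bound $a$ of $u,v$ in the directed set $A$ and use that the natural partial order is compatible with multiplication, so $a^{-1}\leq u^{-1}$ and $a\leq v$ give $a^{-1}a\leq u^{-1}v$.

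Part~(1) is the easy one and needs only (I). Since $(A^{-1})^{-1}=A$ and the statement is symmetric, I would prove just that $A$ a $\mathcal{C}$-filter forces $A^{-1}$ to be one. Given $x\in A^{-1}$ and $X\in\mathcal{C}(x)$, note $x^{-1}\in A$ and, by (I), $X^{-1}\in\mathcal{C}(x^{-1})$; applying the $\mathcal{C}$-filter property of $A$ yields some $y^{-1}\in X^{-1}\cap A$, whence $y\in X\cap A^{-1}$.

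For part~(2) I would treat the two implications separately. For the forward direction, suppose $A$ is a $\mathcal{C}$-filter and take $e\in\mathbf{d}(A)$ together with $X\in\mathcal{C}(e)$. First pick $a\in A$ with $a^{-1}a\leq e$, so that $ae=a$. Since $\{a\}\in\mathcal{C}(a)$ by (R), axiom (MS) gives $aX=\{ax':x'\in X\}\in\mathcal{C}(ae)=\mathcal{C}(a)$, and each $ax'\leq ae=a$, so $aX\subseteq a^{\downarrow}$ is a genuine covering of $a$. As $a\in A$, the $\mathcal{C}$-filter property produces $x'\in X$ with $ax'\in A$. Then $a^{-1}(ax')\in A^{-1}A\subseteq\mathbf{d}(A)$, and since $a^{-1}a$ is idempotent we have $a^{-1}ax'\leq x'$, so upward closure of $\mathbf{d}(A)$ gives $x'\in\mathbf{d}(A)$, as required. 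For the converse, suppose $\mathbf{d}(A)$ is a $\mathcal{C}$-filter and take $a\in A$ with $X\in\mathcal{C}(a)$. By part~(1) of the preceding lemma, $\mathbf{d}(X)\in\mathcal{C}(a^{-1}a)$, and $a^{-1}a\in A^{-1}A\subseteq\mathbf{d}(A)$; so the $\mathcal{C}$-filter property of $\mathbf{d}(A)$ yields $x'\in X$ with $\mathbf{d}(x')=x'^{-1}x'\in\mathbf{d}(A)$. It then remains to climb from $\mathbf{d}(x')$ back to $x'$: choose $a'\in A$ with $\mathbf{d}(a')\leq\mathbf{d}(x')$ and then $f\in A$ below both $a$ and $a'$; using $x'\leq a$ (as $X\subseteq a^{\downarrow}$) one checks $f=a\mathbf{d}(f)\leq a\mathbf{d}(x')=x'$, so $x'\in A$ by upward closure.

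I expect the only real friction to be the forward direction of (2). The point to be careful about is that elements of $\mathbf{d}(A)$ need not be idempotent, so one must resist treating $e$ as an idempotent and instead transport the covering $X$ of $e$ to the covering $aX$ of $a$ and argue entirely inside $A$. The recovery step $a^{-1}ax'\leq x'$, where left multiplication by the idempotent $a^{-1}a$ does the work that infinite distributivity did in Lemma~2.2, is the crux; everything else is routine bookkeeping with the natural partial order and the axioms (R) and (MS).
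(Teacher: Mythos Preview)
Your proof is correct and follows essentially the same route as the paper's. Part~(1) is identical, and in part~(2) both arguments transport the covering by multiplying on the left by an element of $A$ via (MS), then push the resulting element back into $\mathbf{d}(A)$ using $a^{-1}a\,x'\leq x'$. The only real difference is in the converse of (2): the paper observes that $x^{-1}y\in A^{-1}\cdot A$ and $x\in A$ give $xx^{-1}y\in A\cdot\mathbf{d}(A)=A$ (using the groupoid identity for filters), whereas you unwind this by hand, producing an explicit $f\in A$ with $f\leq x'$; this is a cosmetic difference, not a structural one.
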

\begin{proof} (1) Suppose that $A$ is a $\mathcal{C}$-filter.
Let $x \in A^{-1}$ where $X \in \mathcal{C}(x)$.
Then $x^{-1} \in A$ and $X^{-1} \in \mathcal{C}(x^{-1})$ by axiom (I).
By assumption, there exists $y \in X^{-1}$ such that $y \in A$.
But then $y^{-1} \in A^{-1}$ where $y^{-1} \in X$, as required.
 
(2) Suppose that $A$ is a $\mathcal{C}$-filter.
Let $x \in A^{-1} \cdot A$ where $X \in \mathcal{C}(x)$.
Then $a^{-1}b \leq x$ where $a,b \in A$.
It follows that $ax \in A$ where $aX \in \mathcal{C}(ax)$ by axiom (MS).
By assumption, $ay \in A$ for some $y \in X$.
Thus $a^{-1}ay \in A^{-1} \cdot A$ and so $y \in  A^{-1} \cdot A$, as required.

Suppose now that $A^{-1} \cdot A$ is a $\mathcal{C}$-filter.
Let $x \in A$ where $X \in \mathcal{C}(x)$.
Then $x^{-1}x \in A^{-1} \cdot A$ where $x^{-1}X \in \mathcal{C}(x^{-1}X)$.
By assumption, $x^{-1}y \in A^{-1} \cdot A$ where $x^{-1}X \in \mathcal{C}(x^{-1}x)$.
Thus $x^{-1}y \in A^{-1} \cdot A$ for some $y \in A$.
Thus $x^{-1}xy \in A$ and so $y \in A$, as required.
\end{proof}

\begin{lemma} 
If $A$ and $B$ are $\mathcal{C}$-filters and if $A \cdot B$ exists then $A \cdot B$ is a $\mathcal{C}$-filter.
\end{lemma}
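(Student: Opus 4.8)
The plan is to avoid manipulating elements of $A \cdot B = (AB)^{\uparrow}$ directly and instead reduce the claim to a statement about $\mathbf{d}(A \cdot B)$ by invoking the criterion already set up in Lemma~4.3(2). Since the product $A \cdot B$ is assumed to exist, $\mathsf{L}(S)$ being a groupoid already guarantees that $A \cdot B$ is a filter, so there is nothing to check on that front. Lemma~4.3(2) then says that a filter is a $\mathcal{C}$-filter if and only if its $\mathbf{d}$-image is a $\mathcal{C}$-filter. Hence it suffices to prove that $\mathbf{d}(A \cdot B) = (A\cdot B)^{-1} \cdot (A \cdot B)$ is a $\mathcal{C}$-filter.

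The key observation is the groupoid identity $\mathbf{d}(A \cdot B) = \mathbf{d}(B)$, which holds in any groupoid and in particular in $\mathsf{L}(S)$. If one prefers an explicit verification, it is the same one-line computation as in the final paragraph of the proof of Lemma~2.4: writing $(A\cdot B)^{-1} \cdot (A\cdot B) = (B^{-1}A^{-1}AB)^{\uparrow}$ and using the hypothesis $A^{-1}\cdot A = B \cdot B^{-1}$ that makes the product defined, this reduces to $(B^{-1}B)^{\uparrow} = \mathbf{d}(B)$.

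Putting these together finishes the argument. We have $\mathbf{d}(A \cdot B) = \mathbf{d}(B) = B^{-1} \cdot B$, and since $B$ is a $\mathcal{C}$-filter, $B^{-1}\cdot B$ is a $\mathcal{C}$-filter by Lemma~4.3(2) applied to $B$. Therefore $\mathbf{d}(A \cdot B)$ is a $\mathcal{C}$-filter, and a second application of Lemma~4.3(2), this time to $A\cdot B$, shows that $A \cdot B$ itself is a $\mathcal{C}$-filter.

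I do not expect a genuine obstacle here: the entire point of the preceding lemma is that Lemma~4.3(2) converts a statement about a product of filters into one about a single idempotent filter $\mathbf{d}(B)$, where the coverage axioms have already done their work. The only thing needing care is the bookkeeping identity $\mathbf{d}(A\cdot B) = \mathbf{d}(B)$. A direct elementwise proof is also possible — take $x \in A \cdot B$ with $X \in \mathcal{C}(x)$, write $ab \leq x$ with $a \in A$ and $b \in B$, and pull the covering back through left multiplication by $a^{-1}$ using axiom (MS) to land a member of $X$ inside $A \cdot B$ — but this is exactly the calculation the reduction is designed to sidestep, so I would not carry it out.
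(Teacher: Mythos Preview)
Your proposal is correct and follows essentially the same approach as the paper's own proof: the paper notes that $A\cdot B$ is a filter with $\mathbf{d}(A\cdot B)=\mathbf{d}(B)$ (as explained in Section~1), and then invokes Lemma~4.3(2) twice, exactly as you do. Your additional remarks about the explicit verification of $\mathbf{d}(A\cdot B)=\mathbf{d}(B)$ and the alternative elementwise argument are accurate but go beyond what the paper writes down.
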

\begin{proof} As we explained in Section~1, $A \cdot B$ is a filter and 
$\mathbf{d}(A \cdot B) = \mathbf{d}(B)$.
Thus the result follows from the lemma above.
\end{proof}

It follows that we may define the groupoid $\mathsf{G}_{\mathcal{C}}(S)$ of $\mathcal{C}$-filters of $S$.

For each $s \in S$, define $Z_{s}$ to be the set of all $\mathcal{C}$-filters that contain $s$.
Define $\xi$ to be the set of all such sets.

\begin{lemma} \mbox{}
\begin{enumerate}

 \item $Z_{s}$ is a bisection.

\item $Z_{s}^{-1} = Z_{s^{-1}}$.

\item $Z_{s}Z_{t} = Z_{st}$.

\item $Z_{s} \cap Z_{t}$ is a union of elements of $\xi$.

\end{enumerate}
\end{lemma}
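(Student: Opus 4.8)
The plan is to follow the proof of Lemma~2.6 almost verbatim, replacing the completely-prime closure results of Lemma~2.2 by the corresponding $\mathcal{C}$-filter closure results of Lemma~4.3; the only genuinely new point is part~(4), where the possible absence of binary meets in $S$ forces a union of basic sets in place of a single one. Throughout I would use that $\mathsf{G}_{\mathcal{C}}(S)$ is a groupoid, so that products $A \cdot B$ and the idempotents $\mathbf{d}(A)$, $\mathbf{r}(A)$ make sense; this is guaranteed by Lemma~4.4.

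For part~(1) I would argue exactly as in Lemma~2.6(1). If $F, G \in Z_{s}$ satisfy $\mathbf{d}(F) = \mathbf{d}(G)$, then $F$ and $G$ share the common element $s$, so Lemma~2.11 of \cite{Law3} forces $F = G$; the statement with $\mathbf{r}$ in place of $\mathbf{d}$ is dual. This says precisely that $\mathbf{d}$ and $\mathbf{r}$ are injective on $Z_{s}$, which is exactly the assertion that $Z_{s}Z_{s}^{-1} \subseteq G_{o}$ and $Z_{s}^{-1}Z_{s} \subseteq G_{o}$, i.e.\ that $Z_{s}$ is a bisection. Part~(2) is then immediate: the map $A \mapsto A^{-1}$ sends $\mathcal{C}$-filters to $\mathcal{C}$-filters by Lemma~4.3(1), and $s \in A$ if and only if $s^{-1} \in A^{-1}$, so it restricts to a bijection $Z_{s} \to Z_{s^{-1}}$.

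For part~(3) the inclusion $Z_{s}Z_{t} \subseteq Z_{st}$ is clear, since $s \in A$ and $t \in B$ give $st \in (AB)^{\uparrow} = A \cdot B$. For the reverse inclusion I would copy the construction from Lemma~2.6(3): given $F \in Z_{st}$, put $H = \mathbf{d}(F) = F^{-1} \cdot F$, so that $F = (stH)^{\uparrow}$, and set $A = (s(tHt^{-1})^{\uparrow})^{\uparrow}$ and $B = (tH)^{\uparrow}$; one then checks $A \in Z_{s}$, $B \in Z_{t}$, $\mathbf{d}(A) = \mathbf{r}(B)$ and $A \cdot B = F$. The one place the new machinery enters is in verifying that $A$ and $B$ are $\mathcal{C}$-filters and not merely filters: since $F$ is a $\mathcal{C}$-filter, $H = \mathbf{d}(F)$ is one by Lemma~4.3(2); hence $\mathbf{d}(B) = H$ and $\mathbf{d}(A) = \mathbf{r}(B) = (tHt^{-1})^{\uparrow}$ are $\mathcal{C}$-filters by Lemma~4.3(1),(2), and a final application of Lemma~4.3(2) to $B$ and to $A$ upgrades each to a $\mathcal{C}$-filter. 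Checking that this purely filter-theoretic construction respects the coverage-closure condition is the step I expect to need the most care, and is where the present argument really differs from the pseudogroup case.

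For part~(4) I would use directedness rather than meets. If $F \in Z_{s} \cap Z_{t}$ then $s, t \in F$, and since $F$ is directed there is $u \in F$ with $u \leq s, t$, so $F \in Z_{u}$; conversely, any $\mathcal{C}$-filter containing such a $u$ contains both $s$ and $t$ by upward closure, so $Z_{u} \subseteq Z_{s} \cap Z_{t}$. Hence $Z_{s} \cap Z_{t} = \bigcup \{ Z_{u} \colon u \leq s, t \}$, a union of members of $\xi$. This is the expected weakening of Lemma~2.6(4): with no element $s \wedge t$ at our disposal we obtain a union of basic sets rather than a single one, which is exactly what is needed to conclude that $\xi$ is a basis for a topology on $\mathsf{G}_{\mathcal{C}}(S)$.
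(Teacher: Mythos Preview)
Your proposal is correct and follows essentially the same route as the paper's proof: part~(1) via Lemma~2.11 of \cite{Law3}, part~(2) via Lemma~4.3(1), part~(3) by copying the construction in Lemma~2.6(3) with Lemma~4.3(2) supplying $\mathcal{C}$-closure, and part~(4) by directedness. If anything, you spell out the details of part~(3) more carefully than the paper does.
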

\begin{proof} (1) This follows by Lemma~2.11 of \cite{Law3}.

(2) This follows by Lemma~4.3(1).

(3) The inclusion $Z_{s}Z_{t} \subseteq Z_{st}$ follows by Lemma~4.4.
The proof of the reverse inclusion uses the same argument as Lemma~2.21(4) combined with Lemma~4.3(2).

(4) Let $A \in Z_{s} \cap Z_{t}$.
Then $s,t \in A$.
Since $A$ is a filter there exists $a \in A$ such that $a \leq s,t$.
Observe that $Z_{a} \subseteq Z_{s} \cap Z_{t}$ and that $A \in Z_{a}$. 
\end{proof}

It follows that $\xi$ is a basis for a topology on  $\mathsf{G}_{\mathcal{C}}(S)$.

\begin{proposition} For each coverage $\mathcal{C}$ on the inverse semigroup $S$,  
the groupoid $\mathsf{G}_{\mathcal{C}}(S)$ is an \'etale topological groupoid.
\end{proposition}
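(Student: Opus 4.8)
The plan is to mirror the proofs of Lemma~2.7 and Proposition~2.8, substituting Lemma~4.5 for Lemma~2.6 throughout. Since the groupoid structure on $\mathsf{G}_{\mathcal{C}}(S)$ and the basis $\xi$ are already in hand, there are exactly two tasks: first, to check that inversion and multiplication are continuous, so that $\mathsf{G}_{\mathcal{C}}(S)$ is a topological groupoid; and second, to verify the \'etale property, which by Resende's characterization (Theorem~5.18 of \cite{R2}) amounts to showing that the space of identities is open and that the product of two open sets is again open.

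For the topological-groupoid part, continuity of inversion is immediate from Lemma~4.5(2): the inversion map pulls the basic open set $Z_{s}$ back to $Z_{s^{-1}}$, and being an involution it is in fact a homeomorphism. For multiplication $m$, I would establish the identity
$$m^{-1}(Z_{s}) = \left( \bigcup_{0 \neq ab \leq s} Z_{a} \times Z_{b} \right) \cap (\mathsf{G}_{\mathcal{C}}(S) \ast \mathsf{G}_{\mathcal{C}}(S)),$$
exactly as in Lemma~2.7; the verification is the argument of step~3 of the proof of Proposition~2.22 of \cite{Law3}, which uses only the groupoid operations together with the fact that the sets $Z_{s}$ form a basis (Lemma~4.5), and so carries over verbatim to $\mathcal{C}$-filters. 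The right-hand side is open in the product-subspace topology, so $m$ is continuous.

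For the \'etale property I would argue as in Proposition~2.8. To see that $\mathsf{G}_{\mathcal{C}}(S)_{o}$ is open, let $F$ be an identity; then $F = \mathbf{d}(F)$ contains an idempotent $e$, and by Lemma~2.3 $F$ is an inverse subsemigroup, so $F \in Z_{e}$. Conversely, every $\mathcal{C}$-filter in $Z_{e}$ contains the idempotent $e$ and so, again by Lemma~2.3, is an inverse subsemigroup and hence an identity of the groupoid. Thus $F \in Z_{e} \subseteq \mathsf{G}_{\mathcal{C}}(S)_{o}$, and $\mathsf{G}_{\mathcal{C}}(S)_{o}$ is open. For closure of the open sets under multiplication, write arbitrary open sets as $X = \bigcup_{i} Z_{s_{i}}$ and $Y = \bigcup_{j} Z_{t_{j}}$; then $XY = \bigcup_{i,j} Z_{s_{i}t_{j}}$ is open by Lemma~4.5(3). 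Together these establish that $\mathsf{G}_{\mathcal{C}}(S)$ is \'etale.

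The whole argument is essentially routine given the preparatory lemmas, so the real content lies in the multiplication-continuity formula of the second paragraph. The one point needing care, and the only genuine departure from the pseudogroup case, is that meets are no longer available: $Z_{s} \cap Z_{t}$ is now merely a union of basic open sets (Lemma~4.5(4)) rather than a single $Z_{s \wedge t}$. This is harmless, however, since every topological step above requires only that $\xi$ be a basis, which it is.
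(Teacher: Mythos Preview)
Your proposal is correct and is exactly the approach the paper takes: the paper's proof is a single sentence stating that ``the proof follows the same approach as used in Lemma~2.7 and Proposition~2.8 by virtue of Lemma~4.5,'' and you have simply unpacked that sentence in full detail. Your closing observation about the absence of meets (so that Lemma~4.5(4) gives only a union rather than a single $Z_{s \wedge t}$) is apt and is the one point where the transcription from the pseudogroup case requires a moment's thought.
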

\begin{proof} 
The proof follows the same approach as used in Lemma~2.7 and Proposition~2.8 by virtue of Lemma~4.5.
\end{proof}

\subsection{Universal pseudogroups determined by coverages}

Let $S$ be an inverse semigroup equipped with a coverage $\mathcal{C}$.
A semigroup homomorphism $\theta \colon S \rightarrow T$ to a pseudogroup
is said to be a {\em $\mathcal{C}$-cover-to-join} map
if for each element $a \in S$ and $\mathcal{C}$-cover $A$ of $a$ we have that $\theta (a) = \bigvee_{a_{i} \in A} \theta (a_{i})$.
We single out two cases for special terminology.
\begin{itemize}

\item In the case $\mathcal{C} = \mathcal{D}$, we shall refer instead to {\em dense maps}.

\item In the case $\mathcal{C} = \mathcal{T}$, we shall refer instead to {\em tight maps}.

\end{itemize}
Suppose that $\pi \colon S \rightarrow \mathsf{P}_{\mathcal{C}}(S)$ is a $\mathcal{C}$-cover-to-join map to a pseudogroup
such that if
$\theta \colon S \rightarrow T$ is any  $\mathcal{C}$-cover-to-join map to a pseudogroup
then there is a unique morphism of pseudogroups 
$\bar{\theta} \colon  \mathsf{P}_{\mathcal{C}}(S) \rightarrow T$ such that $\theta = \bar{\theta} \pi$.
Then we call $\mathsf{P}_{\mathcal{C}}(S)$ the {\em universal pseudogroup} of $(S,\mathcal{C})$.

\begin{examples}\mbox{}
\begin{enumerate}
{\em

\item When $\mathcal{C}$ is the trivial coverage of Examples~4.1(1), the universal pseudogroup is just $C(S)$.

\item When $S$ is a distributive inverse semigroup equipped with the coverage coming from finite joins, Examples~4.1(2), 
the universal pseudogroup is ${\rm Idl}(S)$ by Proposition~3.1.

}
\end{enumerate}
\end{examples}

We do not know how to construct universal pseudogroups in general, but we shall show how to construct them in all the cases of interest to us in this paper.
To do this, we examine Examples~4.1 in more detail.
The first three examples share a common property.
In each case, $\mathcal{C}(a) \cap \mathcal{C}(b) \neq \emptyset$ implies that $a = b$.
We say that such semigroups are {\em separative (with respect to the coverage $\mathcal{C}$)}
and that the coverage is {\em separated}.

The last two examples are not separated.
For the tight coverage, this is the origin of Section~5 of \cite{L}.
It will turn out that constructing universal pseudogroups in the case where the semigroup is separative with respect to the coverage is easy.
In the case where the semigroup is not separative, we try to construct a homomorphic image which is separative with respect to the `induced coverage'.
The meaning of this latter phrase is unclear in general, but is not problematic in the case of the dense and tight coverages.
We deal with this case first and show that we can reduce it to the separative case.

Let $S$ be an inverse semigroup equipped with a coverage $\mathcal{C}$.
Define the relation $\equiv$ on $S$ by
$$a \equiv b \Leftrightarrow \mathcal{C}(a) \cap \mathcal{C}(b) \neq \emptyset.$$

\begin{lemma} 
The relation $\equiv$ is a congruence on $S$.
\end{lemma}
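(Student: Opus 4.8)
The plan is to verify in turn that $\equiv$ is an equivalence relation, that it is compatible with multiplication, and—since $S$ is an inverse semigroup—that it respects inversion. The guiding idea is that each instance $a\equiv b$ that we must establish should be witnessed by a single covering lying simultaneously in $\mathcal{C}(a)$ and $\mathcal{C}(b)$.

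Reflexivity is immediate from axiom~(R), since $\{a\}\in\mathcal{C}(a)\cap\mathcal{C}(a)$, and symmetry is built into the definition. For compatibility with products, suppose $a\equiv b$ and $c\equiv d$, witnessed by $X\in\mathcal{C}(a)\cap\mathcal{C}(b)$ and $Y\in\mathcal{C}(c)\cap\mathcal{C}(d)$. Applying axiom~(MS) to the pairs $X\in\mathcal{C}(a),\,Y\in\mathcal{C}(c)$ and to $X\in\mathcal{C}(b),\,Y\in\mathcal{C}(d)$ yields $XY\in\mathcal{C}(ac)\cap\mathcal{C}(bd)$, so $ac\equiv bd$; this single computation gives both left and right compatibility at once. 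Respect for inversion is equally quick: if $X\in\mathcal{C}(a)\cap\mathcal{C}(b)$ then $X^{-1}\in\mathcal{C}(a^{-1})\cap\mathcal{C}(b^{-1})$ by axiom~(I), so $a^{-1}\equiv b^{-1}$.

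The one step requiring genuine work is transitivity, and this is where Lemma~4.2 enters. Suppose $a\equiv b$ and $b\equiv c$, witnessed by $X\in\mathcal{C}(a)\cap\mathcal{C}(b)$ and $Y\in\mathcal{C}(b)\cap\mathcal{C}(c)$. Since both $X$ and $Y$ lie in $\mathcal{C}(b)$ they are subsets of $b^{\downarrow}$, so every pointwise meet $x\wedge y$ with $x\in X$, $y\in Y$ exists and the covering $X\wedge Y$ is well defined. I then apply Lemma~4.2(4) twice: taking the distinguished covering to be $X\in\mathcal{C}(a)\cap\mathcal{C}(b)$ and the auxiliary one to be $Y\in\mathcal{C}(b)$ gives $X\wedge Y\in\mathcal{C}(a)$; taking the distinguished covering to be $Y\in\mathcal{C}(c)\cap\mathcal{C}(b)$ and the auxiliary one to be $X\in\mathcal{C}(b)$ gives $Y\wedge X\in\mathcal{C}(c)$. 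Because the meet is commutative, $X\wedge Y=Y\wedge X$, so this common covering lies in $\mathcal{C}(a)\cap\mathcal{C}(c)$ and hence $a\equiv c$.

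The main obstacle is precisely transitivity: one must produce a single covering of both $a$ and $c$, and the natural candidate $X\wedge Y$ only makes sense, and only lands in the two required sets, because $X$ and $Y$ share membership in $\mathcal{C}(b)$—this shared membership is exactly the hypothesis that activates Lemma~4.2(4). Once that lemma is available the argument is symmetric in $a$ and $c$ and closes immediately, so the remaining verifications are routine applications of the coverage axioms.
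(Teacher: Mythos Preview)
Your proof is correct and follows essentially the same route as the paper: reflexivity from (R), symmetry by definition, transitivity via Lemma~4.2(4) using the common covering $X\wedge Y$, and compatibility with multiplication from (MS). Your explicit unpacking of the two applications of Lemma~4.2(4) (and the role-swap needed for the $\mathcal{C}(c)$ case) is a helpful clarification of what the paper states in a single line; the inversion check you include is harmless but unnecessary, since any semigroup congruence on an inverse semigroup automatically respects inverses.
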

\begin{proof} We show first that $\equiv$ is an equivalence relation.
We have that $\{a \} \in \mathcal{C}(a)$ and so $a \equiv a$.
It is immediate that $a \equiv b$ implies that $b \equiv a$.
Suppose that $a \equiv b$ and $b \equiv c$.
Let $X \in \mathcal{C}(a) \cap \mathcal{C}(b)$ and $Y \in \mathcal{C}(b) \cap \mathcal{C}(c)$.
By Lemma~4.2(4), we have that $X \wedge Y \in \mathcal{C}(a) \cap \mathcal{C}(c)$
and so $a \equiv c$.
Thus $\equiv$ is an equivalence relation and it is a congruence by (MS).
\end{proof}

We denote by $\mathbf{S}$ the quotient of $S$ by $\equiv$,
and the $\equiv$-congruence class containing $s$ by $\mathbf{s}$.
There is a homomorphism $\sigma \colon S \rightarrow \mathbf{S}$ given by $s \mapsto \mathbf{s}$.

\begin{lemma} Let $\theta \colon S \rightarrow T$ be a $\mathcal{C}$-cover-to-join map to a pseudogroup.
Then there is a unique homomorphism $\bar{\theta} \colon \mathbf{S} \rightarrow T$ such that
$\bar{\theta} \sigma = \theta$.
\end{lemma}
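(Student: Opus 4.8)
The plan is to show that $\theta$ is constant on the $\equiv$-classes, after which the result is just the universal property of the quotient semigroup $\mathbf{S} = S/{\equiv}$. This last step is legitimate precisely because $\equiv$ is a congruence by Lemma~4.8.

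The heart of the argument is the following observation. Suppose $a \equiv b$. By definition this means there is a covering $X \in \mathcal{C}(a) \cap \mathcal{C}(b)$. Since $X \in \mathcal{C}(a)$ is a $\mathcal{C}$-cover of $a$ and $\theta$ is a $\mathcal{C}$-cover-to-join map, we have that $\theta(a) = \bigvee_{x \in X} \theta(x)$; in particular this join exists (the elements $x \in X$ lie in $a^{\downarrow}$, so their images are bounded above by $\theta(a)$ and hence compatible). But $X$ is \emph{simultaneously} a $\mathcal{C}$-cover of $b$, so applying the same property on the other side yields $\theta(b) = \bigvee_{x \in X} \theta(x)$. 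The two right-hand sides are the very same join over the very same set $X$, and therefore $\theta(a) = \theta(b)$. Thus $a \equiv b$ implies $\theta(a) = \theta(b)$, which is exactly the statement that $\theta$ is constant on $\equiv$-classes.

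With this in hand I would define $\bar{\theta} \colon \mathbf{S} \rightarrow T$ by $\bar{\theta}(\mathbf{s}) = \theta(s)$, which is well-defined by the previous paragraph. It is a semigroup homomorphism since $\bar{\theta}(\mathbf{s}\mathbf{t}) = \bar{\theta}(\mathbf{st}) = \theta(st) = \theta(s)\theta(t) = \bar{\theta}(\mathbf{s})\bar{\theta}(\mathbf{t})$, using that $\sigma$ is a homomorphism (so $\mathbf{s}\mathbf{t} = \mathbf{st}$) and that $\theta$ is a homomorphism. By construction $\bar{\theta}\sigma = \theta$. For uniqueness, any homomorphism $\psi$ satisfying $\psi\sigma = \theta$ must obey $\psi(\mathbf{s}) = \psi(\sigma(s)) = \theta(s) = \bar{\theta}(\mathbf{s})$ for every $s \in S$; since $\sigma$ is surjective, every element of $\mathbf{S}$ has the form $\mathbf{s}$, and hence $\psi = \bar{\theta}$.

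I do not expect a genuine obstacle here. The entire content sits in the well-definedness step, and even that reduces to the simple remark that a single covering $X$ witnessing $a \equiv b$ can be fed into the cover-to-join hypothesis on both sides at once, forcing $\theta(a)$ and $\theta(b)$ to be one and the same join. The remainder is the standard factorization of a homomorphism through a quotient by a congruence, so the only care required is to invoke Lemma~4.8 to know we are indeed quotienting by a congruence, and the definition of the cover-to-join map to know the relevant joins are the images of the given coverings.
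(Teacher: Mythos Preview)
Your proposal is correct and follows essentially the same approach as the paper: both arguments hinge on the observation that a single covering $X \in \mathcal{C}(a) \cap \mathcal{C}(b)$ can be fed into the cover-to-join hypothesis on each side to force $\theta(a) = \bigvee_{x \in X}\theta(x) = \theta(b)$, after which $\bar{\theta}(\mathbf{a}) = \theta(a)$ is well-defined. You supply slightly more detail on the homomorphism and uniqueness claims than the paper does, but the content is identical.
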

\begin{proof} Suppose that $a \equiv b$.
Then there exists $X \in \mathcal{C}(a) \cap \mathcal{C}(b)$.
But $\theta$ is a $\mathcal{C}$-cover-to-join map and so 
$\theta (a) = \bigvee_{x \in X} \theta (x)$ 
and
$\theta (b) = \bigvee_{x \in X} \theta (x)$.
Thus $\theta (a) = \theta (b)$.
We may therefore define $\bar{\theta} (\mathbf{a}) = \theta (a)$.
\end{proof}

The above lemma shows that $\mathcal{C}$-cover-to-join maps factor through $\mathbf{S}$.
We now turn to the specific cases of interest to us.

\begin{lemma} Let $\mathcal{D}$ be the dense coverage on $S$.
\begin{enumerate}

\item Let $\mathbf{X} \in \mathcal{D}(\mathbf{a})$.
Then there exists $A \in \mathcal{D}(a)$ such that $\sigma (A) = \mathbf{X}$.
In addition, if $\mathbf{X}$ is finite then $A$ can be chosen to be finite. 

\item If $\mathcal{D}(\mathbf{a}) \cap \mathcal{D}(\mathbf{b}) \neq \emptyset$ 
then 
$\mathcal{D}(a) \cap \mathcal{D}(b) \neq \emptyset$.

\item Let $X \in \mathcal{D}(a)$. Then $\mathbf{X} \in \mathcal{D}(\mathbf{a})$.

\end{enumerate}
\end{lemma}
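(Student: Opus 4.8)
The plan is to reduce everything to two structural facts about the quotient map $\sigma \colon S \to \mathbf{S}$ for the dense coverage $\mathcal{D}$: that $\sigma$ is \emph{$0$-restricted}, and that \emph{non-orthogonality is an invariant of $\equiv$-classes}. First I would record the preliminaries. Computing $\mathcal{D}(0) = \{\emptyset, \{0\}\}$ and unwinding the arrow relation shows $a \equiv 0$ forces $a = 0$, so $\sigma^{-1}(\mathbf{0}) = \{0\}$ and hence $\sigma(u) \neq \mathbf{0}$ whenever $u \neq 0$. Since $\sigma$ is a homomorphism it preserves the natural partial order, and I would prove a lifting statement: if $\mathbf{w} \leq \mathbf{a}$ then, for any representative $w_0$ of $\mathbf{w}$, the element $aw_0^{-1}w_0$ lies below $a$ and satisfies $\sigma(aw_0^{-1}w_0) = \mathbf{a}\mathbf{w}^{-1}\mathbf{w} = \mathbf{w}$, using the identity $\mathbf{w} = \mathbf{a}\mathbf{w}^{-1}\mathbf{w}$. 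Finally, the core fact: if $a \equiv a'$ then $a$ and $a'$ are non-orthogonal to exactly the same elements. This follows by fixing a common covering $X \in \mathcal{D}(a) \cap \mathcal{D}(a')$; a nonzero $w \leq a, d$ meets some $x \in X$ in a nonzero $u \leq w, x$, and since $x \leq a'$ this $u$ witnesses $(a')^{\downarrow} \cap d^{\downarrow} \neq 0$.

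The heart of the argument is a \emph{transfer lemma}: for $z, x \in S$ one has $z^{\downarrow} \cap x^{\downarrow} \neq 0$ in $S$ if and only if $\mathbf{z}^{\downarrow} \cap \mathbf{x}^{\downarrow} \neq \mathbf{0}$ in $\mathbf{S}$. The forward direction is immediate from $0$-restrictedness. The reverse direction is the main obstacle, since an $\equiv$-class is generally much coarser than a single element, so a common lower bound of $\mathbf{z}$ and $\mathbf{x}$ need not be the image of a common lower bound of $z$ and $x$. The trick is to restrict $z$ and $x$ by the \emph{same} idempotent: given $\mathbf{0} \neq \mathbf{w} \leq \mathbf{z}, \mathbf{x}$ and a representative $w \neq 0$, set $v = zw^{-1}w$ and $v' = xw^{-1}w$. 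Then $v \leq z$, $v' \leq x$, while $\sigma(v) = \mathbf{z}\mathbf{w}^{-1}\mathbf{w} = \mathbf{w} = \mathbf{x}\mathbf{w}^{-1}\mathbf{w} = \sigma(v')$, so $v \equiv v'$ and both are nonzero. Since $v \leq z$ makes $v$ non-orthogonal to $z$, the core fact gives $v'$ non-orthogonal to $z$; any nonzero $t \leq v', z$ then also satisfies $t \leq x$, so $z^{\downarrow} \cap x^{\downarrow} \neq 0$.

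With these in hand the three parts are quick. For (3), order-preservation gives $\mathbf{X} \subseteq \mathbf{a}^{\downarrow}$; to see $\mathbf{a} \to \mathbf{X}$, lift any $\mathbf{0} \neq \mathbf{z} \leq \mathbf{a}$ to $0 \neq z \leq a$, apply $a \to X$ to obtain $0 \neq u \leq z, x$ with $x \in X$, and push down to $\mathbf{0} \neq \mathbf{u} \leq \mathbf{z}, \mathbf{x}$ using $0$-restrictedness. For (1), choose for each $\mathbf{x} \in \mathbf{X}$ a lift $x^{*} \leq a$ with $\sigma(x^{*}) = \mathbf{x}$ and put $A = \{x^{*} : \mathbf{x} \in \mathbf{X}\}$; then $A \subseteq a^{\downarrow}$, $\sigma(A) = \mathbf{X}$, and $A$ is finite when $\mathbf{X}$ is. To verify $a \to A$, take $0 \neq z \leq a$, use $\mathbf{a} \to \mathbf{X}$ to find $\mathbf{x} \in \mathbf{X}$ with $\mathbf{z}^{\downarrow} \cap \mathbf{x}^{\downarrow} \neq \mathbf{0}$, and apply the transfer lemma to get $z^{\downarrow} \cap (x^{*})^{\downarrow} \neq 0$.

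For (2) I would first isolate the general characterization $a \equiv b \Leftrightarrow (a \to \{b\} \text{ and } b \to \{a\})$; the forward implication holds because the elements of a common covering lie below both $a$ and $b$, and the backward implication because $a^{\downarrow} \cap b^{\downarrow}$ is then a common covering. Reading the hypothesis $\mathcal{D}(\mathbf{a}) \cap \mathcal{D}(\mathbf{b}) \neq \emptyset$ as $\mathbf{a} \to \{\mathbf{b}\}$ and $\mathbf{b} \to \{\mathbf{a}\}$ in $\mathbf{S}$, the transfer lemma upgrades these to $a \to \{b\}$ and $b \to \{a\}$ in $S$ (apply the transfer lemma with $x = b$, respectively $x = a$), whence $a \equiv b$ and so $\mathcal{D}(a) \cap \mathcal{D}(b) \neq \emptyset$. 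Thus the only genuinely delicate point is the reverse direction of the transfer lemma, and everything else is bookkeeping around $\sigma$.
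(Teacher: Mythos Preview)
Your proof is correct, and the organization is genuinely different from the paper's. You isolate a single \emph{transfer lemma}---non-orthogonality passes between $S$ and $\mathbf{S}$---resting on the observation that $\equiv$-equivalent elements are non-orthogonal to exactly the same things. All three parts then fall out uniformly. The paper instead treats each part by a separate direct computation: for (1) it builds the lifts $ae_x$ much as you do but verifies $a \to A$ by hand; for (3) it argues via an auxiliary common covering $Y \in \mathcal{D}(b)\cap\mathcal{D}(ab^{-1}b)$ and the meet $Y \wedge Xb^{-1}b$; and for (2) it invokes part (1) to get $A \in \mathcal{D}(a)$, $B \in \mathcal{D}(b)$ with $\sigma(A)=\sigma(B)$, then for each matched pair $ae_x \equiv bf_x$ chooses a common cover $C_x$ and glues these together using axiom (T).

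The main payoff of your route is in part (2): by recasting $\equiv$ as the symmetric arrow condition $a \to \{b\}$ and $b \to \{a\}$, you reduce (2) to two applications of the transfer lemma and avoid the paper's appeal to axiom (T) and the gluing step entirely. The paper's approach, on the other hand, makes the role of the coverage axioms (MS) and (T) more visible, which is in keeping with the surrounding section's emphasis on coverages as the organizing structure. Both arguments are short once set up; yours front-loads the work into the transfer lemma, the paper distributes it across the three parts.
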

\begin{proof} Observe first that in this case the homomorphism $\sigma$ is $0$-restricted.

(1) By Lemma~4.2, we have that $\{\mathbf{x}^{-1}\mathbf{x} \colon \mathbf{x} \in \mathbf{X} \} \in \mathcal{D}(\mathbf{a}^{-1}\mathbf{a})$.
For each $\mathbf{x} \in \mathbf{X}$, choose an idempotent $e_{x}$ such that $\sigma (e_{x}) = \mathbf{x}^{-1}\mathbf{x}$.
Put $A = \{ae_{x} \colon \mathbf{x} \in \mathbf{X} \} \subseteq a^{\downarrow}$.
Observe that $\sigma (ae_{x}) = \mathbf{a}\mathbf{x}^{-1}\mathbf{x} = \mathbf{x}$.
Thus $\sigma (A) = \mathbf{X}$.
Observe that if $\mathbf{X}$ is finite then $A$ is finite.
We prove that $a \rightarrow A$.
Let $0 \neq z \leq a$.
Then $z = ak$ for some idempotent $k$.
Thus $0 \neq \sigma (z) \leq \sigma (a)$ since $\sigma$ is $0$-restricted.
Observe that $\sigma (z) = \sigma (a)\sigma (k)$.
Thus there exists $0 \neq \mathbf{u}$ and $\mathbf{x} \in \mathbf{X}$ such that $\mathbf{u} \leq \mathbf{z}, \mathbf{x}$.
Choose any idempotent $f$ such that $\sigma (f) = \mathbf{u}^{-1}\mathbf{u}$ and put $u = ae_{x}fk$.
Then $\sigma (u) = \mathbf{u}$, using the fact that $\mathbf{u}\sigma (k) = \mathbf{u}$, and so in particular $u \neq 0$.
By construction $u \leq ae_{x}, z$.
We have therefore proved that $a \rightarrow A$.

(2) By definition, there exists $\mathbf{X} \in \mathcal{D}(\mathbf{a}) \cap \mathcal{D}(\mathbf{b})$.
By (1) above, we may $A \in \mathcal{D}(a)$ and $B \in \mathcal{D}(b)$ such that $\sigma (A) = \mathbf{X} = \sigma (B)$.
Each element of $A$ has the form $ae_{x}$ 
and each element of $B$ has the form $bf_{x}$
where $\sigma (ae_{x}) = \mathbf{x} = \sigma (be_{x})$.
Thus $ae_{x} \equiv be_{x}$.
Choose $C_{x} \in \mathcal{D}(ae_{x}) \cap \mathcal{D}(be_{x})$ and put $C = \bigcup_{x} C_{x}$.
Then by axiom (T), we have that $C \in \mathcal{D}(a) \cap \mathcal{D}(b)$.

(3) Let $0 \neq \sigma (b) \leq \sigma (a)$.
Then $\sigma (b) = \sigma (ab^{-1}b)$.
Thus $b \equiv ab^{-1}b$.
Therefore there exists $Y \in \mathcal{D}(b) \cap \mathcal{D}(ab^{-1}b)$.
But $X \in \mathcal{D}(a)$ implies by (MS) that $Xb^{-1}b \in \mathcal{D}(ab^{-1}b)$.
Thus by Lemma~4.2, we have that $Y \wedge Xb^{-1}b \in \mathcal{D}(ab^{-1}b)$.
Now $0 \neq ab^{-1}b$ and so there exists $0 \neq z$ such that $z \leq ab^{-1}b$ and $z \leq y \wedge xb^{-1}b$ for some
$y \wedge xb^{-1}b \in Y \wedge Xb^{-1}b$.
But then $0 \neq \sigma (z) \leq \sigma (b), \sigma (x)$, as required.
\end{proof}

The following is immediate by the above lemma.

\begin{corollary} 
With respect to either the dense or tight coverage  on $S$, the quotient semigroup $\mathbf{S}$ is separative.
\end{corollary}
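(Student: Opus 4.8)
The plan is to read this off directly from Lemma~4.10, since by construction the quotient $\mathbf{S} = S/{\equiv}$ encodes precisely the failure of separativity. Recall that separativity of $\mathbf{S}$ with respect to $\mathcal{D}$ means that $\mathcal{D}(\mathbf{a}) \cap \mathcal{D}(\mathbf{b}) \neq \emptyset$ forces $\mathbf{a} = \mathbf{b}$, and that, by the very definition of the congruence $\equiv$, we have $\mathbf{a} = \mathbf{b}$ if and only if $\mathcal{D}(a) \cap \mathcal{D}(b) \neq \emptyset$. Thus the whole content of the corollary is the implication
$$\mathcal{D}(\mathbf{a}) \cap \mathcal{D}(\mathbf{b}) \neq \emptyset \;\Longrightarrow\; \mathcal{D}(a) \cap \mathcal{D}(b) \neq \emptyset,$$
which is exactly Lemma~4.10(2). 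So for the dense coverage I would simply chain these three facts together: assume $\mathcal{D}(\mathbf{a}) \cap \mathcal{D}(\mathbf{b}) \neq \emptyset$, invoke Lemma~4.10(2) to get $\mathcal{D}(a) \cap \mathcal{D}(b) \neq \emptyset$, and read off $a \equiv b$, i.e. $\mathbf{a} = \mathbf{b}$.

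For the tight coverage the argument has the identical shape, but I must verify the tight analogue of Lemma~4.10(2), namely that $\mathcal{T}(\mathbf{a}) \cap \mathcal{T}(\mathbf{b}) \neq \emptyset$ implies $\mathcal{T}(a) \cap \mathcal{T}(b) \neq \emptyset$. The only thing to check is that finiteness is preserved throughout the construction used to prove Lemma~4.10(2), and here the finiteness clause of Lemma~4.10(1) is what does the work. Starting from a finite $\mathbf{X} \in \mathcal{T}(\mathbf{a}) \cap \mathcal{T}(\mathbf{b})$, I lift it to finite sets $A \in \mathcal{T}(a)$ and $B \in \mathcal{T}(b)$ with $\sigma(A) = \mathbf{X} = \sigma(B)$, match their elements so that $ae_{x} \equiv be_{x}$, choose finite covers $C_{x} \in \mathcal{T}(ae_{x}) \cap \mathcal{T}(be_{x})$, and form $C = \bigcup_{x} C_{x}$. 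Since this is a finite union of finite sets it is finite, and axiom (T) places it in $\mathcal{T}(a) \cap \mathcal{T}(b)$; hence $a \equiv b$ and $\mathbf{a} = \mathbf{b}$.

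I expect essentially no obstacle here: all the genuine work was already carried out in Lemma~4.10, and the corollary is a bookkeeping step. The one place that demands a moment's care is the tight case, where one must confirm that the lift, the element-matching, and the gluing via~(T) never leave the realm of \emph{finite} coverings --- which is guaranteed by the finiteness assertion built into Lemma~4.10(1).
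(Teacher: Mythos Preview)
Your proposal is correct and follows essentially the same approach as the paper, which simply declares the corollary immediate from Lemma~4.10. You have in fact written out more than the paper does, spelling out explicitly why the tight case goes through by tracking finiteness through the proof of Lemma~4.10(2); this is exactly the intended reading of the ``In addition, if $\mathbf{X}$ is finite\ldots'' clause in Lemma~4.10(1).
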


We now prove two key propositions.
The first provides more information about the factorization arising from Lemma~4.9.

\begin{proposition} Let $S$ be an inverse semigroup equipped with the dense coverage $\mathcal{D}$ and
let $\theta \colon S \rightarrow T$ be a $\mathcal{D}$-cover-to-join map to a pseudogroup.
Then the induced homomorphism $\bar{\theta} \colon \mathbf{S} \rightarrow T$ is a $\mathcal{D}$-cover-to-join map.
An analogous result holds for the tight coverage.
\end{proposition}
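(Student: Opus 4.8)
The plan is to reduce the claim to the lifting property recorded in Lemma~4.10(1), which asserts that every $\mathcal{D}$-cover of an element $\mathbf{a}$ of $\mathbf{S}$ is the $\sigma$-image of a $\mathcal{D}$-cover of a chosen representative upstairs in $S$. Recall that by Lemma~4.9 the induced map $\bar{\theta} \colon \mathbf{S} \rightarrow T$ is the unique homomorphism with $\bar{\theta} \sigma = \theta$, and that $\mathbf{S}$, being an inverse semigroup, carries its own dense coverage $\mathcal{D}$. Thus to prove that $\bar{\theta}$ is $\mathcal{D}$-cover-to-join I must verify that $\bar{\theta}(\mathbf{a}) = \bigvee_{\mathbf{x} \in \mathbf{X}} \bar{\theta}(\mathbf{x})$ for every $\mathbf{a} \in \mathbf{S}$ and every $\mathbf{X} \in \mathcal{D}(\mathbf{a})$.

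First I would fix such an $\mathbf{a}$ and $\mathbf{X}$, and choose any representative $a \in S$ with $\sigma(a) = \mathbf{a}$. By Lemma~4.10(1) there is a cover $A \in \mathcal{D}(a)$ satisfying $\sigma(A) = \mathbf{X}$. Since $\theta$ is assumed to be a $\mathcal{D}$-cover-to-join map, we have $\theta(a) = \bigvee_{x \in A} \theta(x)$. Next I would transport this identity across $\sigma$: using $\theta = \bar{\theta}\sigma$ we get $\theta(a) = \bar{\theta}(\mathbf{a})$ and $\theta(x) = \bar{\theta}(\sigma(x))$ for each $x \in A$. Because $\sigma$ maps $A$ onto $\mathbf{X}$, the family $\{\sigma(x) \colon x \in A\}$ is exactly $\mathbf{X}$, and a join is unaffected by repetitions in its indexing. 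Hence $\bar{\theta}(\mathbf{a}) = \bigvee_{x \in A} \bar{\theta}(\sigma(x)) = \bigvee_{\mathbf{x} \in \mathbf{X}} \bar{\theta}(\mathbf{x})$, as required. The tight case would be handled identically: a tight cover $\mathbf{X}$ of $\mathbf{a}$ is just a finite $\mathcal{D}$-cover, and the finiteness clause of Lemma~4.10(1) allows the lift $A$ to be taken finite, so the same computation applies verbatim with $\mathcal{T}$ in place of $\mathcal{D}$.

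There is no genuine obstacle here beyond ensuring that the lift $A$ really does realise $\mathbf{X}$ as its image under $\sigma$, which is precisely what Lemma~4.10(1) supplies. The only point deserving care is the bookkeeping in the final join, where I must observe that the surjectivity of $\sigma|_A \colon A \rightarrow \mathbf{X}$ suffices even though this map need not be injective, since a join taken over a surjectively indexed family agrees with the join over its image.
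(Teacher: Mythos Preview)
Your proof is correct and follows essentially the same route as the paper's own argument: lift the cover $\mathbf{X}$ to a cover $A$ of a representative $a$ via Lemma~4.10(1), apply the $\mathcal{D}$-cover-to-join property of $\theta$ upstairs, and then push back down using $\bar{\theta}\sigma = \theta$. Your treatment is in fact slightly more careful than the paper's, since you make explicit the surjectivity-versus-injectivity point in the reindexing of the join and spell out how the finiteness clause of Lemma~4.10(1) handles the tight case.
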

\begin{proof} Let $\mathbf{X} \in \mathcal{D}(\mathbf{a})$ in $\mathbf{S}$.
By Lemma~4.10, there exists $A \in \mathcal{D}(a)$ such that $\sigma (A) = \mathbf{X}$.
By assumption, $\theta (a) = \bigvee_{x \in A} \theta (x)$.
But $\bar{\theta}(\sigma (a)) = \theta (a)$ and
$\bigvee_{x \in X} \bar{\theta} (\sigma{x}) = \bigvee_{x \in A} \theta (x)$ and the result follows.
\end{proof}

The second proposition tells us that $S$ and $\mathbf{S}$ are essentially the same when it comes to constructing \'etale groupoids.

\begin{proposition} Let $S$ be an inverse semigroup equipped with the dense coverage $\mathcal{D}$.
Then the posets of dense filters on $S$ and those on $\mathbf{S}$ are order-isomorphic and this induces
a homeomorphism between the groupoids $\mathsf{G}_{\mathcal{D}}(S)$ and $\mathsf{G}_{\mathcal{D}}(\mathbf{S})$.
An analogous result holds for the tight coverage.
\end{proposition}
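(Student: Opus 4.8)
The plan is to exhibit the order isomorphism explicitly by the two obvious maps built from $\sigma \colon S \rightarrow \mathbf{S}$, and then to check that it respects both the groupoid operations and the topology. Define $\Psi(\mathbf{A}) = \sigma^{-1}(\mathbf{A})$ for a dense filter $\mathbf{A}$ on $\mathbf{S}$, and $\Phi(A) = \sigma(A)^{\uparrow}$ for a dense filter $A$ on $S$. Recall from the proof of Lemma~4.10 that for the dense coverage $\sigma$ is surjective and $0$-restricted; these two facts will do all the work, with $0$-restrictedness guaranteeing that $\sigma^{-1}(\mathbf{A})$ and $\sigma(A)^{\uparrow}$ remain proper.

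First I would verify that each map lands among dense filters. That $\sigma^{-1}(\mathbf{A})$ is upward closed is immediate, and the $\mathcal{D}$-filter property transfers directly: if $x \in \sigma^{-1}(\mathbf{A})$ and $X \in \mathcal{D}(x)$ then $\sigma(X) \in \mathcal{D}(\mathbf{x})$ by Lemma~4.10(3), so some $\mathbf{y} \in \sigma(X) \cap \mathbf{A}$ lifts to the required $y \in X \cap \sigma^{-1}(\mathbf{A})$. Dually, to see $\sigma(A)^{\uparrow}$ is a $\mathcal{D}$-filter I would push a covering $\mathbf{X} \in \mathcal{D}(\mathbf{x})$ down to $\mathbf{X}\mathbf{a}^{-1}\mathbf{a} \in \mathcal{D}(\mathbf{a})$ (with $\mathbf{a} \leq \mathbf{x}$, $a\in A$) via axiom (MS), lift it through Lemma~4.10(1), and apply the $\mathcal{D}$-filter property of $A$. \textbf{The main obstacle is the directedness of $\sigma^{-1}(\mathbf{A})$}: a common lower bound for $\mathbf{s},\mathbf{t}$ in $\mathbf{S}$ need not lift to a common lower bound of $s,t$ in $S$, since compatibility is not preserved by $\sigma^{-1}$. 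I would resolve this as follows. Given $s,t \in \sigma^{-1}(\mathbf{A})$, the directedness of $\mathbf{A}$ makes $\mathbf{s},\mathbf{t}$ compatible, so $\mathbf{s}\wedge\mathbf{t} = \mathbf{s}\mathbf{t}^{-1}\mathbf{t} \in \mathbf{A}$. The two natural lifts $w = st^{-1}t$ and $z = ts^{-1}s$ both satisfy $\sigma(w) = \sigma(z) = \mathbf{s}\wedge\mathbf{t}$, hence $w \equiv z$; a common covering $C \in \mathcal{D}(w)\cap\mathcal{D}(z)$ then lies in $w^{\downarrow}\cap z^{\downarrow} \subseteq s^{\downarrow}\cap t^{\downarrow}$. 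Finally $\sigma(C) \in \mathcal{D}(\mathbf{s}\wedge\mathbf{t})$ by Lemma~4.10(3), so the $\mathcal{D}$-filter property of $\mathbf{A}$ yields $c \in C$ with $\sigma(c)\in\mathbf{A}$, i.e. $c \leq s,t$ in $\sigma^{-1}(\mathbf{A})$.

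Next I would show $\Phi$ and $\Psi$ are mutually inverse. The identity $\sigma(\sigma^{-1}(\mathbf{A}))^{\uparrow} = \mathbf{A}$ follows from surjectivity of $\sigma$. For $\sigma^{-1}(\sigma(A)^{\uparrow}) = A$ only the inclusion $\subseteq$ needs density: if $\sigma(s)\geq \sigma(a)$ with $a\in A$, then $sa^{-1}a \equiv a$, so a common cover together with the $\mathcal{D}$-filter property of $A$ produces $c \leq sa^{-1}a \leq s$ in $A$, whence $s\in A$ by upward closure. Both maps plainly preserve inclusion, giving the claimed order isomorphism. Functoriality is then a transcription of Lemma~2.14 with surjectivity of $\sigma$ playing the role of calliticity: $\sigma^{-1}$ commutes with inversion, and the key equality $(\sigma^{-1}(\mathbf{A})\,\sigma^{-1}(\mathbf{B}))^{\uparrow} = \sigma^{-1}((\mathbf{A}\mathbf{B})^{\uparrow})$ is proved by the same two-inclusion argument, the reverse inclusion using that every $\mathbf{g}\in\mathbf{B}$ has a $\sigma$-preimage. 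Identities match because idempotents lift ($\sigma(s)$ idempotent gives $\sigma(s^{-1}s)=\sigma(s)$), so by Lemma~2.3 the identity filters correspond.

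It remains to identify the topologies. Since $\mathbf{s}\in\mathbf{A}$ iff $s\in\sigma^{-1}(\mathbf{A})$ and $\Psi$ is a bijection onto all dense filters, I would conclude $\Psi(Z_{\mathbf{s}}) = Z_{s}$ for every $s\in S$; as these sets form bases, $\Psi$ carries basic opens to basic opens in both directions and is therefore a homeomorphism. The tight case is identical throughout: Lemma~4.10(1) allows the lift of a \emph{finite} covering to be chosen finite, so every use of Lemma~4.10 and of the common cover $C$ goes through verbatim for $\mathcal{T}$ in place of $\mathcal{D}$.
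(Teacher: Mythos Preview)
Your approach is essentially the paper's --- both set up the bijection via $\sigma$ and $\sigma^{-1}$ and check the same list of properties --- but there is a genuine (though easily repairable) error in your directedness argument for $\sigma^{-1}(\mathbf{A})$. You assert that ``the directedness of $\mathbf{A}$ makes $\mathbf{s},\mathbf{t}$ compatible''. This is false: in an inverse semigroup a common \emph{upper} bound forces compatibility, but a common \emph{lower} bound does not. (In the symmetric inverse monoid, two partial bijections agreeing on a proper subdomain and disagreeing elsewhere have a nonzero common restriction yet are incompatible.) Without compatibility there is no reason for $\sigma(w)=\mathbf{s}\mathbf{t}^{-1}\mathbf{t}$ to equal $\sigma(z)=\mathbf{t}\mathbf{s}^{-1}\mathbf{s}$, so your conclusion $w\equiv z$ collapses.

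The paper's repair slots straight into your framework: use the actual witness to directedness. Choose $\mathbf{c}\in\mathbf{A}$ with $\mathbf{c}\leq\mathbf{s},\mathbf{t}$, lift it to any $c\in S$ with $\sigma(c)=\mathbf{c}$, and set $w=sc^{-1}c$, $z=tc^{-1}c$. Now $\sigma(w)=\mathbf{s}\mathbf{c}^{-1}\mathbf{c}=\mathbf{c}=\mathbf{t}\mathbf{c}^{-1}\mathbf{c}=\sigma(z)$ genuinely, whence $w\equiv z$; a common covering $C\in\mathcal{D}(w)\cap\mathcal{D}(z)$ sits in $s^{\downarrow}\cap t^{\downarrow}$, and applying the $\mathcal{D}$-filter property of $\mathbf{A}$ to $\sigma(C)\in\mathcal{D}(\mathbf{c})$ (Lemma~4.10(3)) produces the desired element below both $s$ and $t$. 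With this correction the remainder of your argument goes through and matches the paper. (A minor stylistic point: the paper first isolates the observation that for a dense filter $A$ one has $\sigma(x)\in\sigma(A)$ iff $x\in A$, which makes $\sigma(A)$ already a filter and streamlines several later steps; your $\sigma(A)^{\uparrow}$ then equals $\sigma(A)$.)
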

\begin{proof} We shall prove that the map $A \mapsto \sigma (A)$ is a homeomorphism.

Observe first that if $A$ is a dense filter in $S$ then $\sigma (x) \in \sigma (A)$ if and only if $x \in A$.
Suppose that $\sigma (x) \in \sigma (A)$.
Then $\sigma (x) = \sigma (a)$ for some $a \in A$.
Thus $x \equiv a$.
It follows that there exists $X \in \mathcal{D}(x) \cap \mathcal{D}(a)$.
But $a \in A$ and $A$ is a dense filter thus there exists $y \in X \cap A$.
But $y \leq x$ and so $x \in A$, as required.

Let $A$ and $B$ be dense filters.
If $A \subseteq B$ then clearly $\sigma (A) \subseteq \sigma (B)$.
Conversely, suppose that $\sigma (A) \subseteq \sigma (B)$.
Let $a \in A$.
Then $\sigma (a) \in \sigma (A) =\sigma (B)$.
Thus $\sigma (a) \in \sigma (B)$.
It follows by our observation above that $a \in B$ and so $A \subseteq B$.

We prove that if $A$ is a dense filter then $\sigma (A)$ is a dense filter.
By the observation above, it is clear that $\sigma (A)$ is a directed set.
Suppose that $\sigma (a) \in \sigma (A)$ where $a \in A$ and $\sigma (a) \leq \sigma (b)$.
Then  $\sigma (a) = \sigma (ba^{-1}a)$.
Thus again by the observation above, we have that $ba^{-1}a \in A$ and so $b \in A$ giving $\sigma (b) \in \sigma (A)$.
Let $\mathbf{X} \in \mathcal{D}(\mathbf{a})$ where $\mathbf{a} \in \sigma (A)$.
By lemma~4.10, there exists $X \in \mathcal{D}(a)$ such that $\sigma (X) = \mathbf{X}$.
But $a \in A$ and so there exists $b \in A \cap X$.
Thus $\sigma (b) \in \mathbf{X} \cap \sigma (A)$, as required.

The map $A \mapsto \mathbf{A}$ is a bijection.
Suppose that $A$ and $B$ are dense filters such that $\sigma (A) = \sigma (B)$.
Let $a \in A$.
Then $\sigma (a) \in \sigma (A)$ and so there exists $b \in B$ such that $\sigma (a) = \sigma (b)$.
It follows by the observation above that $b \in B$.
By symmetry it follows that $A = B$ and so the map is injective.
We now prove that this map is surjective.
Let $\mathbf{A}$ be a dense filter in $\mathbf{S}$.
Put $A = \sigma^{-1}(\mathbf{A})$.
It is clear that $A$ is closed upwards.
Let $a,b \in A$.
Then $\sigma (a), \sigma (b) \in \mathbf{A}$.
Thus there exists $\sigma (c) \in \mathbf{A}$ such that $\sigma (c) \leq \sigma (a), \sigma (b)$.
It follows that $\sigma (c) = \sigma (ac^{-1}c) = \sigma (bc^{-1}c)$.
Hence $ac^{-1}c \equiv bc^{-1}c$.
Thus there exists $X \in \mathcal{D}(ac^{-1}c) \cap \mathcal{D}(bc^{-1}c)$.
By Lemma~4.10, $\sigma (X) \in \mathcal{D}(\sigma (c))$.
Thus $\sigma (x) \in \mathbf{A}$, for some $x \in X$, since the filter is dense.
But then $x \in A$ and $x \leq a,b$. as required.
Finally, let $X \in \mathcal{D}(a)$ where $a \in A$.
Then by Lemma~4.10, we have that $\mathbf{X} \in \mathcal{D}(\mathbf{a})$.
Thus $\sigma (x) \in \mathbf{A}$ for some $x \in X$ and so $x \in A$, as required.
It follows that we have shown that the map is a bijection.

It remains to show that it induces a functor between the groupoids and that it is a homeomorphism.
Let $A$ be a dense filter.
We prove that $\sigma (A^{-1} \cdot A) = \sigma (A)^{-1} \cdot \sigma (A)$.
Let $\sigma (x) \in \sigma (A^{-1} \cdot A)$.
Then $\sigma (a^{-1}b) \leq \sigma (x)$ for some $a,b \in A$.
Thus $\sigma (a^{-1}b) = \sigma (x \mathbf{d}(a^{-1}b))$.
But $A^{-1} \cdot A$ is a dense filter and so $x \mathbf{d}(a^{-1}b)) \in A^{-1} \cdot A$.
It follows that $x \in A^{-1} \cdot A$ and so we may find $c,d \in A$ such that $c^{-1}d \leq x$.
But then $\sigma (c)^{-1} \sigma (d) \leq \sigma (x)$.
We have therefore proved that  $\sigma (A^{-1} \cdot A) \subseteq \sigma (A)^{-1} \cdot \sigma (A)$.
We now prove the reverse inclusion.
Let $\sigma (x) \in \sigma (A)^{-1} \cdot \sigma (A)$.
Then $\sigma (a)^{-1}\sigma (b) \leq \sigma (x)$ for some $a,b \in A$.
Thus $\sigma (a^{-1}b) = \sigma (x \mathbf{d}(a^{-1}b))$.
But $a^{-1}b \in A^{-1} \cdot A$ a dense filter and so $x \mathbf{d}(a^{-1}b)) \in A^{-1} \cdot A$ giving $x \in A^{-1} \cdot A$. 
Thus $\sigma (x) \in \sigma (A^{-1} \cdot A)$.

Suppose that $A$ and $B$ are dense filters such that $A \cdot B$ exists.
Then it follows by the above that $\sigma (A) \cdot \sigma (B)$ exists.
The proof that $\sigma (A \cdot B) = \sigma (A) \cdot \sigma (B)$ is similar to the above proof.

It remains to show that our bijection is a homeomorphism.
Let $s \in S$.
Then $Z_{s}$ consists of all dense filters that contain $s$.
The fact that $\sigma (Z_{s}) = Z_{\sigma (s)}$ is immediate in one direction,
and the converse follows since if $\sigma (A)$ contains $\sigma (s)$ then $s \in A$ by our
observation at the head of the proof.  
It follows that our map is an open map.
Finally, the inverse image of $Z_{\sigma (s)}$ under our map is precisely $Z_{s}$ and so our map is continuous.
\end{proof}

\subsection{Universal pseudogroups: the idempotent-pure case}

We begin by generalizing the classical notion of a nucleus on a frame \cite{J} to pseudogroups.
Let $S$ be an inverse semigroup.
A function $\nu \colon S \rightarrow S$ is called a {\em nucleus} if it satisfies the following four conditions:

\begin{description}

\item[{\rm (N1)}] $a \leq \nu (a)$ for all $a \in S$.

\item[{\rm (N2)}] $a \leq b$ implies that $\nu (a) \leq \nu (b)$.

\item[{\rm (N3)}] $\nu^{2}(a) = \nu (a)$ for all $a \in S$.

\item[{\rm (N4)}] $\nu (a) \nu (b) \leq \nu (ab)$ for all $a,b \in S$.

\end{description}

The following is a routine derivation from the axioms.

\begin{lemma} Let $\nu$ be a nucleus on an inverse semigroup $S$.
Then
$$\nu (ab) = \nu (a \nu (b)) = \nu ( \nu (a) b  ) = \nu ( \nu (a) \nu (b) ).$$
\end{lemma}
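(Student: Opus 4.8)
The plan is to isolate one auxiliary identity, $\nu(ab) = \nu(a\nu(b))$ for all $a,b \in S$, prove it first, and then recover everything else by a mirror-image argument together with one round of iteration. The only ingredient I will need beyond the four axioms is that the natural partial order on an inverse semigroup is compatible with multiplication, i.e.\ $x \leq y$ implies $zx \leq zy$ and $xz \leq yz$; this is standard (see \cite{Law2}) and is precisely what lets me push inequalities through products in every step below.

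For the forward inequality $\nu(ab) \leq \nu(a\nu(b))$ I would start from (N1) in the form $b \leq \nu(b)$, multiply on the left by $a$ using order-compatibility to get $ab \leq a\nu(b)$, and then apply (N2) to conclude $\nu(ab) \leq \nu(a\nu(b))$. For the reverse inequality I would use (N1) again, now as $a \leq \nu(a)$, giving $a\nu(b) \leq \nu(a)\nu(b)$, and then invoke (N4) to get $\nu(a)\nu(b) \leq \nu(ab)$; chaining these yields $a\nu(b) \leq \nu(ab)$. Applying (N2) and then (N3) gives
\[
\nu(a\nu(b)) \leq \nu(\nu(ab)) = \nu(ab),
\]
which together with the forward inequality establishes $\nu(ab) = \nu(a\nu(b))$.

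The identity $\nu(ab) = \nu(\nu(a)b)$ is obtained by the mirror-image argument: from $a \leq \nu(a)$ one gets $ab \leq \nu(a)b$ and hence $\nu(ab) \leq \nu(\nu(a)b)$, while from $b \leq \nu(b)$ and (N4) one gets $\nu(a)b \leq \nu(a)\nu(b) \leq \nu(ab)$, whence $\nu(\nu(a)b) \leq \nu(ab)$ after (N2) and (N3). Finally, the equality with both arguments replaced follows by composing the two identities: taking $x = \nu(a)$ and $y = b$ in $\nu(xy) = \nu(x\nu(y))$ gives $\nu(\nu(a)b) = \nu(\nu(a)\nu(b))$, and combining this with $\nu(\nu(a)b) = \nu(ab)$ shows that all four expressions coincide.

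Each step is short, so I do not anticipate a genuine obstacle. The one point that must be stated with care is the compatibility of the natural partial order with multiplication, since every monotonicity move above silently depends on it; I would flag this explicitly rather than treat it as evident, especially as one never needs any interaction between $\nu$ and the involution (so no such hypothesis should be smuggled in).
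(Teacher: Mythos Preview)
Your proof is correct and is exactly the routine derivation the paper has in mind; the paper itself does not spell out the argument, merely stating that it ``is a routine derivation from the axioms.'' Your explicit flagging of the compatibility of the natural partial order with multiplication is a good touch, since that is the only ingredient beyond (N1)--(N4) that is needed.
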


Let $S$ be an inverse semigroup equipped with a nucleus $\nu$.
Define
$$S_{\nu} = \{a \in S \colon \nu (a) = a    \},$$ 
the set of {\em $\nu$-closed} elements of $S$.
On the set $S_{\nu}$ define
$$a \cdot b = \nu (ab).$$
A homomorphism $\theta \colon S \rightarrow T$ is said to be {\em idempotent-pure} if $\theta (s)$ an idempotent implies that $s$ is an idempotent.

\begin{lemma} The structure $(S_{\nu},\cdot)$ is a semigroup and the map $S \rightarrow S_{\nu}$ given
by $a \mapsto \nu (a)$ is a surjective idempotent-pure semigroup homomorphism.
In particular, $(S_{v},\cdot)$ is an inverse semigroup whose natural partial order coincides with the one in $S$.
\end{lemma}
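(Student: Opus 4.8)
The plan is to run everything through the four nucleus axioms together with the identities of Lemma~4.15, treating the map $q\colon a \mapsto \nu(a)$ as the central object. First I would record that $S_{\nu}$ is closed under $\cdot$, since $a\cdot b = \nu(ab)$ is $\nu$-closed by (N3), and that $\cdot$ is associative: by Lemma~4.15 both $(a\cdot b)\cdot c = \nu(\nu(ab)c)$ and $a\cdot(b\cdot c) = \nu(a\nu(bc))$ collapse to $\nu(abc)$. The map $q$ is then a homomorphism, because $\nu(a)\cdot\nu(b) = \nu(\nu(a)\nu(b)) = \nu(ab)$ by Lemma~4.15, and it is surjective because $\nu$ fixes every element of $S_{\nu}$.

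Since $q$ is a surjective semigroup homomorphism out of the inverse semigroup $S$, the image $(S_{\nu},\cdot)$ is automatically an inverse semigroup (a homomorphic image of an inverse semigroup is inverse). If a self-contained argument is preferred, one checks directly that $\nu(a^{-1})$ is an inverse of $a$ in $S_{\nu}$ — a short computation with Lemma~4.15 yielding $a\cdot\nu(a^{-1})\cdot a = \nu(aa^{-1}a) = a$ — and that the idempotents commute, the latter following once the idempotents of $S_{\nu}$ are identified below. The coincidence of natural partial orders is then routine: for $a,b \in S_{\nu}$ one has $a \le_{S_{\nu}} b$ iff $a = \nu(ba^{-1}a)$, and since $ba^{-1}a \le b$ in $S$, applying (N1) and (N2) sandwiches $\nu(ba^{-1}a)$ between $ba^{-1}a$ and $\nu(b)=b$; hence $a \le_{S_{\nu}} b$ is equivalent to $a \le b$ in $S$.

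The real work is idempotent-purity, and I expect this to be the main obstacle. Unpacking the hypothesis, $q(s)=\nu(s)$ is idempotent in $S_{\nu}$ exactly when $\nu(s^{2})=\nu(s)$; writing $x=\nu(s)\in S_{\nu}$ and using Lemma~4.15 this says $\nu(x^{2})=x$, and (N1) gives $x^{2} \le x$. The temptation is to conclude at once that $x$ is idempotent, but $x^{2} \le x$ alone does \emph{not} force this — a non-idempotent partial bijection can have its square equal to a restriction of itself — so the hypothesis $\nu(x^{2})=x$ must be used in an essential way. Here is the step I would isolate as a lemma: in an inverse semigroup, $x^{2} \le x$ implies that $x^{2} = \mathbf{r}(x)\mathbf{d}(x)$ is an idempotent lying below $\mathbf{d}(x)$. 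Granting this, I apply $\nu$: from $x^{2} \le \mathbf{d}(x)$ and (N2) we get $x = \nu(x^{2}) \le \nu(\mathbf{d}(x))$, while (N1) gives $\mathbf{d}(x) \le \nu(\mathbf{d}(x))$. Thus $x$ and $\mathbf{d}(x)$ have the common upper bound $\nu(\mathbf{d}(x))$ and are therefore compatible; but $x\,\mathbf{d}(x)^{-1} = x\,\mathbf{d}(x) = x$, so compatibility forces $x$ to be an idempotent. Finally $s \le \nu(s) = x$ with $x$ idempotent makes $s$ idempotent, since any element below an idempotent is itself idempotent.

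This final paragraph also supplies what the inverse-semigroup step needs: the same compatibility argument shows that an element of $S_{\nu}$ is idempotent in $S_{\nu}$ if and only if it is idempotent in $S$, so that $E(S_{\nu}) = S_{\nu} \cap E(S)$, and these commute under $\cdot$ because $e\cdot f = \nu(ef) = \nu(fe) = f\cdot e$.
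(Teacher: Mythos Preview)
Your proof is correct. The associativity, homomorphism, and inverse-semigroup parts match the paper essentially verbatim (both defer to Lemma~4.14 and to the fact that a homomorphic image of an inverse semigroup is inverse). Your partial-order argument is in fact tidier than the paper's: you go directly from $a=\nu(ba^{-1}a)$ via (N1)--(N2) to $a\le b$, whereas the paper detours through a compatibility argument before reaching the same conclusion.

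The genuine divergence is in idempotent-purity. The paper dispatches this in one line by quoting the standard equivalence (from \cite{Law2}) that a homomorphism is idempotent-pure iff its kernel is contained in the compatibility relation, and then observing that $\nu(s)=\nu(t)$ gives $s,t\le\nu(s)$, a common upper bound, hence compatibility. Your route is longer but self-contained: you unpack what it means for $x=\nu(s)$ to be idempotent in $S_{\nu}$, extract the inequality $x^{2}\le x$, prove the (correct and not entirely obvious) sublemma that this forces $x^{2}$ to be an idempotent below $\mathbf{d}(x)$ --- indeed $x^{3}=x^{2}$ follows from $x^{2}=x\cdot\mathbf{d}(x^{2})$, whence $x^{4}=x^{2}$ --- and then invoke exactly the same ``common upper bound implies compatible'' step, applied to $x$ and $\mathbf{d}(x)$ under $\nu(\mathbf{d}(x))$. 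So the underlying mechanism is identical; you simply rediscover the relevant special case of the cited characterisation rather than importing it. What the paper's approach buys is brevity; what yours buys is independence from \cite{Law2}, plus the bonus identification $E(S_{\nu})=S_{\nu}\cap E(S)$, which the paper leaves implicit.

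One cosmetic point: your references to ``Lemma~4.15'' for the identity $\nu(ab)=\nu(a\nu(b))=\nu(\nu(a)b)=\nu(\nu(a)\nu(b))$ should be to Lemma~4.14; Lemma~4.15 is the statement you are proving.
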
 
\begin{proof} The proof that the operation yields a semigroup follows from Lemma~4.14 as does the proof that the map is a semigroup map.
The image of an inverse semigroup under a homomorphism is an inverse semigroup.  
Observe that if $\nu (s) = \nu (t)$ then $s$ and $t$ are bounded above and so are compatible.
Thus the kernel of $\nu$ is a subset of the compatibility relation and so idempotent-pure by \cite{Law2}.

We denote the natural partial order in $S_{\nu}$ temporarily by $\preceq$.
Let $a,b \in S_{\nu}$.
Suppose first that $a \preceq b$.
Then $a = b \cdot a^{-1} \cdot a$.
Thus $a = \nu (ba^{-1}a)$,
It follows that $ba^{-1}a \leq a$.
Observe that $bb^{-1}a = a$ and so $aa^{-1} \leq bb^{-1}$.
Also $ba^{-1}a = a(ba^{-1}a)^{-1}ba^{-1}a$.
Thus $ba^{-1}a = ab^{-1}b$ and so $ba^{-1}$ is an idempotent.
By symmetry $b^{-1}a$ is also an idempotent and so $a$ and $b$ are compatible.
But $aa^{-1} \leq bb^{-1}$ and so $a \leq bb^{-1}a \leq b$, as required.
On the other hand, if $a \leq b$ then $a = ba^{-1}a$.
Thus $a = \nu (a) = \nu (ba^{-1}a)$ and so $a \preceq b$, as required. 
\end{proof}

\begin{proposition} Let $\nu$ be a nucleus on a pseudogroup $S$.
Then $S_{\nu}$ is a pseudogroup and the natural map from $S$ to $S_{\nu}$ is an idempotent-pure pseudogroup homomorphism.
\end{proposition}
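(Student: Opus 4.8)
The plan is to take for granted everything supplied by Lemma~4.15: $(S_{\nu},\cdot)$ is an inverse semigroup, its natural partial order is the restriction of that of $S$, and the map $\nu\colon S\to S_{\nu}$ is a surjective idempotent-pure homomorphism. What is left is to show that $S_{\nu}$ has an identity, is complete and infinitely distributive, and that $\nu$ preserves compatible joins. The argument rests on two preliminary facts. First, the idempotents of $S_{\nu}$ are exactly the $\nu$-closed idempotents of $S$: if $e\in S_{\nu}$ is idempotent in $S_{\nu}$ then $\nu(e)=e$ is idempotent in $S_{\nu}$, so $e$ is idempotent in $S$ by idempotent-purity, while conversely $e\cdot e=\nu(e^{2})=\nu(e)=e$. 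Second, and this is the crux, compatibility in $S_{\nu}$ coincides with compatibility in $S$. For $s,t\in S_{\nu}$ the $S_{\nu}$-product $s^{-1}\cdot t$ equals $\nu(s^{-1}t)$ by Lemma~4.14, and by idempotent-purity this is idempotent in $S_{\nu}$ precisely when $s^{-1}t$ is idempotent in $S$; the same applies to $st^{-1}$, so $s$ and $t$ are compatible in $S_{\nu}$ if and only if they are compatible in $S$. I expect this step to carry the weight of the proof, since the two multiplications differ and, without idempotent-purity, there would be no reason for the two compatibility relations to agree.

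With compatibility pinned down I would describe joins explicitly: for a compatible $A\subseteq S_{\nu}$ the join $\bigvee A$ exists in $S$ (the set is compatible there too), and I claim $\nu(\bigvee A)$ is its join in $S_{\nu}$. It lies in $S_{\nu}$ by (N3), it dominates each $a\in A$ by (N1) together with the coincidence of orders, and if $b\in S_{\nu}$ bounds every element of $A$ then $\bigvee A\le b$ in $S$, whence $\nu(\bigvee A)\le\nu(b)=b$ by (N2) and (N3). Thus $S_{\nu}$ is complete with $\bigvee^{S_{\nu}}A=\nu(\bigvee A)$. The element $\nu(1)$ is an identity: for $a\in S_{\nu}$ we have $\nu(1)\cdot a=\nu(\nu(1)a)=\nu(1\cdot a)=\nu(a)=a$ by Lemma~4.14, and symmetrically on the right, so $S_{\nu}$ is a monoid.

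The computational engine for the rest is the identity $\nu\!\left(\bigvee_{x}x\right)=\nu\!\left(\bigvee_{x}\nu(x)\right)$, valid whenever $\{x\}$ and hence $\{\nu(x)\}$ are compatible: one inequality follows from $x\le\nu(x)$ and (N2), the other from $\nu(x)\le\nu(\bigvee_{y}y)$ for each $x$ followed by (N2) and (N3). For infinite distributivity, given $s\in S_{\nu}$ and compatible $A\subseteq S_{\nu}$ I would compute $s\cdot\bigvee^{S_{\nu}}A=\nu(s\,\nu(\bigvee A))=\nu(s\bigvee A)$ by Lemma~4.14, then $=\nu(\bigvee_{a}sa)$ by infinite distributivity in $S$, and finally $=\nu(\bigvee_{a}\nu(sa))=\bigvee^{S_{\nu}}_{a}(s\cdot a)$ by the displayed identity; the right-handed law is symmetric. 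For the map itself, the same identity gives $\nu(\bigvee A)=\nu(\bigvee_{a}\nu(a))=\bigvee^{S_{\nu}}\nu(A)$ for compatible $A\subseteq S$, using that homomorphisms carry compatible sets to compatible sets; hence $\nu$ preserves compatible joins, and being a semigroup homomorphism it is a pseudogroup morphism, idempotent-pure by Lemma~4.15. Assembling these, $S_{\nu}$ is a complete, infinitely distributive inverse monoid, that is, a pseudogroup, and $\nu$ is the required idempotent-pure morphism.
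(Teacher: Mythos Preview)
Your proof is correct and follows essentially the same approach as the paper's: construct joins in $S_{\nu}$ as $\nu(\bigvee A)$, verify infinite distributivity by the pair of inequalities $\nu(\bigvee x)=\nu(\bigvee\nu(x))$, and use the same computation to show $\nu$ preserves compatible joins. Your treatment is in fact more careful than the paper's in two places: you spell out via idempotent-purity why a compatible subset of $S_{\nu}$ is already compatible in $S$ (the paper compresses this into the remark that the kernel of $\nu$ lies inside the compatibility relation), and you explicitly check that $\nu(1)$ is an identity for $S_{\nu}$, which the paper omits.
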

\begin{proof} 
Let $\{a_{i} \colon i \in I \}$ be a compatible subset of $S_{\nu}$.
Since the kernel of $\nu$ is contained in the compatibility relation, 
it is obviously a compatible subset of $S$ and so by assumption has a join $a$ in $S$.
Put $a' = \nu (a) \in S_{\nu}$.
We claim that $a'$ is the join of the $a_{i}$ in $S_{\nu}$.
First $a_{i} \leq a \leq \nu (a) = a'$ and so it is an upper bound of the $a_{i}$.
Suppose that $b \in S_{\nu}$ and $a_{i} \leq b$ for all $i$.
Then $a \leq b$ and so $a' = \nu (a) \leq \nu (b) = b$.  

For clarity we shall denote the join operation on $S_{\nu}$ by $\bigsqcup$.
We shall prove that if $\bigsqcup a_{i}$ exists then $\bigsqcup b \cdot a_{i}$ exists and that
$b \cdot \bigsqcup a_{i} = \bigsqcup ba_{i}$ where $b$ and the $a_{i}$ are all $\nu$-closed elements.
In the semigroup $S$, the existence of $\bigvee a_{i}$ implies the existence of $\bigvee ba_{i}$.
Now $ba_{i} \leq \bigvee ba_{i}$ implies that $\nu (ba_{i}) \leq \nu (\bigvee ba_{i})$.
Thus the  $\nu (ba_{i})$ are pairwise compatible and so $\bigvee \nu (ba_{i})$ exists.
It follows that $\bigsqcup b \cdot a_{i}$ exists.
Now
$$b \cdot \sqcup a_{i} = \nu (\bigvee ba_{i})$$
and
$$\bigsqcup b \cdot a_{i} = \nu (\bigvee \nu (ba_{i})).$$
Since $ba_{i} \leq \nu (ba_{i})$ it is immediate that
$\nu (\bigvee ba_{i}) \leq \nu (\bigvee \nu (ba_{i}))$.
To prove the reverse inequality, we start with
$ba_{i} \leq \bigvee ba_{i}$ and so
$\nu (ba_{i}) \leq \nu (\bigvee ba_{i} )$ from which the inequality readily follows.

It remains to show that the map $S \rightarrow S_{\nu}$ given by $a \mapsto \nu (a)$ is a pseudogroup map.
Suppose that $a_{i}$ is a compatible set of elements in $S$.
We need to prove that $\nu (\bigvee a_{i} ) = \bigsqcup \nu (a_{i})$.
Since $a_{i} \leq \nu (a_{i})$ we have that $\bigvee a_{i} \leq \bigvee \nu (a_{i})$
and so $\nu (\bigvee a_{i} ) \leq \bigsqcup \nu (a_{i})$.
The proof of the reverse inequality starts with $a_{i} \leq \bigvee_{i} a_{i}$,
and the desired inequality then follows readily.
\end{proof}

We may characterize the pseudogroup morphisms that may be described by means of nuclei.

\begin{theorem} 
Surjective idempotent-pure pseudogroup morphisms may be described by means of nuclei.
\end{theorem}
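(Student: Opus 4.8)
The plan is to upgrade Proposition~4.17 into a genuine correspondence by constructing, from an arbitrary surjective idempotent-pure pseudogroup morphism, a nucleus whose associated quotient recovers that morphism. Proposition~4.17 already supplies one direction: every nucleus $\nu$ on a pseudogroup $S$ yields the surjective idempotent-pure pseudogroup morphism $S \rightarrow S_{\nu}$, $a \mapsto \nu(a)$. The substance therefore lies in the converse, and I would phrase ``may be described by means of nuclei'' as the assertion that these two constructions are mutually inverse up to isomorphism of the codomain.

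Given a surjective idempotent-pure pseudogroup morphism $\theta \colon S \rightarrow T$, I would set $\nu(a) = \bigvee \{ s \in S \colon \theta(s) \leq \theta(a) \}$. The first and most important step is to check that this join exists, i.e.\ that the set $\{ s \colon \theta(s) \leq \theta(a) \}$ is a \emph{compatible} subset of $S$. Indeed, if $\theta(s), \theta(t) \leq \theta(a)$ then $\theta(s)$ and $\theta(t)$ are compatible in $T$, so that $\theta(s^{-1}t) = \theta(s)^{-1}\theta(t)$ and $\theta(st^{-1})$ are idempotents; idempotent-purity of $\theta$ then forces $s^{-1}t$ and $st^{-1}$ to be idempotents, so $s$ and $t$ are compatible. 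Completeness of the pseudogroup $S$ now guarantees the join. This is precisely where idempotent-purity is essential, and it is the main obstacle in the argument.

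Next I would establish the single identity that drives everything, namely $\theta(\nu(a)) = \theta(a)$. Since $\theta$ preserves compatible joins, $\theta(\nu(a))$ is the join in $T$ of $\{ \theta(s) \colon \theta(s) \leq \theta(a) \}$; surjectivity of $\theta$ makes this set exactly $\theta(a)^{\downarrow}$, whose join is $\theta(a)$. From this identity the nucleus axioms follow quickly: (N1) holds because $a$ lies in the defining set of $\nu(a)$; (N2) because $a \leq b$ enlarges the defining set; (N3) because $\theta(\nu(a)) = \theta(a)$ yields the very same defining set at $\nu(a)$ as at $a$; and (N4) because $\theta(\nu(a)\nu(b)) = \theta(a)\theta(b) = \theta(ab)$ places $\nu(a)\nu(b)$ in the defining set of $\nu(ab)$, so that $\nu(a)\nu(b) \leq \nu(ab)$.

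Finally I would identify $S_{\nu}$ with $T$. Restricting $\theta$ to $S_{\nu}$ gives a map $\phi$ which is injective, since $\theta(a) = \theta(b)$ forces the defining sets, hence $\nu(a) = \nu(b)$, and thus $a = b$ for $\nu$-closed $a,b$; it is surjective because for $y \in T$ any $s$ with $\theta(s) = y$ produces the $\nu$-closed element $\nu(s)$ with $\theta(\nu(s)) = y$. That $\phi$ is an isomorphism of pseudogroups follows from $\phi(a \cdot b) = \theta(\nu(ab)) = \theta(a)\theta(b)$, together with the coincidence of the orders on $S_{\nu}$ and $S$ recorded in Lemma~4.16, and the composite $S \rightarrow S_{\nu} \rightarrow T$ returns $\theta$. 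To close the correspondence into a bijection, I would verify the round trip starting from a nucleus $\nu$: the nucleus built from $S \rightarrow S_{\nu}$ is $a \mapsto \bigvee \{ s \colon \nu(s) \leq \nu(a) \}$, and since $s \leq \nu(s) \leq \nu(a)$ while $\nu(a)$ itself lies in this set, its top element is $\nu(a)$, recovering $\nu$. The two constructions are therefore mutually inverse, so the surjective idempotent-pure pseudogroup morphisms out of $S$ are exactly the quotient maps $S \rightarrow S_{\nu}$ determined by nuclei.
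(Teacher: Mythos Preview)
Your proof is correct and follows essentially the same route as the paper. The paper introduces the right adjoint $\theta_\ast(t) = \bigvee\{s : \theta(s) \leq t\}$ and sets $\nu = \theta_\ast \theta$, which is exactly your $\nu$; your key identity $\theta(\nu(a)) = \theta(a)$ is the paper's $\theta\theta_\ast\theta = \theta$, and the verification of (N1)--(N4) and of the isomorphism $S_\nu \cong T$ proceed in the same way. You go slightly further by also checking the round trip starting from a nucleus, which the paper omits.
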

\begin{proof}
Let $\theta \colon S \rightarrow T$ be a surjective idempotent-pure pseudogroup morphism.
For each $t \in T$, the inverse image $\theta^{-1} (t)$ is a compatible subset of $S$ since $\theta$ is idempotent-pure.
Define $\theta_{\ast} \colon T \rightarrow S$ by 
$$\theta_{\ast} (t) = \bigvee \{ s \in S \colon  \theta (s) \leq t \}.$$
Define $\nu \colon S \rightarrow S$ by $\nu (s) = \theta_{\ast} (\theta (s))$.
Observe that $\theta_{\ast}$ is an order-preserving map and that
$s \leq \theta (\theta_{\ast} (s))$ for all $s \in S$ 
and
$\theta (\theta_{\ast}(t)) = t$ for all $t \in T$ since $\theta$ is assumed surjective. 
It therefore follows that $\theta = \theta \theta_{\ast} \theta$ 
and 
$\theta_{\ast} = \theta_{\ast} \theta \theta_{\ast}$.
We claim that $\nu$ is a nucleus on $S$.
The proofs that (N1), (N2) and (N3) hold are straightforward.
The proof of (N4) follows from the fact that multiplication distributes over compatible joins.

It remains to show that $(S_{\nu},\cdot)$ is isomorphic to $T$.
Let $s,t \in S_{\nu}$.
Then
$$\theta (s \cdot t) = \theta (\nu (st)) = \theta \theta_{\ast} \theta (st) = \theta (st) = \theta (s) \theta (t).$$
If $s,t \in S_{\nu}$ and $\theta (s) = \theta (t)$.
Then $\theta_{\ast}\theta (s) = \theta_{\ast} \theta (t)$ and so $\nu (s) = \nu (t)$ giving $s = t$.
Finally, let $t \in T$.
Then there exists $s \in S$ such that $\theta (s) = t$.
Then $\theta \theta_{\ast} \theta (s) = \theta \theta_{\ast} (t)$ giving $\theta (\nu (s)) = t$.
\end{proof}



A coverage $\mathcal{C}$ on an inverse semigroup $S$ is said to be {\em idempotent-pure} if
$X \in \mathcal{C}(a)$ and $X \subseteq E(S)$ implies that $a$ is an idempotent.
We shall now show how to construct nuclei on the pseudogroup $C(S)$ using idempotent-pure coverages.

Let $A$ be a subset of the inverse semigroup $S$.
The subset $A$ is said to be {\em $\mathcal{C}$-closed} if $X \subseteq A$ and $X \in \mathcal{C}(x)$ implies that $x \in A$.
Define $\overline{A}$ by $x \in \overline{A}$ if and only if there exists $X \subseteq A$ such that $X \in \mathcal{C}(x)$.

\begin{lemma} \mbox{}
\begin{description}

\item[{\rm (a)}] If $\mathcal{C}$ is separated then it is idempotent-pure, and $\overline{s^{\downarrow}} = s^{\downarrow}$ for all $s \in S$.

\item[{\rm (b)}] Let $\mathcal{C}$ be an arbitrary idempotent-pure coverage on the inverse semigroup $S$. 
\begin{enumerate}

\item Let $A$ be a compatible order ideal. Then  $\overline{A}$ is a $\mathcal{C}$-closed compatible order ideal.

\item $\overline{A}$ is equal to the intersection of all $\mathcal{C}$-closed compatible order ideals that contain $A$.

\item If $E,F \subseteq E(S)$ are $\mathcal{C}$-closed order ideals of the semilattice of idempotents then so too is $EF$.


\end{enumerate}

\end{description}
\end{lemma}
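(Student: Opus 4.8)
The plan is to treat the two parts separately, leaning throughout on Lemma~4.2 and the coverage axioms (R), (I), (MS), (T). For part (a), separatedness gives idempotent-purity almost immediately: if $X \in \mathcal{C}(a)$ with $X \subseteq E(S)$, then every $x \in X$ satisfies $x^{-1}x = x$, so $\mathbf{d}(X) = X$; by Lemma~4.2(1) we have $X = \mathbf{d}(X) \in \mathcal{C}(a^{-1}a)$, whence $\mathcal{C}(a) \cap \mathcal{C}(a^{-1}a) \neq \emptyset$ and separatedness forces $a = a^{-1}a$, an idempotent. For the identity $\overline{s^{\downarrow}} = s^{\downarrow}$, the inclusion $s^{\downarrow} \subseteq \overline{s^{\downarrow}}$ is immediate from (R). The reverse inclusion is the delicate point and will be the main obstacle. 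Given $x \in \overline{s^{\downarrow}}$, fix $X \subseteq s^{\downarrow}$ with $X \in \mathcal{C}(x)$; I would show $X \in \mathcal{C}(sx^{-1}x)$ as well, and then conclude $x = sx^{-1}x \leq s$ by separatedness. To obtain $X \in \mathcal{C}(sx^{-1}x)$ I would first check $X \subseteq (sx^{-1}x)^{\downarrow}$ (each $y \in X$ satisfies $y \le x$ and $y \le s$, so $y = sx^{-1}x\,y^{-1}y$), and then apply Lemma~4.2(2): since $(sx^{-1}x)^{-1}(sx^{-1}x) = x^{-1}x\,s^{-1}s$, it suffices to show $\mathbf{d}(X) \in \mathcal{C}(x^{-1}x\,s^{-1}s)$, which follows by multiplying $\mathbf{d}(X) \in \mathcal{C}(x^{-1}x)$ (Lemma~4.2(1)) on the right by $\{s^{-1}s\} \in \mathcal{C}(s^{-1}s)$ via (MS), noting $\mathbf{d}(X)\{s^{-1}s\} = \mathbf{d}(X)$ because $y^{-1}y \le s^{-1}s$.

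For part (b)(1) I would verify the three properties in turn. That $\overline{A}$ is an order ideal: given $x \in \overline{A}$, witnessed by $X \subseteq A$ with $X \in \mathcal{C}(x)$, and $z \le x$, the set $Xz^{-1}z$ lies in $A$ (its elements lie below those of $X$ and $A$ is an order ideal) and lies in $\mathcal{C}(xz^{-1}z) = \mathcal{C}(z)$ by (MS), so $z \in \overline{A}$. That $\overline{A}$ is compatible is where idempotent-purity does the real work: for $x, x' \in \overline{A}$ witnessed by $X, X' \subseteq A$, axioms (I) and (MS) give $X^{-1}X' \in \mathcal{C}(x^{-1}x')$, and since $A$ is compatible every $y^{-1}y'$ with $y \in X$, $y' \in X'$ is idempotent, so $X^{-1}X' \subseteq E(S)$; idempotent-purity then forces $x^{-1}x'$ idempotent, and symmetrically $xx'^{-1}$, i.e. $x, x'$ are compatible. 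Finally $\overline{A}$ is $\mathcal{C}$-closed by a direct appeal to the transitivity axiom (T): if $X \subseteq \overline{A}$ with $X \in \mathcal{C}(x)$, choose witnesses $X_y \subseteq A$, $X_y \in \mathcal{C}(y)$ for each $y \in X$; then $\bigcup_y X_y \subseteq A$ lies in $\mathcal{C}(x)$ by (T), so $x \in \overline{A}$.

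Part (b)(2) is then a formal sandwich. By (1) together with $A \subseteq \overline{A}$ (again from (R)), $\overline{A}$ is itself a $\mathcal{C}$-closed compatible order ideal containing $A$, so the intersection of all such is contained in $\overline{A}$; conversely any $\mathcal{C}$-closed compatible order ideal $B \supseteq A$ satisfies $\overline{A} \subseteq B$, since each $x \in \overline{A}$ has a witness $X \subseteq A \subseteq B$ with $X \in \mathcal{C}(x)$, and $\mathcal{C}$-closedness of $B$ yields $x \in B$. For part (b)(3) I would first observe that for order ideals of the semilattice $E(S)$ one has $EF = E \cap F$ (each $ef \le e,f$ gives $EF \subseteq E \cap F$, while $g = gg$ gives $E \cap F \subseteq EF$), so $EF$ is an order ideal as an intersection of order ideals; its $\mathcal{C}$-closedness is then immediate, since $X \subseteq E \cap F$ with $X \in \mathcal{C}(x)$ gives $x \in E$ and $x \in F$ by the $\mathcal{C}$-closedness of each factor. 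The only genuinely nontrivial steps are the reverse inclusion in (a) and the compatibility argument in (b)(1); everything else is bookkeeping with the coverage axioms and Lemma~4.2.
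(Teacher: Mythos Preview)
Your proof is correct and follows essentially the same route as the paper's. The only cosmetic differences are that in part~(a) you reach $X \in \mathcal{C}(sx^{-1}x)$ via Lemma~4.2(2) whereas the paper simply observes $X = s\,\mathbf{d}(X)$ and applies (MS) directly, and in part~(b)(3) you make the identity $EF = E \cap F$ explicit while the paper leaves it implicit; neither changes the substance of the argument.
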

\begin{proof} (a) Let $E \in \mathcal{C}(x)$ where $E \subseteq E(S)$.
By (R) and (MS) we have that $Ex^{-1} \in \mathcal{C}(xx^{-1})$.
But if $e \in E$ then $e \leq x$ and so $e = ex = xe$.
Thus $ex^{-1} = exx^{-1} = e$.
It follows that $Ex^{-1} = E$ and so by assumption $x = xx^{-1}$.

Let $X \in \mathcal{C}(a)$ where $X \subseteq s^{\downarrow}$.
We prove that $\mathcal{C}(sa^{-1}a) \cap \mathcal{C}(a) \neq \emptyset$ from which we get $a = sa^{-1}a$ and so $a \leq s$.
Now $X \subseteq s^{\downarrow}$ implies that $X = s\mathbf{d}(X)$.
But $\mathbf{d}(X) \in \mathcal{C}(a^{-1}a)$ and so $s\mathbf{d}(X) \in \mathcal{C}(Sa^{-1}a)$.

(b1) We show first that $\overline{A}$ is $\mathcal{C}$-closed.
Let $X \subseteq \overline{A}$ be such that $X \in \mathcal{C}(a)$.
Let $x \in X$.
Then either $x \in A$ or $x \in \overline{A}$.
If the latter then there exists $A_{x} \subseteq A$ such that $A_{x} \in \mathcal{C}(x)$.
If the former then put $A_{x} = \{x \} \in \mathcal{C}(x)$ by (R).
Put $B = \bigcup_{x \in X} A_{x} \subseteq A$.
By (T), we have that $B \in \mathcal{C}(x)$.
Thus $x \in \overline{A}$.

Next we show that $\overline{A}$ is an order ideal.
Let $x \in \overline{A}$ where $X \in \mathcal{C}(x)$ and suppose that $y \leq x$.
Then $y = xy^{-1}y$.
But by (MS), we have that $Xy^{-1}y \in \mathcal{C}(y)$.
But $A$ is an order ideal and so $Xy^{-1}y \subseteq A$.
Thus $y \in \overline{A}$, as required.

Finally, we show that $\overline{A}$ is a compatible subset.
Let $a,b \in \overline{A}$ where $X,Y \subseteq A$ are such that $X \in \mathcal{C}(a)$ and $Y \in \mathcal{C}(b)$.
Then by (I) and (MS), we have that $X^{-1}Y \in \mathcal{C}(a^{-1}b)$
But $A$ is a compatible set and so $X^{-1}Y$ consists entirely of idempotents.
It follows by our assumption on the coverage that $a^{-1}b$ is an idempotent.
Similarly $ab^{-1}$ is an idempotent.
Thus $a$ and $b$ are compatible, as required.

(b2) This is immediate.

(b3) Let $g \in \overline{EF}$. By assumption, $g$ is an idempotent.
There exists $X \subseteq EF$ such that $X \in \mathcal{C}(g)$.
But $EF \subseteq E,F$.
Thus $g \in E$ and $g \in F$ and so $g \in EF$, as required.


\end{proof}

Let $A$ and $B$ be subsets of $S$.
Define the following sets
$$A^{-1}B = \{s \in S \colon As \subseteq B \}
\text{ and }
BA^{-1} = \{s \in S \colon sA \subseteq B \}.$$

\begin{lemma} Let $B$ be a $\mathcal{C}$-closed order ideal.
Then for any $A$, we have that $A^{-1}B$ is a $\mathcal{C}$-closed order ideal, and dually.
\end{lemma}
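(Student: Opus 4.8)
The plan is to verify separately the two defining properties of a $\mathcal{C}$-closed order ideal for $A^{-1}B$, and then obtain the statement for $BA^{-1}$ by a symmetric argument. Throughout, $A$ is an arbitrary subset and the hypotheses on $B$ are that it is downward closed and $\mathcal{C}$-closed.

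First I would check that $A^{-1}B$ is an order ideal. Suppose $s \in A^{-1}B$, so $As \subseteq B$, and let $t \leq s$. For any $a \in A$ we have $at \leq as$, since multiplication in an inverse semigroup is order-preserving for the natural partial order. As $as \in B$ and $B$ is an order ideal, it follows that $at \in B$; hence $At \subseteq B$ and $t \in A^{-1}B$. This step is routine and uses only the downward closure of $B$.

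The substantive step is showing that $A^{-1}B$ is $\mathcal{C}$-closed. Suppose $X \subseteq A^{-1}B$ with $X \in \mathcal{C}(x)$; I must show $x \in A^{-1}B$, that is, $Ax \subseteq B$. Fix $a \in A$. By axiom (R) we have $\{a\} \in \mathcal{C}(a)$, so axiom (MS) gives $aX \in \mathcal{C}(ax)$. On the other hand, each $y \in X$ lies in $A^{-1}B$, so $Ay \subseteq B$ and in particular $ay \in B$; thus $aX \subseteq B$. Since $B$ is $\mathcal{C}$-closed and $aX \in \mathcal{C}(ax)$ with $aX \subseteq B$, we conclude $ax \in B$. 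As $a \in A$ was arbitrary, $Ax \subseteq B$, i.e.\ $x \in A^{-1}B$, as required.

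The key move, and the only place the coverage axioms enter, is the passage from a covering $X \in \mathcal{C}(x)$ to the left-translated covering $aX \in \mathcal{C}(ax)$ via (R) and (MS); everything else is bookkeeping with downward closure and the definition of $A^{-1}B$. I do not expect any real obstacle here: the argument is short and the two properties are essentially independent. For the dual claim concerning $BA^{-1}$ one runs the identical argument using right multiplication, replacing $aX \in \mathcal{C}(ax)$ by $Xa \in \mathcal{C}(xa)$, so no new idea is needed.
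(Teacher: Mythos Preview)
Your proof is correct and follows essentially the same route as the paper's: first the order-ideal property via monotonicity of multiplication and downward closure of $B$, then $\mathcal{C}$-closure by translating the covering $X \in \mathcal{C}(x)$ to $aX \in \mathcal{C}(ax)$ via (R) and (MS) and invoking $\mathcal{C}$-closure of $B$. The only difference is that you spell out the appeal to (R) and (MS) explicitly where the paper leaves it implicit.
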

\begin{proof}
Let $s \in A^{-1}B$.
Then by definition $As \subseteq B$.
Let $t \leq s$ and let $a \in A$.
Then $at \leq as$. 
But $as \in B$ and $B$ is an order ideal and so $at \in B$.
It follows that $t \in A^{-1}B$ and so $A^{-1}B$ is an order ideal.
Let $X \subseteq A^{-1}B$ where  $X \in \mathcal{C}(x)$.
Then for each $x_{i} \in X$ we have that $Ax_{i} \subseteq B$.
It follows that for each $a \in A$, 
we have that 
$aX \in \mathcal{C}(ax)$ and $aX \subseteq B$.
But $B$ is $\mathcal{C}$-closed and so $ax \in B$ for every $a \in A$.
It follows by definition that $x \in A^{-1}B$, as required. 
\end{proof}

Let $\mathcal{C}$ be an idempotent-pure coverage on the inverse semigroup $S$.
Denote by $C(S,\mathcal{C})$ the set of $\mathcal{C}$-closed elements of $C(S)$.
Define $\iota \colon S \rightarrow C(S,\mathcal{C})$ by $\iota (s) = \overline{(s^{\downarrow})}$.

\begin{theorem} Let $\mathcal{C}$ be an idempotent-pure coverage on the inverse semigroup $S$.
Then the map $A \mapsto \overline{A}$ defines a nucleus on $C(S)$,
and $\iota \colon S \rightarrow C(S,\mathcal{C})$ is a $\mathcal{C}$-cover-to-join map 
which is universal amongst such maps to pseudogroups.
\end{theorem}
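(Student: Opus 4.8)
The plan is to split the assertion into three tasks: that $\nu(A)=\overline{A}$ is a nucleus on $C(S)$; that $\iota$ is a $\mathcal{C}$-cover-to-join map; and that $\iota$ has the required universal property. Throughout I rely on Lemma~4.18(b1), which guarantees that $\overline{A}$ is a $\mathcal{C}$-closed compatible order ideal whenever $A$ is a compatible order ideal, so that $\nu$ carries $C(S)$ into itself; note also that the $\mathcal{C}$-closed elements are exactly the fixed points of $\nu$, so that $C(S,\mathcal{C})=C(S)_{\nu}$ in the notation of Proposition~4.16.

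First I would check the four nucleus axioms for $\nu$, the order on $C(S)$ being inclusion and the product being subset multiplication. Monotonicity (N2) is immediate, and (N1), i.e. $A\subseteq\overline{A}$, follows from axiom (R), since $x\in A$ gives $\{x\}\subseteq A$ with $\{x\}\in\mathcal{C}(x)$. Axiom (N3), $\overline{\overline{A}}=\overline{A}$, is precisely the statement that $\overline{A}$ is $\mathcal{C}$-closed, which is part of Lemma~4.18(b1). For (N4), let $s=ab$ with $a\in\overline{A}$ and $b\in\overline{B}$; choosing $X\subseteq A$, $Y\subseteq B$ with $X\in\mathcal{C}(a)$, $Y\in\mathcal{C}(b)$, axiom (MS) yields $XY\in\mathcal{C}(ab)$ and $XY\subseteq AB$, so $s\in\overline{AB}$ and hence $\overline{A}\,\overline{B}\subseteq\overline{AB}$. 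Proposition~4.16 then delivers for free that $C(S,\mathcal{C})$ is a pseudogroup and that $A\mapsto\overline{A}$ is an idempotent-pure pseudogroup morphism; in $C(S,\mathcal{C})$ the product is $A\cdot B=\overline{AB}$ and the join of a compatible family is obtained by applying $\nu$ to its join in $C(S)$.

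Next, $\iota$ is the composite of the canonical embedding $s\mapsto s^{\downarrow}$ into $C(S)$ with the quotient $A\mapsto\overline{A}$, hence a homomorphism landing in $C(S,\mathcal{C})$; using $s^{\downarrow}t^{\downarrow}=(st)^{\downarrow}$ and Lemma~4.14 one gets $\iota(s)\cdot\iota(t)=\overline{(st)^{\downarrow}}=\iota(st)$. To see that $\iota$ is $\mathcal{C}$-cover-to-join, fix $X\in\mathcal{C}(a)$. As $X\subseteq a^{\downarrow}$, each $\overline{x^{\downarrow}}$ lies below $\overline{a^{\downarrow}}$, so $\{\iota(x):x\in X\}$ is compatible with join at most $\iota(a)$. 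Conversely the join of the $\overline{x^{\downarrow}}$ in $C(S)$ is their union, which contains $X$; since $X\in\mathcal{C}(a)$ this forces $a$ into the $\nu$-closure of that union, so $\overline{a^{\downarrow}}$ is contained in it, i.e. $\iota(a)\leq\bigsqcup_{x\in X}\iota(x)$. Hence $\iota(a)=\bigsqcup_{x\in X}\iota(x)$.

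Finally, for universality let $\theta\colon S\to T$ be any $\mathcal{C}$-cover-to-join map to a pseudogroup. Left-adjointness of $C$ provides a unique pseudogroup morphism $\tilde{\theta}\colon C(S)\to T$ with $\tilde{\theta}(A)=\bigvee_{a\in A}\theta(a)$ and $\tilde{\theta}(s^{\downarrow})=\theta(s)$. The crux of the argument is that $\tilde{\theta}$ factors through the nucleus, namely $\tilde{\theta}(\overline{A})=\tilde{\theta}(A)$: given $x\in\overline{A}$, pick $X\subseteq A$ with $X\in\mathcal{C}(x)$, and the cover-to-join hypothesis on $\theta$ gives $\theta(x)=\bigvee_{y\in X}\theta(y)\leq\tilde{\theta}(A)$, whence $\tilde{\theta}(\overline{A})\leq\tilde{\theta}(A)$, the reverse inequality being monotonicity. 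Thus $\overline{A}=\overline{B}$ implies $\tilde{\theta}(A)=\tilde{\theta}(B)$, and restricting $\tilde{\theta}$ to the fixed points yields a well-defined $\bar{\theta}\colon C(S,\mathcal{C})\to T$. That $\bar{\theta}$ is a pseudogroup morphism with $\bar{\theta}\iota=\theta$ follows from the computations $\bar{\theta}(A\cdot B)=\tilde{\theta}(\overline{AB})=\tilde{\theta}(AB)=\bar{\theta}(A)\bar{\theta}(B)$ and the analogous identity for joins, both exploiting $\tilde{\theta}\circ\nu=\tilde{\theta}$. Uniqueness is then immediate: each $A\in C(S,\mathcal{C})$ satisfies $A=\bigsqcup_{a\in A}\iota(a)$, so any join-preserving extension of $\theta$ must coincide with $\bar{\theta}$. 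I expect this factorization step to carry the real weight of the proof, the rest being routine bookkeeping with the nucleus quotient.
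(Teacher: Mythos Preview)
Your proof is correct and follows the same overall architecture as the paper's: verify the nucleus axioms, check that $\iota$ is cover-to-join, then establish universality by defining $\bar{\theta}(A)=\bigvee_{a\in A}\theta(a)$ and proving it respects the closure. The one genuine difference is your treatment of (N4). You argue directly: for $a\in\overline{A}$ and $b\in\overline{B}$ choose $X\in\mathcal{C}(a)$ with $X\subseteq A$ and $Y\in\mathcal{C}(b)$ with $Y\subseteq B$, then (MS) gives $XY\in\mathcal{C}(ab)$ with $XY\subseteq AB$, so $ab\in\overline{AB}$. The paper instead proves Lemma~4.19, showing that the residual $A^{-1}C=\{s:As\subseteq C\}$ is $\mathcal{C}$-closed whenever $C$ is, and then uses this twice to deduce $\overline{A}\,\overline{B}\subseteq C$ for every $\mathcal{C}$-closed $C\supseteq AB$. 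Your argument is shorter and avoids introducing residuals altogether; the paper's route is the standard quantale/frame technique and would survive in settings where one only has an abstract closure operator rather than the explicit ``there exists a cover contained in $A$'' description of $\overline{A}$. Here both work, and yours is the more economical choice.
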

\begin{proof} It is clear that axioms (N1), (N2) and (N3) hold.
It remains to show that (N4) holds.
Let $A,B \in C(S)$.
We prove that $\overline{A} \, \overline{B} \subseteq \overline{AB}$.
Let $C$ be any  $\mathcal{C}$-closed compatible order ideal containing $AB$.
Thus $AB \subseteq C$.
It follows that $B \subseteq A^{-1}C$.
By the above lemma, $A^{-1}C$ is a $\mathcal{C}$-closed order ideal.
Now $B \subseteq \overline{B}$ and so $B = B \cap \overline{B} \subseteq A^{-1}C \cap \overline{B}$.
Observe that the intersection of a $\mathcal{C}$-closed compatible order ideal
and a $\mathcal{C}$-closed order ideal is a $\mathcal{C}$-closed compatible order ideal.
Thus $A^{-1}C \cap \overline{B}$ is a $\mathcal{C}$-closed compatible order ideal containing $B$.
It follows that it must contain $\overline{B}$.
Hence $\overline{B} \subseteq A^{-1}C$. 
Thus $A\overline{B} \subseteq C$.
A dual argument shows that $\overline{A} \, \overline{B} \subseteq C$, as required.

We now come to the proof of the second claim.
We show first that $\iota$ is a $\mathcal{C}$-cover-to-join map.
Let $X \in \mathcal{C}(a)$.
Clearly $\iota (b) \leq \iota (a)$ for all $b \in X$.
Suppose that $A \in C(S,\mathcal{C})$ such that $\iota (b) \leq A$ for all $b \in X$.
Then $X \subseteq A$.
But $A$ is a $\mathcal{C}$-closed subset and so $a \in A$ from which it follows that $\iota (a) \leq A$.
We have therefore proved that $\iota (a) = \bigvee_{b \in X} \iota (b)$, as required.

Let $\theta \colon S \rightarrow T$ be any $\mathcal{C}$-cover-to-join map to a pseudogroup.
Define $\bar{\theta} \colon  C(S,\mathcal{C}) \rightarrow T$ by
$\bar{\theta}(A) = \bigvee_{a \in A} \theta (a)$.
Given $s \in S$ we show that
$$\theta (s) = \bigvee_{x \in \iota (s)} \theta (x).$$
Clearly
$\theta (s) \leq \bigvee_{x \in \iota (s)} \theta (x)$.
To prove the reverse inequality we argue as follows.
Let $x \in \overline{s^{\downarrow}}$.
Then there exists $X \in \mathcal{C}(x)$ where $X \subseteq s^{\downarrow}$.
But $\theta$ is a $\mathcal{C}$-cover-to-join map and so $\theta (x) = \bigvee_{y \in X} \theta (y)$.
However $y \leq s$ and so $\theta (y) \leq \theta (s)$.
It follows that $\theta (x) \leq \theta (s)$, as required. 
\end{proof}

The above theorem tells us how to construct pseudogroups under certain circumstances, we now turn to the related groupoids.

\begin{theorem} Let $\mathcal{C}$ be an idempotent-pure coverage on the inverse semigroup $S$.
Then the poset of $\mathcal{C}$-filters on $S$ is order isomorphic to the poset of completely prime filters on $C(S,\mathcal{C})$
and this induces a homeomorphism between the groupoids $\mathsf{G}_{\mathcal{C}}(S)$ and $\mathsf{G}(C(S,\mathcal{C}))$.
\end{theorem}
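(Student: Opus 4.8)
The plan is to transcribe the bijection of Lemma~\ref{prime-vs-completelyprime} from the $\mbox{Idl}$-completion setting to the present one, replacing $\mbox{Idl}(S)$ by the pseudogroup $C(S,\mathcal{C})$ and the embedding $s \mapsto s^{\downarrow}$ by the universal $\mathcal{C}$-cover-to-join homomorphism $\iota \colon S \rightarrow C(S,\mathcal{C})$, $\iota(s) = \overline{s^{\downarrow}}$, supplied by the preceding theorem. For a $\mathcal{C}$-filter $A$ on $S$ I would set $A^{u} = \{B \in C(S,\mathcal{C}) \colon B \cap A \neq \emptyset\}$, and for a completely prime filter $F$ on $C(S,\mathcal{C})$ I would set $F^{d} = \iota^{-1}(F) = \{s \in S \colon \overline{s^{\downarrow}} \in F\}$. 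The two facts I would isolate first are purely formal: for any $a \in S$ and any $B \in C(S,\mathcal{C})$ one has $\overline{a^{\downarrow}} \subseteq B$ if and only if $a \in B$ (because $B$ is $\mathcal{C}$-closed and an order ideal), and consequently $B = \bigvee_{s \in B} \iota(s)$ in $C(S,\mathcal{C})$ and $A^{u} = (\iota(A))^{\uparrow}$. Beyond these I will only use that $\iota$ is a cover-to-join homomorphism and that joins in $C(S,\mathcal{C}) = C(S)_{\nu}$ are computed as $\bigvee_{i} B_{i} = \overline{\bigcup_{i} B_{i}}$.

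Next I would verify that the two constructions land where claimed. That $A^{u}$ is upward closed and closed under the meet $B \wedge B' = B \cap B'$ follows from $A$ being directed; complete primeness follows because a witness $a \in A \cap \overline{\bigcup_{i} B_{i}}$ carries a covering $X \in \mathcal{C}(a)$ with $X \subseteq \bigcup_{i} B_{i}$, and the $\mathcal{C}$-filter property of $A$ selects $y \in X \cap A$ sitting inside a single $B_{i}$. For $F^{d}$, upward closure and the $\mathcal{C}$-condition are direct consequences of $\iota$ being cover-to-join together with complete primeness of $F$. Properness of both images follows from properness of the input filter, using the $\mathcal{C}$-filter property to rule out the zero.

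The one genuinely delicate point, and what I expect to be the main obstacle, is directedness of $F^{d}$ when $\mathcal{C}$ is merely idempotent-pure: then $\overline{s^{\downarrow}}$ may strictly contain $s^{\downarrow}$, so an element of $\overline{s^{\downarrow}} \cap \overline{t^{\downarrow}}$ need not lie below $s$ and $t$. Given $s,t \in F^{d}$ I would pass to $\overline{s^{\downarrow}} \wedge \overline{t^{\downarrow}} \in F$, write it as $\bigvee_{c} \iota(c)$, and use complete primeness to obtain some $c \in \overline{s^{\downarrow}} \cap \overline{t^{\downarrow}}$ with $\iota(c) \in F$. Such a $c$ carries coverings $X \in \mathcal{C}(c)$ with $X \subseteq s^{\downarrow}$ and $Y \in \mathcal{C}(c)$ with $Y \subseteq t^{\downarrow}$; Lemma~4.2(3) then gives $X \wedge Y \in \mathcal{C}(c)$ with $X \wedge Y \subseteq s^{\downarrow} \cap t^{\downarrow}$, and since $\iota(c) = \bigvee_{w \in X \wedge Y} \iota(w)$ is a cover-to-join expression, complete primeness again yields $w \in X \wedge Y$ with $\iota(w) \in F$, i.e. $w \in F^{d}$ and $w \leq s,t$. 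Mutual inverseness is then short: $(A^{u})^{d} = A$ and $(F^{d})^{u} = F$ each reduce to the equivalence $\overline{a^{\downarrow}} \subseteq B \Leftrightarrow a \in B$ combined with the $\mathcal{C}$-filter and complete-primeness properties, and both maps are visibly order-preserving, giving the asserted order isomorphism.

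Finally, for the groupoid structure and the topology I would lean on the description $A^{u} = (\iota(A))^{\uparrow}$. Since $\iota$ is a homomorphism, $\iota(A^{-1}) = \iota(A)^{-1}$ and $\iota(AB) = \iota(A)\iota(B)$, so the identities $(A^{u})^{-1} = (A^{-1})^{u}$, $\mathbf{d}(A^{u}) = (\mathbf{d}(A))^{u}$ and $(A \cdot B)^{u} = A^{u} \cdot B^{u}$ all drop out of the standard rule $(FG)^{\uparrow} = (F^{\uparrow}G^{\uparrow})^{\uparrow}$; this makes $A \mapsto A^{u}$ a groupoid isomorphism, exactly as in Proposition~\ref{gpt-equal-gs}. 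For the topologies I would check the two basic identities $A \in Z_{s} \Leftrightarrow A^{u} \in X_{\iota(s)}$, giving $\Phi(Z_{s}) = X_{\iota(s)}$ so that $\Phi$ is open, and $\Phi^{-1}(X_{B}) = \bigcup_{s \in B} Z_{s}$ (since $B \in A^{u}$ iff $B \cap A \neq \emptyset$ iff $A \in Z_{s}$ for some $s \in B$), so that $\Phi$ is continuous. As the $Z_{s}$ and the $X_{B}$ are bases, $\Phi$ is a homeomorphism; every step beyond the directedness argument is either formal or a direct transcription of the $\mbox{Idl}$-case.
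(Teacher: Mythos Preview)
Your proposal is correct and follows essentially the same route as the paper: the same maps $A \mapsto A^{u}$ and $F \mapsto F^{d}$, the same identification of directedness of $F^{d}$ as the only nontrivial step, and the same resolution via a common element $c$ of $\overline{s^{\downarrow}} \cap \overline{t^{\downarrow}}$ together with coverings landing in $s^{\downarrow}$ and $t^{\downarrow}$. Your use of Lemma~4.2(3) at that point and of the formula $A^{u} = (\iota(A))^{\uparrow}$ for the functoriality and homeomorphism checks are minor streamlinings of the paper's argument (which instead invokes infinite distributivity \cite{R3} and carries out the groupoid calculations by hand), not a different approach.
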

\begin{proof} We begin by setting up the bijection.

Let $F$ be a $\mathcal{C}$-filter in $S$.
Define 
$$F^{u} = \{A \in C(S,\mathcal{C}) \colon A \cap F \neq \emptyset \}.$$
We prove that $F^{u}$ is a completely prime filter in $C(S,\mathcal{C})$.
Let $A,B \in F^{u}$.
Then $A \cap F \neq \emptyset$ and $B \cap F \neq \emptyset$.
Thus we may find elements $f_{1} \in F \cap A$ and $f_{2} \in F \cap B$.
But $f_{1},f_{2} \in F$ implies that there exists $f \in F$ such that $f \leq f_{1},f_{2}$.
 Furthermore, $A$ and $B$ are order ideals.
Thus $f \in A \cap B$.
It follows that $f \in F \cap A \cap B$ and so $A \cap B \in F^{u}$.
Now let $A \in F^{u}$ and $A \leq B$.
Then $A \cap F \neq \emptyset$ and $A \subseteq B$ and so $B \cap F \neq \emptyset$ giving $B \in F^{u}$.
Finally, let $\bigsqcup A_{i} \in F^{u}$.
Then $\overline{\bigcup A_{i} } \in F^{u}$.
Thus  $\overline{\bigcup A_{i} } \cap  F \neq \emptyset$.
It follows that there exists $a \in  \overline{\bigcup A_{i}}$ such that $a \in F$.
By definition, there exists $X \in \mathcal{C}(a)$ such that $X \subseteq \bigcup A_{i}$.
But $a \in F$ and  $X \in \mathcal{C}(a)$ implies that there exists $x \in F \cap X$ since $F$ is a $\mathcal{C}$-filter.
But $x \in X \subseteq \bigcup A_{i}$ and so $x \in A_{i}$.
It follows that $A_{i} \in F^{u}$, as required.

Let $P$ be a completely prime filter in  $C(S,\mathcal{C})$.
Define
$$P^{d} = \{s \in S \colon \overline{s^{\downarrow}} \in P \}.$$
We prove that $P$ is a $\mathcal{C}$-filter in $S$.
We show first that $P^{d}$ is a directed set. 
Let $s,t \in P^{d}$.
Then $\overline{s^{\downarrow}}, \overline{t^{\downarrow}} \in P$.
But $P$ is a filter in a pseudogroup and so  $\overline{s^{\downarrow}} \wedge \overline{t^{\downarrow}} \in P$.
Now in a pseudogroup we have that
$$\overline{s^{\downarrow}} \wedge \overline{t^{\downarrow}}
=
\bigsqcup \overline{u^{\downarrow}}$$
where $\overline{u^{\downarrow}} \leq \overline{s^{\downarrow}}, \overline{t^{\downarrow}}$.
But $P$ is a completely prime filter and so $\overline{u^{\downarrow}} \in P$ for some such $u$.
Now sadly, we cannot deduce that $u \leq s,t$ so we need to do some more work.
We have that $u \in \overline{s^{\downarrow}}$ and $u \in \overline{t^{\downarrow}}$.
Thus there is a subset $\{s_{i} \colon i \in I \} \subseteq s^{\downarrow}$ such that $\{s_{i} \colon i \in I \} \in \mathcal{C}(u)$
and a subset $\{t_{j} \colon j \in J \} \subseteq t^{\downarrow}$ such that $\{t_{j} \colon j \in J \} \in \mathcal{C}(u)$.
We therefore have that
$$\overline{u^{\downarrow}} = \bigsqcup_{i} \overline{s_{i}^{\downarrow}}$$
and
$$\overline{u^{\downarrow}} = \bigsqcup_{j} \overline{t_{j}^{\downarrow}}.$$
Observe that it immediately follows that each $s_{i}$ is compatible with each $t_{j}$.
By \cite{R3}, we may write
$$\overline{u^{\downarrow}} = \bigsqcup_{i,j} \overline{s_{i}^{\downarrow}} \wedge \overline{t_{j}^{\downarrow}}.$$
But we now use again the fact that $P$ is completely prime to deduce that 
$\overline{s_{i}^{\downarrow}} \wedge \overline{t_{j}^{\downarrow}} \in P$ for some $i$ and $j$.
However, we ascertained above that $s_{i}$ is compatible with $t_{j}$.
It follows that $s_{i} \wedge t_{j} \leq s,t$ exists and that 
$\overline{(s_{i} \wedge t_{j})^{\downarrow}} = \overline{s_{i}^{\downarrow}} \wedge \overline{t_{j}^{\downarrow}}$.
Thus $s_{i} \wedge t_{j} \in P^{d}$ and is below both $s$ and $t$.
It is immediate that $P^{d}$ is closed upwards.
It remains to show that it is a $\mathcal{C}$-filter. 
Let $x \in P^{d}$ where $X \in \mathcal{C}(x)$.
Then $\overline{x^{\downarrow}} = \bigsqcup_{y \in X} \overline{y^{\downarrow}} \in P$ and so $\overline{y^{\downarrow}} \in P$,
since $P$ is completely prime, and so $y \in P^{d}$, as required.

It is immediate that $P^{d}$ is closed upwards.
It remains to show that it is a $\mathcal{C}$-filter. 
Let $x \in P^{d}$ where $X \in \mathcal{C}(x)$.
Then $\overline{x^{\downarrow}} = \bigsqcup_{y \in X} \overline{y^{\downarrow}} \in P$ and so $\overline{y^{\downarrow}} \in P$,
since $P$ is completely prime, and so $y \in P^{d}$, as required.

It remains to show that the above two operations are mutually inverse.
We begin by showing that $F = (F^{u})^{d}$.
If $a \in F$ then $\overline{a^{\downarrow}} \in F^{u}$ and so $a \in (F^{u})^{d}$.
Thus $F \subseteq (F^{u})^{d}$.
Let $s \in (F^{u})^{d}$.
Then $\overline{s^{\downarrow}} \in F^{u}$.
Thus $\overline{s^{\downarrow}} \cap F \neq \emptyset$.
It follows that there is $X \in \mathcal{C}(x)$ such that $X \subseteq s^{\downarrow}$.
But $F$ is a $\mathcal{C}$-filter and so there exists $y \in X$ such that $y \in F$.
But $y \leq s$ and so $s \in F$, as required.

Next we show that $P = (P^{d})^{u}$.
Let $A \in (P^{d})^{u}$.
Then $A \cap P^{d} \neq \emptyset$.
Let $a \in A \cap P^{d}$.
Then $\overline{a^{\downarrow}} \in P$ and $\overline{a^{\downarrow}} \leq A$.
Thus $A \in P$.
We have shown that $(P^{d})^{u} \subseteq P$.
To prove the reverse inclusion let $A \in P$.
We have that $A = \bigsqcup_{a \in A} \overline{a^{\downarrow}}$.
But $P$ is completely prime and so $\overline{a^{\downarrow}} \in P$ for some $a \in A$.
Thus $a \in P^{d}$ and $A \in (P^{d})^{u}$, as required.

We have therefore set up a bijection between the two groupoids.
We show now that this bijection is a functor.
We show first that  $(F^{-1} \cdot F)^{u} = (F^{u})^{-1} \cdot F^{u}$.
Let $A \in (F^{-1} \cdot F)^{u}$.
Then there is $f \in F$ such that $f^{-1}f \in A$ and $f \in F$.
But $X = \overline{f^{\downarrow}} \in F^{u}$ and $X^{-1}X \leq A$, as required.
The proof of the reverse inclusion is straightforward.

Let $F$ and $G$ be two $\mathcal{C}$-filters in $S$ such that the product $F \cdot G$ is defined.
Let $A \in (F \cdot G)^{u}$.
Then $fg \in A$ for some $f \in F$ and $g \in G$.
Observe that $\overline{(f^{\downarrow})} \cdot \overline{(g^{\downarrow})} = \overline{(fg)^{\downarrow}}$.
It follows that $A \in F^{u} \cdot G^{u}$.
To prove the reverse inclusion let $A \in F^{u} \cdot G^{u}$.
Then $XY \leq A$ where $X \in F^{u}$ and $Y \in G^{u}$.
Thus $X \cap F \neq \emptyset$ and $Y \cap G \neq \emptyset$.
Let $f \in X \cap F$ and $g \in Y \cap G$.
But then $fg \in A$ and $fg \in F \cdot G$.
Thus $A \in (F \cdot G)^{u}$.

It follows that the two groupoids are isomorphic.
It remains to show that this isomorphism induces a homeomorphism.
The basic open sets in $\mathsf{G}_{\mathcal{C}}(S)$ have the form $Z_{s}$ where $s \in S$.
We claim that the image of this set under the map $F \mapsto F^{u}$ is the set $X_{t}$ where $t = \overline{s^{\downarrow}}$.
Let $s \in F$ where $F$ is a $\mathcal{C}$-filter.
Then $F^{u}$ is, as we have seen, a completely prime filter.
But $\overline{s^{\downarrow}} \cap F \neq \emptyset$ and so $t \in F^{u}$.
Conversely, if $A \in X^{t}$ then $s \in A^{d}$ and $(A^{d})^{u} = A$.
Thus our isomorphism is an open map.
Finally, consider the open subset $X_{A}$ where $A \in C(S,\mathcal{C})$.
Then $X_{A} = \bigcup X_{t}$ where $t = \overline{s^{\downarrow}}$ and $s \in A$.
Thus we need only determine the inverse images of the set $X_{t}$ where $t = \overline{s^{\downarrow}}$ for some $s \in S$.
But this is just the set $Z_{s}$.
It follows that our mapping is continuous and open and so it is a homeomorphism.
\end{proof}

A coverage $\mathcal{C}$ is said to be of {\em finite type} if the elements of $\mathcal{C}(a)$ are finite sets for all $a$.

\begin{lemma} Let $\mathcal{C}$ be an idempotent-pure coverage on $S$ of finite type.

\begin{enumerate}

\item The finite elements of  $C(S,\mathcal{C})$ are the elements of the form $\overline{A}$
where $A$ is a compatible order ideal of the form $A = \{a_{1}, \ldots, a_{m} \}^{\downarrow}$.

\item The product of finite idempotents in  $C(S,\mathcal{C})$ is a finite idempotent.

\item Every idempotent is a join of finite idempotents.

\item $C(S,\mathcal{C})$ is coherent.

\end{enumerate}
\end{lemma}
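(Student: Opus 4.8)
The four claims are tightly linked: (2) and (3) are the two hypotheses needed to invoke the structural lemmas on finite elements from Section~3.2, and (4) is then a formal consequence of (1)--(3) via the definition of coherent pseudogroup. So the plan is to prove (1) first, extract (2) and (3) as fairly quick consequences of (1) together with the finite-type hypothesis, and finally assemble (4) from the definition together with Lemma~3.4. The crucial work is all in (1); once we know exactly which elements of $C(S,\mathcal{C})$ are finite, everything else follows.

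\textbf{Part (1): identifying the finite elements.} The plan is to show that $\overline{A}$, for $A=\{a_1,\dots,a_m\}^{\downarrow}$ a finitely generated compatible order ideal, is a finite element, and conversely that every finite element arises this way. For the forward direction, suppose $\overline{A}\leq \bigsqcup_i B_i$ in $C(S,\mathcal{C})$. Since $\bigsqcup_i B_i=\overline{\bigcup_i B_i}$, each generator $a_k\in\overline{\bigcup_i B_i}$, so by definition there is some $X_k\in\mathcal{C}(a_k)$ with $X_k\subseteq\bigcup_i B_i$. Here is where finite type does the work: $X_k$ is a \emph{finite} set, so it meets only finitely many of the $B_i$, and since there are only finitely many generators $a_1,\dots,a_m$, altogether only finitely many $B_i$ are involved. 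Closing under $\overline{(\;\cdot\;)}$ and using that each $a_k$ lies in the join of these finitely many $B_i$ gives $\overline{A}\leq\bigsqcup_{i\in I_0}B_i$ for a finite $I_0$. For the converse, suppose $C\in C(S,\mathcal{C})$ is finite. Since $C=\bigsqcup_{c\in C}\overline{c^{\downarrow}}$ (each $\overline{c^{\downarrow}}\leq C$ because $C$ is $\mathcal{C}$-closed, and the join recovers $C$), finiteness yields $c_1,\dots,c_m\in C$ with $C=\bigsqcup_{k}\overline{c_k^{\downarrow}}=\overline{\{c_1,\dots,c_m\}^{\downarrow}}$, which is exactly the required form.

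\textbf{Parts (2) and (3).} For (2), take two finite idempotents; by (1) they have the form $\overline{E}$, $\overline{F}$ with $E,F$ finitely generated order ideals in $E(S)$. Their product in $C(S,\mathcal{C})$ is $\overline{E}\cdot\overline{F}=\overline{EF}=\overline{(EF)}$, and $EF$ is again a finitely generated order ideal of idempotents (a finite generating set for $EF$ is obtained from products of the generators of $E$ and $F$), so by (1) the product is finite; I would invoke Lemma~3.19(b3) to see that $EF$ is $\mathcal{C}$-closed when $E,F$ are, which keeps the bookkeeping clean. For (3), let $\overline{E}$ be an arbitrary idempotent of $C(S,\mathcal{C})$, so $E$ is a $\mathcal{C}$-closed order ideal of idempotents; then $\overline{E}=\bigsqcup_{e\in E}\overline{e^{\downarrow}}$ and each $\overline{e^{\downarrow}}$ is finite by (1), exhibiting every idempotent as a join of finite idempotents.

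\textbf{Part (4).} This is now formal. By (2) the finite idempotents form a subsemigroup, so by Lemma~3.4(1) the finite elements of $C(S,\mathcal{C})$ form an inverse subsemigroup, and by Lemma~3.4(2) this is a distributive inverse semigroup. By (3) and Lemma~3.4(3) every element is a join of finite elements. These are precisely the two defining conditions, so $C(S,\mathcal{C})$ is coherent. The one point to keep honest here is that $C(S,\mathcal{C})$ really is a pseudogroup (so that ``finite element'' and Lemma~3.4 apply)---this is supplied by Theorem~4.21, under which $C(S,\mathcal{C})=S_\nu$ for the nucleus $A\mapsto\overline{A}$. The main obstacle is entirely in part (1), and specifically in the converse direction: one must be careful that the self-indexed join $C=\bigsqcup_{c\in C}\overline{c^{\downarrow}}$ is a \emph{covering} of $C$ in the pseudogroup sense so that finiteness can be applied to it, and that the finitely many surviving generators genuinely regenerate all of $C$ after $\mathcal{C}$-closure rather than merely a dense suborder ideal.
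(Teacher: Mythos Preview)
Your proposal is correct and follows essentially the same approach as the paper: establish (1) by exploiting finite type for the forward direction and the self-indexed join $C=\bigsqcup_{c\in C}\overline{c^{\downarrow}}$ for the converse, then read off (2) and (3) quickly from (1), and assemble (4) via Lemma~3.4. Your cross-references have small slips---the result on products of $\mathcal{C}$-closed idempotent order ideals is Lemma~4.18(b3), and the pseudogroup structure on $C(S,\mathcal{C})$ comes from Theorem~4.20 via Proposition~4.16---but the argument itself matches the paper, and in fact your treatment of (2) is more explicit than the paper's one-line citation.
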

\begin{proof} (1) Let  $A = \{a_{1}, \ldots, a_{m} \}^{\downarrow} \in C(S)$
and let $\overline{A} \leq \bigsqcup_{j} B_{j}$ where the $B_{j}$ are $\mathcal{C}$-closed elements of $C(S)$.
Then $A$ is a subset of the $\mathcal{C}$-closure of $\bigcup_{j} B_{j}$.
It follows that each $a_{i}$ is in the closure of a finite union of some of $B_{j}$'s.
Thus $A$ is contained in the closure of a finite union of the $B_{j}$'s under our assumption that the coverage is of finite type.
Thus $\overline{A} \leq \bigsqcup_{j=1}^{p} B_{j}$ for some finite $p$ and suitable relabelling.

Let $A = \{a_{i} \colon i \in I \}^{\downarrow}$ be an element of $C(S)$ such that $\overline{A}$ is finite.  
Then 
$\overline{A} = \bigsqcup_{i} \overline{a_{i}^{\downarrow}} 
= 
\bigsqcup_{i=1}^{p} \overline{a_{i}^{\downarrow}} 
$
for some finite number of $a_{i}$ and suitable relabelling.

(2) This follows from Lemma~4.18(3).

(3) If $E$ is an idempotent in  $C(S,\mathcal{C})$ then $E = \bigsqcup_{e \in E} \overline{e^{\downarrow}}$.

(4) This follows by the results (2) and (3) above and Lemma~3.4.
\end{proof}

If $\mathcal{C}$ is a coverage of finite type, then we can clearly define what it means for homomorphisms
$\theta \colon S \rightarrow T$ to distributive inverse semigroups to be {\em $\mathcal{C}$-cover-to-join maps}. 
We denote the inverse subsemigroup of finite elements in  $C(S,\mathcal{C})$ by $K(S,\mathcal{C})$.
By the above lemma, we have now proved the following.

\begin{theorem} Let $\mathcal{C}$ be an idempotent-pure coverage of finite type on the inverse semigroup $S$.
Then the pseudogroup $C(S,\mathcal{C})$ is coherent.
The map $\iota \colon S \rightarrow K(S,\mathcal{C})$
is a $\mathcal{C}$-cover-to-join map and is universal amongst such maps to distributive inverse semigroups.
\end{theorem}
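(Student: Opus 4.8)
The plan is to treat the two assertions separately, leaning on the preceding lemma throughout. Coherence of $C(S,\mathcal{C})$ is already part~(4) of the preceding lemma, so the opening sentence needs no new argument, and it is this coherence that makes $K(S,\mathcal{C})$ a \emph{distributive} inverse semigroup: by definition the finite elements of a coherent pseudogroup form a distributive inverse subsemigroup. What remains is to identify $\iota$ as the universal $\mathcal{C}$-cover-to-join map into distributive inverse semigroups.

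First I would confirm that $\iota$ actually lands in $K(S,\mathcal{C})$ and is a $\mathcal{C}$-cover-to-join map there. Since $\iota(s)=\overline{s^{\downarrow}}=\overline{\{s\}^{\downarrow}}$, part~(1) of the preceding lemma shows that each $\iota(s)$ is a finite element, so $\iota$ maps $S$ into $K(S,\mathcal{C})$. That $\iota$ is $\mathcal{C}$-cover-to-join is inherited from Theorem~4.20: for $X\in\mathcal{C}(a)$ we already have $\iota(a)=\bigvee_{x\in X}\iota(x)$ in $C(S,\mathcal{C})$, and because the coverage is of finite type this join is finite and all its terms lie in $K(S,\mathcal{C})$, so the identity holds in $K(S,\mathcal{C})$.

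For the universal property the central device is the $\mbox{Idl}$-completion. Given any $\mathcal{C}$-cover-to-join map $\theta\colon S\rightarrow T$ into a distributive inverse semigroup $T$, I would compose with the embedding $\iota_{T}\colon T\rightarrow\mbox{Idl}(T)$, which preserves finite compatible joins by Proposition~3.1; the composite $\iota_{T}\theta$ is then a $\mathcal{C}$-cover-to-join map into the \emph{pseudogroup} $\mbox{Idl}(T)$. By the universality of $\iota$ among maps to pseudogroups (Theorem~4.20) there is a unique pseudogroup morphism $\psi\colon C(S,\mathcal{C})\rightarrow\mbox{Idl}(T)$ with $\psi\iota=\iota_{T}\theta$. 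I would then restrict $\psi$ to the finite elements: writing a finite element as $\overline{A}=\bigvee_{i=1}^{m}\iota(a_{i})$ with $A=\{a_{1},\dots,a_{m}\}^{\downarrow}$ (part~(1) of the preceding lemma), one computes $\psi(\overline{A})=\bigvee_{i}\iota_{T}(\theta(a_{i}))=\iota_{T}\!\left(\bigvee_{i}\theta(a_{i})\right)$, the last equality because $\iota_{T}$ preserves finite compatible joins and the $\theta(a_{i})$ are pairwise compatible. Hence $\psi$ carries $K(S,\mathcal{C})$ into the image of the injection $\iota_{T}$, and $\bar{\theta}:=\iota_{T}^{-1}\psi|_{K(S,\mathcal{C})}$ is the required morphism, explicitly $\bar{\theta}(\overline{A})=\bigvee_{i}\theta(a_{i})$, with $\bar{\theta}\iota=\theta$. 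That $\bar{\theta}$ is a homomorphism preserving finite compatible joins is inherited from $\psi$ together with the closure of $K(S,\mathcal{C})$ under products and finite compatible joins (coherence); uniqueness is forced since any competitor preserves finite compatible joins and agrees with $\theta$ on the generators $\iota(s)$, while every finite element is a finite compatible join of such generators.

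The step I expect to be the crux is showing that $\psi$ sends finite elements into the copy of $T$ sitting inside $\mbox{Idl}(T)$, rather than into strictly larger elements of the ambient pseudogroup. This is precisely where the finite-type hypothesis is indispensable: it guarantees, through part~(1) of the preceding lemma, that each finite element is a genuinely \emph{finite} compatible join of the $\iota(a_{i})$, so that the finite-join-preservation of $\iota_{T}$ applies and keeps the image inside $T$. Without finiteness the join would range over an infinite family, and $\iota_{T}$, preserving only finite compatible joins, would give no such control.
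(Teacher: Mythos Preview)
Your proposal is correct and is essentially the argument the paper has in mind: the paper's ``proof'' is literally the single sentence ``By the above lemma, we have now proved the following,'' so all the content is delegated to Lemma~4.22 together with the already-established universal property of $C(S,\mathcal{C})$ (Theorem~4.20) and the $\mbox{Idl}$-adjunction (Proposition~3.1). Your route through $\mbox{Idl}(T)$ to manufacture a pseudogroup target, invoke Theorem~4.20, and then restrict to finite elements is exactly how one unpacks that sentence, and your identification of the finite-type hypothesis as what keeps $\psi(K(S,\mathcal{C}))$ inside the image of $\iota_{T}$ is the right diagnosis of where that hypothesis enters.
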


\subsection{Main results}

We shall now apply the theory we have developed to three coverages: the trivial, the dense and the tight.

We begin with the trivial coverage of Examples~4.1(1).
This is a coverage of finite-type and every inverse semigroup is separative with respect to this coverage.
The theorem below is a direct application of Theorems~4.20, 4.21 and 4.24 as well as Proposition~3.19.

\begin{theorem} Let $S$ be an inverse semigroup equipped with the trivial coverage.
\begin{enumerate}

\item The universal pseudogroup in this case is just $\mathsf{P}(S) = C(S)$
along with the map $\pi \colon S \rightarrow \mathsf{P}(S)$
where universality is with respect to arbitrary semigroup homomorphisms to pseudogroups.

\item  The pseudogroup $\mathsf{P}(S)$ is coherent. 
Put $\mathsf{D}(S) = K(\mathsf{P}(S))$ along with the map $\delta \colon S \rightarrow \mathsf{D}(S)$. 
This is the universal distributive inverse semigroup where universality is with respect to arbitrary 
semigroup homomorphisms to distributive inverse semigroups.

\item The groupoid of all filters on $S$ is homeomorphic to $\mathsf{G}(\mathsf{P}(S))$.

\item The groupoid of all filters on $S$ is homeomorphic to $\mathsf{G}_{P}(\mathsf{D}(S))$.

\end{enumerate}
\end{theorem}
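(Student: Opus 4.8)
The plan is to observe that for the trivial coverage $\mathcal{C}(x) = \{\{x\}\}$ every construction of this section degenerates in a transparent way, and then to read off each of the four statements from the general theorems already proved. So the first step is to record the relevant properties of the trivial coverage and the resulting identifications. The coverage is \emph{idempotent-pure}, since the only covering of $x$ is $\{x\}$ and $\{x\} \subseteq E(S)$ forces $x$ to be idempotent; it is of \emph{finite type}, since every covering is a singleton; and it is \emph{separated}, since $\mathcal{C}(a) \cap \mathcal{C}(b) \neq \emptyset$ forces $\{a\} = \{b\}$, i.e. $a = b$. A subset $A$ is $\mathcal{C}$-closed precisely when $\{x\} \subseteq A$ implies $x \in A$, which holds for every subset, so every compatible order ideal is $\mathcal{C}$-closed and hence $C(S,\mathcal{C}) = C(S) = \mathsf{P}(S)$, with the nucleus $A \mapsto \overline{A}$ equal to the identity. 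By Examples~4.1(1) the $\mathcal{C}$-filters are exactly the filters, so $\mathsf{G}_{\mathcal{C}}(S)$ is the groupoid of all filters on $S$. Finally, a semigroup homomorphism $\theta$ to a pseudogroup is $\mathcal{C}$-cover-to-join iff $\theta(a) = \bigvee_{x \in \{a\}} \theta(x) = \theta(a)$ for all $a$, which is automatic; thus the $\mathcal{C}$-cover-to-join maps are precisely the arbitrary semigroup homomorphisms.

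With these identifications the statements are immediate specializations. For (1), Theorem~4.20 asserts that $\iota \colon S \to C(S,\mathcal{C})$ is universal amongst $\mathcal{C}$-cover-to-join maps to pseudogroups; substituting $C(S,\mathcal{C}) = \mathsf{P}(S)$ and reading ``$\mathcal{C}$-cover-to-join'' as ``semigroup homomorphism'' gives exactly the claimed universal property of $\pi \colon S \to \mathsf{P}(S)$. For (2), since $\mathcal{C}$ is idempotent-pure and of finite type, Theorem~4.24 applies and yields both that $\mathsf{P}(S) = C(S)$ is coherent and that $\iota \colon S \to K(S,\mathcal{C}) = K(\mathsf{P}(S)) = \mathsf{D}(S)$ is universal amongst semigroup homomorphisms to distributive inverse semigroups. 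For (3), Theorem~4.21 gives a homeomorphism $\mathsf{G}_{\mathcal{C}}(S) \cong \mathsf{G}(C(S,\mathcal{C})) = \mathsf{G}(\mathsf{P}(S))$, and the left-hand side is the groupoid of all filters by the identification above.

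For (4) I would chain (3) with Proposition~3.19. Since $\mathsf{P}(S)$ is coherent by (2), Proposition~3.5 supplies the canonical isomorphism $\mathsf{P}(S) \cong \mbox{Idl}(K(\mathsf{P}(S))) = \mbox{Idl}(\mathsf{D}(S))$, so the pair $(\mathsf{P}(S), \mathsf{D}(S))$ meets the hypotheses of Proposition~3.19, namely $\mathsf{P}(S) = \mbox{Idl}(\mathsf{D}(S))$ and $\mathsf{D}(S) = K(\mathsf{P}(S))$. That proposition then gives a homeomorphism $\mathsf{G}_{P}(\mathsf{D}(S)) \cong \mathsf{G}(\mathsf{P}(S))$, and composing with the homeomorphism from (3) shows that the groupoid of all filters on $S$ is homeomorphic to $\mathsf{G}_{P}(\mathsf{D}(S))$.

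There is essentially no analytic obstacle here: the whole theorem is a dictionary entry translating the general machinery of idempotent-pure coverages of finite type into the trivial case. The one point that repays care is precisely this degeneration of the definitions, in particular confirming that $C(S,\mathcal{C}) = C(S)$ with trivial nucleus and that $\mathcal{C}$-cover-to-join maps are unrestricted homomorphisms, since each of the four conclusions is merely one of the cited results read through these identifications.
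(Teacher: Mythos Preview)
Your proposal is correct and takes essentially the same approach as the paper, which states just before the theorem that it is ``a direct application of Theorems~4.20, 4.21 and 4.24 as well as Proposition~3.19.'' You have simply spelled out the identifications (idempotent-pure, finite type, separated, $C(S,\mathcal{C})=C(S)$, $\mathcal{C}$-filters $=$ filters, $\mathcal{C}$-cover-to-join $=$ arbitrary homomorphism) that make those citations apply, which is exactly what is needed.
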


One application of the semigroup $\mathsf{D}(S)$ is in connection with the weak meet condition.

\begin{proposition} Let $S$ be an inverse semigroup.
Then $S$ satisfies the weak meet condition if and only if $\mathsf{D}(S)$ is an inverse $\wedge$-semigroup.
\end{proposition}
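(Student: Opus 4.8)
The plan is to begin by making $\mathsf{D}(S)$ completely concrete. Since the trivial coverage is of finite type and idempotent-pure and induces the trivial closure operator (so that $\overline{A}=A$ for every compatible order ideal $A$), Lemma~4.24(1) identifies the finite elements of $\mathsf{P}(S)=C(S)$, and hence the elements of $\mathsf{D}(S)=K(\mathsf{P}(S))$, as precisely the finitely generated compatible order ideals $\{a_{1},\ldots,a_{m}\}^{\downarrow}$ of $S$. I would also recall that the natural partial order on $C(S)$, and therefore on its inverse subsemigroup $\mathsf{D}(S)$, is subset inclusion, so that a meet in $\mathsf{D}(S)$ (if it exists) is the largest finitely generated compatible order ideal contained in both factors. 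With this dictionary in place, the proposition becomes a statement about when the intersection of two finitely generated compatible order ideals is again finitely generated.

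For the forward implication I would assume the weak meet condition and take $A=\{a_{1},\ldots,a_{n}\}^{\downarrow}$ and $B=\{b_{1},\ldots,b_{m}\}^{\downarrow}$ in $\mathsf{D}(S)$. Since $x\in A\cap B$ iff $x\leq a_{i}$ and $x\leq b_{j}$ for some $i,j$, we have
$$A\cap B=\bigcup_{i,j}\bigl(a_{i}^{\downarrow}\cap b_{j}^{\downarrow}\bigr).$$
By the weak meet condition each $a_{i}^{\downarrow}\cap b_{j}^{\downarrow}$ is finitely generated, and a finite union of finitely generated order ideals is finitely generated; moreover $A\cap B\subseteq A$ is compatible. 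Hence $A\cap B\in\mathsf{D}(S)$, and because the order is inclusion it is the greatest lower bound of $A$ and $B$ inside $\mathsf{D}(S)$. Thus $\mathsf{D}(S)$ is an inverse $\wedge$-semigroup.

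For the converse I would assume $\mathsf{D}(S)$ is an inverse $\wedge$-semigroup and fix $a,b\in S$. Both $a^{\downarrow}$ and $b^{\downarrow}$ lie in $\mathsf{D}(S)$, since any two elements below $a$ share the upper bound $a$ and are therefore compatible. Let $M=a^{\downarrow}\wedge b^{\downarrow}$ be their meet in $\mathsf{D}(S)$; the claim is that $M=a^{\downarrow}\cap b^{\downarrow}$. The inclusion $M\subseteq a^{\downarrow}\cap b^{\downarrow}$ is immediate from the order being inclusion. For the reverse, given $c\in a^{\downarrow}\cap b^{\downarrow}$ the principal ideal $c^{\downarrow}$ belongs to $\mathsf{D}(S)$ and satisfies $c^{\downarrow}\subseteq a^{\downarrow},b^{\downarrow}$, so $c^{\downarrow}$ is a lower bound of $a^{\downarrow}$ and $b^{\downarrow}$ in $\mathsf{D}(S)$, whence $c^{\downarrow}\subseteq M$ and $c\in M$. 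Therefore $a^{\downarrow}\cap b^{\downarrow}=M\in\mathsf{D}(S)$ is finitely generated, which is exactly the weak meet condition.

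The hard part, and the only genuinely nontrivial step, is keeping straight the two different meet operations: the meet computed in the ambient pseudogroup $C(S)$, which is the honest intersection, versus the meet computed inside the subsemigroup $\mathsf{D}(S)$, which a priori could be strictly smaller because $\mathsf{D}(S)$ need not be closed under intersections. The device that resolves this in the converse is the remark that every $c^{\downarrow}$ below both $a$ and $b$ is itself a member of $\mathsf{D}(S)$ and so competes as a lower bound, forcing the meet taken in $\mathsf{D}(S)$ to recover the full intersection. Everything else is the routine verification that finitely generated compatible order ideals behave well under finite unions, intersections, and passage to principal ideals.
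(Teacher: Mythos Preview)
Your proof is correct and follows essentially the same approach as the paper's: both identify $\mathsf{D}(S)$ with the finitely generated compatible order ideals of $S$ ordered by inclusion, and both reduce the question to whether $a^{\downarrow}\cap b^{\downarrow}$ is finitely generated. Your treatment of the converse is in fact more careful than the paper's, which simply asserts that $s^{\downarrow}\wedge t^{\downarrow}$ in $\mathsf{D}(S)$ forces $s^{\downarrow}\cap t^{\downarrow}$ to be finitely generated; you supply the missing sentence explaining why the meet computed in $\mathsf{D}(S)$ coincides with the honest intersection, via the observation that each $c^{\downarrow}$ with $c\leq a,b$ already lives in $\mathsf{D}(S)$.
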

\begin{proof} The natural partial order in $\mathsf{D}(S)$ is subset-inclusion.
Observe that
$s^{\downarrow} \cap t^{\downarrow}$ is a compatible order ideal.
It is finitely-generated if and only if the weak meet condition holds.
Suppose that $\mathsf{D}(S)$ is an inverse $\wedge$-semigroup.
Then in particular $s^{\downarrow} \wedge t^{\downarrow}$ exists and must equal a finitely-generated compatible order ideal. 
It follows that $s^{\downarrow} \cap t^{\downarrow}$ is a finitely generated compatible order ideal.
Conversely, suppose that $S$ satisfies the weak meet condition.
Then $s^{\downarrow} \wedge t^{\downarrow}$ always exists.
An arbitrary element of $\mathsf{D}(S)$ can be written $A = s_{1}^{\downarrow} \vee \ldots \vee s_{m}^{\downarrow}$.
If $B = t_{1}^{\downarrow} \vee \ldots \vee t_{n}^{\downarrow}$ is any element of  $\mathsf{D}(S)$
then it can be verified that $A \wedge B = \bigvee_{i,j} (s_{i}^{\downarrow} \wedge t_{j}^{\downarrow})$.
\end{proof}

We now turn to the dense coverage and its properties.

\begin{theorem} Let $S$ be an inverse semigroup equipped with the dense coverage.
Then the universal pseudogroup $\pi \colon S \rightarrow \mathsf{P}_{\mathcal{D}}(S)$ exists.
\end{theorem}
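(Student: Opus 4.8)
The plan is to reduce to the separated case already settled in Theorem~4.20, by first passing to the quotient by the congruence $\equiv$. Recall from Corollary~4.12 that the quotient $\mathbf{S} = S/\!\equiv$ is separative with respect to the dense coverage, so the induced dense coverage on $\mathbf{S}$ is separated; by Lemma~4.18(a) it is therefore idempotent-pure. Consequently Theorem~4.20 applies to $\mathbf{S}$, and it tells us that the map $\iota \colon \mathbf{S} \rightarrow C(\mathbf{S},\mathcal{D})$, $\mathbf{s} \mapsto \overline{\mathbf{s}^{\downarrow}}$, is a $\mathcal{D}$-cover-to-join map to a pseudogroup which is universal among all such maps.

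I would then set $\mathsf{P}_{\mathcal{D}}(S) = C(\mathbf{S},\mathcal{D})$ and take $\pi = \iota \sigma \colon S \rightarrow \mathsf{P}_{\mathcal{D}}(S)$, where $\sigma \colon S \rightarrow \mathbf{S}$ is the quotient map of Lemma~4.8. The first step is to verify that $\pi$ is itself a $\mathcal{D}$-cover-to-join map: given $A \in \mathcal{D}(a)$, Lemma~4.10(3) yields $\sigma(A) \in \mathcal{D}(\sigma(a))$, and since $\iota$ is $\mathcal{D}$-cover-to-join on $\mathbf{S}$ we get $\pi(a) = \iota(\sigma(a)) = \bigvee_{a_{i} \in A} \iota(\sigma(a_{i})) = \bigvee_{a_{i} \in A} \pi(a_{i})$, where repetitions in the image under $\sigma$ are harmless for joins.

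For the universal property, let $\theta \colon S \rightarrow T$ be any $\mathcal{D}$-cover-to-join map to a pseudogroup. By Lemma~4.9 there is a unique homomorphism $\bar{\theta} \colon \mathbf{S} \rightarrow T$ with $\bar{\theta}\sigma = \theta$, and by Proposition~4.11 this $\bar{\theta}$ is again a $\mathcal{D}$-cover-to-join map. The universality of $\iota$ then provides a unique pseudogroup morphism $\widetilde{\theta} \colon C(\mathbf{S},\mathcal{D}) \rightarrow T$ with $\widetilde{\theta}\iota = \bar{\theta}$, whence $\widetilde{\theta}\pi = \widetilde{\theta}\iota\sigma = \bar{\theta}\sigma = \theta$. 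For uniqueness of the factoring morphism I would invoke the fact, established during the proof of Theorem~4.21, that every $A \in C(\mathbf{S},\mathcal{D})$ equals $\bigsqcup_{\mathbf{a} \in A} \overline{\mathbf{a}^{\downarrow}} = \bigsqcup_{\mathbf{a} \in A} \iota(\mathbf{a})$; thus every element of $\mathsf{P}_{\mathcal{D}}(S)$ is a compatible join of elements of the image of $\iota$, which coincides with the image of $\pi$ since $\sigma$ is surjective. As pseudogroup morphisms preserve compatible joins, any two agreeing on the image of $\pi$ must be equal, forcing uniqueness of $\widetilde{\theta}$.

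The substantive input is the separativity of $\mathbf{S}$, which is exactly Corollary~4.12; once that is in hand, the only real obstacle is the careful chaining of the two universal properties (through $\sigma$ via Lemma~4.9 and Proposition~4.11, and through $\iota$ via Theorem~4.20) together with the observation that compatible joins of $\pi$-images generate $\mathsf{P}_{\mathcal{D}}(S)$, which is what delivers uniqueness.
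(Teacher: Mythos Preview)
Your argument is correct and follows essentially the same route as the paper: define $\mathsf{P}_{\mathcal{D}}(S) = C(\mathbf{S},\mathcal{D})$ with $\pi = \iota\sigma$, use separativity of $\mathbf{S}$ to make the dense coverage idempotent-pure so that Theorem~4.20 applies, factor a given $\theta$ first through $\sigma$ and then through $\iota$, and deduce uniqueness from the fact that every element of $C(\mathbf{S},\mathcal{D})$ is a compatible join of elements in the image of $\pi$. The only blemishes are two swapped citations (separativity is Corollary~4.11, and the induced map being $\mathcal{D}$-cover-to-join is Proposition~4.12), but the content you invoke at each step is the right one.
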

\begin{proof} We put  $\mathsf{P}_{\mathcal{D}}(S) = C(\mathbf{S},\mathcal{D})$ and define $\pi (s) =  \iota (\mathbf{s})$. 
We now show that this has the required universal properties.
Let $\theta \colon S \rightarrow T$ be a dense map to a pseudogroup.
By Lemma~4.9 and Proposition~4.12, there is a dense map $\phi \colon \mathbf{S} \rightarrow T$ such that  $\theta = \phi \sigma$.
By Corollary~4.11, the semigroup $\mathbf{S}$ is separative with respect to the dense coverage.
By Theorem~4.20, there is a pseudogroup morphism $\bar{\phi} \colon \mathbf{S} \rightarrow C(\mathbf{S},\mathcal{D})$ such that $\phi = \bar{\phi} \iota$.
If we put $\pi = \iota \sigma$ and rechristen $\bar{\phi}$ as $\bar{\theta}$,
we have that $\theta = \bar{\theta} \pi$.
The uniqueness of $\bar{\theta}$ follows from the fact that each element of $\mathsf{P}_{\mathcal{D}}(S)$ is a join of elements of the form $\pi (s)$.
\end{proof}

We shall denote the universal pseudogroup with respect to the dense coverage by $\mathsf{P}_{d}(S)$
and call it the {\em dense pseudogroup} of $S$.

\begin{theorem} Let $S$ be an inverse semigroup equipped with the dense coverage.
Then $\mathsf{G}_{\mathcal{D}}(S)$ is homeomorphic to the groupoid $\mathsf{G}(\mathsf{P}_{d}(S))$.
\end{theorem}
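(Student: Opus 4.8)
The plan is to exhibit the homeomorphism as a composite of two homeomorphisms that have essentially already been established, routing through the separative quotient $\mathbf{S}$ of $S$. The first link is handed to us directly by Proposition~4.13: the map $A \mapsto \sigma(A)$ is a homeomorphism $\mathsf{G}_{\mathcal{D}}(S) \to \mathsf{G}_{\mathcal{D}}(\mathbf{S})$. It therefore suffices to identify $\mathsf{G}_{\mathcal{D}}(\mathbf{S})$ with $\mathsf{G}(\mathsf{P}_{d}(S))$. Recalling that by the very construction in Theorem~4.27 we have $\mathsf{P}_{d}(S) = C(\mathbf{S},\mathcal{D})$, the second link should come from Theorem~4.22, which for any \emph{idempotent-pure} coverage $\mathcal{C}$ on an inverse semigroup $R$ produces a homeomorphism $\mathsf{G}_{\mathcal{C}}(R) \cong \mathsf{G}(C(R,\mathcal{C}))$. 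Applying this with $R = \mathbf{S}$ and $\mathcal{C} = \mathcal{D}$ would give exactly $\mathsf{G}_{\mathcal{D}}(\mathbf{S}) \cong \mathsf{G}(C(\mathbf{S},\mathcal{D})) = \mathsf{G}(\mathsf{P}_{d}(S))$, and composing the two homeomorphisms yields the theorem.

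First I would verify the single hypothesis needed to invoke Theorem~4.22, namely that the dense coverage on $\mathbf{S}$ is idempotent-pure. This is precisely where passing to the separative quotient pays off: by Corollary~4.11 the semigroup $\mathbf{S}$ is separative with respect to the dense coverage, so that coverage on $\mathbf{S}$ is separated, and by Lemma~4.18(a) every separated coverage is idempotent-pure. Hence Theorem~4.22 applies verbatim to the pair $(\mathbf{S},\mathcal{D})$, and the desired middle homeomorphism follows.

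The argument then requires no genuine computation beyond composing the two maps and noting that the composite is again a groupoid homeomorphism. I expect the only delicate point to be bookkeeping rather than mathematics: one must be careful that the coverage occurring in $C(\mathbf{S},\mathcal{D})$, in $\mathsf{G}_{\mathcal{D}}(\mathbf{S})$, and in the hypotheses of Corollary~4.11 and Lemma~4.18(a) is in each case the genuine dense coverage of the semigroup $\mathbf{S}$ (the one used in Lemma~4.10), so that all three invoked results speak about the same structure on $\mathbf{S}$. Granting this identification, the proof is a two-step composition, and it generalizes word-for-word to the tight coverage by the analogous clauses of Proposition~4.13, Corollary~4.11, and the finite-type version of the construction.
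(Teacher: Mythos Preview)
Your proposal is correct and is exactly the paper's approach: the paper's proof is a one-line citation of Proposition~4.13 (written ``3.13'' in the source, an evident typo) and Theorem~4.21, and you have simply unpacked what that composition entails, including the check via Corollary~4.11 and Lemma~4.18(a) that the dense coverage on $\mathbf{S}$ is idempotent-pure so that Theorem~4.21 applies. Aside from minor numbering drift (your 4.22 and 4.27 are the paper's 4.21 and the construction inside Theorem~4.26/4.27), there is nothing to add.
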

\begin{proof} 
This is immediate by Proposition~3.13 and Theorem~4.21.
\end{proof}

We shall now briefly outline why we chose the name {\em dense} for this coverage.
With each inverse semigroup $S$, we may associate a left cancellative category 
$\mathsf{C}(S)$, that we call its {\em associated Leech category}.
The elements of this category are of the form $(e,s)$ where $e$ is a non-zero idempotent
and $s$ a non-zero element such that $ss^{-1} \leq e$.
The product $(e,s)(f,t)$ is defined iff $s^{-1}s = f$ in which case its product defined to be $(e,st)$.
In \cite{LS}, we proved that there as a bijective correspondence between what we called
{\em Ehresmann topologies} on $S$ and {\em Grothendieck topologies} on $\mathsf{C}(S)$.
One significant Grothendieck topology is the {\em dense topology} also referred to
as the {\em $\neg \neg$-topology}; see pp~115, 273 of \cite{MM}.
The dense coverage $\mathcal{D}$ can be used to construct an Ehresmann topology $\mathcal{D}^{\downarrow}$:
for each idempotent $e \in S$ if $A \in \mathcal{D}(e)$ then $A^{\downarrow} \in \mathcal{D}^{\downarrow}(e)$.
We call this the {\em dense Ehresmann topology}. 
Using the correspondence proved as Theorem~3.8 of \cite{LS}, we may easily 
show that the dense coverage on $S$ gives rise to the dense Ehresmann topology on $S$ 
which is associated with the dense topology on $\mathsf{C}(S)$.
This motivates our final result about the dense pseudogroup.
First we need a simple but illuminating lemma.

\begin{lemma} Let $E$ be a meet semilattice.
Let $F \subseteq E$ be an order ideal.
If $e \in (F^{\perp})^{\perp}$ is non-zero then $e \rightarrow eF$.
\end{lemma}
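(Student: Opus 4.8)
I want to prove that for a meet semilattice $E$ and an order ideal $F \subseteq E$, any non-zero element $e \in (F^{\perp})^{\perp}$ satisfies $e \rightarrow eF$. Unpacking the arrow relation from Examples~4.1(4): $e \rightarrow eF$ means that for each non-zero $x \leq e$ there exists $b \in eF$ such that $x^{\downarrow} \cap b^{\downarrow} \neq 0$, i.e.\ there is a non-zero element below both $x$ and $b$. Here $eF = \{ef : f \in F\} = \{e \wedge f : f \in F\}$ since $E$ is a semilattice and products are meets. The hypothesis $e \in (F^{\perp})^{\perp}$ says: for every $y$ orthogonal to all of $F$ (that is, $y \wedge f = 0$ for all $f \in F$), we have $e \wedge y = 0$; equivalently, whatever is orthogonal to $F$ is orthogonal to $e$.

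**The plan.**
First I would fix a non-zero $x \leq e$ and argue by contradiction: suppose that $x$ meets no element of $eF$, i.e.\ $x^{\downarrow} \cap (e \wedge f)^{\downarrow} = 0$ for every $f \in F$. Since $x \leq e$, we have $x \wedge (e \wedge f) = x \wedge f$, so this assumption says precisely $x \wedge f = 0$ for all $f \in F$. The key observation is then that this makes $x$ itself orthogonal to every element of $F$, so $x \in F^{\perp}$. Now I invoke the hypothesis in the following form: since $x \leq e$ and $e \in (F^{\perp})^{\perp}$, I want to derive that $x$ is orthogonal to itself, forcing $x = 0$.

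**Closing the contradiction.**
The cleanest route is this: from $x \in F^{\perp}$ and $e \in (F^{\perp})^{\perp}$ I get $x \wedge e = 0$ by the definition of $(F^\perp)^\perp$ (every element of $(F^\perp)^\perp$ is orthogonal to every element of $F^\perp$). But $x \leq e$ gives $x = x \wedge e = 0$, contradicting the choice of $x$ as non-zero. Hence there must exist $f \in F$ with $x \wedge f \neq 0$, and then $x \wedge f = x \wedge (e \wedge f)$ is a non-zero common lower bound of $x$ and the element $e \wedge f \in eF$, witnessing $x^{\downarrow} \cap (e \wedge f)^{\downarrow} \neq 0$. Since $x$ was an arbitrary non-zero element below $e$, this establishes $e \rightarrow eF$.

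**Main obstacle.**
There is no deep obstacle here; the proof is short once the definitions are unwound correctly. The one point requiring care is the reduction $x \wedge (e \wedge f) = x \wedge f$, which uses $x \leq e$, and the symmetric reading of orthogonality built into the double-perp operation: I must make sure I am applying $e \in (F^\perp)^\perp$ against the element $x \in F^\perp$ in the right direction. Everything else is a routine translation between the arrow relation, orthogonality, and the perp notation established in the Preliminaries.
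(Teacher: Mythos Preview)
Your proof is correct and follows essentially the same argument as the paper: take a non-zero $x \leq e$, suppose it is orthogonal to every element of $F$, deduce $x \in F^{\perp}$, then use $e \in (F^{\perp})^{\perp}$ to get $x = x \wedge e = 0$, a contradiction. The only cosmetic difference is that the paper first records that $eF \subseteq e^{\downarrow}$ (needed so that $eF$ is a legitimate target for the arrow relation), which you leave implicit, and the paper phrases the contradiction hypothesis as $xF = 0$ rather than unpacking it via $x \wedge (e \wedge f) = x \wedge f$; your version is slightly more explicit but the content is identical.
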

\begin{proof}
Clearly $eF \subseteq F$ and $eF \subseteq e^{\downarrow}$.
Let $0 \neq f \leq e$.
Suppose that $fF = 0$.
Then $f \in F^{\perp}$.
But $e \in (F^{\perp})^{\perp}$ and so $ef = 0$ but this is a contradiction.
Thus $fF \neq 0$, as required.
\end{proof}

Let $e$ be an idempotent in a pseudogroup $S$.
Define
$$e^{\ast} = \bigvee_{f \wedge e = 0} f.$$
By construction we have that $e \wedge e^{\ast} = 0$.

\begin{theorem} 
The dense pseudogroup is boolean.
\end{theorem}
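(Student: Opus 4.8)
The plan is to prove that $\mathsf{P}_{d}(S)$ is boolean by showing that its frame of idempotents is a boolean algebra; since every pseudogroup is automatically an inverse $\wedge$-semigroup, this is exactly what ``boolean'' requires here. Recall that $\mathsf{P}_{d}(S) = C(\mathbf{S},\mathcal{D})$. By Corollary~4.11 the semigroup $\mathbf{S}$ is separative with respect to the dense coverage, so by Lemma~4.18(a) the coverage $\mathcal{D}$ on $\mathbf{S}$ is idempotent-pure. Writing $E = E(\mathbf{S})$ for the meet semilattice of idempotents, it follows that the idempotents of $\mathsf{P}_{d}(S)$ are precisely the $\mathcal{D}$-closed order ideals of $E$, with meet given by intersection and join given by $\mathcal{D}$-closure of the union.

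The heart of the argument is to identify these $\mathcal{D}$-closed order ideals with the \emph{regular} order ideals of $E$, namely those $F$ with $F = F^{\perp\perp}$. First I would show that every regular order ideal is $\mathcal{D}$-closed: if $F = F^{\perp\perp}$, if $X \subseteq F$, and if $X \in \mathcal{D}(e)$, then for any $g \in F^{\perp}$ a nonzero element below $e \wedge g$ would, since $e \rightarrow X$, lie below some $x \in X \subseteq F$, hence below $g \wedge x = 0$, a contradiction; thus $e \wedge g = 0$ for all $g \in F^{\perp}$, so $e \in F^{\perp\perp} = F$. For the converse, let $F$ be $\mathcal{D}$-closed and let $e \in F^{\perp\perp}$ be nonzero. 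By the lemma immediately preceding the theorem, $e \rightarrow eF$, and since each $e \wedge f$ lies below $e$ and below $f \in F$ we have $eF \subseteq e^{\downarrow}$ and $eF \subseteq F$; thus $eF \in \mathcal{D}(e)$ with $eF \subseteq F$, and $\mathcal{D}$-closedness forces $e \in F$. Hence $F^{\perp\perp} \subseteq F$ and $F$ is regular.

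It then remains to invoke the familiar fact (the Booleanization underlying the $\neg\neg$-topology discussed above) that the regular order ideals of a meet semilattice form a complete boolean algebra: meet is intersection, the complement of $F$ is $F^{\perp}$, which is automatically regular and satisfies $F \cap F^{\perp} = \{0\}$ and $(F \cup F^{\perp})^{\perp\perp} = \{0\}^{\perp} = E$. Identifying the pseudocomplement $e^{*}$ defined before the theorem with $F^{\perp}$ for the corresponding regular ideal $F$, this yields $e \wedge e^{*} = 0$ and $e \vee e^{*} = 1$ for every idempotent $e$ of $\mathsf{P}_{d}(S)$. Therefore the frame of idempotents is boolean and $\mathsf{P}_{d}(S)$ is a boolean pseudogroup.

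I expect the main obstacle to be the second inclusion in the identification of $\mathcal{D}$-closed ideals with regular ideals, where the ``dense'' notion of closure (phrased through the arrow relation and coverings) must be matched with the double-orthocomplement operation. This is precisely where the preceding lemma does the decisive work, converting membership in $F^{\perp\perp}$ into a covering $e \rightarrow eF$ by elements of $F$. The points requiring care are checking that $eF$ is genuinely a $\mathcal{D}$-cover of $e$ contained in $F$, and keeping track of the passage to the separative quotient $\mathbf{S}$ so that the idempotent-purity of $\mathcal{D}$ is available.
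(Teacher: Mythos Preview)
Your proposal is correct and follows essentially the same route as the paper. Both arguments reduce to the separative quotient, work in the frame of $\mathcal{D}$-closed order ideals of $E(\mathbf{S})$, and use the preceding lemma (Lemma~4.26) to show that any element of the double orthogonal $F^{\perp\perp}$ is densely covered by elements of $F$ and hence lies in $F$ by $\mathcal{D}$-closure.

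The only difference is packaging: the paper phrases the goal as $E^{\ast\ast} = E$ for the frame pseudocomplement $\ast$ (the standard criterion for a frame to be boolean) and then passes to the semilattice-level $\perp$ to invoke Lemma~4.26, whereas you first identify the $\mathcal{D}$-closed ideals with the regular ideals $F = F^{\perp\perp}$ and then appeal to the classical fact that regular ideals form a complete boolean algebra. Your version makes the connection to the $\neg\neg$-booleanization more explicit and also records the easy converse (regular implies $\mathcal{D}$-closed), but the decisive step is identical in both. One small wording point: in your first direction you say a nonzero element below $e \wedge g$ ``would lie below some $x \in X$''; strictly, $e \rightarrow X$ only gives a nonzero common lower bound with some $x$, but that suffices for the contradiction since this lower bound is then below $g \wedge x = 0$.
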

\begin{proof} We may assume that we are working with a semigroup $S$ separated with respect to the dense coverage.
To show that the pseudogroup is boolean, it is enough to show that $E^{\ast \ast} = E$ for any densely-closed order ideal of the semilattice of idempotents of $S$.
Since $E \subseteq E^{\ast \ast}$, it is enough to prove that $E^{\ast \ast} \subseteq \overline{E} = E$.
Let $f \in E^{\ast \ast}$.
Then $\overline{f^{\downarrow}} \wedge E^{\ast} = 0$.
We shall prove that $f \in (E^{\perp})^{\perp}$ giving $f \in \overline{E}$ by Lemma~4.26.
Let $e \in E^{\perp}$.
Then $e \in E^{\ast}$ and so $\overline{f^{\downarrow}} \wedge \overline{e^{\downarrow}} = 0$.
It follows that $fe = 0$.
\end{proof}

We now turn to the tight coverage.
A similar argument to that used in the proof of Theorem~4.26 shows that there is a universal pseudogroup
$\mathsf{P}_{\mathcal{T}}(S)$ with respect to the tight coverage.
However, by Lemma~4.22 this pseudogroup is coherent.
Put $\mathsf{D}_{t}(S) = K(\mathsf{P}_{\mathcal{T}}(S))$ and 
denote by $\delta \colon S \rightarrow \mathsf{D}_{t}(S)$ the corresponding map.
We have therefore proved result (1) below, a generalization of Theorem~2.1 the key result of \cite{Law4}.
The proof of (2) below follows from Proposition~4.13, Theorem~4.21 and Proposition~3.19.

\begin{theorem} Let $S$ be an inverse semigroup.
\begin{enumerate}

\item There is a distributive inverse semigroup $\mathsf{D}_{t}(S)$ 
and a tight map $\delta \colon S \rightarrow \mathsf{D}_{t}(S)$ which is universal for tight maps from $S$
to distributive inverse semigroups.

\item The \'etale groupoids $\mathsf{G}_{\mathcal{T}}(S)$ and $\mathsf{G}_{P}(\mathsf{D}_{t}(S))$ are homeomorphic.

\item If $S$ satisfies the weak meet condition then $\mathsf{D}_{t}(S)$ is an inverse $\wedge$-semigroup.

\end{enumerate}
\end{theorem}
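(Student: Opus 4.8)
The plan is to handle the three parts in order, drawing almost entirely on the machinery of Sections~3 and~4, so that the only genuinely new argument is needed for part~(3).

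For parts (1) and (2) I would recycle the constructions already set up. As in the proof of Theorem~4.26 I pass to the separated quotient $\mathbf{S}$ (Corollary~4.11); with respect to the tight coverage $\mathbf{S}$ is separated, hence the coverage on $\mathbf{S}$ is idempotent-pure by Lemma~4.18(a) and of finite type, so I may set $\mathsf{P}_{\mathcal{T}}(S)=C(\mathbf{S},\mathcal{T})$ and $\pi=\iota\sigma$. Universality among tight maps to pseudogroups then follows from the factorization of tight maps through $\mathbf{S}$ (Lemma~4.9 and Proposition~4.12) together with Theorem~4.20. Since $\mathcal{T}$ is of finite type, Lemma~4.22 and Theorem~4.23 show $\mathsf{P}_{\mathcal{T}}(S)$ is coherent and that $\iota$ lands in its finite elements, so putting $\mathsf{D}_{t}(S)=K(\mathsf{P}_{\mathcal{T}}(S))$ and $\delta=\iota\sigma$ gives part~(1) directly from Theorem~4.23. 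Part~(2) is then the chain of homeomorphisms $\mathsf{G}_{\mathcal{T}}(S)\cong\mathsf{G}_{\mathcal{T}}(\mathbf{S})$ (Proposition~4.13, tight case) $\cong\mathsf{G}(C(\mathbf{S},\mathcal{T}))=\mathsf{G}(\mathsf{P}_{\mathcal{T}}(S))$ (Theorem~4.21, applicable because the coverage on $\mathbf{S}$ is idempotent-pure) $\cong\mathsf{G}_{P}(\mathsf{D}_{t}(S))$ (Proposition~\ref{gpt-equal-gs}, since $\mathsf{P}_{\mathcal{T}}(S)$ is coherent with finite part $\mathsf{D}_{t}(S)$).

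For part~(3) I would work inside the pseudogroup $\mathsf{P}_{\mathcal{T}}(S)=C(\mathbf{S},\mathcal{T})$, where binary meets always exist. The first observation is that for $\mathcal{T}$-closed elements the meet is computed by intersection: $\overline{A}\cap\overline{B}$ is again a $\mathcal{T}$-closed compatible order ideal, hence is the greatest lower bound, and using Lemma~4.2(3) one checks $\overline{A}\wedge\overline{B}=\overline{A}\cap\overline{B}=\overline{A\cap B}$. By Lemma~4.22(1) the finite elements are exactly the $\overline{A}$ with $A=\{\mathbf{a}_{1},\ldots,\mathbf{a}_{m}\}^{\downarrow}$ finitely generated, so since $A\cap B=\bigcup_{i,j}(\mathbf{a}_{i}^{\downarrow}\cap\mathbf{b}_{j}^{\downarrow})$ and finite elements are closed under finite compatible joins (Lemma~3.4), showing $\mathsf{D}_{t}(S)$ is an inverse $\wedge$-semigroup reduces to the principal case: prove $\overline{\mathbf{a}^{\downarrow}\cap\mathbf{b}^{\downarrow}}$ is finite.

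The heart of the matter is this last claim, and I expect it to be the main obstacle. Here I would use the weak meet condition on $S$: lift $\mathbf{a},\mathbf{b}$ to $a,b\in S$ and write $a^{\downarrow}\cap b^{\downarrow}=\{c_{1},\ldots,c_{k}\}^{\downarrow}$, then argue $\overline{\mathbf{a}^{\downarrow}\cap\mathbf{b}^{\downarrow}}=\overline{\{\mathbf{c}_{1},\ldots,\mathbf{c}_{k}\}^{\downarrow}}$, which is finite by Lemma~4.22(1). The inclusion $\supseteq$ is immediate from $c_{l}\leq a,b$. For $\subseteq$, given $\mathbf{d}\leq\mathbf{a},\mathbf{b}$ and any lift $d$ one has $\sigma(ad^{-1}d)=\mathbf{d}=\sigma(bd^{-1}d)$, so $ad^{-1}d\equiv bd^{-1}d$ and there is $W\in\mathcal{T}(ad^{-1}d)\cap\mathcal{T}(bd^{-1}d)$ with $W\subseteq a^{\downarrow}\cap b^{\downarrow}=\{c_{l}\}^{\downarrow}$; the tight analogue of Lemma~4.10(3) then gives $\sigma(W)\in\mathcal{T}(\mathbf{d})$ with $\sigma(W)\subseteq\{\mathbf{c}_{l}\}^{\downarrow}$, whence $\mathbf{d}\in\overline{\{\mathbf{c}_{l}\}^{\downarrow}}$. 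The delicacy — and the reason I would not try the seemingly cleaner route of showing that $\mathbf{S}$ itself inherits the weak meet condition — is that this last statement is in fact false: because $\mathbf{S}$ is only separated, $\mathbf{a}^{\downarrow}\cap\mathbf{b}^{\downarrow}$ is $\mathcal{T}$-closed (Lemma~4.18(a)) and need only be the $\mathcal{T}$-closure of a finitely generated ideal rather than finitely generated outright. The whole argument must therefore be conducted at the level of the closure operator and the quotient $\sigma$, with the finite type of $\mathcal{T}$ ensuring that every covering that enters remains finite.
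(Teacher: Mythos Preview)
Your proposal is correct and follows essentially the same route as the paper: parts (1) and (2) are assembled from exactly the results you cite, and for part (3) the paper likewise reduces to the principal case and proves $\mathbf{s}^{\downarrow}\wedge\mathbf{t}^{\downarrow}=\overline{\{\mathbf{a}_{1},\ldots,\mathbf{a}_{m}\}^{\downarrow}}$ by lifting to $S$, using $\sigma(a)=\sigma(sa^{-1}a)=\sigma(ta^{-1}a)$ and a covering contained in $s^{\downarrow}\cap t^{\downarrow}$ via Lemma~4.2(3),(4) and Lemma~4.10(3). Your version is marginally more streamlined in that you obtain a single covering $W$ directly from the transitivity $ad^{-1}d\equiv d\equiv bd^{-1}d$, whereas the paper constructs two coverings and takes their meet, but this is the same mechanism (indeed the transitivity proof in Lemma~4.8 \emph{is} the meet-of-coverings argument).
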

\begin{proof} It remains to prove (3).
Let $s,t \in S$ such that $s^{\downarrow} \cap t^{\downarrow} = \{a_{1}, \ldots, a_{m} \}^{\downarrow}$.
We prove that 
$\mathbf{s}^{\downarrow} \wedge \mathbf{t}^{\downarrow} = \overline{\{\mathbf{a_{1}}, \ldots, \mathbf{a_{m}} \}^{\downarrow}} = A$.
We have that
$\{a_{1}, \ldots, a_{m}\}^{\downarrow} \subseteq s^{\downarrow}$.
Thus 
$\{\mathbf{a_{1}}, \ldots, \mathbf{a_{m}} \}^{\downarrow} \subseteq \mathbf{s}^{\downarrow}$
and so
$A \subseteq \mathbf{s}^{\downarrow}$ by Lemma~4.18(a).
Similarly $A \subseteq \mathbf{t}^{\downarrow}$.
Let $\mathbf{a}^{\downarrow} \subseteq \mathbf{s}^{\downarrow}, \mathbf{t}^{\downarrow}$.
Then $\mathbf{a} \leq \mathbf{s}, \mathbf{t}$.
Thus $\sigma (a) = \sigma (sa^{-1}a) = \sigma (ta^{-1}a)$. 
Let 
$X \in \mathcal{T}(ta^{-1}a) \cap \mathcal{T}(a)$
and
$Y \in \mathcal{T}(a) \cap \mathcal{T}(sa^{-1}a)$.
By Lemma~4.2(3),(4), we have that $X \wedge Y \in \mathcal{T}(a), \mathcal{T}(ta^{-1}a), \mathcal{T}(sa^{-1}a)$.
Observe that $X \wedge Y \subseteq \{a_{1}, \ldots, a_{m} \}^{\downarrow}$.
By Lemma~4.10(3), we have that $\sigma (a) = \bigvee \sigma (X) \wedge \sigma (Y) \leq A$.
The result now follows because every element of $\mathsf{D}_{t}(S)$ 
is a finite join of elements of the form $\mathbf{a}^{\downarrow}$.
\end{proof}

We call the distributive inverse semigroup $\mathsf{D}_{t}(S)$ the {\em tight completion} of $S$.

\section{Applications}

\subsection{Paterson's universal groupoid}

This groupoid was introduced by Paterson \cite{P} in functional-analytic terms, 
and described by means of filters by the second author \cite{L} and in a slightly revised form by the first author \cite{LMS}
in collaboration with Stuart Margolis and Ben Steinberg.
Let $S$ be an inverse semigroup.
If we equip it with the simplest coverage of Examples~4.12(1), then the associated filters are just all filters.
We denote by $U_{s}$ the set of all filters containing $s$ and by $\omega$ the set of all such sets.
The groupoid of all filters equipped with this topology will be denoted by $\mathsf{G}_{\omega}(S)$.
This is the groupoid and topology that one would expect on the basis of the theory developed in this paper so far.
See Theorem~4.24.

We shall now define a different topology on the same groupoid.
For $x$, $x_1,\ldots, x_n \in S$ with $x_1,\ldots, x_n \leq x$, 
the set $U_{x; x_1, \ldots, x_n} $ is defined by 
$$ U_{x; x_1, \ldots, x_n} = U_x \cap U_{x_1}^c \cap\ldots \cap U_{x_n}^c$$
where $U_{x}^c$ is the complement of $U_x$ in the groupoid.
Let $\Omega$ be the set of all such subsets.
We call this topology the {\em patch topology}.
With respect to this topology, the groupoid is called the {\em universal groupoid} and is denoted by means of $\mathsf{G}_{u}(S)$ where `u' stands for `universal'.
In \cite{L}, the second author proves that $\Omega$ consists of compact-open sets and that the space of identities is hausdorff.
It therefore follows by Lemma~3.13 that we have the following,  using the terminology of Section~3.5

\begin{theorem} 
The universal groupoid of an inverse semigroup is a weakly boolean groupoid. 
\end{theorem}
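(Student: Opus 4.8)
The plan is to present the theorem as a corollary of the topological analysis of the patch topology carried out in \cite{L}, combined with the reduction results of Lemma~3.13. Recall from Section~3.5 that a weakly boolean groupoid is a coherent \'etale groupoid whose identity space is hausdorff, and that by Lemma~3.13 this amounts to an \'etale groupoid $G$ for which $G_{o}$ is a boolean space. So the whole task splits into two claims: that $\mathsf{G}_{u}(S)$ is \'etale under the patch topology, and that $\mathsf{G}_{u}(S)_{o}$ is hausdorff with a basis of compact-open sets.

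First I would record the groupoid and \'etale structure. That the set of all proper filters forms a groupoid was recalled in Section~1, and under the coarser topology $\omega$ generated by the $U_{s}$ it is the groupoid $\mathsf{G}_{\omega}(S) = \mathsf{G}_{\mathcal{C}}(S)$ for the trivial coverage $\mathcal{C}$, which is \'etale by Proposition~4.6. The patch topology is the finer topology generated by the $U_{s}$ together with their complements; that it is again an \'etale topological groupoid (multiplication and inversion remain continuous, $\mathsf{G}_{u}(S)_{o} = \bigcup_{e \in E(S)} U_{e}$ stays open, and products of open sets stay open) is part of the content of \cite{L}, building on Paterson \cite{P}. Next I would note that every $U_{s}$ is a bisection by Lemma~4.5(1) (the trivial coverage gives $U_{s} = Z_{s}$), so each basic patch set $U_{x;x_{1},\dots,x_{n}} \subseteq U_{x}$ is a bisection as well, a bisection condition being inherited by subsets. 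Since \cite{L} shows that every member of $\Omega$ is compact-open, the family $\Omega$ is therefore a basis of compact-open bisections for $\mathsf{G}_{u}(S)$.

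With these inputs the conclusion is immediate. By Lemma~3.13(1), $\mathsf{G}_{u}(S)$ having a basis of compact-open bisections forces $\mathsf{G}_{u}(S)_{o}$ to have a basis of compact-open sets; together with the hausdorffness of $\mathsf{G}_{u}(S)_{o}$ proved in \cite{L}, this identifies $\mathsf{G}_{u}(S)_{o}$ as a boolean space. A boolean space is coherent in our sense (hausdorff spaces are sober, and in a hausdorff space the compact-open sets are closed under binary intersection, since a compact set is closed and hence meets a compact-open set in a compact-open set), so by Lemma~3.13(4) the groupoid $\mathsf{G}_{u}(S)$ is coherent; being coherent with hausdorff identity space, it is weakly boolean. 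The genuine content, and the only real obstacle, is imported wholesale from \cite{L}: the hausdorffness of the identity space and, above all, the compactness of the basic patch-open sets $U_{x;x_{1},\dots,x_{n}}$, the latter being proved by realising the filter space as a closed, hence compact, subspace of $2^{S}$ with the product topology, on which each patch-basic set is clopen.
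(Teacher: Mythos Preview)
Your proposal is correct and follows essentially the same approach as the paper: cite \cite{L} for the compactness of the patch-basic sets and the hausdorffness of the identity space, then invoke Lemma~3.13 to conclude that the groupoid is weakly boolean. The paper's proof is just the single sentence preceding the theorem statement; your version spells out more carefully which parts of Lemma~3.13 are being used and why a boolean space is coherent, but the logical skeleton is identical.
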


It is of interest to know when the universal groupoid is actually boolean as opposed to being just weakly boolean.
To do this, we shall use another description of the topology of   $\mathsf{G}_{u}(S)$   based on an alternative way of regarding filters.
With each filter $X$, we may associate a function $j(X) \colon S \rightarrow \{0,1 \}$ 
defined by 
$$\mbox{$j(X)(x)=1$ if $x \in X$ and $j(X) (x)=0$ otherwise}.$$
If we regard $\Gamma$ as a poset with respect to its natural partial order and $\mathbf{2} = \{0,1\}$ as a meet-semilattice then $j(X)$ is an order-preserving map
with the additional property that if $j(X)(a) = 1$ and $j(X)(b) = 1$ there exists $c \leq a,b$ such that $j(X)(c) = 1$.
Conversely, if $\alpha \colon S \rightarrow \mathbf{2}$ is an order-preserving map satisfying the additional property then
the set $X$ of all elements $a \in \Gamma$ such that $\alpha (a) = 1$ is a filter.
In this way, we set up a bijective correspondence between filters on $S$
and certain kinds of order-preserving maps $S \rightarrow \mathbf{2}$.
In any event, the map $j$ is injective. 
Thus we can consider   $\mathsf{G}_{u}(S)$  as a subset of $\{0,1\}^{S}$. 
The space $\{0,1\}^{S}$ carries a canonical topology: namely, the  product topology where $\{0,1\}$ is given the discrete topology.
The equivalence of (1) and (5) below was first proved by Steinberg \cite{Stei}.

\begin{proposition} \label{theorem-characterization-hausdorff} Let $S$ be an inverse semigroup. Then the following assertions are equivalent:

\begin{enumerate}

\item The groupoid $\mathsf{G}_{u}(S)$ is hausdorff.

\item  Every compact set in  $\mathsf{G}_{u}(S)$  is closed.

\item The map $j \colon \mathsf{G}_{u}(S) \longrightarrow \{0,1\}^{S}$ is continuous.

\item The topology of $\mathsf{G}_{u}(S)$ is induced from the map $j \colon \mathsf{G}_{u}(S) \longrightarrow \{0,1\}^{S}$.   

\item The semigroup satisfies the weak meet condition.

\end{enumerate}
\end{proposition}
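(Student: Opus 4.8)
The plan is to run a cycle of implications anchored on statement~(4), which I restate as the equality of two topologies on the common underlying set of filters: the patch topology $\tau_u$ with basis $\Omega$, and the initial topology $\tau_j$ that $j$ induces from the product topology on $\{0,1\}^{S}$. Since the product topology is generated by the subbasic sets $\{f \colon f(x)=1\}$ and $\{f \colon f(x)=0\}$, whose $j$-preimages are exactly $U_x$ and $U_x^c$, the topology $\tau_j$ is generated by $\{U_x, U_x^c \colon x \in S\}$. Every patch-basic set $U_{x;x_1,\ldots,x_n} = U_x \cap U_{x_1}^c \cap \cdots \cap U_{x_n}^c$ is therefore $\tau_j$-open, so $\tau_u \subseteq \tau_j$ always holds. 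Consequently $j$ is continuous (statement~(3)) if and only if $\tau_j \subseteq \tau_u$, if and only if $\tau_u = \tau_j$, which is statement~(4); this gives (3)$\Leftrightarrow$(4) for free. Moreover, since $j$ is injective and $\{0,1\}^{S}$ is Hausdorff, the initial topology $\tau_j$ is always Hausdorff, so (4) forces $\tau_u$ to be Hausdorff, giving (4)$\Rightarrow$(1); and (1)$\Rightarrow$(2) is the standard fact that compact subsets of a Hausdorff space are closed.

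First I would close the loop back to (4) from (2), using the result quoted from \cite{L} that every set in $\Omega$, in particular each $U_s$, is compact in the patch topology. If every compact set is closed, then each $U_s$ is $\tau_u$-closed, so each $U_s^c$ is $\tau_u$-open; since the $U_s^c$ are what $\tau_j$ adds to $\tau_u$, this yields $\tau_j \subseteq \tau_u$ and hence (4). This establishes the cycle (4)$\Rightarrow$(1)$\Rightarrow$(2)$\Rightarrow$(4), so (1),(2),(3),(4) are equivalent.

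It remains to tie in the weak meet condition, i.e.\ to prove (4)$\Leftrightarrow$(5). For (5)$\Rightarrow$(4) I would show directly that each $U_y^c$ is patch-open. Given a proper filter $X$ with $y \notin X$, pick any $x \in X$ and write $x^{\downarrow} \cap y^{\downarrow} = \{c_1,\ldots,c_m\}^{\downarrow}$ by the weak meet condition; each $c_i \leq x$, and $c_i \notin X$ since $c_i \leq y$ and $X$ is upward closed. Then $U_{x;c_1,\ldots,c_m}$ is a patch-neighbourhood of $X$ contained in $U_y^c$: any filter lying in both $U_x$ and $U_y$ contains, by directedness, a non-zero common lower bound of $x$ and $y$, which lies below some $c_i$ and thus forces that filter into $U_{c_i}$. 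Hence $U_y^c$ is a union of such neighbourhoods, and (4) follows.

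The converse (4)$\Rightarrow$(5) is where the real work lies. Assuming (4), each $U_t$ is $\tau_u$-closed (its complement is $\tau_j$-open, hence $\tau_u$-open), so for arbitrary $s,t$ the set $U_s \cap U_t$ is a closed subset of the compact set $U_s$ and is therefore compact. A proper filter contains both $s$ and $t$ exactly when it contains a non-zero common lower bound, so $U_s \cap U_t = \bigcup_{c \in s^{\downarrow} \cap t^{\downarrow}} U_c$, and compactness extracts a finite subcover $U_s \cap U_t = U_{c_1} \cup \cdots \cup U_{c_m}$. The crux is then to upgrade this equality of filter-sets to the order-theoretic identity $s^{\downarrow} \cap t^{\downarrow} = \{c_1,\ldots,c_m\}^{\downarrow}$: given a non-zero $d \leq s,t$, the principal filter $d^{\uparrow}$ is proper and lies in $U_s \cap U_t$, hence in some $U_{c_i}$, which means $d \leq c_i$; this is exactly the weak meet condition. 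The main obstacle is precisely this last inference—passing from the topological finite subcover back to finite generation of the order ideal—because the naive common lower bound inside a filter does not directly bound a given $d$, and invoking the principal filter $d^{\uparrow}$ is the device that repairs this. Assembling all the implications then yields the single equivalence (1)$\Leftrightarrow$(2)$\Leftrightarrow$(3)$\Leftrightarrow$(4)$\Leftrightarrow$(5).
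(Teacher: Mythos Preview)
Your proposal is correct and proceeds along the same lines as the paper's proof: both hinge on the compactness of the sets $U_s$ (quoted from \cite{L}), the Hausdorffness of the product space $\{0,1\}^S$, and the finite-subcover extraction for the weak meet condition. The differences are organizational rather than substantive. The paper runs the single cycle $(4)\Rightarrow(1)\Rightarrow(2)\Rightarrow(5)\Rightarrow(4)$, whereas you first close $(4)\Rightarrow(1)\Rightarrow(2)\Rightarrow(4)$ via the observation that once each $U_s$ is closed the complements $U_s^c$ already generate $\tau_j$ over $\tau_u$, and only then handle $(4)\Leftrightarrow(5)$ separately. Your $(5)\Rightarrow(4)$ is also slightly leaner: you show directly that each $U_y^c$ is patch-open, while the paper verifies the full subbasic-neighbourhood inclusion~(\ref{stern}). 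Conversely, in passing from the finite subcover $U_s\cap U_t=\bigcup U_{c_i}$ to the order-theoretic statement $s^{\downarrow}\cap t^{\downarrow}=\{c_1,\ldots,c_m\}^{\downarrow}$ you spell out the principal-filter trick $d^{\uparrow}\in U_{c_i}\Rightarrow d\leq c_i$, which the paper leaves implicit in its ``we deduce~(5)''. Neither reorganization changes the mathematics.
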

\begin{proof} The topology of $\mathsf{G}_{u}(S)$
obviously comes from the restriction of certain open subsets of $\{0,1\}^{S}$ to   $\mathsf{G}_{u}(S)$ . 
Thus the equivalence of (3) and (4) is clear.
We are now going to show $ (4) \Longrightarrow (1) \Longrightarrow (2)\Longrightarrow (5) \Longrightarrow (4)$.
Observe that the implications $(4)\Longrightarrow (1)$ and $(1)\Longrightarrow (2)$ are immediate.

$(2)\Longrightarrow (5)$.  Let $x_1,\ldots, x_n \in  S$ and assume that $U = \bigcap_{j=1}^n U_{x_j}$ is non-empty. 
The sets $U_{x_{j}}$ are compact and therefore by assumption they are closed and so their union is closed.
But $U \subseteq U_{x_{1}}$ is a closed subset of a compact set and so is itself compact.
Let $X\in U$.
Then there exists $x \in X$ such that $x \leq x_1,\ldots, x_n$
and $X \in U_x \subseteq U$.
As $X\in U$ was arbitrary, we can express $U$ as the union of all these $U_x$. 
But $U$ is compact, and so we deduce (5).

$(5)\Longrightarrow  (4)$.  The proof is similar to the proof given in the case where the semigroup is an inverse $\wedge$-semigroup
described in \cite{L}. 
To show that the topology induced by $j$  agrees with the original topology, we have
to show the following: for arbitrary $X\in  \mathsf{G}_{u}(S) $ and $x_1,\ldots, x_n,
y_1,\ldots, y_m$ with  $X\in U_{x_1}\cap\ldots\cap U_{x_n} \cap
U_{y_1}^c\cap \ldots \cap U_{y_m}^c$, there exist $z_1,\ldots, z_k \leq z$ with
\begin{equation} \label{stern}X\in U_{z;z_1,\ldots,z_k}\subseteq U_{x_1}\cap\ldots\cap U_{x_n} \cap
U_{y_1}^c\cap \ldots \cap U_{y_m}^c.
\end{equation}
By the weak meet condition we have
$$U_{x_1}\cap\ldots\cap U_{x_n} = U_{a_1}\cup \ldots \cup U_{a_m}$$
for suitable $a_1,\ldots, a_l \in \Gamma$ with $a_j < x_k$ for all $j$ and $k$.

Let  $X\in U_{x_1}\cap\ldots\cap U_{x_n}$.
Then there exists then a $k\in \{1,\ldots, m\}$
with $X\in U_{a_k}$. Without loss of generality we can assume $k=1$. 
 This gives
$$X \in U_{a_1} \subseteq U_{x_1} \cap \ldots \cap U_{x_n}.$$ 
We now have to deal  with the $U_{y_j}^c$.  For each $j$ we consider $U_{y_{j}} \cap U_{a_1}$. By the weak meet condition, 
we have
$$ U_{y_{j}} \cap U_{a_1} = U_{b_1^{(j)}} \cup \ldots U_{b_{l(j)}^{j}}$$
with suitable $b_1^{(j)},\ldots, b_{l(j)}^{(j)} \leq y_j, a_1$. As $X$ does not belong to $U_{y_j}$, $j=1,\ldots, m$, we obtain
$$ X\notin U_{b_k^{(j)}}$$
for  $k=1,\ldots, l(j)$. Putting this together we arrive at
$$X \in U_{a_1;b_1^{(1)},\ldots b_{l(1)}^{(1)}, \ldots, b_1^{(m)},\ldots, b_{l(m)}^{(m)}}\subseteq
U_{x_1}\cap\ldots\cap U_{x_n} \cap
U_{y_1}^c\cap \ldots \cap U_{y_m}^c$$
 and we are done.
\end{proof}

We therefore have the following result.

\begin{corollary} The universal groupoid of an inverse semigroup is boolean if and only if the inverse semigroup
satisfies the weak meet condition.
\end{corollary}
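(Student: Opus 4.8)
The plan is to read the statement as a purely formal consequence of three results already in hand: the fact (Theorem~5.1) that $\mathsf{G}_{u}(S)$ is always weakly boolean, the identification (Proposition~3.23(1)) of boolean groupoids with the Hausdorff coherent \'etale groupoids, and the characterisation (Proposition~\ref{theorem-characterization-hausdorff}) of the Hausdorffness of $\mathsf{G}_{u}(S)$ by the weak meet condition. The key observation is that a weakly boolean groupoid is already a coherent \'etale groupoid whose identity space is Hausdorff, so the \emph{only} property separating it from a boolean groupoid is Hausdorffness of the total space. Hence the corollary should reduce to translating ``Hausdorff'' into ``weak meet condition''.

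First I would record, from Theorem~5.1, that $\mathsf{G}_{u}(S)$ is weakly boolean and therefore a coherent \'etale groupoid. For the direction ($\Leftarrow$), assume $S$ satisfies the weak meet condition; by the equivalence of conditions (1) and (5) in Proposition~\ref{theorem-characterization-hausdorff}, the groupoid $\mathsf{G}_{u}(S)$ is then Hausdorff, and since it is also coherent, Proposition~3.23(1) lets me conclude that it is a boolean groupoid. For the direction ($\Rightarrow$), suppose $\mathsf{G}_{u}(S)$ is boolean; by the definition of a boolean groupoid it is Hausdorff, so applying Proposition~\ref{theorem-characterization-hausdorff} in the reverse direction gives the weak meet condition on $S$. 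This closes the biconditional.

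The main thing to be careful about is not a genuine difficulty but an alignment of definitions: I must verify that ``weakly boolean'' supplies every clause of the definition of ``boolean groupoid'' except global Hausdorffness, so that Hausdorffness really is the single missing ingredient. This is exactly what Proposition~3.23(1) certifies, since it asserts that among coherent \'etale groupoids the boolean ones are precisely the Hausdorff ones; coherence already delivers a basis of compact-open bisections and, via Lemma~3.13, that the (Hausdorff) identity space is a boolean space. Granting this alignment, no computation remains, and the corollary follows by splicing together Theorem~5.1, Proposition~3.23(1), and Proposition~\ref{theorem-characterization-hausdorff}.
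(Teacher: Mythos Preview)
Your proposal is correct and matches the paper's intended argument. The paper offers no explicit proof but introduces the corollary with ``We therefore have the following result'' immediately after Proposition~\ref{theorem-characterization-hausdorff}, signalling exactly the splicing of Theorem~5.1, Proposition~3.23(1), and Proposition~\ref{theorem-characterization-hausdorff} that you carry out.
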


But what exactly is the universal groupoid?
Paterson gave one answer in terms of topological groupoids \cite{P}.
We shall give another in terms of inverse semigroups.

Let $S$ be a distributive inverse semigroup and $\mathsf{G}_{P}(S)$ its associated coherent groupoid of prime filters.
Recall that a basis is given by $\pi = \{Y_{s} \colon s \in S \}$ where $Y_{s}$ is the set of all prime filters containing $s$.
Define $\Pi = \{ Y_{s} \cap Y_{t}^{c} \colon s,t \in S, \,  t \leq s \}$.
It is convenient to define $Y_{s;t} = Y_{s} \cap Y_{t}^{c}$ where $t \leq s$.

\begin{lemma} Let $S$ be a distributive inverse semigroup.
\begin{enumerate}

\item With the above definition, $\Pi$ is the basis for a topology on the groupoid $\mathsf{G}_{P}(S)$.

\item If $S$ is also an inverse $\wedge$-semigroup, 
then the set $Y_{s} \cap Y_{t}^{c}$, where $s$ and $t$ are arbitrary, is equal to an element of  $\Pi$.

\item If $S$ is weakly boolean then the topologies generated by $\pi$ and $\Pi$ are the same.

\item $Y_{s \vee t ; u \vee v} = Y_{s; (u \vee v)s^{-1}s} \cup Y_{t ; (u \vee v)t^{-1}t}$.

\end{enumerate}
\end{lemma}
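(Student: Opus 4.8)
The plan is to run all four parts off a small toolkit. For a prime filter $P$ and elements admitting a join, primeness gives $a \vee b \in P$ iff $a \in P$ or $b \in P$; by Lemma~3.11(2) we have $Y_a \subseteq Y_b$ iff $a \leq b$; and any two elements below a common element are compatible, so compatible elements $a,b$ possess both a meet $a \wedge b = ab^{-1}b$ and a join $a \vee b$ \emph{even when $S$ is not a $\wedge$-semigroup}. Combined with directedness of filters these yield the two working identities $Y_a \cap Y_b = \bigcup_{c \leq a,b} Y_c$ (equal to $Y_{a\wedge b}$ when $a,b$ are compatible) and $Y_a \cup Y_b = Y_{a\vee b}$ whenever the join exists. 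For (1), the sets $Y_s = Y_{s;0}$ (with $0 \leq s$ and $Y_0 = \emptyset$, filters being proper) lie in $\Pi$ and cover $\mathsf{G}_{P}(S)$. For the intersection condition I would take $P \in Y_{s_1;t_1} \cap Y_{s_2;t_2}$, use directedness to pick $p \in P$ with $p \leq s_1,s_2$, and reduce to showing $Y_p \cap Y_{t_1}^{c} \cap Y_{t_2}^{c}$ is a single member of $\Pi$. Since $t_i \leq s_i$ by definition of $\Pi$ and $p \leq s_i$, the pair $p,t_i$ is compatible, so $p \wedge t_i$ exists and $Y_p \cap Y_{t_i} = Y_{p\wedge t_i}$, giving $Y_p \cap Y_{t_i}^{c} = Y_{p; p\wedge t_i}$; as $p\wedge t_1, p\wedge t_2 \leq p$ are compatible, $b := (p\wedge t_1)\vee(p\wedge t_2) \leq p$ exists, $Y_{p\wedge t_1}\cup Y_{p\wedge t_2} = Y_b$, and De Morgan gives $Y_p \cap Y_{t_1}^{c}\cap Y_{t_2}^{c} = Y_{p;b}$. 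Hence $Y_{s_1;t_1}\cap Y_{s_2;t_2} = \bigcup_{p\leq s_1,s_2} Y_{p;(p\wedge t_1)\vee(p\wedge t_2)}$ is a union of members of $\Pi$.

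Part (2) is the collapse of the same computation: when $S$ is a $\wedge$-semigroup $Y_s \cap Y_t = Y_{s\wedge t}$, so for arbitrary $s,t$ one gets $Y_s \cap Y_t^{c} = Y_{s;\,s\wedge t} \in \Pi$. For (3) I would invoke the relative complement $s\setminus t$ of Lemma~3.27, available because the idempotents form a Boolean algebra: from $s = t \vee (s\setminus t)$ with $t$ and $s\setminus t$ orthogonal, primeness yields $Y_s = Y_t \cup Y_{s\setminus t}$ while orthogonality forces $Y_t \cap Y_{s\setminus t} = \emptyset$ (a proper filter cannot contain two orthogonal elements). Thus $Y_{s;t} = Y_{s\setminus t} \in \pi$, so $\Pi \subseteq \pi$; since $\pi \subseteq \Pi$ trivially, the two collections generate the same topology.

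For (4) I would first confirm the right-hand terms are genuine members of $\Pi$: using $v \leq t$ and the compatibility identity $t s^{-1}s = s t^{-1}t$ gives $v s^{-1}s = s v^{-1}v \leq s$, whence $(u\vee v)s^{-1}s = u \vee s v^{-1}v \leq s$, and symmetrically $(u\vee v)t^{-1}t \leq t$. The identity itself is a membership check: for a prime filter $P$, primeness rewrites the left-hand side as $\{P : (s\in P \text{ or } t\in P),\ u\notin P,\ v\notin P\}$. On the right, the observation that $s \in P$ forces $u s^{-1}s \in P \Leftrightarrow u \in P$ (forward because $u s^{-1}s \leq u$, backward by choosing $c \in P$ below $u$ and $s$, so $c = c s^{-1}s \leq u s^{-1}s$) identifies $Y_{s;(u\vee v)s^{-1}s}$ with $\{P : s\in P,\ u,v\notin P\}$ and the $t$-term with $\{P : t\in P,\ u,v\notin P\}$; their union is exactly the left-hand side.

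The step I expect to be the real obstacle is the intersection condition in (1): absent binary meets it looks as though $Y_p \cap Y_{t_i}$ might be an uncontrollable union of basic sets with no single element below $p$ to subtract. The decisive point is that the only meets and joins the argument actually needs — $p\wedge t_i$ and $(p\wedge t_1)\vee(p\wedge t_2)$ — exist automatically, since the elements involved sit below a common element and are therefore compatible; once this is recognized, everything else is routine bookkeeping with primeness and De Morgan.
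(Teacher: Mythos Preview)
Your proposal is correct and follows essentially the same approach as the paper. In each part the moves coincide: in (1) you pick $p\leq s_1,s_2$ in the given filter and set $b=(p\wedge t_1)\vee(p\wedge t_2)$, which is exactly the paper's $c=zt^{-1}t\vee zv^{-1}v$ in meet notation; (2) and (3) are identical to the paper's arguments; and (4) is the same membership check, with your equivalence ``$s\in P$ forces $us^{-1}s\in P\Leftrightarrow u\in P$'' making explicit the step the paper leaves implicit.
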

\begin{proof} (1) Suppose that $(Y_{s} \cap Y_{t}^{c}) \cap (Y_{u} \cap Y_{v}^{c}) \neq \emptyset$ where $t \leq s$ and $v \leq u$.
Let $P$ be any prime filter belonging to this set.
Then $s,u \in P$ and $t,v \notin P$.
It follows that there exists $z \in P$ such that $z \leq s,u$.
Put $a = zt^{-1}t \leq t$ and $b = zv^{-1}v \leq v$.
Since $a,b \leq z$ we have that $c = a \vee b$ exists.
Consider now the set $Y_{z} \cap Y_{c}^{c}$.
Suppose that $c \in P$.
Then since $P$ is a prime filter either $a \in P$ or $b \in P$.
Suppose that $a \in P$.
Then $t \in P$ which is a contradiction.
It follows that $P \in Y_{z} \cap Y_{c}^{c}$.
Let $Q \in Y_{z} \cap Y_{c}^{c}$.
Then $a,b \in Q$.
Suppose that $t \in Q$.
Then since $a = zt^{-1}t$ we would have $a \in Q$ which is a contradiction since $Q$ omits $c$.
Thus $t \notin Q$.
Similarly, $v \notin Q$.
It follows that $P \in Y_{z} \cap Y_{c}^{c} \subseteq (Y_{s} \cap Y_{t}^{c}) \cap (Y_{u} \cap Y_{v}^{c})$.
Thus $\Pi$ is the basis for a topology on $\mathsf{G}_{P}(S)$. 

(2) It is easy to check that
$Y_{s} \cap Y_{t}^{c}
=
Y_{s} \cap Y_{s \wedge t}^{c}$.

(3) Let $t \leq s$. Then $\mathbf{d}(t) \leq \mathbf{d}(s)$.
By assumption, the semilattice of idempotents of $S$ is a boolean algebra and so there exists an idempotent $e$ such that
$e \leq \mathbf{d}(s)$,
$e \wedge \mathbf{d}(t) = 0$.
and $\mathbf{s} = e \vee \mathbf{d}(t)$.
By distributivity we have that $s = se \vee t$ and it is easy to check that $se \wedge t = 0$.
Put $u = se$.
We prove that $Y_{s} \cap Y_{t}^{c} = Y_{u}$.
Let $X$ be a prime filter in  $Y_{s} \cap Y_{t}^{c}$.
Then $s \in X$.
But $X$ is a prime filter and so either $u \in X$ or $t \in X$.
We cannot have the latter and so $u \in X$ and $X \in Y_{u}$.
Conversely, suppose that $X \in Y_{u}$.
Then $s \in X$ and we cannot have $t \in X$ because $u \wedge t = 0$.

(4) Observe first that $(u \vee v)s^{-1}s \leq s$.
We have that $(u \vee v)s^{-1}s \leq (s \vee t)s^{-1}s$.
But $(s \vee t)s^{-1}s = s \vee ts^{-1}s$ and since $s$ and $t$ are compatible $ts^{-1}$ is an idempotent.
Thus $ts^{-1}s \leq s$.
It follows that $s \vee ts^{-1}s = s$, as required.

Let $P \in Y_{s \vee t ; u \vee v}$.
Thus $s \vee t \in P$.
Since $P$ is a prime filter either $u \in P$ or $v \in P$.
Suppose that $u \in P$ and $(u \vee v)s^{-1}s \in P$.
Then $u \vee v \in P$ which is a contradiction.
It is clear that the lefthand-side is contain in the righthand-side.
Now let $P \in Y_{s; (u \vee v)s^{-1}s} \cup Y_{t ; (u \vee v)t^{-1}t}$.
Suppose that $P \in Y_{s; (u \vee v)s^{-1}s}$.
Then $s \in P$ implies that $s \vee t \in P$.
Suppose that $u \vee v \in P$.
Then $(u \vee v)s^{-1}s \in P$ which is a contradiction.
It follows that the righthand-side is contained in the lefthand-side. 
\end{proof}

It follows from (2) above that the topology generated by $\Pi$ is a generalization of the {\em patch topology} described on page~72 of \cite{J}.

\begin{proposition} Let $S$ be a distributive inverse semigroup.
\begin{enumerate}

\item $Y_{s;t}Y_{u;v} = Y_{su;sv \vee tu \vee tv}$.

\item $Y_{s;t}^{-1} = Y_{s^{-1} ; t^{-1}}$.

\item The groupoid $\mathsf{G}_{P}(S)$ equipped with the patch topology is weakly boolean.

\end{enumerate}
\end{proposition}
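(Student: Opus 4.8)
The plan is to dispose of \textbf{(2)} first, then \textbf{(1)}, and finally assemble \textbf{(3)} from these together with the previous lemma and the classical Stone duality for distributive lattices. Throughout I use the facts, available for the groupoid $\mathsf{G}_P(S)$ exactly as for completely prime filters in Lemma~2.6, that $Y_aY_b=Y_{ab}$, that inversion $P\mapsto P^{-1}$ is an order-preserving bijection commuting with complementation and satisfying $Y_s^{-1}=Y_{s^{-1}}$, and the groupoid identities $Q=P^{-1}\cdot R$ and $P=R\cdot Q^{-1}$ whenever $R=P\cdot Q$. For \textbf{(2)} I simply compute, using $(X^c)^{-1}=(X^{-1})^c$ and the fact that $t\leq s$ forces $t^{-1}\leq s^{-1}$,
$$Y_{s;t}^{-1}=(Y_s\cap Y_t^c)^{-1}=Y_s^{-1}\cap (Y_t^{-1})^c=Y_{s^{-1}}\cap (Y_{t^{-1}})^c=Y_{s^{-1};t^{-1}}.$$

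For \textbf{(1)} I first observe that $sv,tu,tv\leq su$, so these three elements are pairwise compatible, the join $sv\vee tu\vee tv$ exists and lies below $su$, and since every $R$ involved is prime we have $sv\vee tu\vee tv\notin R$ if and only if none of $sv,tu,tv$ lies in $R$. For the inclusion $\subseteq$, take $R=P\cdot Q$ with $P\in Y_{s;t}$ and $Q\in Y_{u;v}$. Then $su\in PQ\subseteq R$. If $sv\in R$ then $s^{-1}sv\in P^{-1}\cdot R=Q$ and $s^{-1}sv\leq v$, whence $v\in Q$, a contradiction; if $tu\in R$ then $tuu^{-1}\in R\cdot Q^{-1}=P$ and $tuu^{-1}\leq t$, whence $t\in P$, a contradiction; and $tv\in R$ would force $tu\in R$ by upward closure. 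Hence $R\in Y_{su;\,sv\vee tu\vee tv}$. For the reverse inclusion, given such an $R$ we have $su\in R$, so $R\in Y_{su}=Y_sY_u$ and we may factor $R=P\cdot Q$ with $s\in P$ and $u\in Q$; then automatically $t\notin P$ (else $tu\in PQ\subseteq R$) and $v\notin Q$ (else $sv\in PQ\subseteq R$), so $P\in Y_{s;t}$, $Q\in Y_{u;v}$ and $R\in Y_{s;t}Y_{u;v}$.

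For \textbf{(3)}, parts \textbf{(1)} and \textbf{(2)}, together with the previous lemma, show that $\Pi=\{Y_{s;t}\}$ is a basis of open bisections closed under the partial multiplication and under inversion; continuity of multiplication and inversion follows as in Lemma~2.7, and the sets $Y_e$ for idempotent $e$ cover and lie inside the unit space, so $\mathsf{G}_P(S)$ with the patch topology is an \'etale groupoid. It then remains to analyse the unit space. The idempotent prime filters of $S$ correspond bijectively to the prime filters of the distributive lattice $E(S)$, and the subspace patch topology on $\mathsf{G}_P(S)_o$ is precisely the patch topology on the spectrum of $E(S)$, which by classical Stone duality for distributive lattices \cite{J} is a boolean space: it is hausdorff (two distinct idempotent prime filters are separated by $Y_e$ and its complement for any idempotent $e$ distinguishing them) and has a basis of compact-open sets, hence is coherent. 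By the lemma relating coherence of $G$ to that of $G_o$, the groupoid $\mathsf{G}_P(S)$ is then coherent with hausdorff unit space, that is, weakly boolean, with the $Y_{s;t}$ as its compact-open bisections.

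The main obstacle is the compactness hidden in \textbf{(3)}: that each basic patch set $Y_{s;t}$ is genuinely compact in the finer patch topology. This I would reduce, via the bisection homeomorphism $\mathbf{d}\colon Y_{s;t}\to\mathbf{d}(Y_{s;t})\subseteq\mathsf{G}_P(S)_o$, to compactness of a basic clopen subset of the Stone space of $E(S)$, which is the finite-intersection-property argument underlying Stone duality and is the exact analogue for $\mathsf{G}_P(S)$ of the compactness of the patch sets $\Omega$ for the universal groupoid established in \cite{L}.
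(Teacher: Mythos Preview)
Your arguments for (1) and (2) are correct. For the reverse inclusion in (1) your factorisation via $Y_{su}=Y_sY_u$ is neater than the paper's explicit construction of the two filters, and your contradiction arguments for the forward inclusion, using $Q=P^{-1}\cdot R$ and $P=R\cdot Q^{-1}$, are essentially a cleaner repackaging of the paper's computation.

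The genuine gap is in (3), in the sentence ``continuity of multiplication and inversion follows as in Lemma~2.7''. Lemma~2.7 treats the coarser topology $\pi$ generated by the sets $Y_s$, not the patch topology $\Pi$, and its description $m^{-1}(Y_s)=\bigcup_{ab\le s}Y_a\times Y_b$ does not carry over. Your formula (1) shows that the product of two basic $\Pi$-open sets is again basic, but continuity of $m$ is the converse problem: given $P\cdot Q\in Y_{a;b}$ you must \emph{find} basic neighbourhoods $Y_{s;t}\ni P$ and $Y_{u;v}\ni Q$ with $Y_{s;t}Y_{u;v}\subseteq Y_{a;b}$. This requires a specific choice. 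The paper (with target $Y_{s;t}$ and $U\cdot V=X$) picks $u\in U$, $v\in V$ with $uv\le s$ and $u^{-1}u=vv^{-1}$, then takes $Y_{u;\,tt^{-1}u}$ and $Y_{v;\,vt^{-1}t}$ as the neighbourhoods; one checks directly that $tt^{-1}u\notin U$ and $vt^{-1}t\notin V$, and your formula (1) then confirms that the product $Y_{u;\,tt^{-1}u}Y_{v;\,vt^{-1}t}=Y_{uv;\,uvt^{-1}t\vee tt^{-1}uv}$ lies inside $Y_{s;t}$ (since $uv\le s$ and any filter containing both $uv$ and $t$ contains $uvt^{-1}t$). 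So your (1) is the right tool, but the choice of excluded parts $tt^{-1}u$ and $vt^{-1}t$ is the missing ingredient; without it the \'etale claim, and hence the appeal to Lemma~3.13, is unsupported.

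Your reduction of the unit space to the patch spectrum of the distributive lattice $E(S)$ and the appeal to classical Stone duality is correct and is exactly what the paper does (via the embedding into $\{0,1\}^{E(S)}$). Your closing paragraph on compactness is also on target: once \'etale-ness is in hand, the homeomorphism $\mathbf{d}\colon Y_{s;t}\to Y_{s^{-1}s;\,t^{-1}t}\subseteq\mathsf{G}_P(S)_o$ reduces compactness of the basic patch bisections to the boolean-space statement for $E(S)$, which is precisely the mechanism behind Lemma~3.13(1).
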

\begin{proof} (1) Observe that since $t \leq s$ and $v \leq u$ we have that $sv, tu, tv \leq su$.
Thus the join $sv \vee tu \vee tv$ exists.
Let $X \in Y_{s;t}$v and $Y \in  Y_{u;v}$.
Then $su \in XY$.
Suppose that $sv \vee tu \vee tv \in XY$.
But $XY$ is a prime filter and so either $sv \in XY$ or $tu \in XY$ or $tv \in XY$ but each of these is ruled out.
Thus $Y_{s;t}Y_{u;v} \subseteq Y_{su;sv \vee tu \vee tv}$.
We now prove the reverse inclusion.
Put $w = sv \vee tu \vee tv$. 
Let $Z \in  Y_{su;w}$.
Put $X = (s (uZ^{-1} \cdot Z u^{-1} )^{\uparrow})^{\uparrow}$ 
and 
$Y = (uZ^{-1} \cdot Z)^{\uparrow}$.
Then $X \cdot Y = Z$ and by construction both $X$ and $Y$ are prime filters.
Clearly $s \in X$ and $u \in Y$.
Suppose $t \in X$.
Then we may write $sueu^{-1} \leq t$ for some $e \in E(Z^{-1} \cdot Z)$.
Thus $(su)e(u^{-1}s^{-1})(su) \leq ts^{-1}su$.
But $(su)e(u^{-1}s^{-1})(su) \in Z$ and since $t \leq s$ we have that $ts^{-1}su = tu$.
Thus we deduce that $tu \in Z$, which is a contradiction.
Suppose $v \in Y$.
Then $ue \leq v$ for some $e \in E(Z^{-1} \cdot Z)$.
Thus $sue \leq sv$
But $(su)e \in ZZ^{-1}Z = Z$.
Thus $sv \in Z$, which is a contradiction.
It follows that $X \in Y_{s;t}$ and $Y \in Y_{u;v}$ as required.

(2) We have that $X \in Y_{s;t}$ if and only if $X^{-1} \in Y_{s^{-1};t^{-1}}$
and we know that $X$ is a prime filter if and only if $X^{-1}$ is a prime filter.

(3) We prove first that $\mathsf{G}_{P}(S)$ is also an etale groupoid with respect to the patch topology.
By (2) above, it is clear that the inversion map is continuous with respect to the patch topology.

We now show that the multiplication map is continuous with respect to the patch topology.
We use the same idea as in the proof of Proposition~4.3 of \cite{L}.
We prove that $m^{-1} (Y_{s;t})$ is open.
Let $U\cdot V = X \in Y_{s;t}$.
Then $s \in U\cdot V$ and so we may find $u \in U$ and $v \in V$ such that $uv \leq s$.
In addition, we may assume that $u^{-1}u = vv^{-1}$ since $U^{-1} \cdot U = V \cdot V^{-1}$.
Consider now $Y_{u;tt^{-1}u}$ and $Y_{v;vt^{-1}t}$.
We claim that $U \in Y_{u;tt^{-1}u}$ and $V \in Y_{v;vt^{-1}t}$.
Suppose that $tt^{-1}u \in U$.
Then $tt^{-1}uv \in Z$.
But $tt^{-1}uv \leq tt^{-1}s = t$ which implies that $t \in Z$, which is a contradiction.
Suppose that $vt^{-1}t \in V$.
Then $uvt^{-1}t \in Z$ and $uvt^{-1}t \leq st^{-1}t = t$, which is a contradiction.
It remains to show that if $A \in Y_{u;tt^{-1}u}$ and $B \in Y_{v;vt^{-1}t}$ and $A \cdot B$ exists then $A \cdot B \in Y_{s;t}$.
Since $u \in A$ and $v \in B$ we have that $uv \in A \cdot B$ and $uv \leq s$ gives $s \in A \cdot B$.
Suppose that $t \in A \cdot B$.
Then $ab \leq t$ where $a \in A$ and $b \in B$ and we may assume that $a^{-1}a = bb^{-1}$ since $A \cdot B$ exists.
From the fact that $A$ is a filter and $u \in A$ we may also assume that $a \leq u$.
Similarly we may assume that $b \leq v$.
We now calculate
$$a = au^{-1}u = a(bb^{-1})u^{-1}u = (ab)b^{-1}u^{-1}u \leq t(b^{-1}u^{-1})u \leq ts^{-1}u = tt^{-1}ss^{-1}u$$
which is equal to $tt^{-1}u$.
But this contradicts the fact that $tt^{-1}u \notin A$.

We need to show that the set of identities is an open subspace with respect to the patch topology.
Let $P$ be an identity in the groupoid $\mathsf{G}_{P}(S)$. 
Then it is an inverse subsemigroup by Lemma~2.3 and so contains an idempotent $e$, say.
Then $P \in Y_{e}$ which consists entirely of idempotent prime filters.
By (1) above and an argument similar to the one employed in the proof of Proposition~2.8,
we deduce that the product of open sets is open.
We have therefore proved that our groupoid is \'etale.

To show that the groupoid with the patch topology is weakly boolean it is enough,
by Section~3.4, to show that the set of identities of $\mathsf{G}_{P}(S)$,
with respect to the subspace topology, is a boolean space.
The case where the distributive lattice has a top element is dealt with in Proposition~II.4.5 of \cite{J}.
We deal with the general case here and use the same idea as that used in the proof of Proposition~5.2.
The idea for this proof goes back to Section~4.3 of Paterson \cite{P}.
Observe first that we can restrict our attention to the distributive lattice $E(S)$ since idempotent filters are determined by the idempotents they contain.
We give the set $\mathbf{2}^{E(S)}$ the discrete topology.
By the Axiom of Choice this is a compact space.
A subbase for this topology is given by sets of the form $U_{e}$ and $U_{e}^{c}$ where $e \in E(S)$ and
$$U_{e} = \{\theta \colon E(S) \rightarrow \mathbf{2} \colon \theta (e)(1) = 1\}
\mbox{ and }
U_{e}^{e} = \{\theta \colon E(S) \rightarrow \mathbf{2} \colon \theta (e)(1) = 1\}.$$
With each filter, we may associate an element $j(X)$ of $\mathbf{2}^{E(S)}$.
This is an injective function.
If we restrict our attention to the prime filters on $E(S)$ then the function $j$ restricts to a bijection with a closed subset of $\mathbf{2}^{E(S)}$.
We denote this closed subset by $\mathbf{P}$.
The simple argument to prove that it really is closed is made explicit in \cite{Law3a}.
 The restriction of the product topology to $\mathbf{P}$ is hausdorff and has a basis of compact-open sets.
The topology generated by $\Pi$ restricted to the prime filters on $E(S)$ is easily seen to be 
homeomorphic to the restriction of the product topology to the set $\mathbf{P}$.
Thus we have shown that the space $\mathsf{G}_{P}(S)$ equipped with the patch topology is boolean.
\end{proof}

If $S$ is a distributive inverse semigroup then the groupoid $\mathsf{G}_{P}(S)$ equipped with the patch topology is called the 
{\em weak booleanization} of the groupoid $\mathsf{G}_{P}(S)$ with its usual topology.
If $S$ is a distributive $\wedge$-semigroup then $\mathsf{G}_{P}(S)$ equipped with the patch topology is called the {\em booleanization}.

Let $S$ be an arbitrary inverse semigroup.
Put $\mathsf{B}(S) = \mathsf{KB}(\mathsf{G}_{u}(S))$.

\begin{theorem}[First booleanization] \mbox{}
\begin{enumerate}

\item The universal groupoid $\mathsf{G}_{u}(S)$ of an inverse semigroup $S$ is homeomorphic to the weak booleanization of the groupoid $\mathsf{G}_{P}(\mathsf{D}(S))$.
In particular, the weakly boolean inverse semigroup $\mathsf{B}(S)$ has the property that the universal groupoid $\mathsf{G}_{u}(S)$ 
is homeomorphic to the groupoid $\mathsf{G}_{P}(\mathsf{B}(S))$.

\item If the inverse semigroup $S$ also satisfies the weak meet condition then this weak booleanization is in fact a booleanization.

\end{enumerate}
\end{theorem}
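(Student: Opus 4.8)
The plan is to build on Theorem~4.24(4), which already identifies the groupoid of all proper filters on $S$, topologised by the sets $U_s$, with $\mathsf{G}_P(\mathsf{D}(S))$ carrying its usual topology generated by the sets $Y_a$. Since $\mathsf{G}_u(S)$ and the weak booleanization of $\mathsf{G}_P(\mathsf{D}(S))$ have exactly this common underlying groupoid, with only their topologies changed (both to a patch topology), I would prove (1) by showing that the bijection $\Phi$ underlying Theorem~4.24(4) stays a homeomorphism after both sides are re-topologised.

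First I would make $\Phi$ explicit. Composing the bijection between filters on $S$ and completely prime filters on $\mathsf{P}(S)=C(S)$ (the trivial-coverage instance of Theorem~4.21) with the order isomorphism of Lemma~\ref{prime-vs-completelyprime}, applied to $\mathsf{D}(S)$ with $\mathrm{Idl}(\mathsf{D}(S))=C(S)$, gives
\[
\Phi(F)=\{a\in\mathsf{D}(S)\colon a\cap F\neq\emptyset\},
\]
where $\delta(x)=x^{\downarrow}$ is the universal map $S\to\mathsf{D}(S)$. The one computation that matters is then
\[
\Phi(F)\in Y_{\delta(x)}\iff x^{\downarrow}\cap F\neq\emptyset\iff x\in F\iff F\in U_x,
\]
the middle equivalence using upward closure of $F$. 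Hence $\Phi(U_x)=Y_{\delta(x)}$ and $\Phi(U_x^c)=Y_{\delta(x)}^c$.

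Next I would match subbases. By definition the patch topology on $\mathsf{G}_u(S)$ is generated by the $U_x$ and their complements $U_x^c$, which $\Phi$ carries to the $Y_{\delta(x)}$ and $Y_{\delta(x)}^c$. Conversely, writing an arbitrary finite element as a finite join $a=\bigvee_{i=1}^k\delta(x_i)$ of principal ideals and using primeness (so that $Y_a=\bigcup_i Y_{\delta(x_i)}$), every generator $Y_a$ and $Y_a^c$ of the patch topology on $\mathsf{G}_P(\mathsf{D}(S))$ pulls back under $\Phi$ to a patch-open subset of $\mathsf{G}_u(S)$. This proves $\Phi$ is a homeomorphism of the two patch-topologised groupoids, and since $\mathsf{D}(S)$ is distributive this is exactly the weak booleanization of $\mathsf{G}_P(\mathsf{D}(S))$, giving the first sentence of (1). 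For the ``in particular'' clause, Theorem~5.1 makes $\mathsf{G}_u(S)$ weakly boolean, hence coherent and in particular sober (by (C3)); so $\eta$ is a homeomorphism and, since $\mathsf{B}(\mathsf{G}_u(S))$ is a coherent pseudogroup with finite elements $\mathsf{KB}(\mathsf{G}_u(S))=\mathsf{B}(S)$, Proposition~\ref{gpt-equal-gs} yields $\mathsf{G}_u(S)\cong\mathsf{G}(\mathsf{B}(\mathsf{G}_u(S)))\cong\mathsf{G}_P(\mathsf{B}(S))$; that $\mathsf{B}(S)$ is weakly boolean follows because its idempotents are the compact-open subsets of the boolean space $(\mathsf{G}_u(S))_o$.

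For (2) I would argue at the level of semigroups: if $S$ satisfies the weak meet condition then $\mathsf{D}(S)$ is an inverse $\wedge$-semigroup by Proposition~4.25, hence a distributive $\wedge$-semigroup, so by the definition given just before the theorem the weak booleanization of $\mathsf{G}_P(\mathsf{D}(S))$ is in fact its booleanization (equivalently, $\mathsf{G}_u(S)$ is boolean by Corollary~5.3, so $\mathsf{B}(S)=\mathsf{KB}(\mathsf{G}_u(S))$ is a genuine boolean inverse semigroup by Proposition~3.23). The main obstacle is the bookkeeping in the second step: confirming that the homeomorphism of Theorem~4.24(4) is realised by the explicit map $\Phi$ above and verifying $\Phi(U_x)=Y_{\delta(x)}$, which means threading a single filter on $S$ through two successive completion-and-spectrum correspondences. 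Once that identity is in place, the rest is routine subbasis matching.
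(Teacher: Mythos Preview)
Your overall plan — transport Theorem~4.24(4) to the patch topologies — is exactly the paper's, and your explicit formula for $\Phi$ together with $\Phi(U_x)=Y_{\delta(x)}$ is correct and useful. Your treatment of the ``in particular'' clause and of part~(2) is also fine and matches the paper's references to Proposition~4.25 and the weakly boolean duality.

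The gap is in the subbasis-matching step for part~(1). You assert that the patch topology on $\mathsf{G}_u(S)$ is generated by $\{U_x,\,U_x^c : x\in S\}$, and likewise that $\{Y_a,\,Y_a^c : a\in\mathsf{D}(S)\}$ generates the patch topology on $\mathsf{G}_P(\mathsf{D}(S))$. Neither is true in general: the topology generated by $\{U_x,U_x^c\}$ is precisely the topology induced by the embedding $j$ into $\{0,1\}^S$, and Proposition~5.2 shows this agrees with the $\Omega$-topology \emph{if and only if} $S$ satisfies the weak meet condition. The same obstruction appears on the other side via Lemma~5.4(2) and Proposition~4.25. So your argument actually proves that $\Phi$ is a homeomorphism between two topologies that are, in general, strictly finer than the patch topologies; concretely, your continuity step needs $\bigcap_j U_{x_j}^c$ to be $\Omega$-open, and it is not without weak meet.

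The fix is to work directly with the genuine bases. For openness, compute
\[
\Phi\bigl(U_{x;x_1,\ldots,x_n}\bigr)=Y_{\delta(x)}\cap Y_{\delta(x_1)}^c\cap\cdots\cap Y_{\delta(x_n)}^c
= Y_{\delta(x);\,\delta(x_1)\vee\cdots\vee\delta(x_n)},
\]
using primeness of the filters (a prime filter omits $\bigvee_j\delta(x_j)$ iff it omits each $\delta(x_j)$); this is a $\Pi$-basic set since $x_j\leq x$ gives $\bigvee_j\delta(x_j)\leq\delta(x)$. For continuity, the paper invokes Lemma~5.4(4): iterating it on $s=\bigvee_i\delta(x_i)$ decomposes $Y_{s;t}$ as a finite union of sets $Y_{\delta(x_i);\,t\,\delta(x_i)^{-1}\delta(x_i)}$, and since $t\,\delta(x_i)^{-1}\delta(x_i)\leq\delta(x_i)$ each piece can be written as $Y_{\delta(x_i);\,\delta(z_1)\vee\cdots\vee\delta(z_p)}$ with $z_k\leq x_i$, whose $\Phi$-preimage is the $\Omega$-basic set $U_{x_i;z_1,\ldots,z_p}$. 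This is the step your subbasis shortcut skips, and it is where the paper's argument differs from yours.
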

\begin{proof} (1) By Theorem~4.24(4), the groupoid of all filters of $S$ with the usual topology is homeomorphic to the groupoid $\mathsf{G}_{P}(\mathsf{D}(S))$,
where $\mathsf{D}(S)$ is a distributive inverse semigroup, with its usual topology.
Observe that the set $U_{x}$ is mapped to the set $Y_{s}$ where $s = x^{\downarrow}$
and that the inverse image of $Y_{s \vee t}$ where $s = x^{\downarrow}$ and $t = y^{\downarrow}$ is the set $U_{x} \cup U_{y}$.  
The groupoid  $\mathsf{G}_{P}(\mathsf{D}(S))$ equipped with the patch topology is homeomorphic to the universal groupoid $\mathsf{G}_{u}(S)$.
To see this start with the set $U_{x;x_{1}, \ldots, x_{n}}$ where $x_{1}, \ldots, x_{n} \leq x$.
Then if we put $s = x^{\downarrow}$ and $t = \{x_{1}, \ldots, x_{n}\}^{\downarrow}$ then in $\mathsf{D}(S)$ we have that $t \leq s$.
The set $U_{x;x_{1}, \ldots, x_{n}}$ is therefore mapped by the homeomorphism to the set $Y_{s;t}$.
By Lemma~5.4(4), the inverse image of arbitrary sets of the form $Y_{s;t}$ is a union of sets of the form $U_{x;x_{1}, \ldots, x_{n}}$.
It follows that the given homeomorphism is also a homeomorphism when each groupoid is regarded with respect to the new topology. 
The proof of the second claim follows by the extension of Theorem~3.25 or the restriction of Theorem~3.17 to weakly boolean inverse semigroups.
 
(2) This follows by (1) above, Proposition~4.25 and Theorem~3.25.
\end{proof}

Let $S$ be an inverse semigroup.
Define $\beta \colon S \rightarrow \mathsf{B}(S)$ by $\beta (s) = U_{s}$.
This homomorphism has an interesting property: if $P$ is a prime filter in $\mathsf{B}(S)$ then $\beta^{-1}(P)$ is a filter in $S$.
The key observation that proves this is the following.
Suppose that $\emptyset \neq U_{x;x_{1}, \ldots, x_{n}} \subseteq U_{y}$.
Then $x^{\uparrow} \in  U_{x;x_{1}, \ldots, x_{n}}$ giving $x^{\uparrow} \in U_{y}$.
Thus $x \leq y$ and so $U_{x} \subseteq U_{y}$.
It follows that $U_{x;x_{1}, \ldots, x_{n}} \subseteq U_{x} \subseteq U_{y}$.
We now describe the universal property enjoyed by this homomorphism.

\begin{theorem}[Second booleanization] Let $S$ be an inverse semigroup and let $\theta \colon S \rightarrow T$ be a homomorphism to a weakly boolean inverse semigroup
with the property that the inverse image under $\theta$ of each prime filter in $T$ is a filter in $S$.
Then there is a unique homomorphism of distributive inverse semigroups $\bar{\theta} \colon \mathsf{B}(S) \rightarrow T$ such that $\bar{\theta} \beta = \theta$.
\end{theorem}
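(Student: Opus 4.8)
The plan is to factor $\theta$ through the universal distributive inverse semigroup $\mathsf{D}(S)$ and then extend over the booleanization. First I would use the universal property of $\mathsf{D}(S)$ (Theorem~4.24(2)): since $T$ is in particular a distributive inverse semigroup, the homomorphism $\theta$ factors uniquely as $\theta = \theta' \delta$, where $\delta \colon S \to \mathsf{D}(S)$ is the canonical map and $\theta' \colon \mathsf{D}(S) \to T$ is a morphism of distributive inverse semigroups. By Theorem~5.6 I may identify $\mathsf{B}(S)$ with the weakly boolean inverse semigroup of compact-open bisections of $\mathsf{G}_{P}(\mathsf{D}(S))$ in the patch topology; a basis of compact-open bisections is given by the sets $Y_{s;t} = Y_{s} \cap Y_{t}^{c}$ with $t \leq s$ in $\mathsf{D}(S)$, and under this identification $\beta(s)$ corresponds to $Y_{s^{\downarrow}} = Y_{\delta(s)}$. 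Since $\theta'$ is order preserving, $\theta'(t) \leq \theta'(s)$, so the relative complement $\theta'(s) \setminus \theta'(t)$ exists in the weakly boolean $T$ by Lemma~3.27. I would define $\bar\theta$ on basic bisections by $\bar\theta(Y_{s;t}) = \theta'(s) \setminus \theta'(t)$ and extend it to finite compatible joins, every element of $\mathsf{B}(S)$ being a compact-open bisection, hence a finite join of pairwise compatible basic bisections $Y_{s_{i};t_{i}}$.

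The main obstacle is well-definedness, and this is the only place the hypothesis on $\theta$ is used. The key lemma I would prove is that for every prime filter $Q$ of $T$ the inverse image $(\theta')^{-1}(Q)$ is a prime filter of $\mathsf{D}(S)$. That it is upward closed and prime is immediate from the fact that $\theta'$ preserves order and finite compatible joins together with primeness of $Q$. Directedness is the delicate point: given $a, b \in (\theta')^{-1}(Q)$, I would write them as finite joins of elements $\delta(s_{i})$ and $\delta(u_{j})$; primeness of $Q$ forces some $\theta(s_{i}), \theta(u_{j}) \in Q$, so that $s_{i}, u_{j} \in \theta^{-1}(Q)$, and the hypothesis that $\theta^{-1}(Q)$ is a filter of $S$ supplies a common lower bound $w$, whence $\delta(w) \in (\theta')^{-1}(Q)$ lies below both $a$ and $b$. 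With this in hand, the defining relation $\theta'(s) = \theta'(t) \vee (\theta'(s) \setminus \theta'(t))$, an orthogonal join, shows that $\theta'(s) \setminus \theta'(t) \in Q$ if and only if $s \in (\theta')^{-1}(Q)$ and $t \notin (\theta')^{-1}(Q)$, i.e. if and only if the point $(\theta')^{-1}(Q)$ lies in $Y_{s;t}$. Hence, for any two decompositions $\bigvee_{i} Y_{s_{i};t_{i}} = \bigvee_{j} Y_{s'_{j};t'_{j}}$ of the same element of $\mathsf{B}(S)$, primeness of $Q$ gives that $\bigvee_{i}(\theta'(s_{i}) \setminus \theta'(t_{i}))$ and $\bigvee_{j}(\theta'(s'_{j}) \setminus \theta'(t'_{j}))$ lie in exactly the same prime filters $Q$ of $T$, namely those whose image $(\theta')^{-1}(Q)$ lies in the common underlying subset; since prime filters separate the elements of a distributive inverse semigroup (Lemma~3.11(2)), the two joins are equal.

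It remains to check that $\bar\theta$ is a morphism of distributive inverse semigroups. Multiplicativity on basic bisections is exactly the combination of the product formula $Y_{s;t} Y_{u;v} = Y_{su;sv \vee tu \vee tv}$ of Proposition~5.5(1) with the relative-complement product formula $(\theta'(s) \setminus \theta'(t))(\theta'(u) \setminus \theta'(v)) = \theta'(su) \setminus \theta'(sv \vee tu \vee tv)$ of Lemma~3.28, using that $\theta'$ preserves products and compatible joins; preservation of inverses follows similarly from Proposition~5.5(2). Since $\bar\theta$ then sends idempotents to idempotents, the images of pairwise compatible basic bisections are pairwise compatible in $T$, so the extension to joins is legitimate and preservation of finite compatible joins holds by construction. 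Finally $\bar\theta \beta = \theta$ because $\beta(s)$ corresponds to $Y_{\delta(s)} = Y_{\delta(s);0}$, so $\bar\theta(\beta(s)) = \theta'(\delta(s)) \setminus 0 = \theta(s)$.

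For uniqueness, suppose $\psi \colon \mathsf{B}(S) \to T$ is any morphism of distributive inverse semigroups with $\psi \beta = \theta$. Then $\psi$ composed with $\delta$ agrees with $\theta$, and by the uniqueness clause of Theorem~4.24(2) the composite $a \mapsto \psi(Y_{a})$ must coincide with $\theta'$. For a basic bisection, $Y_{s} = Y_{t} \vee Y_{s;t}$ is an orthogonal join; applying $\psi$, which preserves compatible joins and orthogonality, gives $\theta'(s) = \theta'(t) \vee \psi(Y_{s;t})$ with $\theta'(t)$ orthogonal to $\psi(Y_{s;t})$. By the uniqueness of relative complements (Lemma~3.27) this forces $\psi(Y_{s;t}) = \theta'(s) \setminus \theta'(t) = \bar\theta(Y_{s;t})$, and since both maps preserve finite compatible joins they agree on all of $\mathsf{B}(S)$. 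I expect the directedness step in the second paragraph to be the crux of the argument, as it is the sole point at which the filter hypothesis on $\theta$ enters.
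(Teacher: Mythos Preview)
Your proof is correct and follows essentially the same route as the paper's: factor $\theta$ through $\mathsf{D}(S)$ via Theorem~4.24(2), identify $\mathsf{B}(S)$ with the compact-open bisections of $\mathsf{G}_{P}(\mathsf{D}(S))$ under the patch topology, define $\bar\theta(Y_{s;t}) = \theta'(s)\setminus\theta'(t)$, check well-definedness and multiplicativity via Proposition~5.5 and Lemma~3.28, then extend to finite compatible joins and separate by prime filters using Lemma~3.11(2). Your uniqueness argument is also the paper's, almost verbatim.

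Two small points where you actually improve on the paper. First, the paper simply asserts that the inverse image under $\theta'$ of a prime filter is a prime filter; you correctly identify directedness as the nontrivial step and supply the argument, which is indeed exactly where the hypothesis on $\theta$ enters. Second, for showing that images of compatible basic bisections are compatible, the paper runs a prime-filter argument appealing to Lemma~3.29, whereas you observe more directly that the basic bisections $Y_{s;t}$ form an inverse subsemigroup (Proposition~5.5) on which $\bar\theta$ is already a semigroup homomorphism, hence preserves idempotents and therefore compatibility. That shortcut is legitimate and cleaner.
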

\begin{proof} We prove uniqueness first.
Any morphism of distributive inverse semigroups $\gamma \colon \mathsf{B}(S) \rightarrow T$
is determined by its values on the elements of the form $U_{x;x_{1}, \ldots, x_{m}}$.
Observe now that the elements $U_{x_{1}}, \ldots, U_{x_{m}}$ are pairwise compatible in $\mathsf{B}(S)$ and so their union 
$\bigcup_{i=1}^{m} U_{x_{i}}$ is a well-defined element.
In $\mathsf{B}(S)$, we have that
$$U_{x} = U_{x;x_{1}, \ldots, x_{m}} \vee \left( \bigcup_{i=1}^{m} U_{x_{i}} \right)
\mbox{ and }
U_{x;x_{1}, \ldots, x_{m}} \wedge \left( \bigcup_{i=1}^{m} U_{x_{i}} \right) = 0.$$
The map $\gamma$ preserves this information, essentially by Lemma~3.27, and so $\gamma$ is determined by its values on
elements of the form $U_{y}$.
This is enough to force uniqueness.

We now turn to existence.
By Theorem~4.24(2), 
there is a homomorphism of distributive inverse semigroups $\phi \colon \mathsf{D}(S) \rightarrow T$ such that $\phi \delta = \theta$.
The inverse image under $\phi$ of a prime filter is a prime filter.
We equip $\mathsf{G}_{P}(\mathsf{D}(S))$ with the patch topology.
By Theorem~5.6(1), we may identify $\mathsf{B}(S)$ with the set of compact-open bisections of $\mathsf{G}_{P}(\mathsf{D}(S))$.
 
We may therefore assume that we are given a homomorphism $\phi \colon D \rightarrow T$
from a distributive inverse semigroup $D$ to $T$ such that the inverse images of prime filters under $\phi$ are prime filters.
Put $B(D)$ equal to the set of all compact-open bisections of $\mathsf{G}_{P}(D)$ with respect to the patch topology of Proposition~5.5.
Let $B(S)'$ be the subset of $B(D)$ consisting of those elements of the form $Y_{s;t}$.
By Proposition~5.5(1) and (2), it is an inverse subsemigroup.
Let $\upsilon \colon D \rightarrow B(D)$ be the map $s \mapsto Y_{s}$.
This is an injective map by Lemma~3.11(2).

Define 
$$\bar{\phi} (Y_{s;t}) = \phi (s) \setminus \phi (t).$$
To show this is well-defined, suppose that 
$Y_{s;t} = Y_{u;v}$ and that $\phi (s) \setminus \phi (t) \neq \phi (u) \setminus \phi (v)$.
By Lemma~3.11(2), we may find a prime filter $P$ in $T$ which contains, without loss of generality, 
$\phi (s) \setminus \phi (t)$ but not $\phi (u) \setminus \phi (v)$.
By assumption $Q = \phi^{-1}(P)$ is a prime filter in $D$.
Clearly $s \in Q$ and $t \notin Q$.
Thus $Q \in Y_{s;t}$.
By assumption $Q \in Y_{u;v}$.
But this implies that $\phi (u) \in P$ and $\phi (v) \notin P$ and so $\phi (u) \setminus \phi (v) \in P$, which is a contradiction.
Thus this map is well-defined.
By Lemma~3.28, it is a homomorphism.
It follows that we have defined a homomorphism from $B(D)'$ to the semigroup $T$.

It remains to extend the map $\bar{\phi}$ to the whole of $B(D)$.
The first step is to observe that if $Y_{s;t}$ and $Y_{u;v}$ are compatible in  $B(D)$
then $\phi (s)\setminus \phi (t)$ is compatible with $\phi (u) \setminus \phi (v)$
by Lemma~3.28 and Proposition~5.5(2).
Let $P$ be a prime filter containing $(\phi (s)\setminus \phi (t))^{-1}(\phi (u) \setminus \phi (v))$.
Then $\phi^{-1}(P)$ is a prime filter that contains $s^{-1}u$ and omits $s^{-1}v \vee t^{-1}u \vee t^{-1}v$.
Thus $\phi^{-1}(P) \in Y_{s;t}^{-1}Y_{u;v}$.
By assumption, this contains only idempotent filters.
Thus $\phi^{-1}(P)$ is an idempotent filter and so $P$ is an idempotent prime filter.
Thus all the prime filters containing $(\phi (s)\setminus \phi (t))^{-1}(\phi (u) \setminus \phi (v))$ are idempotent.
By Lemma~3.29, it follows that $(\phi (s)\setminus \phi (t))^{-1}(\phi (u) \setminus \phi (v))$ is an idempotent.
We deduce that $\phi (s)\setminus \phi (t)$ and $\phi (u) \setminus \phi (v)$ are compatible.

Given a finite union of compatible elements $\bigcup_{i=1}^{m} Y_{s_{i} ; t_{i}}$ in $B(D)$
define 
$$\bar{\phi}(\bigcup_{i=1}^{m} Y_{s_{i} ; t_{i}}) = \bigvee_{i=1}^{m} \phi (s_{i}) \setminus \phi (t_{i}).$$
We show that this map is well-defined.
Suppose that
$$\bigcup_{i=1}^{m} Y_{s_{i} ; t_{i}}
=
\bigcup_{j=1}^{n} Y_{u_{j} ; v_{j}}.$$
We need to prove that 
$$\bigvee_{i=1}^{m} \phi (s_{i}) \setminus \phi (t_{i})
=
\bigvee_{j=1}^{n} \phi (u_{j}) \setminus \phi (v_{j}).$$
By Lemma~3.11(2) it is enough to show that the set of prime filters containing the
lefthand-side is the same as the set of prime filters containing the righthand-side.
This is straightforward to prove given our assumption on $\phi$.
By construction, the map $\bar{\phi}$ is a morphism of distributive inverse semigroups.
\end{proof}

\begin{remark}
{\em The above theorem was inspired by the calculations on pp~190--191 of Paterson's book \cite{P}}.
\end{remark}

\subsection{The closure of the space of ultrafilters in the universal groupoid}

The goal of this section is to show that our notion of tight filters and tight maps coincides with that of Exel \cite{Exel1,Exel2}.

\begin{lemma} Let $S$ be a semigroup and let $F$ be a proper filter.
\begin{enumerate}

\item $A$ is a tight filter (respectively, ultrafilter) if and only if $A^{-1} \cdot A$ is a tight filter (respectively, ultrafilter).

\item  $A$ is an idempotent tight filter (respectively, ultrafilter) in $S$ if and only if $E(A)$ is a tight filter (respectively, ultrafilter) in $E(S)$.

\end{enumerate}
\end{lemma}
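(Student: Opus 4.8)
The plan is to reduce everything to two structural observations about idempotent filters and then dispatch the tight-filter and ultrafilter cases separately, reusing the coverage machinery of Section~4 and the general facts about the groupoid of filters from Section~1. Throughout I take $A$ to be a proper filter in the inverse semigroup $S$, and I write $\mathbf{d}(A) = A^{-1}\cdot A$.

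First I would record two preliminaries. Recall that anything below an idempotent is itself idempotent: if $c\le f$ with $f\in E(S)$ then $c = cc^{-1}f$, and since idempotents commute $c^{2} = (cc^{-1}f)(cc^{-1}f) = cc^{-1}f = c$. Hence $e^{\downarrow}\subseteq E(S)$ for every idempotent $e$, so the arrow relation $e\rightarrow B$, and therefore the tight coverage $\mathcal{T}(e)$, computed in $S$ agrees with the one computed in the semilattice $E(S)$. Second, I would show that an idempotent filter $A$ satisfies $A = E(A)^{\uparrow}$: given $a\in A$ choose an idempotent $f\in A$ (possible by Lemma~2.3) and, by directedness, $c\in A$ with $c\le a,f$; the first fact makes $c$ idempotent, so $c\in E(A)$ and $c\le a$. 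Thus $A\mapsto E(A)$ is an order isomorphism from the idempotent filters of $S$ onto the filters of $E(S)$, with inverse $G\mapsto G^{\uparrow}$; I would also note that any proper filter containing an idempotent filter is itself idempotent, again by Lemma~2.3.

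For part~(1) the tight case is immediate from Lemma~4.3(2) applied to $\mathcal{C}=\mathcal{T}$. For the ultrafilter case: if $A$ is an ultrafilter then $\mathbf{d}(A)$ is an ultrafilter because the ultrafilters form a subgroupoid of the groupoid of all filters; conversely, if $\mathbf{d}(A)$ is an ultrafilter and $A\subseteq B$ with $B$ a proper filter, then $\mathbf{d}(A)\subseteq\mathbf{d}(B)$ with $\mathbf{d}(B)$ proper, so $\mathbf{d}(A)=\mathbf{d}(B)$, whence $A=B$ by Lemma~2.11 of~\cite{Law3} (equal domains together with the common element of $A\subseteq B$), and $A$ is an ultrafilter. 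For part~(2) the ultrafilter case then follows formally from the order isomorphism $A\leftrightarrow E(A)$, the point being that maximality of $A$ \emph{among idempotent filters} already gives maximality among all proper filters, since every proper filter above $A$ is idempotent. The forward direction of the tight case is also short: for $e\in E(A)$ and $B\in\mathcal{T}(e)$, the preliminary identifies $B$ as a covering in $S$, and tightness of $A$ produces $b\in B\cap A\subseteq E(A)$.

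The hard part, and the step where I expect the only genuine obstacle, is the reverse direction of the tight case of~(2): from $E(A)$ tight in $E(S)$ deduce $A$ tight in $S$. Given $x\in A$ and $X\in\mathcal{T}(x)$, I would pick $e\in E(A)$ with $e\le x$ (using $A=E(A)^{\uparrow}$), push $X$ down to an idempotent covering of $e$ by combining Lemma~4.2(1) with axiom~(MS) and the relation $e\le x^{-1}x$, obtaining $\{e\,y^{-1}y : y\in X\}\in\mathcal{T}(e)$, and then apply tightness of $E(A)$ to find $y\in X$ with $e\,y^{-1}y\in A$. The delicate conversion is to turn this into $y\in A$: because $e,y\le x$, the idempotent $e$ and the element $y$ have a common upper bound, and a short computation (most transparently via the Wagner--Preston representation, where $e$ is a partial identity and $e,y$ are both restrictions of $x$) shows that the range- and domain-restrictions of $y$ by $e$ coincide, i.e.\ $ey = e\,y^{-1}y$, and that $ey\le y$. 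Hence $ey\in A$ and upward closure gives $y\in A$, completing the argument. Verifying this identity $ey = e\,y^{-1}y$ is the one place requiring care; everything else is bookkeeping on the order isomorphism and the coverage axioms.
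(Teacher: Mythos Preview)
Your proposal is correct and follows essentially the same route as the paper: Lemma~4.3 for the tight half of (1), the groupoid-of-filters argument (which is exactly what the cited Proposition~2.13 of \cite{Law3} contains) for the ultrafilter halves, and the passage to $\mathbf{d}(X)\in\mathcal{T}(\mathbf{d}(x))$ via Lemma~4.2(1) for the reverse tight direction of (2).

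The only place your argument is more laborious than necessary is the ``delicate'' step in the reverse tight direction of (2). You choose an arbitrary idempotent $e\in E(A)$ with $e\le x$, push the covering down to $\mathcal{T}(e)$, and then have to verify $ey = ey^{-1}y$ to lift back. The paper instead takes $e = x^{-1}x$ directly: since an idempotent filter is an inverse subsemigroup (Lemma~2.3), $x\in A$ already gives $x^{-1}x\in E(A)$, so Lemma~4.2(1) puts $\{y^{-1}y : y\in X\}\in\mathcal{T}(x^{-1}x)$, tightness of $E(A)$ yields some $y^{-1}y\in E(A)\subseteq A$, and then $y = x\cdot y^{-1}y\in A$ because $A$ is closed under products. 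No auxiliary identity is needed. Your identity $ey = ey^{-1}y$ is correct (from $e = ex$ and $y = xy^{-1}y$ one gets $ey = ex\cdot y^{-1}y = ey^{-1}y$), but the detour is avoidable.
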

\begin{proof} (1) The claim concerning tight filters follows from Lemma~4.3.
The proof of our claim concerning ultrafilters follows from Proposition~2.13 
\cite{Law3} since the argument used there in fact works in any inverse semigroup.

(2) A filter $A$ is an idempotent filter if and only if $A = E(A)^{\uparrow}$.
It is immediate that if $A$ is a tight filter then $E(A)$ is a tight filter since $E(A) \subset A$.
Conversely, suppose that $E(A)$ is a tight filter.
Let $\{a_{1}, \ldots, a_{m} \} \in \mathcal{T}(a)$.
Then 
$\{\mathbf{d}(a_{1}), \ldots, \mathbf{d}(a_{m})\} \in \mathcal{T}(\mathbf{d}(a))$. 
By assumption $\mathbf{d}(a_{i}) \in E(A)$ for some $i$.
Now $a, \mathbf{d}(a_{i}) \in A$ and $A$ is an inverse subsemigroup and so $a_{i} = a \mathbf{d}(a_{i}) \in A$,
as required.

The proof of our claim concerning ultrafilters follows from Proposition~2.13 \cite{Law3} 
since the argument used there in fact works in any inverse semigroup.
\end{proof}

Our first important result about tight filters is the following.

\begin{proposition} 
Every ultrafilter is a tight filter.
\end{proposition}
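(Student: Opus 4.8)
The plan is to use Lemma~5.9 to push the statement down to the semilattice of idempotents, where it becomes a short computation settled by the ultrafilter characterization of Theorem~1.5. So I would not attack an arbitrary ultrafilter directly, but reduce in two stages.

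First I would reduce to the idempotent case. Let $A$ be an ultrafilter. By Lemma~5.9(1) the filter $A^{-1} \cdot A = \mathbf{d}(A)$ is again an ultrafilter, and it is an idempotent filter; moreover, again by Lemma~5.9(1), $A$ is a tight filter as soon as $A^{-1} \cdot A$ is. Thus it suffices to treat idempotent ultrafilters. Next I would descend to $E(S)$: by Lemma~5.9(2) the idempotent ultrafilter $\mathbf{d}(A)$ corresponds to the ultrafilter $E(\mathbf{d}(A))$ in $E(S)$, and $\mathbf{d}(A)$ is tight in $S$ if and only if $E(\mathbf{d}(A))$ is tight in $E(S)$. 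Hence the whole proposition reduces to the assertion that every ultrafilter in a meet semilattice is a tight filter.

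This semilattice statement is the heart of the argument. Since $E(S)$ is an inverse $\wedge$-semigroup it satisfies the weak meet condition, so Theorem~1.5 applies. Let $F$ be an ultrafilter in $E(S)$, let $e \in F$, and let $\{e_{1}, \ldots, e_{m}\} \in \mathcal{T}(e)$, so that $e \rightarrow \{e_{1}, \ldots, e_{m}\}$. Arguing by contradiction, suppose $e_{i} \notin F$ for every $i$. By Theorem~1.5 there is, for each $i$, some $a_{i} \in F$ with $e_{i} \wedge a_{i} = 0$ (recall that in a $\wedge$-semilattice $x^{\downarrow} \cap y^{\downarrow} = (x \wedge y)^{\downarrow}$). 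Setting $a = e \wedge a_{1} \wedge \cdots \wedge a_{m}$, which lies in $F$ and is therefore non-zero, I get $a \leq e$ together with $a \wedge e_{i} \leq a_{i} \wedge e_{i} = 0$ for every $i$. But $e \rightarrow \{e_{1}, \ldots, e_{m}\}$ applied to the non-zero element $a \leq e$ forces $a \wedge e_{j} \neq 0$ for some $j$, a contradiction. Hence $e_{j} \in F$ for some $j$, so $F$ is a tight filter.

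The main obstacle here is bookkeeping rather than genuine difficulty: one must check that the two reductions of Lemma~5.9 preserve tightness and the ultrafilter property \emph{simultaneously}, so that the chain
$$A \text{ ultrafilter} \;\Rightarrow\; E(\mathbf{d}(A)) \text{ ultrafilter} \;\Rightarrow\; E(\mathbf{d}(A)) \text{ tight} \;\Rightarrow\; A \text{ tight}$$
closes up. Once the problem is commutative, the only real input is the intrinsic description of ultrafilters in Theorem~1.5, which converts each ``$e_{i} \notin F$'' into a witness orthogonal to $e_{i}$; meeting these witnesses with $e$ produces a non-zero element below $e$ that avoids the covering, directly contradicting the arrow relation.
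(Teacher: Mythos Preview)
Your proposal is correct and follows essentially the same two-step reduction as the paper: first to idempotent ultrafilters via Lemma~5.9(1), then to ultrafilters in $E(S)$ via Lemma~5.9(2). The only difference is that where the paper outsources the semilattice case to Lemma~2.33 of \cite{Law3}, you supply the argument directly using Theorem~1.5, which makes your write-up self-contained; your contradiction argument via $a = e \wedge a_{1} \wedge \cdots \wedge a_{m}$ is clean and exactly what that cited lemma does.
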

\begin{proof} Let $A$ be an ultrafilter.
Then $A^{-1} \cdot A$ is an idempotent ultrafilter by Lemma~5.9.
If we can prove that all idempotent ultrafilters are tight then the result will follow from Lemma~5.9 again.
Thus we assume that $A$ is an idempotent ultrafilter and prove that it is tight.
But $A$ is an ultrafilter in the inverse semigroup if and only if $E(A)$ is an ultrafilter in the
semilattice of idempotents.
It follows from Lemma~2.33 of \cite{Law3} that every ultafilter in a semilattice is tight.
Thus $E(A)$ is tight and so $A$ is tight by Lemma~5.9.
\end{proof}

We denote the set of all tight filters of our inverse semigroup $S$ equipped with the restriction of the patch topology on the universal groupoid by $\mathsf{G}_{t}(S)$.
A basis of this topology is given by the  sets
$W_{x; x_1,\ldots x_n} = U_{x;x_1,\ldots x_n} \cap \mathsf{G}_{t}$
for $x_1,\ldots, x_n \leq x \in S$. 
The groupoid of ultrafilters equipped with the restriction of the patch topology on the universal groupoid is denoted by $\mathsf{G}_{m}(S)$.
A basis for this topology is given by the sets  
$V_{x; x_1, \ldots, x_n} = U_{x;x_1,\ldots x_n} \cap \mathsf{G}_{m}$.
By the proposition above, we have the following inclusions
$$\mathsf{G}_{m}(S) \subseteq \mathsf{G}_{t}(S) \subseteq \mathsf{G}_{u}(S).$$ 

\begin{lemma} Let $F$ be a tight filter.
Let $x_{1}, \ldots, x_{n} \leq x$ be such that 
$x \in F$ and $\{x_{1}, \ldots, x_{m} \} \cap F = \emptyset$.
Then $x^{\downarrow} \cap \{x_{1}, \ldots, x_{m} \}^{\perp} \neq 0$.
\end{lemma}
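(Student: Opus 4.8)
The plan is to argue by contraposition, whereupon the statement collapses to a one-line application of the defining property of a tight filter. The crucial point is that the negation of the conclusion is, verbatim, an instance of the arrow relation out of which the tight coverage $\mathcal{T}$ is built.

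First I would unwind the two pieces of notation from Section~1. The symbol $\{x_{1}, \ldots, x_{m}\}^{\perp}$ denotes the set of all elements orthogonal to every $x_{i}$, and writing $x^{\downarrow} \cap \{x_{1}, \ldots, x_{m}\}^{\perp} \neq 0$ asserts the existence of a nonzero $z \leq x$ with $z^{\downarrow} \cap x_{i}^{\downarrow} = 0$ for every $i$. Hence its negation, $x^{\downarrow} \cap \{x_{1}, \ldots, x_{m}\}^{\perp} = 0$, says precisely that no nonzero element below $x$ is orthogonal to all the $x_{i}$; equivalently, for every $0 \neq z \leq x$ there is some $i$ with $z^{\downarrow} \cap x_{i}^{\downarrow} \neq 0$. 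This is, word for word, the assertion that $x \rightarrow \{x_{1}, \ldots, x_{m}\}$ in the arrow relation of Examples~4.1(4).

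The second step is to observe that this makes $\{x_{1}, \ldots, x_{m}\}$ a tight covering of $x$. Since $x_{1}, \ldots, x_{m} \leq x$ by hypothesis, the set $\{x_{1}, \ldots, x_{m}\}$ is a finite subset of $x^{\downarrow}$, and combined with $x \rightarrow \{x_{1}, \ldots, x_{m}\}$ this is exactly the condition for membership in $\mathcal{T}(x)$, the tight coverage of Examples~4.1(5). Now the definition of a $\mathcal{T}$-filter applies directly: as $F$ is tight and $x \in F$, some $x_{i}$ must lie in $F$. This contradicts the hypothesis $\{x_{1}, \ldots, x_{m}\} \cap F = \emptyset$, and the lemma follows.

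I do not anticipate a real obstacle here: once the notation is correctly decoded, the entire content is that the failure of the conclusion is literally a tight cover of $x$ by the $x_{i}$, to which the tight-filter axiom is immediately applied. The only thing demanding care is keeping the logical quantifiers aligned — matching the universal-existential shape encoded in ``$=0$'' against the universal-existential shape of the arrow relation — and remembering that it is the contrapositive of the $\mathcal{T}$-filter condition that drives the argument.
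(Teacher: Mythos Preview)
Your proposal is correct and is essentially identical to the paper's own proof: both assume $x^{\downarrow} \cap \{x_{1},\ldots,x_{m}\}^{\perp} = 0$, unpack this as $x \rightarrow \{x_{1},\ldots,x_{m}\}$, note that this finite subset of $x^{\downarrow}$ therefore lies in $\mathcal{T}(x)$, and invoke the tight-filter condition to force some $x_{i}\in F$, contradicting the hypothesis.
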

\begin{proof}
Suppose that $x \in F$ and $\{x_{1}, \ldots, x_{m} \} \cap F = \emptyset$
but that $x^{\downarrow} \cap \{x_{1}, \ldots, x_{m} \}^{\perp} = 0$.
Let $0 \neq y \leq x$.
Then by our assumption, $y$ cannot be orthogonal to all $x_{i}$ and so
there exists an $i$ such that $x_{i}^{\downarrow} \cap y^{\downarrow} \neq 0$.
It follows that $a \rightarrow \{x_{1}, \ldots, x_{m} \}$.
But $F$ is a tight filter and so $x_{i} \in F$ for some $i$ which is a contradiction.
\end{proof}

The next results provide the remaining key properties about tight filters.

\begin{proposition} \mbox{}
\begin{enumerate}

\item Let $F$ be a tight filter.
Then every open set in the universal groupoid containing $F$ also contains an ultrafilter.

\item Let $F$ be a filter that is not tight.
Then there is an open set in the universal groupoid of $S$ that contains $F$ but does not contain any ultrafilter.

\end{enumerate}
\end{proposition}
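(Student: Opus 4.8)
The plan is to argue entirely with the basic open sets $U_{x;x_{1},\ldots,x_{n}}$ of the patch topology. Since these sets form a basis, any open set containing a given filter $F$ contains one of them as a neighbourhood of $F$, so both statements reduce to the question of when a basic open set meets the ultrafilters. Recall that $F \in U_{x;x_{1},\ldots,x_{n}}$ (with $x_{1},\ldots,x_{n} \leq x$) means precisely that $x \in F$ while $x_{i} \notin F$ for every $i$.

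For part (1), suppose $F$ is tight and fix a basic neighbourhood $U_{x;x_{1},\ldots,x_{n}}$ of $F$. Its defining conditions are exactly the hypotheses of Lemma~5.11, so I would apply that lemma to obtain a nonzero element $z$ with $z \leq x$ and $z \in \{x_{1},\ldots,x_{n}\}^{\perp}$. Using the observation from Section~1 that, by Zorn's Lemma, every nonzero element lies in some ultrafilter, I would extend $z$ to an ultrafilter $A$. Then $x \in A$ by upward closure since $z \leq x$; and if some $x_{i}$ lay in $A$, directedness of $A$ would produce a nonzero common lower bound of $z$ and $x_{i}$, contradicting $z^{\downarrow} \cap x_{i}^{\downarrow} = 0$. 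Hence $A \in U_{x;x_{1},\ldots,x_{n}}$, so the neighbourhood, and therefore the original open set, contains an ultrafilter.

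For part (2), suppose $F$ is not tight. Reading off the negation of the $\mathcal{T}$-filter condition gives an element $x \in F$ and a tight cover $\{x_{1},\ldots,x_{n}\} \in \mathcal{T}(x)$ (so each $x_{i} \leq x$) with $x_{i} \notin F$ for all $i$. The basic open set $U_{x;x_{1},\ldots,x_{n}}$ then contains $F$, and I claim it contains no ultrafilter. Indeed, if $A$ were an ultrafilter in it, then $x \in A$ and $x_{i} \notin A$ for all $i$; but $A$ is tight by Proposition~5.10, so the cover $\{x_{1},\ldots,x_{n}\} \in \mathcal{T}(x)$ together with $x \in A$ forces $x_{i} \in A$ for some $i$, a contradiction.

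I expect essentially all the content to reside in part (1), and even there the decisive input is Lemma~5.11, which is engineered to convert the combinatorial data ``$x_{i} \notin F$'' into an orthogonality that simultaneously yields a witness $z$ below $x$ and prevents any ultrafilter through $z$ from capturing the excluded $x_{i}$. Part (2) is a direct contradiction once Proposition~5.10 is in hand, so the only care needed there is in correctly unwinding the definition of a non-tight filter. Taken together the two parts show that the tight filters are exactly those filters every open neighbourhood of which meets $\mathsf{G}_{m}(S)$, i.e.\ that the tight filters form the closure of the ultrafilters in the universal groupoid.
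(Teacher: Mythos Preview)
Your proof is correct and follows essentially the same approach as the paper's own argument: reduce to basic open sets, invoke Lemma~5.11 to produce a nonzero $z$ orthogonal to the excluded $x_i$, extend $z$ to an ultrafilter for part (1), and for part (2) use Proposition~5.10 directly on the witnessing tight cover. Your explanation of why no $x_i$ can lie in the ultrafilter $A$ is slightly more explicit than the paper's, but the content is identical.
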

\begin{proof}
(1) Let $F$ be a tight filter and let $U$ be an open set containing $F$.
Then we may find $x$ and elements $x_{1}, \ldots, x_{m} \leq x$ such that
$F \in U_{x;x_{1}, \ldots, x_{m}} \subseteq U$.
Thus $x \in F$ and $\{x_{1}, \ldots, x_{m} \} \cap F = \emptyset$.
By Lemma~5.11, there is a non-zero element $z \leq x$ and orthogonal to all the $x_{i}$.
Let $G$ be an ultrafilter containing $z$.
Then $x \in G$ and $\{x_{1}, \ldots, x_{m} \} \cap G = \emptyset$.
Thus $G \in U_{x;x_{1}, \ldots, x_{m}}$ and so $G \in U$.

(2) Let $F$ be a filter that is not tight.
Then there exists $x \in F$ and a covering $x \rightarrow \{x_{1}, \ldots, x_{m} \} \in \mathcal{T}(x)$ 
such that $F \cap \{x_{1}, \ldots, x_{m}\} = \emptyset$.
Clearly $F \in U_{x;x_{1}, \ldots, x_{m}}$.
However if $G$ were any ultrafilter in this open set we would have $x \in G$ and so, by (1) above,
we would have to have $x_{i} \in G$ for some $i$ which is a contradiction.
We have therefore have found an open set containing $F$ that does not contain any ultrafilter.
\end{proof}

The proof of the following is now immediate by the above and can be viewed as the filter version of a result first proved by Exel in \cite{Exel2}.

\begin{theorem} Let $S$ be an inverse semigroup.
Then the closure of the set of ultrafilters in the universal groupoid of $S$ is precisely the set of tight filters.
\end{theorem}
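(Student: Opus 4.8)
The plan is to prove the set equality $\overline{\mathsf{G}_{m}(S)} = \mathsf{G}_{t}(S)$, where the closure is taken inside $\mathsf{G}_{u}(S)$, and to do so by exploiting the standard characterization of topological closure: a point lies in the closure of a subset precisely when every open neighbourhood of that point meets the subset. The inclusions $\mathsf{G}_{m}(S) \subseteq \mathsf{G}_{t}(S) \subseteq \mathsf{G}_{u}(S)$ from Proposition~5.10 guarantee that the closure in question sits sensibly inside $\mathsf{G}_{u}(S)$, so the statement is well-posed; the work is then to pin down the closure exactly, and almost all of the substance has already been front-loaded into Lemma~5.11 and Proposition~5.12.

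First I would establish $\mathsf{G}_{t}(S) \subseteq \overline{\mathsf{G}_{m}(S)}$. Let $F$ be a tight filter. By Proposition~5.12(1), every open set of the universal groupoid that contains $F$ also contains some ultrafilter. In other words, every open neighbourhood of $F$ meets the set of ultrafilters $\mathsf{G}_{m}(S)$, which is exactly the condition for $F$ to lie in $\overline{\mathsf{G}_{m}(S)}$. This direction is the assertion that tight filters are genuinely limits of ultrafilters, and the non-trivial content behind it is Lemma~5.11, which produces, from the failure of the complementary orthogonality, an element below $x$ and orthogonal to each $x_{i}$, hence an ultrafilter landing in the prescribed basic neighbourhood $U_{x;x_{1},\ldots,x_{m}}$.

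Next I would prove $\overline{\mathsf{G}_{m}(S)} \subseteq \mathsf{G}_{t}(S)$, which is cleanest by contraposition. Suppose $F$ is a proper filter that is \emph{not} tight. Then Proposition~5.12(2) furnishes an open set of the universal groupoid containing $F$ but containing no ultrafilter at all. This open set is an open neighbourhood of $F$ disjoint from $\mathsf{G}_{m}(S)$, so $F$ cannot lie in $\overline{\mathsf{G}_{m}(S)}$. Contrapositively, any filter in the closure of the ultrafilters is tight. Combining the two inclusions yields the claimed equality.

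I do not expect a genuine obstacle at this stage: the result is essentially a repackaging of Proposition~5.12 through the neighbourhood description of closure, and the real difficulties have already been absorbed into the arrow-relation machinery of the tight coverage (Examples~4.1(5)) and into Lemma~5.11. The only point deserving care is bookkeeping of the topology — namely that the basic patch-open sets $W_{x;x_{1},\ldots,x_{n}} = U_{x;x_{1},\ldots,x_{n}} \cap \mathsf{G}_{t}$ and the ambient sets $U_{x;x_{1},\ldots,x_{n}}$ are used consistently, so that ``open neighbourhood in $\mathsf{G}_{u}(S)$'' and the basis exhibited in Proposition~5.12 match up. Once that is noted, the proof is immediate, and it is fair to record it as such, since this is the filter-theoretic form of Exel's density theorem and the value added here is the conceptual route through coverages rather than any new computation.
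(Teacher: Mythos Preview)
Your proposal is correct and matches the paper's own treatment exactly: the paper records the theorem as ``immediate by the above,'' meaning Proposition~5.12 (built on Lemma~5.11), and your argument is precisely the unpacking of that immediacy via the neighbourhood characterization of closure.
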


\subsection{Inverse semigroups in which every tight filter is an ultrafilter }

The first goal of this section is to reconcile Exel's work \cite{Exel1,Exel2} with that of the second author \cite{L}.
This involves determining when $\mathsf{G}_{m}(S) = \mathsf{G}_{t}(S)$.
The following is Proposition~4.4(b) of \cite{L}.

\begin{lemma} 
The identities $\mathsf{G}_{u}(S)_{o}$ form a closed subset of $\mathsf{G}_{u}(S)$.
\end{lemma}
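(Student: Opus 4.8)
The plan is to prove the equivalent statement that the complement of $\mathsf{G}_u(S)_o$, namely the set of proper filters that are \emph{not} identities, is open in the patch topology. The first thing I would record is the algebraic description of the identities. By Lemma~2.3 a proper filter $F$ is an identity of the groupoid exactly when it contains an idempotent, equivalently when it is an inverse subsemigroup. From this I would extract the local criterion that drives the whole argument: \emph{if $x\in F$, then $F$ is an identity if and only if $x^{-1}x\in F$}. Indeed, if $F$ is an identity it is closed under inversion and multiplication, so $x\in F$ forces $x^{-1}\in F$ and hence $x^{-1}x\in F$; conversely $x^{-1}x$ is an idempotent, so its membership alone makes $F$ an identity.

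Now I would fix a non-identity filter $F$ and any element $x\in F$. Since $F$ contains no idempotent we have $x^{-1}x\notin F$, so $F$ lies in $U_x\cap U_{x^{-1}x}^c$. The point is that this set contains \emph{no} identity whatsoever: if $H$ were an identity with $x\in H$, then by the local criterion $x^{-1}x\in H$, so $H\in U_{x^{-1}x}$ and thus $H\notin U_{x^{-1}x}^c$. Hence $U_x\cap U_{x^{-1}x}^c$ is a neighbourhood of $F$ whose intersection with $\mathsf{G}_u(S)_o$ is empty, and letting $F$ range over all non-identities exhibits their union as a union of such open sets, hence open. This is the whole argument, once $U_x\cap U_{x^{-1}x}^c$ is known to be open.

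The step I expect to be the crux is precisely this openness. The element $x^{-1}x$ need not lie below $x$ --- in fact $x^{-1}x\le x$ would force $x$ to be an idempotent --- so $U_x\cap U_{x^{-1}x}^c$ is not literally one of the declared basic sets $U_{x;x_1,\dots,x_n}$, and one cannot separate $F$ from the identities merely by refining $U_x$ with exclusions taken from $x^{\downarrow}$. What is genuinely needed is that the complement $U_{x^{-1}x}^c$ is itself patch-open, i.e.\ that $U_{x^{-1}x}$ is patch-closed. This is exactly the feature that distinguishes the patch topology from the coarser topology generated by the sets $U_s$ alone, and it is the same phenomenon analysed in Proposition~\ref{theorem-characterization-hausdorff}, where the openness of the complements $U_y^c$ is related to the map $j\colon \mathsf{G}_u(S)\to\{0,1\}^S$ and to the weak meet condition. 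I would therefore complete the proof by invoking, from the construction of the patch topology in \cite{L}, that the relevant complements are patch-open, so that $U_x\cap U_{x^{-1}x}^c$ is an open separating neighbourhood; the combinatorial heart of the matter is the local criterion of the first paragraph, while this topological input is the technical obstacle.
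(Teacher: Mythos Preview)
Your reduction to the local criterion is correct and useful: a filter $F$ containing $x$ is an identity precisely when $x^{-1}x\in F$, so the set you isolate, $U_x\cap U_{x^{-1}x}^c$, is exactly the set of non-identity filters in $U_x$. The gap is in the last step, where you assert that $U_{x^{-1}x}$ is patch-closed. This is not a consequence of the definition of the patch topology. Indeed, the very result you invoke, Proposition~\ref{theorem-characterization-hausdorff}, shows that the patch topology agrees with the product topology (in which every $U_s$ is clopen) only under the weak meet condition; without that hypothesis there is no reason for $U_{x^{-1}x}^c$ to be open. Worse, the request is circular: one checks easily that $U_e$ is closed in $\mathsf{G}_u(S)$ for every idempotent $e$ if and only if $\mathsf{G}_u(S)_o$ is closed, which is precisely the statement to be proved.

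The paper itself does not give a proof but simply cites Lenz. The missing ingredient from \cite{L} is not that the individual complements are open, but that the \emph{unit space} $\mathsf{G}_u(S)_o$ is Hausdorff. This suffices by a general argument valid for any \'etale groupoid $G$ with Hausdorff $G_o$: for an open bisection $U$, the homeomorphism $\mathbf{d}|_U\colon U\to \mathbf{d}(U)\subseteq G_o$ carries $U\cap G_o$ onto the set
\[
\{f\in \mathbf{d}(U)\;:\;\mathbf{r}\bigl((\mathbf{d}|_U)^{-1}(f)\bigr)=f\},
\]
which is the equalizer of two continuous maps $\mathbf{d}(U)\to G_o$ and hence closed because $G_o$ is Hausdorff. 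Thus $U\cap G_o$ is closed in $U$, so $U\setminus G_o$ is open; taking the union over all open bisections shows $G\setminus G_o$ is open. In the concrete situation $U=U_x$ this yields exactly that your set $U_x\cap U_{x^{-1}x}^c=U_x\setminus \mathsf{G}_u(S)_o$ is open, but the justification goes through Hausdorffness of the unit space rather than through closedness of $U_{x^{-1}x}$.
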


Our next theorem extends the results found in \cite{L}.

\begin{theorem} Let $S$ be an inverse semigroup.
Then the following are equivalent.
\begin{enumerate}

\item $\mathsf{G}_{m}(S)$ is closed in $\mathsf{G}_{u}(S)$.

\item Every tight filter is an ultrafilter.

\item Every idempotent tight filter is an idempotent ultrafilter.

\item $\mathsf{G}_{m}(S)_{o}$ is closed in $\mathsf{G}_{u}(S)_{o}$.

\item For arbitrary $x_1,\ldots, x_n \leq  x \in S$ the set $V_{x;x_1,\ldots, x_n}$ is compact.

\item The tight completion $\mathsf{D}_{t}(S)$ of $S$ is weakly boolean.

\end{enumerate}
\end{theorem}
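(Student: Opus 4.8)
The plan is to run the ring of implications $(1)\Leftrightarrow(2)\Leftrightarrow(3)\Leftrightarrow(4)$, to fold $(5)$ into this loop via $(1)\Rightarrow(5)\Rightarrow(3)$, and then to establish $(3)\Leftrightarrow(6)$ separately. The first equivalences are essentially formal. By Theorem~5.13 the closure of $\mathsf{G}_{m}(S)$ inside $\mathsf{G}_{u}(S)$ is exactly $\mathsf{G}_{t}(S)$; since $\mathsf{G}_{m}(S)\subseteq\mathsf{G}_{t}(S)$, the set $\mathsf{G}_{m}(S)$ is closed precisely when it equals $\mathsf{G}_{t}(S)$, that is, precisely when every tight filter is an ultrafilter, giving $(1)\Leftrightarrow(2)$. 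For $(2)\Leftrightarrow(3)$ I would pass through the domain filter: by Lemma~5.9(1) a filter $A$ is tight (respectively an ultrafilter) iff $A^{-1}\cdot A$ is, so a tight $A$ has $A^{-1}\cdot A$ an idempotent tight filter, which by $(3)$ is an idempotent ultrafilter, whence $A$ is an ultrafilter; the reverse is immediate. For $(1)\Leftrightarrow(4)$ I would use that $\mathsf{G}_{u}(S)$ is an \'etale groupoid, so the domain map $\mathbf{d}\colon\mathsf{G}_{u}(S)\to\mathsf{G}_{u}(S)_{o}$ is continuous, and that again by Lemma~5.9(1) $A$ is an ultrafilter iff $\mathbf{d}(A)$ is; hence $\mathsf{G}_{m}(S)=\mathbf{d}^{-1}(\mathsf{G}_{m}(S)_{o})$ while $\mathsf{G}_{m}(S)_{o}=\mathsf{G}_{m}(S)\cap\mathsf{G}_{u}(S)_{o}$. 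Since $\mathsf{G}_{u}(S)_{o}$ is closed in $\mathsf{G}_{u}(S)$ by Lemma~5.14, closedness then transfers in both directions (forward by relative closedness, backward by continuity of $\mathbf{d}$).

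Next I would close $(5)$ into the loop. For $(1)\Rightarrow(5)$, recall from \cite{L} that each basic patch set $U_{x;x_{1},\ldots,x_{n}}$ is compact-open in $\mathsf{G}_{u}(S)$; if $\mathsf{G}_{m}(S)$ is closed then $V_{x;x_{1},\ldots,x_{n}}=U_{x;x_{1},\ldots,x_{n}}\cap\mathsf{G}_{m}(S)$ is a relatively closed subset of a compact set, hence compact. For $(5)\Rightarrow(3)$ I would work inside the space of identities, using the further fact from \cite{L} that $\mathsf{G}_{u}(S)_{o}$ is hausdorff. Identifying $\mathsf{G}_{u}(S)_{o}$ with the filter space of the semilattice $E(S)$ via Lemma~5.9(2), it suffices to consider basic neighbourhoods $U_{e;e_{1},\ldots,e_{n}}\ni F$ with idempotent entries, which lie inside $\mathsf{G}_{u}(S)_{o}$. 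Given an idempotent tight filter $F$, such a $V_{e;e_{1},\ldots,e_{n}}$ is by $(5)$ a compact subset of the hausdorff space $\mathsf{G}_{u}(S)_{o}$, hence closed. A mild strengthening of Proposition~5.12(1), in which the ultrafilter produced near an idempotent tight filter is chosen idempotent, shows every neighbourhood of $F$ meets $\mathsf{G}_{m}(S)_{o}$, so $F$ lies in the closure of $V_{e;e_{1},\ldots,e_{n}}$; as that set is closed, $F\in V_{e;e_{1},\ldots,e_{n}}\subseteq\mathsf{G}_{m}(S)_{o}$, i.e.\ $F$ is an idempotent ultrafilter. This yields $(3)$ and places $(5)$ inside the equivalence class of $(1)$.

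It remains to prove $(3)\Leftrightarrow(6)$. Here I would first reduce $(6)$ by Lemma~3.20(2): the distributive inverse semigroup $\mathsf{D}_{t}(S)$ is weakly boolean iff every prime filter of $\mathsf{D}_{t}(S)$ is an ultrafilter of $\mathsf{D}_{t}(S)$; equivalently, unwinding the definition of weakly boolean and applying Proposition~1.6(2), iff the distributive lattice $E(\mathsf{D}_{t}(S))$ is boolean, that is, iff every prime filter of $E(\mathsf{D}_{t}(S))$ is an ultrafilter. Using the homeomorphism $\mathsf{G}_{\mathcal{T}}(S)\cong\mathsf{G}_{P}(\mathsf{D}_{t}(S))$ of Theorem~4.30(2), whose underlying bijection identifies tight filters of $S$ with prime filters of $\mathsf{D}_{t}(S)$ and, being a functor, identities with identities, this turns into a statement about the idempotent tight filters of $S$. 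Passing to idempotents by Lemma~5.9(2), the target is that $E(\mathsf{D}_{t}(S))$ is the distributive-lattice tight completion of the semilattice $E(S)$, with ultrafilters of $E(\mathsf{D}_{t}(S))$ corresponding exactly to ultrafilters of $E(S)$; granting this, $(6)$ reads as ``every tight filter of $E(S)$ is an ultrafilter of $E(S)$'', which by Lemma~5.9(2) is precisely $(3)$.

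The hard part will be exactly this last correspondence. Everything else is formal manipulation of closures and continuous maps, but verifying that the chain of constructions assembled in Section~4 --- the congruence $\sigma\colon S\to\mathbf{S}$, the nucleus completion $C(\mathbf{S},\mathcal{T})$, and the passage to finite elements $K(-)$ --- carries \emph{ultrafilters to ultrafilters}, and not merely prime or tight filters to prime or tight filters, is genuinely delicate: $\delta\colon S\to\mathsf{D}_{t}(S)$ need not be injective and the completion alters the underlying order, so maximality is not preserved on the nose. My strategy would be to reduce the whole matter to the semilattice $E(S)$ via Lemma~5.9, where the claim becomes the cleaner assertion that the ultrafilters of $E(S)$ are in bijection with those of its tight completion $E(\mathsf{D}_{t}(S))$, and to prove that directly from the explicit description of the tight completion together with Proposition~1.6(1).
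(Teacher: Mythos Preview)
Your argument for the cycle $(1)\Leftrightarrow(2)\Leftrightarrow(3)\Leftrightarrow(4)$ and for folding in $(5)$ is correct and close to the paper's; where the paper simply cites \cite{L} for $(4)\Leftrightarrow(5)$, you supply a direct argument via compactness of the $U_{x;x_{1},\ldots,x_{n}}$ and hausdorffness of $\mathsf{G}_{u}(S)_{o}$, which is fine. (In your $(5)\Rightarrow(3)$ step, the sentence ``so $F$ lies in the closure of $V_{e;e_{1},\ldots,e_{n}}$'' deserves one more clause: you are using that every neighbourhood of $F$ \emph{contained in} $U_{e;e_{1},\ldots,e_{n}}$ meets $\mathsf{G}_{m}(S)_{o}$ and hence meets $V_{e;e_{1},\ldots,e_{n}}$.)

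Where you diverge substantially is $(6)$, and here you are making life much harder than necessary. The paper does not reduce to the semilattice or attempt any direct analysis of how $\delta$ treats ultrafilters. Instead it observes that the three correspondences already assembled in Section~4 --- Proposition~4.13 (tight filters on $S$ versus tight filters on $\mathbf{S}$), Theorem~4.21 (tight filters on $\mathbf{S}$ versus completely prime filters on $C(\mathbf{S},\mathcal{T})$), and Lemma~3.2 (completely prime filters on $\mbox{Idl}(T)$ versus prime filters on $T$) --- are each explicitly stated as \emph{order}-isomorphisms between the relevant posets of filters. Composing them gives an order-isomorphism between the poset of tight filters on $S$ and the poset of prime filters on $\mathsf{D}_{t}(S)$. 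An order-isomorphism automatically carries maximal elements to maximal elements, so ultrafilters correspond to ultrafilters with no further work. Then $(2)$ reads ``every tight filter on $S$ is maximal among tight filters'', which translates to ``every prime filter on $\mathsf{D}_{t}(S)$ is maximal among prime filters'', which by Lemma~3.20(2) is exactly weak booleanness of $\mathsf{D}_{t}(S)$. What you flagged as the ``hard part'' and ``genuinely delicate'' is in fact a one-line consequence of the bookkeeping already done; the paper's route is both shorter and avoids the semilattice detour entirely.
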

\begin{proof} The equivalence of (1) and (2) is immediate by Theorem~5.13.
The equivalence of (2) and (3) follows by Lemma~5.4.

Suppose that (3) holds.
The closure of $G_{m}(S)^{(0)}$ is contained in $G_{u}(\Gamma)^{(0)}$
which by assumption is contained in $G_{m}(S)^{(0)}$ and so (4) holds.
Conversely, suppose that (4) holds.
Let $F$ be an idempotent tight filter.
Such a filter belongs to the closure of $G_{m}(S)^{(0)}$ which is assumed to be $G_{m}(S)^{(0)}$ itself.
Thus $F$ is an ultrafilter and so (3) holds.
The equivalence of (4) and (5) was proved in \cite{L}.

We now prove the equivalence of (2) and (6).
The distributive inverse semigroup $\mathsf{D}_{t}(S)$ is weakly boolean if and only if every prime filter is an ultrafilter by Proposition~1.6.
By Proposition~4.13, the poset of tight filters on $S$ is order-isomorphic to the poset of tight filters on $\mathbf{S}$.
By Theorem~4.21, the poset of tight filters on $\mathbf{S}$ is order-isomorphic to the poset of completely prime filters on $C(\mathbf{S},\mathcal{T})$
By Lemma~3.2, the poset of completely prime filters on $C(\mathbf{S},\mathcal{T})$ is order-isomorphic to the poset of 
prime filters on $K(\mathbf{S},\mathcal{T}) = \mathsf{D}_{t}(S)$.
It follows that the poset of tight filters on $S$ is order-isomorphic to the poset of prime filters on $\mathsf{D}_{t}(S)$.
Under this order-isomorphism maximal filters are mapped to maximal filters.
Thus every tight filter on $S$ is an ultrafilter if and only if every ultrafilter on $\mathsf{D}_{t}(S)$
is a prime filter if and only if $\mathsf{D}_{t}(S)$ is weakly boolean.
\end{proof}

An inverse semigroup is said to satisfy the {\em compactness condition} if any of the equivalent conditions of the previous theorem holds.

Our goal now is to find simple sufficient conditions on an inverse semigroup that imply it satisfies the compactness condition.
The following lemma will be useful.

\begin{lemma} Let $S$ be an inverse semigroup with zero and let $e$ and $f$ be idempotents with $f \leq e$.
\begin{enumerate} 

\item $V_{e;f} \neq \emptyset$ if and only if $e^{\downarrow} \cap f^{\perp} \neq 0$.

\item If $V_{e;f} = \bigcup_{i=1}^{m} V_{e_{i}}$ then the $e_{i}$ can be chosen such that $e_{i} \leq e$.

\item If $V_{e;f} = \bigcup_{i=1}^{m} V_{e_{i}}$, where $e_{1}, \ldots, e_{m} \leq e$, then $e \rightarrow \{e_{1}, \ldots, e_{m}, f \}$.

\item If $e \rightarrow \{e_{1}, \ldots, e_{m}, f \}$ where $e_{1}, \ldots, e_{m} \leq e$ then $V_{e;f} \subseteq \bigcup_{i=1}^{m} V_{e_{i}}$.

\end{enumerate}
\end{lemma}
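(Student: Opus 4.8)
The plan is to reduce all four parts to two facts established earlier: every ultrafilter is a tight filter, and (by Zorn's Lemma) every non-zero element of $S$ lies in some ultrafilter. The unifying observation is that the orthogonality and covering conditions appearing in the statement are precisely \emph{tight covers}. Indeed, for elements below $e$ the failure of $e^{\downarrow} \cap f^{\perp} \neq 0$ says exactly that $e \rightarrow \{f\}$, and since $f \leq e$ this means $\{f\} \in \mathcal{T}(e)$; likewise $e \rightarrow \{e_{1},\ldots,e_{m},f\}$ with all elements below $e$ says $\{e_{1},\ldots,e_{m},f\} \in \mathcal{T}(e)$. Tightness of an ultrafilter then converts membership of $e$ into membership of one of the covering elements. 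Throughout I use that $V_{e;f}$ is the set of ultrafilters containing $e$ but omitting $f$, and that a proper filter is directed, so any two of its elements have a non-zero common lower bound lying again in the filter.

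For part (1), the reverse implication is the easy one: given a non-zero $y \leq e$ with $y \in f^{\perp}$, extend $y$ to an ultrafilter $A$; then $e \in A$ by upward closure, while $f \notin A$, since otherwise $A$ would contain a non-zero common lower bound of $y$ and $f$, contradicting $y \perp f$. Hence $A \in V_{e;f}$. For the forward implication I argue by contradiction: if $e^{\downarrow} \cap f^{\perp} = 0$ then every non-zero element below $e$ meets $f^{\downarrow}$, that is $e \rightarrow \{f\}$, so $\{f\} \in \mathcal{T}(e)$; any $A \in V_{e;f}$ is tight and contains $e$, so tightness forces $f \in A$, contradicting $A \in V_{e;f}$.

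Parts (3) and (4) run on the same engine. For (3) I suppose $e \not\rightarrow \{e_{1},\ldots,e_{m},f\}$, which produces a non-zero $y \leq e$ orthogonal to each $e_{i}$ and to $f$; an ultrafilter $A$ containing $y$ then lies in $V_{e;f}$ exactly as in part (1), hence in some $V_{e_{i}}$ by the assumed covering, and then $y$ and $e_{i}$ admit a non-zero common lower bound in $A$, contradicting orthogonality. For (4), the hypothesis gives $\{e_{1},\ldots,e_{m},f\} \in \mathcal{T}(e)$, so any tight $A \in V_{e;f}$ containing $e$ must contain one of $e_{1},\ldots,e_{m},f$; as it omits $f$ it contains some $e_{i}$, giving $A \in V_{e_{i}}$. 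Part (2) is purely order-theoretic and needs no tightness: replacing $e_{i}$ by $e_{i}e \leq e$ works because for an ultrafilter $A$ one has $e_{i},e \in A$ if and only if $e_{i}e \in A$, whence $V_{e_{i}} \cap V_{e} = V_{e_{i}e}$; since $V_{e_{i}} \subseteq V_{e;f} \subseteq V_{e}$ this yields $V_{e_{i}} = V_{e_{i}e}$, and we may relabel.

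The only point requiring care, and the step I expect to be the main (if mild) obstacle, is verifying that $\{f\}$ and $\{e_{1},\ldots,e_{m},f\}$ genuinely qualify as members of $\mathcal{T}(e)$: this rests on the hypotheses $f \leq e$ and $e_{i} \leq e$ placing them inside $e^{\downarrow}$, together with reading the arrow relation correctly as the negation of the relevant orthogonality statement. Everything else reduces to routine manipulation of proper filters and Zorn's Lemma.
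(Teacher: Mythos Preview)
Your proof is correct and follows essentially the same route as the paper: parts (1), (3) and (4) all hinge on the fact that ultrafilters are tight (the paper's Proposition~5.10) together with Zorn's Lemma, and your contrapositive formulation of the forward direction in (1) is exactly the content of the paper's Lemma~5.11 specialised to a single element.

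One small point in part (2): your replacement $e_{i} \mapsto e_{i}e$ and the assertion $e_{i}e \leq e$ are only automatic when $e_{i}$ is already an idempotent. The paper does not assume this a priori; it first observes that every ultrafilter in $V_{e;f}$ contains the idempotent $e$ and is therefore an idempotent filter (an inverse subsemigroup), so each $e_{i}$ may be replaced by an idempotent without loss. After that reduction your argument goes through verbatim, since for idempotents $e_{i}e = e_{i} \wedge e \leq e$ and $V_{e_{i}} \cap V_{e} = V_{e_{i} \wedge e}$ as you say. If you intended the notation $e_{i}$ to signal idempotents from the start, then there is no gap at all; otherwise insert that one-line reduction and you match the paper exactly.
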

\begin{proof} (1) Let $F \in  V_{e;f}$.
Then $F$ is an ultrafilter such that  $e \in F$ and $f \notin F$.
Every ultrafilter is a tight filter by Proposition~5.10,
and so by Lemma~5.11 there exists $0 \neq i \leq e$ such that $i \wedge f = 0$.
Hence $e^{\downarrow} \cap f^{\perp} \neq 0$.
We now prove the converse.
Let  $0 \neq i \leq e$ such that $i \wedge f = 0$.
By Zorn's Lemma, there is an ultrafilter $F$ containing $i$.
But then $e \in F$ and $f \notin F$ and so $F \in V_{e;f}$, as required.

(2) Suppose that $V_{e;f} = \bigcup_{i=1}^{m} V_{x_{i}}$.
Each ultrafilter in $V_{e;f}$ contains an idempotent and so is itself an idempotent ultrafilter.
We may therefore assume that the $x_{i}$ are all idempotents.
We claim that 
$V_{e;f} = \bigcup_{i=1}^{m} V_{x_{i} \wedge e}$.
Let $F \in V_{e;f}$.
Then by assumption $F \in V_{x_{i}}$ for some $i$.
But $e \in F$ and so $e \wedge x_{i} \in F$.
It follows that $F \in V_{x_{i} \wedge e}$.
Conversely let $F \in  V_{x_{i} \wedge e}$.
Then $e \in F$ and $x_{i} \in F$.
It follows that $F \in V_{e;f}$. 

(3) Suppose that $V_{e;f} = \bigcup_{i=1}^{m} V_{e_{i}}$. 
Let $0 \neq z \leq e$.
If $z \wedge f \neq 0$ then we are done so we may suppose that $z \wedge f = 0$.
Thus $z \in e^{\downarrow} \cap f^{\perp}$.
Let $G$ be any ultrafilter containing $z$.
Then $e \in G$ and $f \notin G$.
It follows that $e_{i} \in G$ for some $i$.
Thus $z \wedge e_{i} \neq 0$, as required.

(4) Suppose that $e \rightarrow \{e_{1}, \ldots, e_{m}, f \}$.
Let $F \in V_{e;f} \neq \emptyset$.
By Proposition~5.10, every ultrafilter is a tight filter and so either $f \in F$ or $e_{i} \in F$ for some $i$.
Since we have excluded the former, it follows that $e_{i} \in F$.
This shows that $V_{e;f} \subseteq \bigcup_{i=1}^{m} V_{e_{i}}$. 
\end{proof}

Condition (1) above is equivalent to saying that the semilattice of idempotents of the inverse semigroup is {\em $0$-disjunctive}.

The following gives a necessary condition for the compactness condition to hold.

\begin{proposition} Let $S$ be an inverse semigroup with zero.
Suppose that for arbitrary idempotents $e,f$ in $S$ with $f \leq e$ either $V_{e;f}$ is empty or
there exist idempotents $e_1,\ldots, e_m \leq e$ with
$$V_{e;f} = \bigcup_{k=1}^m V_{e_k}.$$
Then the compactness condition holds.
\end{proposition}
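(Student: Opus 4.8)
The plan is to verify the third of the equivalent forms of the compactness condition, namely that every idempotent tight filter is an idempotent ultrafilter; since $S$ is not assumed to satisfy the weak meet condition, the decisive manoeuvre is to push the question down into the semilattice $E(S)$, where Theorem~1.5 is available (a meet semilattice is automatically a weak semilattice). So I would take an idempotent tight filter $F$ in $S$, assume for contradiction that it is not an ultrafilter, and use Lemma~5.9(2) to transfer this to $E(F)$, which is then a proper filter in $E(S)$ that is not an ultrafilter. Applying Theorem~1.5 in $E(S)$ produces an idempotent $b \in E(S)$ with $b \wedge a \neq 0$ for every $a \in E(F)$ but $b \notin E(F)$; since $F = E(F)^{\uparrow}$ and $b$ is idempotent, this is exactly $b \notin F$.

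Next I would fix an arbitrary idempotent $e \in F$ and examine $V_{e; e \wedge b}$, noting first that $e \wedge b \notin F$ (otherwise upward closure would force $b \in F$). The argument then splits according to the hypothesis applied to the pair $e,\, f = e \wedge b$. If $V_{e; e\wedge b} = \emptyset$, then part~(1) of the preceding lemma gives $e^{\downarrow} \cap (e\wedge b)^{\perp} = 0$, which is precisely the statement $e \rightarrow \{e \wedge b\}$, so $\{e\wedge b\} \in \mathcal{T}(e)$; tightness of $F$ then forces $e \wedge b \in F$, a contradiction. Otherwise the hypothesis yields idempotents $e_1, \ldots, e_m \leq e$ with $V_{e; e\wedge b} = \bigcup_k V_{e_k}$, and part~(3) of the preceding lemma gives $e \rightarrow \{e_1, \ldots, e_m, e\wedge b\}$, so this is a tight cover of $e \in F$. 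Since $F$ is a tight filter, some listed element lies in $F$; as $e \wedge b \notin F$, we conclude that some $e_k \in F$.

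To finish I would exploit the inclusion $V_{e_k} \subseteq V_{e; e\wedge b}$: no ultrafilter can contain both $e_k$ and $e \wedge b$, so by the Zorn's Lemma observation of Section~1 (every nonzero element lies in an ultrafilter) we must have $e_k \wedge e \wedge b = 0$, that is $e_k \wedge b = 0$ since $e_k \leq e$. But $e_k \in F$ is idempotent, hence $e_k \in E(F)$, so the defining property of $b$ gives $e_k \wedge b \neq 0$, the required contradiction. Thus $F$ is an ultrafilter, condition~(3) holds, and with it the compactness condition. The only genuine subtlety — and the step I expect to demand the most care — is the bookkeeping between ultrafilters of $S$ and of $E(S)$ through Lemma~5.9, together with checking that both the empty and the nonempty cases of the hypothesis collapse to a tight cover of $e$ that the tight filter $F$ cannot evade.
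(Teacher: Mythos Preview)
Your proof is correct and follows essentially the same route as the paper's: both argue by contradiction toward condition~(3) of Theorem~5.15, produce a witness idempotent outside the tight filter, invoke the hypothesis together with Lemma~5.16(3) to obtain a tight cover, and extract an $e_k$ in the filter that is forced to be orthogonal to the witness. The only cosmetic differences are that the paper finds its witness $f$ by extending $P$ to an ultrafilter $Q$ and taking $f \in Q \setminus P$ (which automatically has nonzero meet with every element of $P$), whereas you find $b$ via Theorem~1.5 in $E(S)$; and the paper uses Lemma~5.11 to see directly that $V_{e;f} \neq \emptyset$, so your Case~1 is in fact vacuous, though handling it does no harm.
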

\begin{proof} We shall prove that every tight idempotent filter is an idempotent ultrafilter and the result will follow by Theorem~5.15.
Suppose to the contrary that $P$ is a tight idempotent filter that is not an ultrafilter.
Then $P \subseteq Q$ where $Q$ is an ultrafilter.

Let $f \in Q$ where $f \notin P$ where $f$ may be chosen without loss of generality as an idempotent.
Let $e \in P$ be any idempotent.
Then $e \wedge f \in Q$ and is not in $P$.
Thus we may find an idempotent $e \in P$ such that $f \leq e$.
Now $P$ is a tight filter that contains $e$ and omits $f$.
It follows by Lemma~5.6 that there is $0 \neq z \leq e$ and $z \wedge f = 0$.
By Lemma~5.16(1), we have that $V_{e;f}$ is non-empty.
Thus by assumption 
$$V_{e;f} = \bigcup_{j=1}^n V_{e_j}$$
where the $e_{1}, \ldots, e_{m} \leq e$.
By Lemma~5.16(3), we have that $e \rightarrow \{e_{1}, \ldots, e_{n}, f\}$.
Now $e \in P$ and $f \notin P$ and $P$ is a tight filter and so $e_{i} \in P$ for some $i$.
But then $e_{i} \in Q$ giving $f,e_{i} \in Q$ and $e_{i} \wedge f = 0$ which is a contradiction.
\end{proof}

\begin{proposition} 
Let $S$ be an inverse semigroup satisfying the weak meet condition.  
Then a subbasis for the topology on $\mathsf{G}_{m}(S)$ is given by the sets $V_x$ where $x\in S$.
\end{proposition}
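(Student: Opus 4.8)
The plan is to show that the topology generated by the family $\{V_x : x \in S\}$ as a subbasis coincides with the restriction of the patch topology to $\mathsf{G}_m(S)$, whose basis is given by the sets $V_{x;x_1,\ldots,x_n}$. One inclusion is immediate: each $V_x = U_x \cap \mathsf{G}_m(S)$ is itself a basic patch-open set (the case of $V_{x;x_1,\ldots,x_n}$ with no excluded elements), so every set generated by the $V_x$ is patch-open. The substance of the proof lies in the reverse inclusion, namely showing that each basic patch-open set $V_{x;x_1,\ldots,x_n}$ belongs to the topology generated by the $V_x$.

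For this, the key claim I would establish is the identity
\[
V_{x;x_1,\ldots,x_n} = \bigcup \left\{ V_c : c \leq x \text{ and } c^{\downarrow} \cap x_i^{\downarrow} = 0 \text{ for } i=1,\ldots,n \right\},
\]
which exhibits $V_{x;x_1,\ldots,x_n}$ as a union of subbasic sets and thereby completes the argument. The inclusion $\supseteq$ is the easy half and uses only that filters are proper and directed: if $c \leq x$ is orthogonal to each $x_i$ and $F$ is an ultrafilter with $c \in F$, then $x \in F$ by upward closure, while $x_i \in F$ would force, by directedness, a non-zero element below both $c$ and $x_i$, contradicting $c^{\downarrow} \cap x_i^{\downarrow} = 0$; hence $F \in V_{x;x_1,\ldots,x_n}$.

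The inclusion $\subseteq$ is where the weak meet condition is needed, entering through Theorem~1.5. Given an ultrafilter $F \in V_{x;x_1,\ldots,x_n}$, for each $i$ we have $x_i \notin F$, so the characterisation of ultrafilters in Theorem~1.5 yields an element $a_i \in F$ with $a_i^{\downarrow} \cap x_i^{\downarrow} = 0$. Since $x, a_1, \ldots, a_n$ all lie in the filter $F$, directedness produces $c \in F$ with $c \leq x, a_1, \ldots, a_n$; then $c \leq a_i$ gives $c^{\downarrow} \cap x_i^{\downarrow} \subseteq a_i^{\downarrow} \cap x_i^{\downarrow} = 0$, so $c$ belongs to the indexing set and $F \in V_c$.

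The main obstacle is precisely the production, for each excluded $x_i$, of a witness $a_i \in F$ orthogonal to $x_i$; this is exactly the content of the ultrafilter characterisation of Theorem~1.5, and is the point at which the weak meet condition is indispensable, since without it an ultrafilter need not omit $x_i$ by way of an orthogonal element. Once the displayed identity is in hand the result is immediate. I would also remark in passing that the same computation, together with the fact that $V_x \cap V_y = \bigcup_{z \leq x,y} V_z$ (again by directedness and upward closure), shows that the $V_x$ in fact form a basis, so the subbasis statement holds a fortiori.
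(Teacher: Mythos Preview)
Your proof is correct and follows essentially the same route as the paper's: both use Theorem~1.5 to produce, for each excluded $x_i$, a witness $a_i \in F$ with $a_i^{\downarrow} \cap x_i^{\downarrow} = 0$, then take a common lower bound $c \leq x, a_1,\ldots,a_n$ in $F$ and observe $F \in V_c \subseteq V_{x;x_1,\ldots,x_n}$. Your version is more explicit in formulating the displayed union identity and in noting that the $V_x$ in fact form a basis, but the underlying argument is the same.
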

\begin{proof}  It suffices to show that for arbitrary $X\in \mathsf{G}_{m}(S)$ and $z_1,\ldots,z_n \leq z\in  S$ with $X\in V_{z;z_1,\ldots,z_n}$, 
we have $X\in V_x\subseteq V_{z;z_1,\ldots,z_n}$ for a suitable $x$. 
To that end, let $X$ be an ultrafilter that contains $z$ and omits $z_{1}, \ldots, z_{n}$.
We now use the properties of ultrafilters in inverse semigroups satisfying the weak meet condition described in Section~1.
By Theorem~1.5, there is an elements $x \in X$ such that $x \leq z$ and $x^{\downarrow} \cap z_{i}^{\downarrow} = 0$ for $1 \leq i \leq n$.
Clearly $X \in V_{x}$ and $V_{x} \subset V_{z;z_1,\ldots,z_n}$.
\end{proof}

\begin{theorem}\label{theorem-characterization-compactness}
If the inverse semigroup $S$ satisfies the weak meet condition, then the following are equivalent:

\begin{enumerate}

\item $S$ satisfies the compactness condition.

\item For any  $x \in  S$ the set  $V_x$ is compact.

\item For any $x_{1},\ldots, x_{n} \leq x$ the set $V_{x;x_{1}, \ldots, x_{n}}$ is compact.

\item For arbitrary  idempotents  $e,f_1,\ldots, f_n$ in $S$ with $f_j \leq p$ for $j=1,\ldots, n$, there exist idempotents $e_1,\ldots, e_m \leq e$ with
$$V_{e;f_1,\ldots, f_n} = \bigcup_{k=1}^m V_{e_k}.$$

\item For arbitrary idempotents $e,f$ in $S$ with $f \leq e$, if $V_{e:f} \neq \emptyset$ then there exist idempotents $e_1,\ldots, e_m \leq e$ with
$$V_{e;f} = \bigcup_{k=1}^m V_{e_k}.$$

\item The tight completion $\mathsf{D}_{t}(S)$ of $S$ is boolean.

\end{enumerate}
\end{theorem}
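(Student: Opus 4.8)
The plan is to establish the cycle $(1)\Rightarrow(3)\Rightarrow(4)\Rightarrow(5)\Rightarrow(1)$, to graft $(2)\Leftrightarrow(3)$ onto it as a side branch, and to handle $(1)\Leftrightarrow(6)$ by itself; the weak meet condition is what makes the last two pieces work. Several links are essentially free. The equivalence $(1)\Leftrightarrow(3)$ is nothing but the definition of the compactness condition combined with condition~(5) of Theorem~5.15, which asserts precisely that every $V_{x;x_{1},\ldots,x_{n}}$ is compact. The implication $(4)\Rightarrow(5)$ is the special case $n=1$, the nonemptiness hypothesis of~(5) merely guaranteeing that the resulting union is nonempty. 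And $(5)\Rightarrow(1)$ is exactly Proposition~5.17.

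For $(1)\Leftrightarrow(6)$ I would argue as follows. By Theorem~5.15 the compactness condition is equivalent to $\mathsf{D}_{t}(S)$ being weakly boolean. Since $S$ satisfies the weak meet condition, Theorem~4.28(3) shows that $\mathsf{D}_{t}(S)$ is an inverse $\wedge$-semigroup. But a weakly boolean inverse semigroup that is also an inverse $\wedge$-semigroup is by definition boolean; hence for $\mathsf{D}_{t}(S)$ the notions ``weakly boolean'' and ``boolean'' coincide, which yields $(1)\Leftrightarrow(6)$.

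The branch $(2)\Leftrightarrow(3)$ rests on the observation that under the weak meet condition each $V_{x}$ is clopen in $\mathsf{G}_{m}(S)$. Indeed, by Proposition~\ref{theorem-characterization-hausdorff} the topology of $\mathsf{G}_{u}(S)$ is induced from the embedding $j\colon \mathsf{G}_{u}(S)\to\{0,1\}^{S}$, under which $U_{x}=j^{-1}(\{\alpha\colon \alpha(x)=1\})$ is the preimage of a clopen cylinder set; thus $U_{x}$ is clopen, and therefore so is $V_{x}=U_{x}\cap\mathsf{G}_{m}(S)$. Then $(3)\Rightarrow(2)$ is the case $n=0$, while for $(2)\Rightarrow(3)$ one writes $V_{x;x_{1},\ldots,x_{n}}=V_{x}\cap V_{x_{1}}^{c}\cap\cdots\cap V_{x_{n}}^{c}$ as a closed subset of the compact set $V_{x}$, each $V_{x_{i}}^{c}$ being closed, hence compact.

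The heart of the matter is $(3)\Rightarrow(4)$. Fix idempotents $e,f_{1},\ldots,f_{n}\leq e$; by~(3) the set $V_{e;f_{1},\ldots,f_{n}}$ is compact, and since $e$ is an idempotent every filter in it is an idempotent ultrafilter, so it lies in $\mathsf{G}_{m}(S)_{o}$. The plan is to cover it by sets $V_{e'}$ with $e'\leq e$ idempotent. Given such a filter $F$, the restriction $E(F)$ is an ultrafilter in the semilattice $E(S)$, which satisfies the weak meet condition trivially; since each $f_{j}\notin E(F)$, Theorem~1.5 provides $a_{j}\in E(F)$ with $a_{j}\wedge f_{j}=0$, and then $e'=e\wedge a_{1}\wedge\cdots\wedge a_{n}\in E(F)$ satisfies $e'\leq e$ and $e'\wedge f_{j}=0$ for all $j$. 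One checks $F\in V_{e'}\subseteq V_{e;f_{1},\ldots,f_{n}}$: any ultrafilter containing the idempotent $e'$ is idempotent, contains $e$, and cannot contain any $f_{j}$, for otherwise it would contain $e'\wedge f_{j}=0$. As the $V_{e'}$ are open by Proposition~5.18 and cover the compact set, finitely many $V_{e_{1}},\ldots,V_{e_{m}}$ suffice; since each is contained in $V_{e;f_{1},\ldots,f_{n}}$ we get equality, which is~(4). I expect this covering construction---producing, for each point $F$ of the compact excision, a single idempotent $e'\leq e$ whose basic clopen set isolates $F$ inside $V_{e;f_{1},\ldots,f_{n}}$---to be the main obstacle, since it is the one place where the weak meet condition is genuinely used, via Theorem~1.5 and the semilattice structure of $E(S)$.
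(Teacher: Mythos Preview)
Your proof is correct and follows essentially the same strategy as the paper. The paper runs the cycle $(1)\Rightarrow(2)\Rightarrow(3)\Rightarrow(4)\Rightarrow(5)\Rightarrow(1)$, whereas you short-circuit $(1)\Rightarrow(3)$ via Theorem~5.15(5) and attach $(2)\Leftrightarrow(3)$ as a side branch; both organizations are fine. Your $(3)\Rightarrow(4)$ is a direct unpacking, in the idempotent case, of the covering argument that the paper obtains by citing Proposition~5.18 (which, under the weak meet condition, says the $V_x$ form a basis). One minor point: your reference to ``Theorem~4.28(3)'' should be Theorem~4.30(3), and the clopen-ness of $V_x$ via Proposition~5.2 is more than you need for $(2)\Rightarrow(3)$---openness of $V_{x_i}$ alone (which holds without the weak meet condition) already makes $V_{x_i}^c$ closed.
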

\begin{proof} 

$(1)\Longrightarrow (2)$. It suffices to consider the case where $x = e$ is an idempotent in $S$. 
Then $V_e$ is just the intersection of $U_e$, which is compact by results of the previous section,
and the set of identities of $\mathsf{G}_{m}(S)$, which is closed by assumption. 
Hence $V_e$ is compact.

$(2)\Longrightarrow (3)$. By definition of the topology, the sets $V_x$ are open and hence their complements are closed. 
As closed subsets of compact sets are compact we obtain (3).

$(3)\Longrightarrow (4)$. By the weak meet condition, the topology is generated by the $V_x$ where $x\in S$.  
Hence any set $V = V_{e;f_1,\ldots, f_n}$ can be written as a union of sets of the form $V_x$ with suitable $x \leq e$. 
As $V$ is compact by (3), finitely many of such $x$ suffice.

$(4)\Longrightarrow (5)$. This is clear.

$(5)\Longrightarrow (1)$. This was proved as Proposition~5.17.

$(1)\Longleftrightarrow (6)$. This follows from Theorem~5.15(6) and the fact that if $S$ is a weak meet semigroup then $\mathsf{D}_{t}(S)$ 
is an inverse $\wedge$-semigroup by Theorem~4.30.
\end{proof}

Condition (5) above is a generalization of Theorem~2.22 of \cite{Law4}.

A variant of (4) and (5) was introduced in \cite{L} under the name of trapping condition and shown to imply the compactness condition.
It was then realized in \cite{Law4} that, in the context of suitable semilattices, the trapping condition is actually equivalent to the compactness condition.
Our result above generalizes the corresponding results of \cite{L,Law4}.

Let $S$ be an inverse semigroup.
If $e$ and $f$ are idempotents of $S$ such that $e \leq f$ and if $e \leq i \leq f$ then either $e = i$ or $f = i$,
then we say that $e$ is a {\em 1-step restriction} 
or {\em one-step restriction} of $f$.\footnote{We would usually say that {\em $f$ covers $e$} but in this paper that could cause confusion.}
Inductively, we define $f$ to be an {\em $n$-step restriction} of $e$ if there exists an $(n-1)$-step restriction $i$ of $e$ 
such that  $f$ is a $1$-step restriction of $i$.
Note that we make no assumption that $n$ is unique.
We say that the semilattice of idempotents of $S$ has {\em finite-depth} if whenever $0 \neq e \leq f$
then $e$ is an $n$-step restriction of $f$ for some finite $n$.

\begin{lemma} Let $S$ be an inverse semigroup whose semilattice of idempotents has finite-depth.
If $0 \neq f \leq e$ then $V_{e;f}$ can be written as a finite union of sets of the form $V_{i;j}$ where
$j \leq i$ and $j$ is a $1$-step restriction of $i$.
\end{lemma}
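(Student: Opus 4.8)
The plan is to use the finite-depth hypothesis to replace the pair $f \leq e$ by a single finite chain of one-step restrictions, and then to decompose $V_{e;f}$ along that chain by a telescoping argument on ultrafilters.

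First I would invoke the finite-depth condition on the pair $0 \neq f \leq e$. Since $f$ is a nonzero idempotent below $e$, it is an $n$-step restriction of $e$ for some finite $n$, and unwinding the inductive definition of ``$n$-step restriction'' produces a chain of idempotents
$$e = i_{0} \geq i_{1} \geq \cdots \geq i_{n} = f$$
in which each $i_{k}$ is a $1$-step restriction of $i_{k-1}$. Every $i_{k}$ satisfies $i_{k} \geq f$ and is therefore nonzero, so each $V_{i_{k-1};i_{k}}$ is a set of exactly the required form, with $i_{k} \leq i_{k-1}$ a $1$-step restriction of $i_{k-1}$. Note that finite-depth supplies only the existence of one such chain (its length $n$ need not be unique), but this is all that is needed, since the conclusion only asks for a finite union.

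The key step is then to establish
$$V_{e;f} = \bigcup_{k=1}^{n} V_{i_{k-1};i_{k}}.$$
For the left-to-right inclusion I would take an ultrafilter $F \in V_{e;f}$, so that $i_{0} = e \in F$ while $i_{n} = f \notin F$. Because $F$ is upward closed and the chain is decreasing, the set $\{k : i_{k} \in F\}$ is an initial segment of $\{0,\ldots,n\}$; it contains $0$ but not $n$, so there is a (unique) index $k$ with $i_{k-1} \in F$ and $i_{k} \notin F$, i.e. $F \in V_{i_{k-1};i_{k}}$. For the reverse inclusion I would check each $V_{i_{k-1};i_{k}} \subseteq V_{e;f}$ directly: if $F$ contains $i_{k-1}$ then $e \in F$ by upward closure, since $i_{k-1} \leq e$; and if $F$ omits $i_{k}$ then $F$ must omit $f$, since $f = i_{n} \leq i_{k}$ means that $f \in F$ would force $i_{k} \in F$. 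This yields the displayed equality (in fact a disjoint union, though disjointness is not needed), completing the proof.

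I do not expect a genuine obstacle here: the argument is essentially a bookkeeping exercise once the chain is in hand. The only point requiring care is keeping the direction of the order straight throughout, so that upward closure of filters is applied correctly in both inclusions (membership propagates upward to $e$, omission propagates downward to $f$). The real content is simply the observation that a decreasing chain of idempotents from $e$ to $f$, together with the fact that an ultrafilter ``drops out'' at exactly one step of the chain, converts the finite-depth hypothesis into the desired finite decomposition.
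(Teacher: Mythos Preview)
Your proposal is correct and is essentially the paper's argument written out in full: the paper proves the single telescoping identity $V_{e;f} = V_{e;i} \cup V_{i;f}$ for $f \leq i \leq e$ and then iterates along the chain, whereas you unwind the chain $e = i_{0} \geq \cdots \geq i_{n} = f$ first and prove the resulting finite union directly. The underlying idea---an ultrafilter containing $e$ but not $f$ must drop out at exactly one step of any descending chain from $e$ to $f$---is the same in both.
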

\begin{proof}
Let $f \leq e$ be an $n$-step restriction.
Then there is an idempotent $i$ such that $f \leq i \leq e$ where $f$ is a $1$-step restriction of $i$ and
$i$ is an $n-1$-step restriction of $e$.
Observe that $V_{e;f} = V_{e;i} \cup V_{i;f}$. 
The result follows by iteration.
\end{proof}

A one-step restriction $f$ of $e$ is said to be {\em weakly complemented} 
if there exist one-step restrictions $e_1,e_{2}, \ldots \in f^{\perp}$ of $e$ such that
$g \leq e$ and $g \wedge f = 0$ implies that $g \leq e_{i}$ for some $i$.

We say that the semilattice of idempotents of $S$ is {\em pseudofinite} or {\em locally finite}
if for each $e \in E(S)$ the set $e^{\downarrow} \setminus \{e\}$ is a finitely generated order ideal.
This is equivalent to saying that each idempotent has only finitely many one-step restrictions.

An inverse semigroup $S$ with zero is said to be {\em coarse-grained} 
if its semilattice of idempotents is 
locally finite, 
has finite-depth, 
and is such that any one-step restriction is weakly complemented.

\begin{theorem} 
A coarse-grained inverse semigroup satisfies the compactness condition. 
\end{theorem}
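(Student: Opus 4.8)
The plan is to verify the sufficient condition for the compactness condition furnished by Proposition~5.17: I would show that for any idempotents $f \leq e$ the set $V_{e;f}$ is either empty or a finite union $\bigcup_k V_{e_k}$ with each $e_k \leq e$ an idempotent. Two degenerate cases are immediate. If $f = e$ then $V_{e;f} = \emptyset$, and if $f = 0$ then $U_0 = \emptyset$ forces $V_{e;0} = V_e$, which is already in the required form. So I may assume $0 \neq f \leq e$ with $f \neq e$ and that $V_{e;f}$ is non-empty.

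First I would invoke Lemma~5.20, whose hypothesis, finite-depth of $E(S)$, is part of being coarse-grained. This writes $V_{e;f}$ as a finite union of sets $V_{i;j}$ in which $j$ is a one-step restriction of $i$, and, by the construction in that proof, every intermediate idempotent lies between $f$ and $e$, so in particular $i \leq e$. It therefore suffices to treat a single term $V_{i;j}$ with $j$ a one-step restriction of $i \leq e$, and to express it as a finite union of sets $V_{e_k}$ with $e_k \leq i$.

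Here I would use the two remaining hypotheses of coarse-grainedness. Since $j$ is a one-step restriction of $i$ and is weakly complemented, there are one-step restrictions $e_1, e_2, \ldots \in j^{\perp}$ of $i$ such that any $g \leq i$ with $g \wedge j = 0$ satisfies $g \leq e_k$ for some $k$; and since $E(S)$ is locally finite, $i$ has only finitely many one-step restrictions, so this list $e_1, \ldots, e_m$ is finite. I claim $V_{i;j} = \bigcup_{k=1}^m V_{e_k}$. For $\supseteq$, each $e_k$ satisfies $e_k \leq i$ and $e_k \wedge j = 0$ (being in $j^{\perp}$); hence any ultrafilter containing $e_k$ must contain $i$ and, being proper, cannot contain $j$, so $V_{e_k} \subseteq V_{i;j}$. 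For $\subseteq$, I would first establish the arrow relation $i \rightarrow \{e_1, \ldots, e_m, j\}$: given $0 \neq h \leq i$, either $h \wedge j \neq 0$, or $h \wedge j = 0$ and weak complementation gives $h \leq e_k$ for some $k$, so in either case $h$ is non-orthogonal to a member of $\{e_1, \ldots, e_m, j\}$. Lemma~5.16(4) then yields $V_{i;j} \subseteq \bigcup_{k=1}^m V_{e_k}$.

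Assembling these decompositions over the finitely many terms produced by Lemma~5.20 expresses $V_{e;f}$ as a finite union of sets $V_{e_k}$ with all $e_k \leq i \leq e$, which is precisely the hypothesis of Proposition~5.17; the compactness condition follows. The only genuinely delicate step is the inclusion $\subseteq$, where I must convert the purely order-theoretic weak-complementation property into the arrow relation in order to apply Lemma~5.16(4); the remainder is bookkeeping that organizes the finite-depth and local-finiteness hypotheses into a finite union.
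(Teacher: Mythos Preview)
Your proof is correct and follows essentially the same route as the paper: reduce via Lemma~5.20 to one-step restrictions, use weak complementation together with local finiteness to obtain finitely many $e_1,\ldots,e_m$, and verify $V_{i;j}=\bigcup_k V_{e_k}$ so that Proposition~5.17 applies. The only minor difference is in the inclusion $V_{i;j}\subseteq\bigcup_k V_{e_k}$: you package it via the arrow relation and Lemma~5.16(4), whereas the paper argues directly from the ultrafilter characterization in $E(S)$ (an ultrafilter omitting $j$ must contain some idempotent orthogonal to $j$, hence some $e_k$); these are equivalent arguments.
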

\begin{proof}  
We show that the condition of Proposition~5.17 holds.
By Lemma~5.20, it is enough to deal with the case where $f \leq e$ is a one-step restriction.
We assume that $V_{e;f} \neq \emptyset$.
By assumption, we may find finitely many one-step restrictions $e_{1}, \ldots, e_{m}$ such that
$g \leq e$ and $g \wedge f = 0$ implies that $g \leq e_{i}$ for some $1 \leq i \leq m$.
We will show that 
$$V_{e;f} = \bigcup_{k=1}^m V_{e_k}.$$
Let $X \in V_{e;f}$.
Then $e \in X$ and $f \notin X$.
We know that $E(X)$ is an ultrafilter in $E(S)$ by Lemma~5.9(2).
Thus there exists $i \in E(X)$ such that $i \wedge f = 0$.
Thus we may find $g \in X$, an idempotent, such that $g \leq e$ and $g \wedge f = 0$.
By assumption $g \leq e_{i}$ for some $i$ and so $X \in V_{e_{i}}$.
To prove the reverse inclusion, suppose that $X \in V_{e_{i}}$.  
Then $e_{i} \leq e$ implies that $e \in X$.
But $e_{i} \wedge f = 0$ and so $f \notin X$, as required. 
\end{proof}

\begin{example}{\em  The inverse semigroup constructed from a locally finite graph is coarse-grained.
The semilattice of idempotents of this semigroup is unambiguous \cite{Law4}
and so the set of one-step restrictions of an idempotent is an orthogonal subset.
The semilattice of idempotents has finite-depth by construction and is locally finite by fiat.}
\end{example}

\section{Concluding remarks}

\subsection{Ehresmann's legacy} We begin by expanding slightly our remarks in the Preliminaries concerning the origins of frame theory in pseudogroup theory.
Ehresmann's work on pseudogroups and local structures that Johnstone refers to can be found collected in Partie~II-1 of \cite{E} as paper 47 and, incidently,
is the only paper Ehresmann wrote in his native German.
Its title is {\em Gattungen von lokalen Strukturen}\footnote{`Species of local structures'.}.
The reader will recall from introductory courses in differential geometry that differential manifolds of various complexions
are defined by means of atlases whose changes of charts are required to satisfy certain conditions depending on the type of manifold
being defined. For the constructions to work, the changes of charts need to belong to a so-called {\em pseudogroup of transformations}.
Differential geometers, such as Ehresmann, were well aware in the 1950's that a whole range of local structures, important in differential geometry,
could be defined in the same way: for example, fibrations and foliations.
Furthermore, category theory, developed in the 1940's, pointed the way to describing classes of structures in general.
Ehresmann, in his {\em Gattungen}-paper, attempts to combine these two approaches to describing local structures.
His goal is to describe, in categorical terms, the process of constructing a category of local structures from a pseudogroup.
This goal in fact becomes the basis of the papers collected in Partie~II-1.
Although pseudogroups have frames of idempotents,
frame theory turned its back on pseudogroups and they do not occur at all in either \cite{J} or \cite{MM}.
Ehresmann's work on pseudogroups was generally neglected except in East Germany \cite{R}
and within inverse semigroup theory where his ideas
turned out to be extremely fertile: they form the basis of the book \cite{Law2}.

\subsection{Motivation for the theory} Inverse semigroups arise naturally in the theory of $C^{\ast}$-algebras.
This was first observed by Renault \cite{Ren} and was later developed more explicitly by Paterson \cite{P}. 
The salient idea in this context is that  $C^{\ast}$-algebras can be constructed from topological groupoids which in turn can be constructed from inverse semigroups.
Exel's work \cite{Exel1} is  a prime example of this fruitful approach to constructing $C^{\ast}$-algebras. 
In this way  many interesting $C^{\ast}$-algebras, such as graph algebras and tiling algebras, can be constructed from inverse semigroups.
This  raises the question of the nature of the relationship between inverse semigroups and topological groupoids.
The authors' interest in this question was aroused by Kellendonk's work \cite{Kel1,Kel2} on aperiodic tilings as models of quasi-crystals.
The second author's paper \cite{L} reanalysed Paterson's work in the light of Kellendonk's and presented a general order-based approach 
to the construction of groupoids from inverse semigroups. 
This approach yields  an alternative description of Paterson's universal groupoid and at the same  time  features a certain reduction of the universal groupoid. 
This reduction is the tiling groupoid in the case of tiling semigroups and the graph groupoid in the case of graphs. 
In this way a unified treatment of certain basic properties concerning e.g. ideal theory  of  tiling groupoids and graph groupoids becomes possible.
The order-based approach  to the construction of groupoids from inverse semigroups was then developed  further by the first author 
in collaboration with Stuart Margolis and Ben Steinberg in terms of filters \cite{LMS}.
Thus the topological groupoids arising in the theory of $C^{\ast}$-algebras were groupoids of filters.
This set the stage for \cite{Law3}, where the first author showed that the topological groupoids arising in the case of the Cuntz $C^{\ast}$-algebras
could be constructed from a class of inverse monoids, called boolean inverse monoids, in a way generalising the classical Stone duality
between unital boolean algebras and boolean spaces.
This was subsequently generalized to boolean inverse semigroup in \cite{Law4} where
the Thompson groups and the Cuntz-Krieger $C^{\ast}$-algebras were shown to arise naturally from
this more generality duality.
It is now that frames naturally reappear.
In Johnstone's book \cite{J}, Stone duality is  obtained as a consequence of the
adjunction that exists between the category of frames, or rather their dual the category of locales,
and the category of topological spaces.
Thus the question arises of obtaining the non-commutative Stone dualities described in \cite{Law3,Law4}
in terms of a generalization of frame theory in which frames are replaced by pseudogroups and topological spaces by \'etale topological groupoids.
This was the starting point for this paper.
For applications of our theory to groups and $C^{\ast}$-algebras we refer the reader to \cite{Law4}.

\subsection{Booleanizations} Given an inverse semigroup $S$ we can ask what the best weakly boolean inverse semigroup associated with $S$ might be.
The first answer is given via Paterson's universal groupoid and the weakly boolean inverse semigroup $\mathsf{B}(S)$ characterized in Theorem~5.7.
This semigroup can always be constructed and contains an isomorphic copy of $S$.
However, the construction of  $\mathsf{B}(S)$ does not take account of any boolean-like properties, whatever that might mean, 
that $S$ may already possess.
Alternatively, we may construct by Theorems~4.26 and 4.29  the dense pseudogroup $\mathsf{P}_{d}(S)$.
This is a boolean inverse monoid and takes account of the boolean-like properties of the semigroup $S$ on account 
of the fact that the map $\pi \colon S \rightarrow \mathsf{P}_{d}(S)$ is a dense map.
Howevever, $\mathsf{P}_{d}(S)$ is not coherent in general.
Intuitively, this provides the motivation for the construction of the distributive inverse semigroup $\mathsf{D}_{t}(S)$, the tight completion of $S$,
in Theorem~4.30.
It is not weakly boolean in general, but it is when $S$ satisfies the compactness condition.
In this case, the associated groupoid is constructed from ultrafilters on $S$ and the map $\delta \colon S \rightarrow \mathsf{D}_{t}(S)$ is a tight map.
Inverse semigroups satisfying the compactness condition, were the subject of the second author's papers
which began with an examination in \cite{Lawa,Lawb} of some ideas to be found in \cite{Birget} and then led via \cite{Law3,Law3a}
to \cite{JL,Law4} where special cases of some of the ideas in this paper were developed.


\end{document}